\documentclass[10pt]{amsart}

\usepackage[english]{babel}
\usepackage[T1]{fontenc}
\usepackage[utf8]{inputenc}

\usepackage[a4paper,top=3cm,bottom=3cm,left=2.7cm,right=2.7cm,marginparwidth=2cm]{geometry}
\usepackage{indentfirst}
\usepackage[dvipsnames]{xcolor}
\usepackage[all,cmtip]{xy}

\usepackage{amsmath}
\usepackage{amsthm}
\usepackage{thmtools}

\usepackage[color=pink]{todonotes}

\usepackage[colorlinks=true, linkcolor=blue, citecolor=blue]{hyperref}
\usepackage{cleveref}

\crefname{theorem}{Theorem}{Theorems}
\crefname{lemma}{Lemma}{Lemmas}
\crefname{corollary}{Corollary}{Corollaries}
\crefname{proposition}{Proposition}{Propositions}
\crefname{conjecture}{Conjecture}{Conjectures}
\crefname{question}{Question}{Questions}
\crefname{definition}{Definition}{Definitions}
\crefname{remark}{Remark}{Remarks}
\crefname{example}{Example}{Examples}
\crefname{case}{Case}{Cases}

\usepackage{tikz}
\usepackage{tikz-cd}
\usepackage{amsfonts}
\usepackage{amssymb}
\usepackage{graphicx}  
\usepackage{mathrsfs}
\usepackage{mathtools}

\usepackage{extarrows}
\usepackage{tensor}

\theoremstyle{plain}
\newtheorem{theorem}{Theorem}[section]
\newtheorem{lemma}[theorem]{Lemma}
\newtheorem{corollary}[theorem]{Corollary}
\newtheorem{conjecture}[theorem]{Conjecture}

\newtheorem{proposition}[theorem]{Proposition}

\theoremstyle{definition}
\newtheorem{definition}[theorem]{Definition}

\newtheorem{remark}[theorem]{Remark}
\newtheorem{example}[theorem]{Example}
\numberwithin{equation}{section}

\theoremstyle{remark}

\DeclareMathOperator{\rk}{\operatorname{\mathsf{rk}}}
\DeclareMathOperator{\Hom}{Hom}
\DeclareMathOperator{\coker}{coker}

\DeclareMathOperator{\modu}{mod}

\usetikzlibrary{matrix,arrows,decorations.markings}

\usepackage{pdflscape}
\usepackage{afterpage}
\usepackage{caption} 
\captionsetup[table]{skip=10pt}
\usepackage{capt-of}
\usepackage{tikz}

\newcommand{\suchthat}{\ | \ }

\newcommand{\myid}{1\hspace{-0.125cm}1}

\newcommand{\image}{\operatorname{im}}
\newcommand{\marked}{\mathbb{M}}

\newcommand{\orbset}{\mathbb{O}}
\newcommand{\surf}{(\Sigma,\marked,\orbset)}

\newcommand{\decrep}{\operatorname{decRep}}
\newcommand{\mycurve}{\mathfrak{c}}

\newcommand{\arrow}{E}

\newcommand{\compactbigotimesk}[1][0pt]{%
  \mathrel{\raisebox{#1}{$\underset{\scalebox{0.9}{
$\Bbbk\,$}}{\boldsymbol{\otimes}}$}}%
}
\newcommand{\smallotimesk}{\scalebox{0.6}{$\compactbigotimesk[0.45em]$}}

\newcommand{\kink}{\iota}

\newcommand{\stau}[1]{\operatorname{s\tau-tilt}{#1}}

\setcounter{tocdepth}{1}

\title[Gentle algebras from surfaces with orbifold points II]{Gentle algebras arising from surfaces with orbifold points, Part II: Locally free Caldero--Chapoton functions}
\author{Daniel Labardini-Fragoso}
\address{Daniel Labardini-Fragoso\newline
Instituto de Matem\'aticas, UNAM, Mexico}
\email{labardini@im.unam.mx}

\author{Lang Mou}
\address{Lang Mou\newline
Mathematisches Institut, Universit\"at zu K\"oln, Weyertal 86-90, 50931 K\"oln, Germany}
\email{langmou@math.uni-koeln.de}
\date{}

\begin{document}

\begin{abstract}
We prove that in the skew-symmetrizable cluster algebras associated by Felikson--Shapiro--Tumarkin to unpunctured surfaces with orbifold points of order $2$ and a specific choice of weights, the Laurent expansion of any cluster variable with respect to any cluster coincides with the locally free Caldero--Chapoton function of a $\tau$-rigid representation of a gentle algebra. These cluster algebras are typically non-acyclic and of infinite type, whereas for polygons with one orbifold point one recovers cluster algebras of finite type $C$; so, our result is an ample extension of a seminal result established by Geiss--Leclerc--Schröer for skew-symmetrizable cluster algebras of finite type and acyclic initial seeds. 
As the main means to achieve the result, we provide a generalization of Derksen--Weyman--Zelevinsky's mutation theory of loop-free quivers with potential to the quivers-with-loops with potential we associate to the triangulations of unpunctured surfaces with orbifold points, and study the relation with $\tau$-tilting theory.

As a result of independent interest, we compute the aforementioned $\tau$-rigid representations explicitly. To this end, we show that the indecomposable $\tau$-rigid string modules arising from arcs on the surface, and the quasi-simple band modules arising from simple closed curves, are well-behaved under the mutations of representations we define in the paper, thus extending results of the first author's Ph.D. thesis.
\end{abstract}

\maketitle

\tableofcontents

\section{Introduction}

Cluster algebras associated with marked bordered surfaces have been a rich area of study since Fock and Goncharov's discovery of cluster structure on the decorated Teichm\"uller spaces \cite{fock2006moduli} and the systematic study by Fomin, Shapiro and Thurston \cite{fomin2008cluster} as a special class of cluster algebras of Fomin and Zelevinsky \cite{fomin2002cluster, fomin2003cluster}. Among many significant features they possess, these algebraic structures capture the combinatorial intricacies of surface triangulations. In particular, these cluster algebras are of finite mutation type, a fact reflecting that there are only finitely many isomorphic triangulations of a given surface.

Felikson, Shapiro, and Tumarkin \cite{felikson2012cluster} expanded the class of cluster algebras associated to marked bordered surfaces to incorporate orbifold points. The resultant cluster structures are not necessarily skew-symmetric, meaning that the cluster exchange relations are no longer governed by skew-symmetric matrices (or equivalently quivers without 2-cycles) but by more general skew-symmetrizable ones induced from triangulations of orbifolds. These cluster algebras form a major part of the classification of cluster algebras of finite mutation type \cite{felikson2012unfolding}, i.e., the ones with only finitely many distinguished exchange matrices.

For skew-symmetric cluster algebras, a powerful framework of (additive) categorification modeled on quiver representations has been established since the seminal works \cite{buan2006tilting} and \cite{caldero2006cluster}. This methodology was later significantly fortified by Derksen, Weyman and Zelevinsky in \cite{derksen2008quivers, derksen2010quivers}, where for any skew-symmetric matrix, there is a distinguished set of representations of the adjacency quiver whose Caldero--Chapoton functions precisely express (non-initial) cluster variables. In the context of skew-symmetric cluster algebras, this noteworthy correlation between quiver representation theory and cluster theory serves as a keystone in solving a collection of conjectures proposed in \cite{fomin2007cluster} which are particularly challenging if using pure combinatorial methods; see \cite[Theorem 1.7]{derksen2010quivers}.

Let us revisit the case of cluster algebras associated to unpunctured marked bordered (oriented) surfaces and explain how quiver representations are useful to understand them. Given any such surface $\Sigma$, the cluster structure inherent to the associated cluster algebra $\mathcal A(\Sigma)$ can be explicitly described combinatorially via the \emph{arc complex} $\mathbf{Arc}(\Sigma)$. In this simplicial complex, $k$-simplices are collections of $k$ arcs in $\Sigma$ considered up to homotopy that only intersect at their endpoints on the boundary marks. Meanwhile, the cluster complex $\Delta(\mathcal A(\Sigma))$ is a simplicial complex on the ground set of all cluster variables whose maximal simplices are clusters.

Fomin, Shapiro and Thurston \cite{fomin2008cluster} showed that the two simplicial complexes $\mathbf{Arc}(\Sigma)$ and $\Delta(\mathcal A(\Sigma))$ are actually isomorphic. In particular, the isomorphism induces bijections
\[
    \{\text{Arcs in $\Sigma$}\} \longleftrightarrow \{ \text{Cluster variables in $\mathcal A(\Sigma)$} \}  \quad \text{and} \quad \{\text{Triangulations of $\Sigma$}\} \longleftrightarrow \{\text{Clusters in $\mathcal A(\Sigma)$}\}
\]
such that flips of triangulations correspond to mutations of clusters.

Each triangulation $T$ of $\Sigma$ gives a quiver $Q(T)$ whose vertices are arcs in $T$ and an arrow is introduced if one arc follows another clockwise within a triangle. One can impose gentle relations $I(T)$ on the associated quiver $Q(T)$ as per \cite{assem2010gentle} and consider the subsequent finite-dimensional gentle algebra $\Bbbk Q(T)/I(T)$. Alternatively there is a potential $W(T)$ defined in \cite{labardini2009quivers} such that the Jacobian algebra $\mathcal P(T)$ of the quiver with potential $(Q(T), W(T))$ is isomorphic to the gentle algebra.

To state the following correspondence between the cluster structure of $\mathcal A(\Sigma)$ and the representation theory of $\mathcal P(T)$, for convenience we choose to employ the language of $\tau$-tilting theory of Adachi--Iyama--Reiten \cite{adachi2014tau} over that of cluster categories or \cite{derksen2010quivers}, which historically may precede. We consider the simplicial complex $\Delta(\mathcal P(T))$ of (basic) $\tau$-rigid pairs whose maximal simplices are (basic) support $\tau$-tilting pairs \cite{demonet2019tau}.

The two major points of the categorification can be summarized as follows:

\begin{itemize}
    \item[(C1)]\label{c1} The complex $\Delta(\mathcal P(T))$ is isomorphic to the cluster complex $\Delta(\mathcal A(\Sigma))$ (thus also to the arc complex $\mathbf{Arc}(\Sigma)$). This isomorphism induces a bijection on $0$-simplices
    \[
        \{ \text{Cluster variables in $\mathcal A(\Sigma)$} \} \longleftrightarrow \{ \text{Indecomposable $\tau$-rigid pairs of $\mathcal P(T)$}\}/\!\cong
    \] such that their respective parameterizations by $\mathbf g$-vectors coincide. It also induces another bijection on maximal simplices
    \[
        \{\text{Clusters in $\mathcal A(\Sigma)$}\} \longleftrightarrow \{\text{Basic support $\tau$-tilting pairs of $\mathcal P(T)$}\}/\!\cong
    \]
    where the cluster given by $T$ corresponds to the pair $(0, \mathcal P(T))$, manifesting respective mutations and thus giving an isomorphism between respective mutation graphs;
    \item[(C2)]\label{c2} The (representation-theoretic) $F$-polynomial of an indecomposable $\tau$-rigid module equals the $F$-polynomial of the corresponding cluster variable. Together with the coincidence of $\mathbf g$-vectors in (C1), this ensures the existence of a valid cluster character - the Caldero--Chapoton function of an indecomposable $\tau$-rigid module equals the Laurent polynomial expression of the cluster variable in the initial cluster variables associated with arcs in $T$.
\end{itemize}

The above conclusions (C1) and (C2) can be derived from \cite{derksen2010quivers} and \cite{labardini2009quivers} combined. The interpretation of $\mathcal A(\Sigma)$ in terms of $\mathcal P(T)$-modules implies properties of $\mathbf g$-vectors and $F$-polynomials on the cluster algebra side; see for example \cite[Conjecture 5.4 and 6.13]{fomin2007cluster}.

A square integer matrix $B$ is called skew-symmetrizable if there exists a diagonal matrix $D$ with positive integer entries such that $DB$ is skew-symmetric, i.e. $DB + B^\intercal D^\intercal = 0$. A cluster algebra is said to be skew-symmetrizable if its exchange matrices are skew-symmetrizable. In this case, a uniform representation-theoretic approach to the cluster algebra $\mathcal A(B)$ in a framework analogous to (C1) and (C2) remains elusive. The complexity mostly arises from that skew-symmetrizable matrices no longer correspond to quivers, but rather to valued quivers whose representation theories deviate significantly from that of quivers.

Surfaces with orbifold points (of order 2), referred to as orbifolds $\mathcal O$ in short, provide a nice combinatorial model for a class of skew-symmetrizable cluster algebras $\mathcal A(\mathcal O)$ \cite{felikson2012cluster}. In this case, cluster variables are still represented by arcs and clusters by triangulations. Due to the existence of orbifold points, the cluster exchange matrices are no longer skew-symmetric but skew-symmetrizable.

In this paper we provide a representation-theoretic approach to the skew-symmetrizable cluster algebras associated to unpunctured marked bordered orbifolds; see the precise definition in \Cref{section: orbifolds}. This is a class of cluster algebras considered in \cite{felikson2012cluster} with a specific choice of \emph{weights} on orbifold points; see \Cref{rmk: matrix}. To each triangulation $T$ of an orbifold $\mathcal O$, we construct a gentle algebra $\mathcal P(T)$ from a quiver with relations as in \cite{LM1}. Now the quivers may have loops arising from the orbifold points. We achieve to relate the cluster structure of $\mathcal A(\mathcal O)$ with the $\tau$-tilting theory of $\mathcal P(T)$ by proving the property (C1) in \Cref{thm: main theorem} (A) and \Cref{cor: app in cluster algebra} (2).

The more challenging task is perhaps (C2), as the usual definition of the $F$-polynomial of a module has to be modified. In our skew-symmetrizable situation, the $\tau$-rigid $\mathcal P(T)$-modules actually possess a \emph{locally free} structure, meaning that for a module $M$, the subspace $e_iM$ is a free module over the local algebra $e_i\mathcal P(T)e_i$ for every primitive idempotent $e_i$. Thus one can take the \emph{locally free} $F$-polynomial of a locally free module, a generating series `counting' only submodules that are locally free. With this modification, we have obtained a valid Caldero--Chapoton type formula for cluster variables; see \Cref{thm: main theorem} (B) and \Cref{cor: app in cluster algebra} (3).

The key technique we employ is a suitable generalization of the Derksen--Weyman--Zelevinsky (DWZ) mutation \cite{derksen2008quivers} designed for $\mathcal P(T)$-modules in \Cref{section: mutation}. We show that iterative mutations yield a method generating $\tau$-rigid modules. In addition, we prove the desired recurrence of representation-theoretic $\mathbf g$-vectors and locally free $F$-polynomials under mutations, thus relating the representation theory of $\mathcal P(T)$ with the cluster structure of $\mathcal A(\mathcal O)$.

Lastly, we offer explicit descriptions of indecomposable $\tau$-rigid $\mathcal P(T)$-modules in \Cref{section: arc representation}. They are given by certain strings associated with arcs on the orbifold. Such a characterization of $\tau$-rigid modules should be consistent with more general descriptions, such as \cite{PPP} for gentle algebras.

We now provide a more comprehensive overview of the paper with more detailed elaborations regarding our main results.

\subsection{Cluster algebras associated to orbifolds}

In this paper we study a class of skew-symmetrizable cluster algebras associated to orbifolds considered in \cite{felikson2012cluster}. For us, an \emph{orbifold} $\mathcal O = \surf$ is an underlying oriented compact surface $\Sigma$ with non-empty boundary $\partial \Sigma$, a finite set of boundary marked points $\mathbb M\subset \partial \Sigma$, and a finite set of orbifold points $\mathbb O \subset \Sigma\setminus \partial \Sigma$ of order 2. An \emph{arc} $\gamma$ in $\mathcal O$ is a curve in $\Sigma \setminus \mathbb O$ with only possible self-intersections at its endpoints in $\mathbb M$. A \emph{triangulation} of $\mathcal O$ is a maximal collection of compatible arcs. We refer to \Cref{subsection: orbifold and triangulations} for precise definitions.

The cluster algebra $\mathcal A(\mathcal O)$ is a commutative ring generated by (cluster) variables associated to arcs in $\mathcal O$ subject to cluster exchange relations arising from flips of triangulations. The first kind of relations appear when flipping the diagonal in a quadrilateral as in \Cref{fig: ptolemy relation quadrilateral}. The second kind is illustrated in \Cref{fig: exchange relation pending} where the flip exchanges \emph{pending arcs} $\gamma$ and $\gamma'$ encircling the orbifold point $\star$. We say that the cluster variables belonging to one triangulation form a cluster. The change of cluster by flipping a diagonal in \Cref{fig: ptolemy relation quadrilateral} or a pending arc in \Cref{fig: exchange relation pending} is called a cluster mutation.

\begin{figure}[h]
    \begin{tikzpicture}
    \filldraw[fill=gray!20, thick] (0,0) rectangle (2,2);
    \draw[thick] (0,2) -- (2,0);
    \node at (1.2,1.1) {$\gamma$};

    \filldraw[fill = gray!20, thick] (5,0) rectangle (7,2);
    \draw[thick] (5,0) -- (7,2);
    \node at (6.2,0.9) {$\gamma'$};

    \node[below] at (1,0) {$\gamma_1$}; 
    \node[left] at (0,1) {$\gamma_2$}; 
    \node[above] at (1,2) {$\gamma_3$}; 
    \node[right] at (2,1) {$\gamma_4$}; 

    \node[below] at (6,0) {$\gamma_1$}; 
    \node[left] at (5,1) {$\gamma_2$}; 
    \node[above] at (6,2) {$\gamma_3$}; 
    \node[right] at (7,1) {$\gamma_4$}; 

    \draw[->] (3,1.1) -- (4,1.1) node[midway, above] {flip at $\gamma$};
    \draw[->] (4,0.9) -- (3,0.9) node[midway, below] {flip at $\gamma'$};
    \end{tikzpicture}
    \centering
    \caption{Exchange relation: $X_{\gamma}X_{\gamma'} = X_{\gamma_1}X_{\gamma_3} + X_{\gamma_2}X_{\gamma_4}$.}\label{fig: ptolemy relation quadrilateral}
\end{figure}
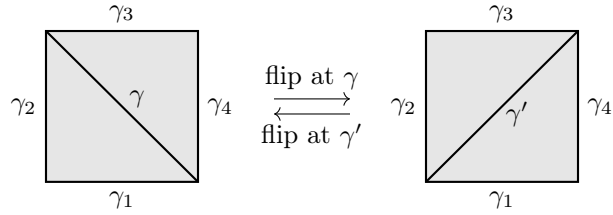

\begin{figure}[h]
    \begin{tikzpicture}
        \filldraw[fill=gray!20, thick](0,0) circle (1);
        \filldraw[black] (0, 1) circle (1.5pt);
        \filldraw[black] (0, -1) circle (1.5pt);
        \draw[] (0, 0) node[]{$\star$};
        \draw[thick] plot [smooth cycle] coordinates {(0,-1) (-0.2, 0) (0, 0.2) (0.2, 0)};
        \draw[] (0, 0.4) node[]{\small $\gamma$};
        \draw[] (-0.7, 0) node[]{\small $\gamma_1$};
        \draw[] (0.7, 0) node[]{\small $\gamma_2$};
        \draw[->] (2, 0) -- (3, 0) node[midway, above]{flip at $\gamma$};
        \draw[->] (3, -0.2) -- (2, -0.2) node[midway, below]{flip at $\gamma'$};
    \end{tikzpicture}
    \quad
    \quad
    \begin{tikzpicture}
        \filldraw[fill=gray!20, thick](0,0) circle (1);
        \filldraw[black] (0, 1) circle (1.5pt);
        \filldraw[black] (0, -1) circle (1.5pt);
        \draw[] (0, 0) node[]{$\star$};
        \draw[thick] plot [smooth cycle] coordinates {(0, 1) (-0.2, 0) (0, -0.2) (0.2, 0)};
        \draw[] (0, -0.4) node[]{\small $\gamma'$};
        \draw[] (-0.7, 0) node[]{\small $\gamma_1$};
        \draw[] (0.7, 0) node[]{\small $\gamma_2$};
    \end{tikzpicture}
    \centering
    \caption{Exchange relation: $X_\gamma X_{\gamma'} = X_{\gamma_1}^2 + X_{\gamma_2}^2$} \label{fig: exchange relation pending}
\end{figure}
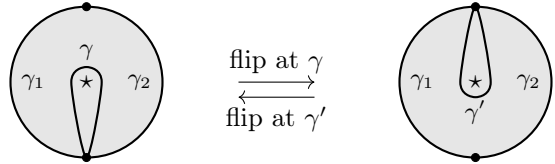

By the remarkable Laurent phenomenon \cite{fomin2002cluster}, cluster exchange relations allow the cluster variable $X_\gamma$ of any arc $\gamma$ to be expressed as
\[
    X_\gamma = X_\gamma(X_{\gamma_1}, \dots, X_{\gamma_n}) \in \mathbb Z[X_{\gamma_1}^\pm, \dots, X_{\gamma_n}^\pm]
\]
a Laurent polynomial (with integer coefficients) in $\{X_{\gamma_1}, \dots, X_{\gamma_n}\}$ where $T = \{\gamma_1, \dots, \gamma_n\}$ forms a triangulation. By the general theory of cluster algebras \cite{fomin2007cluster}, this Laurent expression is determined by the $\mathbf g$-vector $\mathbf g^T(X_\gamma)\in \mathbb Z^n$ and the $F$-polynomial $F^T(X_\gamma)\in \mathbb Z[y_1, \dots, y_n]$. One problem we address in this paper is to give a representation-theoretic interpretation of $\mathbf g$-vectors and $F$-polynomials, thus also the Laurent expressions of cluster variables.

Let $\mathbf{Arc}(\mathcal O)$ be the \emph{arc complex} of $\mathcal O$ whose simplices are partial triangulations. In particular, its vertices are arcs and maximal simplices are triangulations. It is shown in \cite{felikson2012cluster} (extending the result of \cite{fomin2008cluster}) that $\mathbf {Arc}(\mathcal O)$ is isomorphic to the cluster complex of $\mathcal A(\mathcal O)$. Note that each codimension one simplex is a face of exactly two maximal ones exchanged by a flip. We call the $n$-regular dual graph $E(\mathcal O)$ of $\mathbf{Arc}(\mathcal O)$ the \emph{flip graph of triangulations} or the \emph{cluster exchange graph} of $\mathcal A(\mathcal O)$.

\subsection{Gentle algebras from triangulations}
Following \cite{LM1}, for each triangulation $T$ of $\mathcal O$, we construct a finite-dimensional gentle algebra $\mathcal P(T)$ from a quiver $Q(T)$ with relations. The construction is local to each triangle in $T$ as illustrated in \Cref{fig: triangulation quiver}. The existence of loops thus comes from the orbifold points $\mathbb O$. The quiver $Q(T)$ is then obtained by pasting together these rank 3 quivers along their common vertices.
\begin{figure}[h]
    \centering
    \begin{tikzpicture}
        \filldraw[fill=gray!20, thick] (0, 0) -- (2, 0) -- (1, 1.5) -- (0, 0);
        \filldraw[black] (0, 0) circle (1.5pt);
        \filldraw[black] (2, 0) circle (1.5pt);
        \filldraw[black] (1, 1.5) circle (1.5pt);
        \draw[] (1, 0) node[above] {$\gamma_3$};
        \draw[] (1.7, 0.8) node[] {$\gamma_1$};
        \draw[] (0.25, 0.8) node[] {$\gamma_2$};
        \draw[] (2.5, 0.75) node[]{$\leadsto$};
        \draw[] (4, 0.75) node[]{\begin{tikzcd}[column sep = small]
        & 3 \ar[dr, leftarrow, "b"] & \\
        1 \ar[ru, leftarrow, "a"] & & 2 \ar[ll, leftarrow, "c"]
        \end{tikzcd}};
    \end{tikzpicture}
    \quad\quad\quad
    \begin{tikzpicture}
        \filldraw[fill=gray!20, thick](0,0) circle (1);
        \filldraw[black] (0, 1) circle (1.5pt);
        \filldraw[black] (0, -1) circle (1.5pt);
        \draw[] (0, 0) node[]{$\star$};
        \draw[thick] plot [smooth cycle] coordinates {(0,-1) (-0.2, 0) (0, 0.2) (0.2, 0)};
        \draw[] (0, 0.4) node[]{\small $\gamma_3$};
        \draw[] (-0.7, 0) node[]{\small $\gamma_2$};
        \draw[] (0.7, 0) node[]{\small $\gamma_1$};
        \draw[] (2, 0) node[]{$\leadsto$};
        \draw[] (3.5, 0) node[]{\begin{tikzcd}[column sep = small]
        & 3 \ar[dr, leftarrow, "b"]\ar[loop above, leftarrow, "\varepsilon_3"] & \\
        1 \ar[ru, leftarrow, "a"] & & 2 \ar[ll, leftarrow, "c"]
        \end{tikzcd}};
    \end{tikzpicture}  
    \caption{From triangulation to quiver}
    \label{fig: triangulation quiver}
\end{figure}
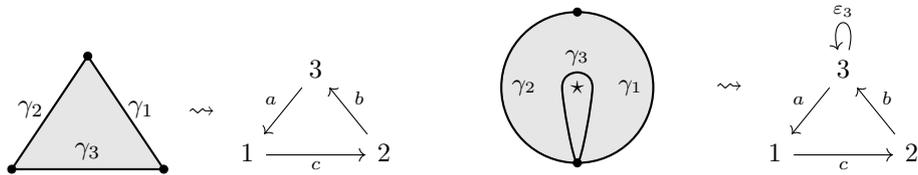

We impose a set of gentle relations $ba = ac = cb = 0$ for each triangle and $\varepsilon_k^2 = 0$ for each pending arc. Denote by $I(T)$ the ideal in the path algebra $\Bbbk Q(T)$ generated by these relations. We consequently obtain a gentle algebra $\mathcal P(T) \coloneqq \Bbbk Q(T)/ I(T)$. When $\mathbb O = \varnothing$, the algebra $\mathcal P(T)$ is the one defined for surfaces in \cite{labardini2009quivers} and \cite{assem2010gentle}. The case when $\Sigma$ is a disk and $|\mathbb O| = 1$ goes back to \cite{labardini2019on}.

\subsection{Relation between $\tau$-tilting theory and cluster combinatorics}\label{subsection: intro tau tilting}
Let $\Lambda$ be a finite-dimensional basic algebra over some field $\Bbbk$, with a complete set of $n$ pairwise orthogonal primitive idempotents $e_1,\dots, e_n$. Write $\modu \Lambda$ for the category of finitely generated left $\Lambda$-modules. Let $\tau$ denote the Auslander--Reiten translation. A \emph{support $\tau$-tilting pair} $(M, P)$ consists of $M\in \modu \Lambda$ and $P\in \modu \Lambda$ that is projective such that 
\[
    \Hom_\Lambda(M, \tau M) = 0 \quad \text{and} \quad \Hom_\Lambda(P, M) = 0,
\]
and $|M| + |P| = n$ where $|\cdot |$ denotes the number of pairwise non-isomorphic indecomposable summands for an object in $\modu \Lambda$. Denote by $\stau{\Lambda}$ the set of basic support $\tau$-tilting pairs up to isomorphism. We enhance $\stau{\Lambda}$ to a graph such that any two $\mathcal M$ and $\mathcal M'$ sharing $n-1$ indecomposable summands are joined by an edge. Adachi, Iyama, and Reiten \cite{adachi2014tau} have shown that $\stau{\Lambda}$ is $n$-regular. In other words, any $\mathcal M = \oplus_{i = 1}^n \mathcal M_i \in \stau{\Lambda}$ mutates in exactly $n$ different directions and the AIR-mutation $\mu_{\mathcal M_i}(\mathcal M)$ is just another support $\tau$-tilting pair $(\mathcal M/\mathcal M_i) \oplus \mathcal M_i'$. Hence the graph $\stau{\Lambda}$ is cluster-like where $\mathcal M \in \stau{\Lambda}$ is analogous to a cluster and a summand $\mathcal M_i$ to a cluster variable.

An indecomposable summand of $\mathcal M\in \stau{\Lambda}$ is either of the form $(M, 0)$ or $(0, P)$, which we call an indecomposable $\tau$-rigid pair. In fact, it is considered in \cite{demonet2019tau} the simplicial complex $\Delta(\Lambda)$ on the ground set of all indecomposable $\tau$-rigid pairs (up to isomorphism) where the maximal simplices are elements in $\stau{(\Lambda)}$. The structure of mutations implies that $\Delta(\Lambda)$ is a pseudo-manifold of pure dimension $n$, just as a cluster complex. The dual graph of $\Delta(\Lambda)$ is $\operatorname{s\tau-tilt} (\Lambda)$.

It is useful to work with a realization of $\Delta(\Lambda)$ as a simplicial cone complex in $\mathbb R^n$ \cite{demonet2019tau}, which is through an injective map
\[
    \begin{tikzcd}[column sep = small]
        \mathbf g^\Lambda \colon \{\text{$\tau$-rigid pairs of $\Lambda$}\}/\!\cong \ar[r] & \mathbb Z^n \subset \mathbb R^n
    \end{tikzcd}
\]
respecting direct sums that are still $\tau$-rigid. We call $\mathbf g^\Lambda(\mathcal M)$ the (representation-theoretic) $\mathbf g$-vector of a $\tau$-rigid pair $\mathcal M$; see \Cref{prop: g vector by inj} for a definition.

Returning to our situation, for the cluster algebra $\mathcal A(\mathcal O)$ and the finite-dimensional algebra $\mathcal P(T)$ associated to a triangulation $T = \{\gamma_1, \dots, \gamma_n\}$, we have

\begin{theorem}[\Cref{thm: main theorem} (A) and \Cref{cor: app in cluster algebra} (2)]\label{thm: main thm 1}
    There is an isomorphism $\Phi_T \colon \Delta(\mathcal P(T)) \rightarrow \Delta(\mathcal A(\mathcal O)) = \mathbf{Arc}(\mathcal O)$ of (abstract) simplicial complexes such that $\Phi_T((0, \mathcal P(T)e_i)) = \gamma_i$. In particular, it gives a bijection from indecomposable $\tau$-rigid pairs (up to isomorphism) to cluster variables in $\mathcal A(\mathcal O)$ (arcs in $\mathcal O$) and a bijection from basic support $\tau$-tilting pairs (up to isomorphism) to clusters in $\mathcal A(\mathcal O)$ (triangulations of $\mathcal O$), respecting mutations on both sides. Moreover, for any indecomposable $\tau$-rigid pair $\mathcal M$, we have
    \[
        \mathbf g^{\mathcal P(T)}(\mathcal M)= \mathbf g^T(\Phi_T(\mathcal M)) \in \mathbb Z^n,
    \]
    where the latter is the $\mathbf g$-vector associated to the cluster variable $\Phi_T(\mathcal M)$.
\end{theorem}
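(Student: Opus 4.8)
The plan is to reduce \Cref{thm: main thm 1} to the analogous statement for quivers with potentials, which is essentially contained in \cite{derksen2010quivers} and \cite{labardini2009quivers}, by using a suitable unfolding of the orbifold $\mathcal O$ to an honest surface $\widetilde{\mathcal O}$ equipped with a finite group action. First I would recall from \cite{felikson2012unfolding} that the skew-symmetrizable matrix $B(T)$ attached to a triangulation of $\mathcal O$ admits an unfolding: there is a surface $\widetilde{\mathcal O}$ (no orbifold points, but possibly with punctures replaced by boundary components, depending on the model) carrying a $\mathbb Z/2$-action whose quotient triangulation is $T$, such that the folded quiver with potential recovers $(Q(T), W(T))$ and hence, after passing to Jacobian algebras, the gentle algebra $\mathcal P(T)$ as a ``locally free'' skew group object. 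The loops $\varepsilon_k$ with $\varepsilon_k^2 = 0$ are precisely the shadow of a pair of arrows swapped by the involution. I would make this correspondence precise at the level of $\tau$-tilting theory: invariant support $\tau$-tilting pairs of $\mathcal P(\widetilde T)$ correspond bijectively to support $\tau$-tilting pairs of $\mathcal P(T)$, compatibly with mutation (this is the technical heart and uses that mutation commutes with the group action).

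Next, the combinatorial backbone is already available: by \cite{felikson2012cluster} the arc complex $\mathbf{Arc}(\mathcal O)$ is isomorphic to the cluster complex $\Delta(\mathcal A(\mathcal O))$, so it suffices to construct an isomorphism $\Phi_T\colon \Delta(\mathcal P(T)) \to \mathbf{Arc}(\mathcal O)$ of simplicial complexes sending $(0,\mathcal P(T)_i)$ to $\gamma_i$ and respecting mutation. I would define $\Phi_T$ on $0$-simplices first: an indecomposable $\tau$-rigid pair $\mathcal M$ of $\mathcal P(T)$ lifts to an invariant indecomposable $\tau$-rigid pair $\widetilde{\mathcal M}$ of $\mathcal P(\widetilde T)$, to which \cite{derksen2010quivers}--\cite{labardini2009quivers} assigns an arc $\widetilde\gamma$ on $\widetilde{\mathcal O}$; invariance of $\widetilde{\mathcal M}$ forces $\widetilde\gamma$ to be invariant, hence descends to an arc $\gamma = \Phi_T(\mathcal M)$ on $\mathcal O$. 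That $\Phi_T$ extends to a simplicial isomorphism follows because both $\Delta(\mathcal P(T))$ and $\mathbf{Arc}(\mathcal O)$ are pseudomanifolds of the same pure dimension $n$ whose dual graphs are connected and $n$-regular (for the former this is \cite{adachi2014tau}, for the latter \cite{felikson2012cluster}), so any bijection of $0$-simplices intertwining the two $n$-regular mutation structures is automatically an isomorphism of complexes; the base case $\Phi_T(0,\mathcal P(T)) = T$ holds by construction, and one propagates along the connected flip/mutation graph, checking at each flip that the DWZ-type mutation of \Cref{section: mutation} matches the flip of triangulations — which, again via the unfolding, reduces to the already-established surface case.

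Finally, for the $\mathbf g$-vector equality $\mathbf g^{\mathcal P(T)}(\mathcal M) = \mathbf g^T(\Phi_T(\mathcal M))$, I would argue that both sides satisfy the same recurrence under mutation and agree at the initial seed. On the representation side, $\mathbf g^{\mathcal P(T)}$ is the injective map of \Cref{prop: g vector by inj}, and the mutation compatibility proved in \Cref{section: mutation} (the recurrence of representation-theoretic $\mathbf g$-vectors) matches the standard $\mathbf g$-vector mutation on the cluster side from \cite{fomin2007cluster}; at the initial triangulation $T$ one has $\mathbf g^{\mathcal P(T)}((0,\mathcal P(T)_i)) = -\mathbf e_i = \mathbf g^T(X_{\gamma_i})$ with the usual sign conventions. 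Since the mutation graph is connected and the recurrence is deterministic once an initial value and a direction are fixed, equality at one seed propagates everywhere. The main obstacle I anticipate is the unfolding/descent step: establishing a clean equivalence between invariant (locally free) $\tau$-rigid $\mathcal P(\widetilde T)$-modules and $\tau$-rigid $\mathcal P(T)$-modules that is functorial with respect to mutation, and checking that ``invariant'' on the module side matches ``pending/invariant'' on the arc side including the orbifold-point cases of \Cref{fig: exchange relation pending}; if a direct unfolding argument proves too delicate, the fallback is to prove the $\mathbf g$-vector recurrence and the flip-mutation compatibility intrinsically on $\mathcal P(T)$ using the explicit string description of $\tau$-rigid modules from \Cref{section: arc representation}, bypassing the covering surface entirely.
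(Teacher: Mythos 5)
Your primary strategy is genuinely different from the paper's, and it has a substantial gap at its technical heart. The paper deliberately avoids the unfolding/covering route: as noted in the introduction, it develops an intrinsic DWZ-type mutation for $\mathcal{P}(T)$-modules (\Cref{section: mutation}), proves the recurrence of representation-theoretic $\mathbf g$-vectors and locally free $F$-polynomials under this mutation (\Cref{lemma: recursion of g vector}, \Cref{prop: recursion of f polynomial}), shows that $E$-rigidity (equivalently $\tau$-rigidity, \Cref{prop: e rigid is tau rigid}) is preserved along mutation (\Cref{cor: e invariant under mutation}), and then in \Cref{section: application cluster algebras} proves \Cref{thm: main theorem} by induction on distance in $\mathbb{T}_n$. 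The simplicial-complex isomorphism in \Cref{cor: app in cluster algebra} then follows because $\mathbf g$-vectors separate $\tau$-rigid pairs (\cite{demonet2019tau}). In fact the paper explicitly remarks that it does \emph{not} rely on \Cref{thm: structure cluster algebra triangulation}, whereas your argument leans on it to identify $\Delta(\mathcal A(\mathcal O))$ with $\mathbf{Arc}(\mathcal O)$.

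The concrete gap in your unfolding approach is the descent step you yourself flag: ``invariant support $\tau$-tilting pairs of $\mathcal P(\widetilde T)$ correspond bijectively to support $\tau$-tilting pairs of $\mathcal P(T)$, compatibly with mutation.'' This is not available in the literature you cite. The Felikson--Shapiro--Tumarkin unfolding is a combinatorial construction on matrices, and while a branched double cover of $\mathcal O$ can realize it geometrically, the corresponding statement in $\tau$-tilting theory — a Clifford-type equivalence between $G$-equivariant $\tau$-rigid pairs upstairs and $\tau$-rigid pairs for the quotient gentle algebra downstairs, compatible with AIR mutation and with the locally free structure — is a research-level claim that would need to be proved, not invoked. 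In particular, the loops $\varepsilon_k$ with $\varepsilon_k^2 = 0$ take $\mathcal P(T)$ outside the usual DWZ quiver-with-potential framework, so it is not accurate to say the folded object ``recovers $(Q(T),W(T))$''; the paper's whole point is that the representation theory here must be developed from scratch for these gentle algebras with loops, tracking local freeness throughout (\Cref{def: loc free reps}, \Cref{lemma: tau rigid implies loc free}). Your fallback (``prove the recurrence and flip-mutation compatibility intrinsically on $\mathcal P(T)$'') is in fact the paper's route, but as stated it elides the main work: constructing $\mu_k$, proving \Cref{prop: twice mutation}, and the $E$-invariant calculus of \Cref{section: e invariant}.

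A small but telling sign error: the base case is $\mathbf g^{\mathcal P(T)}((0,\mathcal P(T)_i)) = \mathbf e_i = \mathbf g^T(X_{\gamma_i})$, not $-\mathbf e_i$. By \eqref{eq: def g vector}, the decorated representation $(0,H_i)$ has $g_i = \rk V(i) = 1$ and $g_k = 0$ otherwise, and on the cluster side $\deg(x_i)=\mathbf e_i$ by \cite[Proposition 6.1]{fomin2007cluster}.
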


We note that in the above theorem, the right-hand $\mathbf g$-vector is defined through cluster algebras of principal coefficients which depend on a choice of initial cluster which is exactly a triangulation $T$ in our case. As the theorem is valid for any triangulation $T$, a flip between $T$ and $T' = \mu_k(T)$ gives rise to the isomorphism
\begin{equation}\label{eq: iso complexes of two triangulations}
    \Phi_{T'}^{-1}\Phi_T \colon \Delta(\mathcal P(T)) \rightarrow \Delta(\mathcal P(T')).
\end{equation}
In particular, it induces a bijection between support $\tau$-tilting pairs intertwining AIR-mutations on the two sides. In fact, the map $\Phi_{T'}^{-1}\Phi_T$ is explicitly operated by a DWZ-type mutation of decorated representations we construct in \Cref{section: mutation}. It is through these operations that we achieve to show \Cref{thm: main thm 1}.

\subsection{Mutations of decorated representations}
Let us restrict the isomorphism in (\ref{eq: iso complexes of two triangulations}) to the sets of vertices, i.e., indecomposable $\tau$-rigid pairs. The induced bijection is given explicitly by a DWZ-type mutation $\mu_k$, our main tool developed in this paper.

Recall that the DWZ mutation \cite{derksen2008quivers} of a representation $M$ for the Jacobian algebra of a quiver with potential $(Q, W)$ is defined through the following diagrams locally at $k\in Q_0$:
\[
    \begin{tikzcd}[column sep = small]
        & M(k) \ar[rd, "\alpha_k"]& \\
        M(k)_{\mathrm{in}} \ar[ru, "\beta_k"] & & M(k)_{\mathrm{out}} \ar[ll, "\gamma_k"]
    \end{tikzcd}
    \quad
    \xrightarrow{\ \mu_k\ }
    \begin{tikzcd}[column sep = small]
        & \overline M(k) \ar[ld, "\bar \alpha_k", swap]& \\
        M(k)_{\mathrm{in}} \ar[rr, "\bar \gamma_k", swap] & & M(k)_{\mathrm{out}} \ar[lu, "\bar \beta_k", swap]
    \end{tikzcd}
\]
On the left diagram, $M(k)$, $M(k)_{\mathrm{in}}$ and $M(k)_{\mathrm{out}}$ are $\Bbbk$-vector spaces and the map $\gamma_k$ is defined through the potential $W$. The right diagram describes the module structure of the mutation $\overline M = \mu_k(M)$ incident to vertex $k$. For example, as a $\Bbbk$-vector space, we have
\[
    \overline M(k) \coloneqq \frac{\ker \gamma_k}{\image \beta_k} \oplus \image \gamma_k \oplus \frac{\ker \alpha_k}{\image \gamma_k}.
\]

Our construction of the mutation $\mu_k$ undergoes a modification of the above diagrams. First, in our situation the mutation $\mu_k$ is local--it only uses the information within the (generalized) quadrilateral where $k$ is a diagonal; see \Cref{subsection: proof define rep mutations}. Therefore the gadgets in the left diagram have concrete descriptions through the triangulation. In addition, the diagrams are now viewed as of $H_k$-modules where $H_k = \Bbbk$ if $k$ is an ordinary arc and $H_k = \Bbbk [\varepsilon_k]/\varepsilon_k^2$ if $k$ is a pending arc enclosing an orbifold point.

Almost for purely formal reasons, we allow mutations to perform on pairs $(M, P)$ (\Cref{subsection: mutation decorations}), where the projective module $P$ can be viewed as the `decoration' part of a decorated representation. For any vertex $i$ of $Q(T)$, define $E_i(T) \coloneqq H_i$ regarded as a $\mathcal P(T)$-module via extension by zero elsewhere, and let $P_i(T)$ be the projective cover of $E_i(T)$. The following theorem can be obtained from \Cref{cor: app in cluster algebra} (2) by considering two triangulations related by a flip.

\begin{theorem} \label{thm: mutation relate tau rigid introduction}
    The decorated mutation $\mu_k$ defines a bijection between the isomorphism classes of indecomposable $\tau$-rigid pairs of $\mathcal P(T)$ and those of $\mathcal P(T')$ such that
    \[
        \mu_k((0, P_i(T))) =  \begin{cases}
            (0, P_i(T')) \quad &\text{for $i\neq k$},\\
            (E_i(T'), 0) \quad &\text{for $i = k$}.
        \end{cases}
    \]
    This bijection induces the isomorphism $\Phi_{T'}^{-1}\Phi_T \colon \Delta(\mathcal P(T))\rightarrow \Delta(\mathcal P(T'))$ in (\ref{eq: iso complexes of two triangulations}).
\end{theorem}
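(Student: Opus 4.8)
The plan is to derive this theorem as a direct consequence of the main results \Cref{thm: main theorem} and \Cref{cor: app in cluster algebra}, which give the isomorphism $\Phi_T\colon \Delta(\mathcal P(T)) \to \mathbf{Arc}(\mathcal O)$ for every triangulation $T$, together with the explicit construction of the decorated mutation $\mu_k$ in \Cref{section: mutation}. First I would fix a flip $T' = \mu_k(T)$ and recall that on the level of vertices, $\Phi_T$ sends $(0, P_i(T))$ to the arc $\gamma_i \in T$. Composing with $\Phi_{T'}^{-1}$ and using that the arc complexes of $T$ and $T'$ are literally the same complex $\mathbf{Arc}(\mathcal O)$, the composite $\Phi_{T'}^{-1}\Phi_T$ is an isomorphism $\Delta(\mathcal P(T)) \to \Delta(\mathcal P(T'))$ of abstract simplicial complexes, hence in particular a bijection on $0$-simplices, i.e.\ on indecomposable $\tau$-rigid pairs.

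The substantive point is to identify this abstract bijection with the map induced by $\mu_k$, and to compute its effect on the decoration classes $(0, P_i(T))$. For $i \neq k$, the arc $\gamma_i$ lies in both $T$ and $T'$, so $\Phi_{T'}^{-1}\Phi_T((0, P_i(T))) = \Phi_{T'}^{-1}(\gamma_i) = (0, P_i(T'))$; I would check that $\mu_k$ indeed does nothing to the decoration at a vertex $i \neq k$ that is not affected inside the generalized quadrilateral, which is built into the local nature of the construction in \Cref{subsection: proof define rep mutations}. For $i = k$: the arc $\gamma_k \in T$ is flipped to $\gamma_k' \in T'$, and under $\Phi_{T'}$ the arc $\gamma_k$ corresponds to an indecomposable $\tau$-rigid \emph{module} (not a negative/decoration pair) of $\mathcal P(T')$, since $\gamma_k \notin T'$. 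By \Cref{thm: mutation relate tau rigid introduction}'s target formula this module should be $E_k(T')$, and I would verify this by unwinding the definition of $\mu_k$ on the decorated representation whose only nonzero piece is the $H_k$-summand sitting at vertex $k$ with all structure maps zero — running it through the modified DWZ diagram, the triple of spaces $(\overline M(k)_{\mathrm{in}}, \overline M(k), \overline M(k)_{\mathrm{out}})$ collapses to $H_k$ at the new vertex $k$ with zero maps elsewhere, which is precisely $E_k(T')$; this is the orbifold analogue of the standard fact that mutating the negative simple $S_k^-$ produces $S_k$.

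The final sentence — that the $\mu_k$-bijection induces the isomorphism in \eqref{eq: iso complexes of two triangulations} — then follows because two simplicial isomorphisms of $\Delta(\mathcal P(T))$ onto $\Delta(\mathcal P(T'))$ that agree on the generating set of $0$-simplices agree; both $\mu_k$ and $\Phi_{T'}^{-1}\Phi_T$ send faces to faces (for $\mu_k$ this is the content that iterated mutations preserve $\tau$-rigidity and support $\tau$-tilting pairs, established in \Cref{section: mutation}), so it suffices to compare them on vertices, which the previous paragraph does for the decoration vertices and which follows for the remaining $\tau$-rigid modules from the commutation of $\mathbf g$-vectors in \Cref{thm: main thm 1} together with the injectivity of $\mathbf g^{\mathcal P(T')}$ recalled after \Cref{prop: g vector by inj}. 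The main obstacle I anticipate is not any single hard estimate but the bookkeeping needed to match the two descriptions of the bijection: one must be careful that the $\mathbf g$-vector recursion realized by $\mu_k$ (proven in \Cref{section: mutation}) coincides on the nose with the cluster-algebraic $\mathbf g$-vector mutation under the flip $T \leftrightarrow T'$, so that injectivity of $\mathbf g^{\mathcal P(T')}$ can be invoked to conclude the two maps are equal rather than merely both bijective.
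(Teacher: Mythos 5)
Your proposal is correct and essentially reproduces the paper's own (one-line) argument, which derives the theorem directly from \Cref{cor: app in cluster algebra}(2) by considering the two triangulations related by a flip. Two small points worth flagging: the identification $\Phi_{T'}^{-1}\Phi_T = \mu_k$ on $0$-simplices is actually immediate from the way $\Phi_T$ is built (each $\mathcal{M}_{\ell;t}^{T;t_0}$ is \emph{defined} by iterating decorated mutations along the tree path, and $\mu_k^2 = \mathrm{id}$ on the relevant pairs by \Cref{cor: twice mutation decoration}), so the $\mathbf{g}$-vector-injectivity detour you propose, while valid, is unnecessary; and the statement that mutation preserves $\tau$-rigidity and support $\tau$-tilting is not established in \Cref{section: mutation} but in \Cref{prop: locally free mutation preserve}, via the $E$-invariant machinery of \Cref{section: e invariant} and the freeness lemmas of \Cref{section: tau tilting}, while the $\mathbf{g}$-vector recursion you cite is \Cref{lemma: recursion of g vector} in \Cref{section: recurrence}.
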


\subsection{Locally free Caldero--Chapoton functions}
\Cref{thm: main thm 1} establishes a bijection between indecomposable $\tau$-rigid modules and non-initial cluster variables. To achieve (C2), we need a formula expressing (Laurent expansions of) cluster variables directly from modules. Such a formula goes back to Caldero and Chapoton, who proved it in \cite{caldero2006cluster} for Dynkin quivers. Closely related is the $F$-polynomial of a ($\mathbb C$-linear) representation $M$ of a quiver $Q$ where $Q_0 = \{1, \dots, n\}$
\[
    F(M)(y_1, \dots, y_n) \coloneqq \sum_{\mathbf e = (e_i)_i} \chi(\mathrm{Gr}(\mathbf e, M)) \prod_{i\in Q_0} y_i^{e_i}
\]
where $\mathrm{Gr}(\mathbf e, M)$ is the (complex) quiver Grassmannian of subrepresentations with dimension vector $\mathbf e$ and $\chi(\cdot)$ denotes the Euler characteristic in analytic topology. For any (2-acyclic) quiver $Q$ and $X$ a cluster variable in the cluster algebra $\mathcal A(Q)$, the (cluster-theoretic) $F$-polynomial of $X$ (defined through making principal coefficients at some cluster \cite{fomin2007cluster}) is known to equal $F(M)$ for a suitable representation $M$ of $Q$ corresponding to $X$; see for example \cite{derksen2010quivers}.

In our skew-symmetrizable situation, a variation of the classic $F$-polynomial seems necessary. We call $M\in \mathcal P(T)$ locally free if $e_i M$ is a free module over the local algebra $H_i$. The tuple $(\rk_{H_i} (e_i M))_i \in \mathbb N^n$ is called the rank vector of $M$. The \emph{locally free $F$-polynomial} of $M$ then only concerns locally free submodules, that is,
\begin{equation}\label{eq: loc free f poly}
    F(M)(y_1, \dots, y_n) = \sum_{\mathbf r = (r_i)_i} \chi(\mathrm{Gr}_\mathrm{l.f.}(\mathbf r, M)) \prod_{i\in Q_0} y_i^{r_i}
\end{equation}
where $\mathrm{Gr}_\mathrm{l.f.}(\mathbf r, M)$ is the set of locally free submodules of $M$ with rank vector $\mathbf r$; see \cite{geiss2018quivers} where this notion is firstly introduced in the acyclic case.

\begin{theorem}[\Cref{cor: app in cluster algebra} (3)]\label{thm: main thm 2}
    For $T = \{\gamma_1, \dots, \gamma_n\}$ a triangulation of $\mathcal O$, the bijection in \Cref{thm: main thm 1}
    \[
        \Phi_T\colon \{ \text{Indecomposable $\tau$-rigid modules of $\mathcal P(T)$} \}/\!\cong \ \xrightarrow{\ \sim \ } \{ \text{non-initial cluster variables in $\mathcal A(\mathcal O)$} \}
    \]
    is given precisely by the locally free Caldero--Chapoton function
    \[
        \mathrm{CC}^\mathrm{l.f.}(M) \coloneqq \left( \prod_{i} X_{\gamma_i}^{g_i} \right) \cdot F(M)(\hat y_1, \dots, \hat y_n)
    \]
    where $(g_i)_i\in \mathbb Z^n$ form the representation-theoretic $\mathbf g$-vector $\mathbf g^{\mathcal P(T)}(M)$ and $\hat y_i = \prod_{j = 1}^n X_{\gamma_j}^{b_{j,i}}$ with $(b_{j,i})$ forming the cluster exchange matrix associated to $T$.
\end{theorem}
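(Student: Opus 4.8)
The plan is to deduce \Cref{thm: main thm 2} from the mutation machinery of \Cref{section: mutation} together with \Cref{thm: main thm 1}, following the classical strategy of Derksen--Weyman--Zelevinsky adapted to the locally free setting. The base case is the triangulation $T$ itself: there the non-initial cluster variables do not include any $X_{\gamma_i}$, the indecomposable $\tau$-rigid modules are exactly the non-projective ones, and one checks directly that $\mathrm{CC}^\mathrm{l.f.}$ applied to the decorated summands $(0,P_i(T))$ returns $X_{\gamma_i}$ (the $\mathbf g$-vector is the $i$-th negative unit vector, the locally free $F$-polynomial is $1$). So the real content is an inductive step: if the formula holds for all indecomposable $\tau$-rigid pairs over $\mathcal P(T)$ for every triangulation $T$ in a mutation class, then it holds after a flip $T' = \mu_k(T)$. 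Since \Cref{thm: main thm 1} already identifies $\Phi_{T'}^{-1}\Phi_T$ with the decorated mutation $\mu_k$ of \Cref{section: mutation}, and identifies representation-theoretic $\mathbf g$-vectors with cluster-theoretic ones, what remains is purely a compatibility statement: the locally free $F$-polynomial transforms under $\mu_k$ by exactly the same rational recurrence that the cluster-theoretic $F$-polynomial satisfies under a seed mutation $\mu_k$ (the Fomin--Zelevinsky $F$-polynomial recursion, see \cite{fomin2007cluster}).

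Concretely, I would first record the cluster-algebra side: the $F$-polynomial recursion expresses $F^{T'}_\gamma$ in terms of $F^T_\gamma$, the exchange matrix $B=B(T)$, and the $\mathbf g$-vector, via the substitution $y_j \mapsto y_j y_k^{[b_{jk}]_+}(1+y_k)^{-b_{jk}}$ together with multiplication by a monomial and division by the appropriate power of $1+y_k$. Then I would establish the representation-theoretic analogue: a formula for $F(\overline M)$ (the locally free $F$-polynomial of $\mu_k(M)$) in terms of $F(M)$. This is where the locally free structure is essential — one needs that the DWZ-type mutation of \Cref{section: mutation}, performed over $H_k = \Bbbk$ or $\Bbbk[\varepsilon_k]/\varepsilon_k^2$, induces, for each rank vector, an isomorphism (or at least a Euler-characteristic-preserving correspondence) between strata of the locally free Grassmannian of $M$ and of $\overline M$ after the appropriate change of rank coordinate at $k$. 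The key geometric input is the analogue of \cite[Lemma 5.2, Proposition 5.3]{derksen2010quivers}: one fibers $\mathrm{Gr}_{\mathrm{l.f.}}(\mathbf r, M)$ over the possible values of the intersection of a submodule with the relevant kernels/images at $k$, computes each fiber as a product of ordinary and "orbifold" Grassmannians (Grassmannians of free $H_k$-submodules, whose Euler characteristics are again binomial-type numbers), and matches this stratification with the one for $\overline M$. Summing $\chi$ over strata and collecting $y$-monomials must reproduce the $F$-polynomial recursion; the locally free constraint at vertex $k$ is exactly what makes the power of $(1+y_k)$ come out right, because over $H_k = \Bbbk[\varepsilon_k]/\varepsilon_k^2$ a free rank-one module contributes the factor $(1+y_k)$ (rather than $(1+y_k)^2$) precisely when only locally free submodules are counted.

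The main obstacle I anticipate is this last $F$-polynomial transformation lemma in the presence of loops. In the loop-free (surface) case it is the content of \cite{derksen2010quivers}, but here the vertex $k$ may carry a loop $\varepsilon_k$, the spaces $M(k)_{\mathrm{in}}, M(k)_{\mathrm{out}}, M(k)$ are $H_k$-modules rather than plain vector spaces, and the maps $\alpha_k, \beta_k, \gamma_k$ are $H_k$-linear; one must verify that the relevant kernels, cokernels and images are again free $H_k$-modules when $M$ is locally free and $\tau$-rigid (so that the new module $\overline M$ is still locally free), and that the fibration of the Grassmannian behaves well over the non-reduced base $\Spec H_k$. I expect the $\tau$-rigidity (hence the "reduced"/non-degenerate nature of the mutation, analogous to DWZ's reduced representations) to be what rules out the pathological cases, exactly as in \cite{derksen2010quivers}; concretely, one should show that on $\tau$-rigid pairs the triangle at $k$ splits compatibly with the $H_k$-action so that $\chi$ of each locally free Grassmannian stratum is a product of Euler characteristics of honest (possibly "quantum-dimension-$2$") Grassmannians. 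Once this lemma is in place, combining it with \Cref{thm: main thm 1} (for the $\mathbf g$-vectors and the bijection $\Phi_T$) and with the base case gives the theorem by induction on distance from $T$ in the flip graph $E(\mathcal O)$, using connectivity of that graph.
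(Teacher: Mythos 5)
Your proposal follows essentially the same route as the paper: the ingredients you identify — the cluster-theoretic $F$-polynomial recursion (\Cref{prop: recurrence of f poly g vector}), the representation-theoretic analogue (\Cref{prop: recursion of f polynomial}, which the paper proves by precisely the stratification of $\mathrm{Gr}_{\mathrm{l.f.}}$ by intersection data at vertex $k$ and binomial Euler characteristics of Grassmannians of free $H_k$-submodules that you describe), the freeness of the relevant kernels and images for $\tau$-rigid modules (\Cref{lemma: tau rigid free kernel}, \Cref{lemma: tau inverse rigid free cokernel}), and the preservation of $E$-rigidity under mutation that keeps $\mu_k(\mathcal M)$ locally free and $\tau$-rigid (\Cref{section: e invariant}, \Cref{prop: locally free mutation preserve}) — are exactly what the paper assembles in the induction proving \Cref{thm: main theorem}, from which the present statement follows as \Cref{cor: app in cluster algebra}~(3). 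Two small corrections: the induction must also track the identity $\mathbf h_{\ell;t}^{B;t_0}=\mathbf h(\mathcal M)$ simultaneously (via \Cref{prop: rep h vector from f poly}), because the exponents of $(1+y_k)$ in the $F$-polynomial recursion are $h_k$ and $h_k'$ rather than $g_k$ alone; and with the paper's injective-copresentation convention \eqref{eq: def g vector}, the $\mathbf g$-vector of $(0,P_i(T))$ is $+\mathbf e_i$, not $-\mathbf e_i$.
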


Combining \Cref{thm: main thm 1} and \Cref{thm: main thm 2}, we have accomplished the `categorification' (C1) and (C2) for skew-symmetrizable cluster algebras $\mathcal A(\mathcal O)$.

\begin{remark}
    The first time that cluster variables in non-skew-symmetric cluster algebras were expressed with respect to other clusters as locally free Caldero--Chapoton functions of representations of quivers-with-loops with relations, occurred in the seminal paper \cite{geiss2018quivers} for cluster algebras of finite type when the initial seed is taken to be acyclic. Our result Theorem \ref{thm: main thm 2} recovers Geiss--Leclerc--Schröer's locally free Caldero--Chapoton expression of cluster variables for finite type $C$, and proves such expression for many cluster algebras of infinite type that do not have acyclic seeds at all. Thus, Theorem \ref{thm: main thm 2} is an ample extension of Geiss--Leclerc--Schröer's result.
\end{remark}

\subsection{Relation to \cite{LM1}}

This work is a sequel to \cite{LM1} from which we inherit the construction of the gentle algebra $\mathcal P(T)$. One may notice that the orbifold model in the prequel features orbifold points of order 3, whereas in the current paper they are of order 2. This discrepancy in orbifold structures results in different cluster exchange relations in their respective Teichm\"uller spaces, corresponding respectively to the generalized cluster structure of Chekhov--Shapiro \cite{chekhov2011teichmller} and the ordinary one of Felikson--Shapiro--Tumarkin \cite{felikson2012cluster}. In \cite{LM1}, we categorify the generalized cluster structure using $\mathcal P(T)$-representations.

Nevertheless, the combinatorics of triangulations are identical in these two situations, leading to the same construction of $\mathcal P(T)$. \Cref{thm: main thm 1} can be essentially obtained from the results of \cite{LM1} and bridging the generalized cluster structure with the ordinary one. However, \Cref{thm: main thm 2} seems out of reach by the methods of \cite{LM1}. This difficulty serves as one of our motivations to develop the current variation of the DWZ mutation, which turns out powerful enough to give another proof of \Cref{thm: main thm 1}.

\subsection{Organization of the paper}

We start by reviewing preliminaries on cluster algebras in \Cref{section: preliminaries}. In particular, the notions of $\mathbf g$-vectors and $F$-polynomials are defined through cluster algebras with principal coefficients.

In \Cref{section: orbifolds}, we review the orbifold models of a class of skew-symmetrizable cluster algebras by Felikson--Shapiro--Thurston \cite{felikson2012cluster} and then describe the construction of finite-dimensional algebras associated to orbifold triangulations. We emphasize the view point from modulated graphs in \Cref{subsection: modulated graph} that the latter construction of mutations of representations is based on.

We introduce our main construction of mutations of representations in \Cref{section: mutation}. We also show that mutations behave particularly well on a class of locally free representations. In \Cref{section: recurrence}, we prove the recurrence of (representation-theoretic) $\mathbf g$-vectors and $F$-polynomials under mutations, which forms the main step of our categorification results \Cref{thm: main thm 1} and \Cref{thm: main thm 2}.

In \Cref{section: tau tilting} and \Cref{section: e invariant}, we make connection to Adachi--Iyama--Reiten's $\tau$-tilting theory and also to the $E$-invariants of Derksen--Weyman--Zelevinsky originally defined in the skew-symmetric case. We show that in our situation for certain locally free modules, the $E$-invariants are preserved under mutations and thus prove that mutations also relate $\tau$-rigid modules of the algebras $\mathcal P(T)$ and $\mathcal P(T')$ associated to two triangulations related by a flip.

\Cref{section: arc representation} is devoted to an explicit description of certain $\mathcal P(T)$-modules in terms of arcs and bands. They behave particularly well under mutations.

Our main categorification result is stated and proved in \Cref{section: application cluster algebras}. We explain in \Cref{section: example} how the result applies to the affine type $\widetilde C_n$ cluster algebras as a special case. More detailed descriptions are given in type $\widetilde C_2$.

\section{Preliminaries on cluster algebras}\label{section: preliminaries}

In this section we review basics of cluster algebras of Fomin and Zelevinsky \cite{fomin2002cluster,fomin2003cluster,fomin2007cluster}. The notion of cluster complex is explained in \Cref{subsection: cluster complex}, which will be useful describing the combinatorial structure of cluster algebras in later sections.

\subsection{Definition}
Let $\mathbb P = \operatorname{Trop}(y_1, \dots, y_n)$, the \emph{tropical semifield} of rank $n$. As an abelian group (written multiplicatively), it is freely generated by $y_1, \dots, y_n$ with any element written in the form $\prod y_j^{a_j}$ for $a_j\in \mathbb Z$. The structure of $\mathbb P$ includes another operation $\oplus$, called addition, defined by
\[
    \prod_j y_j^{a_j} \oplus \prod_j y_j^{b_j} = \prod_j y_j^{\operatorname{min}(a_j, b_j)}.
\]
Denote by $\mathbb Z\mathbb P$ the group ring of $\mathbb P$. Let $\mathcal F$ be a field of rational functions in $n$ variables with coefficients in $\operatorname{Frac}(\mathbb {Z}\mathbb P)$, the fraction field of $\mathbb {Z}\mathbb P$.

\begin{definition}
    A (labeled) \emph{seed} is a triple $(\mathbf x, \mathbf p, B)$, where
    \begin{itemize}
        \item $\mathbf p = (p_i)_{i=1}^n$ is an $n$-tuple of elements of $\mathbb P$ called $\emph{coefficients}$;
        \item $\mathbf x = (x_i)_{i=1}^n$ is an $n$-tuple of algebraically independent rational functions in $\mathcal F$;
        \item $B = (b_{i,j})$ is an $n\times n$ skew-symmetrizable integer matrix.
    \end{itemize}
\end{definition}

\begin{definition}
    For any index $k \in \{1, \dots, n\}$, the \emph{mutation} of a seed $(\mathbf x, \mathbf p, B)$ in direction $k$ is a new seed $(\mathbf x', \mathbf p', B') = \mu_k(\mathbf x, \mathbf p, B)$ such that
    \begin{align}
        & x_i' = \begin{cases}
            x_i \quad & \text{if $i \neq k$} \\
            \dfrac{p_k \prod x_i^{[b_{ik}]_+} + \prod x_i ^{[-b_{ik}]_+}}{(p_k \oplus 1)x_k} \quad & \text{if $i = k$}
        \end{cases} \label{eq: exchange relation principal} \\
        & p_i' = \begin{cases}
            p_ip_k^{[b_{ki}]_+}(p_k \oplus 1)^{-b_{ki}} \quad & \text{if $i \neq k$} \\
            p_k^{-1} \quad & \text{if $i = k$}\\
        \end{cases} \\
        & b_{ij}' = \begin{cases}
            -b_{ij} \quad & \text{if $i = k$ or $j = k$} \\
            b_{ij} + [-b_{ik}]_+b_{kj} + b_{ik} [b_{jk}]_+ \quad & \text{otherwise}.
        \end{cases} \label{eq: matrix mutation}
    \end{align}
\end{definition}

The mutation $\mu_k$ is involutive, that is, $\mu_k(\mu_k(\mathbf x, \mathbf p, B)) = (\mathbf x, \mathbf p, B)$. Two seeds are \emph{mutation-equivalent} if one can be obtained from the other by a finite sequence of mutations (in possibly various directions).

For a skew-symmetrizable matrix $B$, we set the \emph{initial seed with principal coefficients} to be 
\[
    (\mathbf x, \mathbf y, B) = ((x_1, \dots, x_n), (y_1, \dots, y_n), B).
\]
For any seed $(\mathbf x', \mathbf y', B')$ mutation equivalent to the initial seed, we shall call
\[
    \text{$\mathbf x'$ a \emph{cluster}, $B'$ an \emph{exchange matrix} and any $x'_i\in \mathbf x'$ a \emph{cluster variable}.}
\]

\begin{definition}
    The \emph{cluster algebra with principal coefficients} $\mathcal A_\bullet (B)$ associated with the matrix $B$ is the ring of $\mathcal F$ generated by all cluster variables.
\end{definition}

\subsection{Laurent phenomenon and $F$-polynomial}
It is convenient to organize all (labeled) seeds of $\mathcal A_\bullet$ as follows. We work with the infinite $n$-regular tree $\mathbb T_n$ with a chosen root $t_0$ whose edges are labeled by $\{1, \dots, n\}$ such that the labels of the $n$ edges incident to any vertex are distinct. We associate the initial seed $(\mathbf x, \mathbf y, B)$ to $t_0$. This determines a pattern assigning any $t\in \mathbb T_n$, $t \mapsto (\mathbf x_t, \mathbf y_t, B_t)$ by the mutation rule that if $t$ and $t'$ are joined by a $k$-labeled edge in $\mathbb T_n$, then 
\[
    \mu_k(\mathbf x_t, \mathbf y_t, B_t) = (\mathbf x_{t'}, \mathbf y_{t'}, B_{t'}).
\]

Denote the $\ell$-th cluster variable in $\mathbf x_t$ by $X_{\ell; t} = X_{\ell; t}^{B; t_0}$ with the superscripts emphasizing the initial exchange matrix and the root. Notice that each $X_{\ell; t}$ is a rational function of $x_1, \dots, x_n, y_1, \dots, y_n$ as a subtraction-free rational expression. We define the rational function
\begin{equation}\label{eq: cluster f polynomial}
    F_{\ell; t} = F_{\ell; t}^{B; t_0}(y_1, \dots, y_n) \coloneqq X_{\ell; t}^{B; t_0}(1, \dots, 1; y_1, \dots, y_n) \in \mathbb Q(y_1, \dots, y_n).
\end{equation}
The following theorem of Fomin and Zelevinsky states that each $F_{\ell; t}^{B; t_0}$ is actually a polynomial with integer coefficients. It is well-known by the name \emph{Laurent phenomenon} as it says each cluster variable is a Laurent polynomial.

\begin{theorem}[{\cite[Proposition 3.6]{fomin2007cluster}}]
    The cluster algebra with principal coefficients $\mathcal A_\bullet (B)$ is contained in $\mathbb Z[x_1^\pm, \dots, x_n^\pm; y_1, \dots, y_n]$. Thus in particular we have
    \[
        X_{\ell; t}\in \mathbb Z[x_1^\pm, \dots, x_n^\pm; y_1, \dots, y_n] \quad \text{and} \quad F_{\ell; t}\in \mathbb Z[y_1, \dots, y_n].
    \]
\end{theorem}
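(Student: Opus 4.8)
The plan is to prove the Laurent phenomenon for $\mathcal A_\bullet(B)$ by induction on the distance in $\mathbb T_n$ from the root $t_0$, following the strategy of Fomin--Zelevinsky's original argument. The base case is immediate since the initial cluster variables $x_1,\dots,x_n$ are themselves (trivial) Laurent monomials. For the inductive step, suppose $t$ and $t'$ are joined by a $k$-labeled edge and that every cluster variable in $\mathbf x_t$ lies in $\mathbb Z[x_1^\pm,\dots,x_n^\pm; y_1,\dots,y_n]$; we must show the single new variable $X_{k;t'}$ does too. The naive approach via the exchange relation \eqref{eq: exchange relation principal} is not enough, because dividing by $X_{k;t}$ may a priori introduce denominators; the substantive content is that these denominators cancel.

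The key step is to establish what Fomin--Zelevinsky call the ``Laurent phenomenon with respect to two adjacent clusters'': I would show directly that $X_{k;t'}\in \mathbb Z[x_1^\pm,\dots,x_n^\pm; y_1,\dots,y_n]$ by working in a rational-function field where one tracks simultaneously the clusters at $t$, at a neighbor $t''$ of $t$ (other than $t'$), and the effect of a further mutation. More precisely, the argument reduces to a $\operatorname{rank}\,2$ or ``caterpillar lemma'' computation: fixing all but two directions and checking that the upper bound algebra (intersection of Laurent polynomial rings over all clusters in a suitable subtree) is stable under mutation. One verifies coprimality of the two exchange monomials $p_k\prod x_i^{[b_{ik}]_+}$ and $\prod x_i^{[-b_{ik}]_+}$ as polynomials in the relevant variables — this is where the absence of $2$-cycles (equivalently $b_{ik}b_{ki}\le 0$, which holds for skew-symmetrizable $B$) is used — and then invokes the standard fact that if a rational function is a Laurent polynomial with respect to two clusters related by mutations in two different directions and the exchange binomials are coprime, it is a genuine Laurent polynomial. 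The statement about $F_{\ell;t}$ then follows immediately: specializing $x_1=\dots=x_n=1$ in a Laurent polynomial in $\mathbb Z[x_1^\pm,\dots,x_n^\pm;y_1,\dots,y_n]$ yields an element of $\mathbb Z[y_1,\dots,y_n]$, which is precisely the definition \eqref{eq: cluster f polynomial} of $F_{\ell;t}$.

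The main obstacle is the coprimality and cancellation bookkeeping in the inductive step — showing that after mutating in direction $k$ and then in a second direction the resulting expressions remain polynomial. This is exactly the heart of the Laurent phenomenon and cannot be shortcut; in the original source it is handled by the ``Caterpillar Lemma'' applied to a path of clusters, tracking how exchange binomials transform and remain pairwise coprime under mutation. Since the statement we are asked to prove is literally \cite[Proposition 3.6]{fomin2007cluster}, the cleanest route is simply to cite that result, but the proof sketch above indicates how one would reconstruct it: induction on tree distance, reduction to a two-variable sub-pattern, coprimality of exchange monomials from the $2$-acyclic/skew-symmetrizable hypothesis, and a final specialization argument for the $F$-polynomial claim.
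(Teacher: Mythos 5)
The paper itself does not prove this statement: it is imported verbatim as Proposition 3.6 of Fomin--Zelevinsky's \emph{Cluster algebras IV}, and the paper's ``proof'' consists entirely of the citation. Your conclusion that the cleanest route is simply to cite that result is therefore exactly what the paper does, so at the bottom line the two approaches coincide.

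That said, the sketch you give before reaching that conclusion has a genuine gap that is worth flagging, because it is precisely the content separating Proposition 3.6 from the plain Laurent phenomenon. The caterpillar-lemma argument you outline (induction on tree distance, coprimality of the exchange binomials, stability of the upper bound) is the proof of the \emph{general} Laurent phenomenon, and over the tropical semifield $\mathbb{P}=\operatorname{Trop}(y_1,\dots,y_n)$ it yields $X_{\ell;t}\in \mathbb{Z}\mathbb{P}[x_1^\pm,\dots,x_n^\pm]=\mathbb{Z}[x_1^\pm,\dots,x_n^\pm;y_1^\pm,\dots,y_n^\pm]$, i.e., Laurent in both the $x$'s \emph{and} the $y$'s. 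The substance of Proposition 3.6 is the sharper assertion that the $y_j$ occur only with \emph{non-negative} exponents, which is exactly what is needed for the specialization $F_{\ell;t}=X_{\ell;t}(1,\dots,1;y_1,\dots,y_n)$ to land in $\mathbb{Z}[y_1,\dots,y_n]$ rather than $\mathbb{Z}[y_1^\pm,\dots,y_n^\pm]$. Your final ``specialization argument'' step thus presupposes the harder half of the statement. One cannot read this refinement off the naive inductive step either: since $p_k$ is a Laurent monomial in the $y_j$'s, the factor $(p_k\oplus 1)^{-1}$ in the exchange relation \eqref{eq: exchange relation principal} contributes positive powers of $y_j$ to the \emph{denominator} whenever $p_k$ has negative $y$-exponents, so the $y$-positivity does require a separate argument (in [FZ07] it goes through the upper-bound/caterpillar machinery applied over $\mathbb{Q}_{\mathrm{sf}}(y_1,\dots,y_n)$ rather than over $\mathbb{P}$). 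If you want your sketch to genuinely reconstruct the cited result rather than the weaker Laurent-in-$(x,y)$ statement, this is the missing ingredient.
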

The polynomial $F_{\ell; t}$ is called the \emph{$F$-polynomial} of $X_{\ell;t}$. We note that it depends on the choice of an initial seed.

\subsection{Principal grading and $\mathbf g$-vector}\label{subsection: principal grading g vector}

The cluster algebra with principal coefficients $\mathcal A_\bullet (B)$ has a $\mathbb Z^n$-grading such that each cluster variable is homogeneous.

\begin{proposition}[{\cite[Proposition 6.1]{fomin2007cluster}}]
    Every $X_{\ell;t}^{B;t_0} \in \mathcal A_\bullet (B)$ is homogeneous with respect to the $\mathbb Z^n$-grading in $\mathbb Z[x_1^\pm, \dots, x_n^\pm; y_1, \dots, y_n]$ given by 
    \[
        \deg (x_i) = \mathbf e_i\quad \text{and} \quad \deg(y_j) = - \mathbf b_j,
    \]
    where $\mathbf e_1,\dots, \mathbf e_n$ are the standard basis vectors of $\mathbb Z^n$ and $\mathbf b_j$ is the $j$th column vector of $B$.
\end{proposition}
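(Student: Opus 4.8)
The plan is to argue by induction on the distance $d(t_0,t)$ in the tree $\mathbb{T}_n$, proving simultaneously that every cluster variable $X_{j;t}$ is homogeneous and that the associated ``hat-$y$'' variable
\[
    \hat{y}_{j;t} \coloneqq y_{j;t}\prod_{i=1}^n X_{i;t}^{b_{ij;t}} \in \mathcal{F}
\]
is homogeneous of degree $0$; here $\mathbf{y}_t=(y_{1;t},\dots,y_{n;t})$ is the coefficient tuple of the seed at $t$ and each $y_{j;t}\in\mathbb{P}$ is viewed inside $\mathcal{F}$ via $\mathbb{P}\hookrightarrow\operatorname{Frac}(\mathbb{Z}\mathbb{P})$. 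The base case $t=t_0$ is immediate: $X_{j;t_0}=x_j$ has degree $\mathbf{e}_j$ by the definition of the grading, and $\deg(\hat{y}_{j;t_0})=\deg(y_j)+\sum_i b_{ij}\mathbf{e}_i=-\mathbf{b}_j+\mathbf{b}_j=0$, which is exactly where the choice $\deg(y_j)=-\mathbf{b}_j$ enters.

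For the inductive step across an edge labelled $k$ joining $t$ and $t'$, only $X_{k;t}$ changes, through the exchange relation \eqref{eq: exchange relation principal} (whose $p_k$ is our $y_{k;t}$). Its denominator $(y_{k;t}\oplus 1)X_{k;t}$ is the product of a Laurent monomial in the $y_j$, hence automatically homogeneous, with a homogeneous element by the inductive hypothesis; so it suffices to check that the two summands $y_{k;t}\prod_i X_{i;t}^{[b_{ik;t}]_+}$ and $\prod_i X_{i;t}^{[-b_{ik;t}]_+}$ of the numerator have equal degree. Writing $\mathbf{g}_{i;t}\coloneqq\deg(X_{i;t})$, and using $[b]_+-[-b]_+=b$, this reduces to the identity $\deg(y_{k;t})=-\sum_i b_{ik;t}\,\mathbf{g}_{i;t}$, which is precisely the content of $\deg(\hat{y}_{k;t})=0$ once one knows the $X_{i;t}$ are homogeneous. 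Therefore $X_{k;t'}$ is homogeneous, and its degree — which one takes as the definition of the $\mathbf{g}$-vector $\mathbf{g}_{k;t'}$ — is obtained by substituting into the exchange relation.

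It remains to propagate $\deg(\hat{y}_{j;t})=0$ to $t'$, and this is the step I expect to be the main obstacle. A direct attempt to carry the relation $\deg(y_{j;t})=-\sum_i b_{ij;t}\,\mathbf{g}_{i;t}$ through the coefficient-mutation rule stalls, because that rule involves the tropical factor $(y_{k;t}\oplus 1)^{-b_{kj;t}}$, whose degree is not pinned down by $\deg(y_{k;t})$ alone. The remedy is to work with the $\hat{y}_{j;t}$ instead: by Fomin--Zelevinsky \cite{fomin2007cluster} these satisfy the $Y$-pattern recursion \emph{inside the field $\mathcal{F}$}, where the tropical ``$\oplus 1$'' is replaced by the genuine addition of $\mathcal{F}$, namely $\hat{y}_{k;t'}=\hat{y}_{k;t}^{-1}$ and $\hat{y}_{j;t'}=\hat{y}_{j;t}\,\hat{y}_{k;t}^{[b_{kj;t}]_+}(\hat{y}_{k;t}+1)^{-b_{kj;t}}$ for $j\neq k$. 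Degree-$0$-ness is then manifestly stable: products, powers, and inverses of degree-$0$ elements are again degree $0$, and $\hat{y}_{k;t}+1$ is a sum of two degree-$0$ elements of $\mathcal{F}$ (the summand $1$ being constant), hence homogeneous of degree $0$ as well — it is nonzero since the $\hat{y}$'s are subtraction-free, hence positive, rational functions, so no cancellation can destroy homogeneity. This closes the induction, and in particular every cluster variable of $\mathcal{A}_\bullet(B)$ is homogeneous for the stated $\mathbb{Z}^n$-grading.
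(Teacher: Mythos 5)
Your proof is correct and follows essentially the argument of Fomin--Zelevinsky (the source the paper cites for this statement without reproducing a proof): the strengthened inductive hypothesis that every $\hat y_{j;t}$ has degree $0$, propagated via the $Y$-pattern recursion of the $\hat y$'s inside $\mathcal F$, is precisely the standard mechanism. Your observation that trying to push the degree relation directly through the tropical coefficient-mutation rule stalls — because $\deg(y_{k;t}\oplus 1)$ is not determined by $\deg(y_{k;t})$ — and that replacing it by the field-level $\hat y$-recursion removes the obstruction, correctly identifies the crux of the argument.
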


We define the $\mathbf g$-vector of the cluster variable $X_{\ell;t}^{B; t_0}$ to be
\begin{equation}\label{eq: cluster g vector}
    \mathbf g_{\ell; t} = \mathbf g_{\ell; t}^{B; t_0} \coloneqq \deg\left( X_{\ell; t}^{B; t_0} \right) \in \mathbb Z^n.
\end{equation}

The Laurent polynomial $X_{\ell; t}$ is in fact determined by its $F$-polynomial and $\mathbf g$-vector through the expression
\begin{equation}\label{eq: express cluster variable g vector f poly}
    X_{\ell; t}(x_1, \dots, x_n; y_1, \dots, y_n) = \mathbf x^{\mathbf g_{\ell; t}} \cdot F_{\ell; t}(\hat y_1, \dots, \hat y_n)
\end{equation}
where $\mathbf x^{\mathbf g_{\ell; t}}$ denotes $\prod_i x_i^{g_i}$ if writing $\mathbf g_{\ell; t} = (g_1, \dots, g_n)$ and $\hat y_i = y_i \prod_j x_j^{b_{ji}}$.

\subsection{Recurrences}

We now discuss the changing of $\mathbf g$-vectors and $F$-polynomials when moving the root $t_0$ along $\mathbb T_n$.

We define a new class of integer vectors $\mathbf h^{B; t_0}_{\ell; t} = (h_1, \dots, h_n)$ from the $F$-polynomial $F_{\ell; t}$ given by
\begin{equation}\label{eq: def cluster h vector}
    x_1^{h_1} \cdots x_n^{h_n} \coloneqq F_{\ell; t}^{B; t_0}|_{\operatorname{Trop}(x_1, \dots, x_n)}(x_i^{-1}\prod_{j\neq i}x_j^{[-b_{ji}]_+} \leftarrow y_i).
\end{equation}
Namely, with a subtraction-free rational expression of $F_{\ell; t}$, we can substitute $y_i$ by $x_i^{-1}\prod_{j\neq i}x_j^{[-b_{ji}]_+}$ and evaluate in the tropical semifield $\operatorname{Trop}(x_1, \dots, x_n)$ to obtain the element $x_1^{h_1}\cdots x_n^{h_n}$.

Suppose that we have $t_0 \frac{k}{\quad\quad} t_1$ in $\mathbb T_n$. Consider the seed $\mu_k(\mathbf x, \mathbf y, B) = (\mathbf x_{t_1}^{B; t_0}, \mathbf y_{t_1}^{B; t_0}, B_{t_1}^{B; t_0})$. Denote $B_1 = B_{t_1}^{B; t_0}$. Let the vertex $t_1$ be the root of $\mathbb T_n$ but leave the labeling of edges unchanged. The tree $\mathbb T_n$ now carries a whole new pattern assigning seeds to vertices
\[
    t\mapsto (\mathbf x_{t}^{B_1; t_1}, \mathbf y_{t}^{B_1; t_1}, B_{t}^{B_1; t_1})\quad \text{for $t\in \mathbb T_n$}.
\]

\begin{proposition}[{\cite[Proposition 2.4]{derksen2010quivers}}]\label{prop: recurrence of f poly g vector}
    Denote $\mathbf h^{B; t_0}_{\ell; t} = (h_1, \dots, h_n)$, $\mathbf h^{B_1; t_1}_{\ell; t} = (h_1', \dots, h_n')$, and $\mathbf y_{t_1}^{B; t_0} = (y_1', \dots, y_n')$. The $\mathbf g$-vectors $\mathbf g_{\ell; t}^{B; t_0} = (g_1, \dots, g_n)$ and $\mathbf g_{\ell; t}^{B_1; t_1} = (g_1', \dots, g_n')$ are related by
    \begin{equation}\label{eq: recursion g vector}
        g_j' = \begin{cases}
            -g_k \quad & \text{if $j = k$};\\
            g_j + [b_{jk}]_+g_k - b_{jk}h_k \quad & \text{if $j\neq k$}.
        \end{cases}
    \end{equation}
    Moreover, $g_k = h_k - h_k'$.
    The $F$-polynomials $F_{\ell; t}^{B; t_0}$ and $F_{\ell; t}^{B_1; t_1}$ are related by
    \begin{equation} \label{eq: recursion f poly}
        (y_k + 1)^{h_k} F_{\ell; t}^{B; t_0}(y_1, \dots, y_n) = (y_k' + 1)^{h_k'} F_{\ell; t}^{B_1; t_1}(y_1', \dots, y_n').
    \end{equation}
\end{proposition}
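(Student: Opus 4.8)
The plan is to derive both recurrences by writing the single Laurent polynomial $X_{\ell;t}^{B;t_0}\in\mathcal F$ in two ways, via the separation formula based at $t_0$ and via the separation formula based at $t_1$, and comparing. Based at $t_0$ this is exactly \eqref{eq: express cluster variable g vector f poly}: $X_{\ell;t}^{B;t_0}=\mathbf x^{\mathbf g}F(\hat y_1,\dots,\hat y_n)$ with $\mathbf g=\mathbf g_{\ell;t}^{B;t_0}$, $F=F_{\ell;t}^{B;t_0}$, $\hat y_i=y_i\prod_j x_j^{b_{ji}}$. Based at $t_1$: since $X_{\ell;t}^{B;t_0}$ is a cluster variable of $\mathcal A_\bullet(B)$ and the latter carries the seed $(\mathbf x_{t_1}^{B;t_0},\mathbf y_{t_1}^{B;t_0},B_1)$ at the vertex $t_1$, the general ``separation of additions'' of \cite{fomin2007cluster} (of which \eqref{eq: express cluster variable g vector f poly} is the principal-coefficient case, the denominator below being $1$ there) gives
\[
  X_{\ell;t}^{B;t_0}=\Bigl(\textstyle\prod_{i}x_{i;t_1}^{\,g_i'}\Bigr)\cdot\frac{F'(\hat y_1',\dots,\hat y_n')}{F'\big|_{\operatorname{Trop}(y)}(y_1',\dots,y_n')},
\]
where $\mathbf x_{t_1}^{B;t_0}=(x_{1;t_1},\dots,x_{n;t_1})$, $(y_1',\dots,y_n')=\mathbf y_{t_1}^{B;t_0}$, $F'=F_{\ell;t}^{B_1;t_1}$, $\mathbf g'=\mathbf g_{\ell;t}^{B_1;t_1}$, and $\hat y_i'=y_i'\prod_j x_{j;t_1}^{(B_1)_{ji}}$. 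The recurrences \eqref{eq: recursion g vector} and \eqref{eq: recursion f poly} will fall out of equating these two expressions.

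For the comparison one needs three elementary facts. First, since $p_k\oplus1=1$ in $\operatorname{Trop}(y)$, the exchange relation \eqref{eq: exchange relation principal} factors as $x_{k;t_1}=x_k^{-1}\bigl(\prod_{m\neq k}x_m^{[-b_{mk}]_+}\bigr)(1+\hat y_k)$, while $x_{i;t_1}=x_i$ for $i\neq k$; in particular $x_{k;t_1}^{\,g_k'}$ carries a monomial in $\mathbf x$ with $x_k$-exponent $-g_k'$ and a factor $(1+\hat y_k)^{g_k'}$. Second, the hatted variables form a $Y$-pattern in which $\oplus$ is replaced by $+$, so a single mutation at $k$ (because $t_1=\mu_k(t_0)$) gives $\hat y_k'=\hat y_k^{-1}$ and $\hat y_i'=\hat y_i\,\hat y_k^{[b_{ki}]_+}(1+\hat y_k)^{-b_{ki}}$ for $i\neq k$; this is a one-line computation from \eqref{eq: exchange relation principal} and \eqref{eq: matrix mutation}. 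Third, a tropical evaluation of an $F$-polynomial is a monomial: by the definition of $\mathbf h$ preceding the statement, $F_{\ell;t}^{B;t_0}|_{\operatorname{Trop}(x)}$ at the indicated substitution equals $\mathbf x^{\mathbf h}$, and $F'|_{\operatorname{Trop}(y)}(y_1',\dots,y_n')$ is a power of $y_k$ whose exponent, by the same $\min$-over-monomials computation applied at $t_1$, is exactly $h_k'$.

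Granting these, the comparison is degree bookkeeping. Insert the first fact into $\prod_i x_{i;t_1}^{g_i'}$ and the second into $F'(\hat y_1',\dots,\hat y_n')$: the right-hand side becomes a Laurent monomial in $\mathbf x$ times a power of $1+\hat y_k$ times $F'$ evaluated at arguments that are the $\hat y_j$ dressed with powers of $\hat y_k$ and of $1+\hat y_k$, all divided by $y_k^{h_k'}$. Comparing the $\mathbf x$-exponents on the two sides, and using that each $\hat y_i$ (hence $1+\hat y_k$) is homogeneous of degree $0$ for the principal $\mathbb Z^n$-grading so that the $F$-polynomial factors do not affect degrees, yields \eqref{eq: recursion g vector} in the coordinates $j\neq k$ and $g_k'=-g_k$ in coordinate $k$, together with the relation $g_k=h_k-h_k'$. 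To isolate \eqref{eq: recursion f poly} it is cleanest to then specialize $x_1=\dots=x_n=1$, which turns $\hat y_i$ into $y_i$, turns $\hat y_i'$ into $y_i'$ up to the $(1+y_k)$-factors, and turns $x_{k;t_1}$ into $1+y_k$; clearing the resulting common powers of $1+y_k$ from the two sides leaves precisely \eqref{eq: recursion f poly}, and the two sides are then visibly polynomials in $y_1,\dots,y_n$.

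The step I expect to be the genuine obstacle is the bookkeeping in the third fact: one must show that the power $(1+\hat y_k)^{g_k'}$ from the exchange relation, the powers $(1+\hat y_k)^{-b_{ki}}$ introduced by the $i\neq k$ arguments inside $F'(\hat y')$, and the tropical denominator $y_k^{h_k'}$ conspire so that, after clearing, the net power of $1+y_k$ is $h_k$ on the $t_0$-side and $h_k'$ on the $t_1$-side with no leftover, and so that the surviving arguments of $F'$ are exactly the $y_j'$. This cancellation is the real content — it is essentially the place where the structural results of \cite{fomin2007cluster} on tropical specializations and $\mathbf h$-vectors are used — while everything else is formal. (An alternative, perhaps more economical, route is induction on the distance $d(t_0,t)$ in $\mathbb T_n$: the base case $t=t_0$ splits into $\ell=k$ and $\ell\neq k$ and is checked directly from \eqref{eq: exchange relation principal} and the definition of $\mathbf h$, and the inductive step follows by applying the one-step $F$-polynomial and $\mathbf g$-vector recurrences of \cite{fomin2007cluster} with respect to both roots $t_0$ and $t_1$ and substituting the inductive hypothesis; the identity one must verify is again controlled by the powers of $1+y_k$.)
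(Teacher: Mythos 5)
The paper does not prove this proposition; it is cited to \cite[Proposition~2.4]{derksen2010quivers} (which in turn rests on Fomin--Zelevinsky's CA~IV), so there is no in-paper argument to compare against. Your separation-of-additions strategy is the canonical route, and facts (a) and (b) — the factored form $x_{k;t_1}=x_k^{-1}\prod_{m\neq k}x_m^{[-b_{mk}]_+}(1+\hat y_k)$ and the $Y$-dynamics $\hat y_k'=\hat y_k^{-1}$, $\hat y_i'=\hat y_i\hat y_k^{[b_{ki}]_+}(1+\hat y_k)^{-b_{ki}}$ — are clean and correct, as is the identification of the tropical denominator as $y_k^{h_k'}$.

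The genuine gap is the identity $g_k=h_k-h_k'$, which your sketch treats as falling out of the comparison but which is in fact the load-bearing input. Concretely, comparing $\mathbf x$-degrees in
\[
  \mathbf x^{\mathbf g}F(\hat{\mathbf y}) \;=\; y_k^{-h_k'}\,\mathbf x_{t_1}^{\mathbf g'}\,F'(\hat{\mathbf y}')
\]
gives $g_k'=-g_k$ and, for $j\neq k$, the recurrence $g_j'=g_j-b_{jk}h_k'+[-b_{jk}]_+g_k$ — note this features $h_k'$. Rewriting it in the stated form $g_j'=g_j+[b_{jk}]_+g_k-b_{jk}h_k$ requires exactly $g_k=h_k-h_k'$ for each $j$ with $b_{jk}\neq0$, and when $b_{jk}=0$ for all $j$ the comparison yields literally nothing in that direction. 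Likewise, specializing $x=1$ gives $F(\mathbf y)=y_k^{-h_k'}(1+y_k)^{-g_k}F'(\mathbf y')$, where $\mathbf y'$ must be read as the $Y$-seed mutation in the \emph{universal} semifield $\mathbb Q_{\mathrm{sf}}(y_1,\dots,y_n)$ (so that $\hat y_j'\big|_{x=1}=y_j'$ exactly), not as the tropical coefficient tuple $\mathbf y_{t_1}^{B;t_0}$; with the tropical reading the recurrence is simply false — for $B=\begin{psmallmatrix}0&-1\\1&0\end{psmallmatrix}$, $k=1$, $\ell=2$, $t=\mu_2\mu_1 t_0$ one has $F=y_1y_2+y_2+1$, $F'=y_2+1$, $h_1=h_1'=0$, tropical $y_2'=y_2$, and \eqref{eq: recursion f poly} would read $y_1y_2+y_2+1=y_2+1$. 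Your description of ``clearing the common powers of $1+y_k$'' conflates the two readings: the $(1+y_k)$-powers sitting inside the arguments of $F'$ are absorbed into $\mathbf y'$ and do not become a common prefactor. After the correct rearrangement, \eqref{eq: recursion f poly} is \emph{equivalent} to $g_k=h_k-h_k'$ given the degree comparison, so the whole proposition reduces to that one identity. You rightly flag the $(1+y_k)$-bookkeeping as ``the genuine obstacle'', but it is not an obstacle removable by careful algebra: $g_k=h_k-h_k'$ is a substantive fact about tropical evaluations of $F$-polynomials (in CA~IV it is established by a separate inductive argument, and in the representation-theoretic setting it corresponds to $\ker\bar\beta_k\cong\ker\gamma_k/\operatorname{im}\beta_k$), and your proof assumes it rather than proves it.
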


We briefly explain the idea of how the above recurrence can be used to determine the $\mathbf g$-vectors and $F$-polynomials for any $t\in \mathbb T_n$ recursively. Suppose that $t$ and $t_0$ are connected via the path
\[
    t_0 \frac{k_1}{\quad\quad} t_1 \frac{k_2}{\quad\quad} \dots\dots \frac{k_m}{\quad\quad} t_m \frac{k_{m+1}}{\quad\quad}  \dots\dots \frac{}{\quad\quad}t.
\]
First we view $t$ as the root and thus have for any $\ell\in \{1, \dots, n\}$ that
\[
    \mathbf g_{\ell; t}^{B_t; t} = \mathbf e_\ell \quad \text{and} \quad F_{\ell; t}^{B_t; t} = 1.
\]
Assume that the $\mathbf g$-vectors $\mathbf g_{\ell; t}^{B_{t_m}; t_m}$ and $F$-polynomials $F_{\ell; t}^{B_{t_m}; t_m}$ have all been obtained. Then by the formulas (\ref{eq: recursion g vector}) and (\ref{eq: recursion f poly}), the $\mathbf g$-vectors $\mathbf g_{\ell; t}^{B_{t_{m-1}}; t_{m-1}}$ and $F$-polynomials $F_{\ell; t}^{B_{t_{m-1}; t_{m-1}}}$ are all determined. The vertex $t_{m-1}$ is one edge closer to $t_0$ than $t_m$. Continuing this process gives a way computing $\mathbf g_{\ell; t}^{B; t_0}$ and $F_{\ell; t}^{B; t_0}$.

\subsection{Coefficient free cluster algebras}\label{subsection: coef free cluster}

The definition of $\mathcal A_\bullet (B)$ can be easily adapted to define the \emph{coefficient-free cluster algebra} $\mathcal A(B)$. A (labeled) seed $(\mathbf x, B)$ now has only two components a cluster $\mathbf x$ and an exchange matrix $B$. The change of cluster under mutation is simplified from (\ref{eq: exchange relation principal}) by evaluating the coefficients $p_i$ all at $1$:
\begin{equation}
    x_i' = \begin{cases}
    x_i \quad & \text{if $i\neq k$} \\
    x_k^{-1}(\prod x_i^{[b_{ik}]_+} + \prod x_i^{[-b_{ik}]_+}) & \text{if $i = k$}.
    \end{cases}
\end{equation}
The matrix mutation rule remains the same as (\ref{eq: matrix mutation}).

One can associate an initial seed $(\mathbf x, B)$ to a chosen root $t_0\in \mathbb T_n$. Under the above coefficient-free version of mutations, this determines an association of (coefficient-free) seeds to any $t\in \mathbb T_n$ in the same way as in the case of $\mathcal A_\bullet(B)$. The cluster algebra $\mathcal A(B)$ is then defined to be the subring in $\mathbb Q(x_1, \dots, x_n)$ generated by all cluster variables.

Alternatively, one can consider the homomorphism
\[
    \pi \colon \mathbb Z[x_1^\pm, \dots, x_n^\pm; y_1, \dots, y_n] \rightarrow \mathbb Z[x_1^\pm, \dots, x_n^\pm],\quad x_i\mapsto x_i, \quad y_i \mapsto 1,
\]
that maps $\mathcal A_\bullet(B)$ to the image $\mathcal A(B)$. Applied to (\ref{eq: express cluster variable g vector f poly}), it provides an expression of cluster variables in $\mathcal A(B)$ through $\mathbf g$-vectors and $F$-polynomials.

\begin{lemma}
    The cluster variable $x_{\ell; t}^{B; t_0}$ in the coefficient-free cluster algebra $\mathcal A(B)$ can be expressed as
    \[
        x_{\ell; t}^{B; t_0} = \mathbf x^{\mathbf g_{\ell; t}^{B; t_0}} F_{\ell; t}^{B; t_0}(\hat y_1, \dots, \hat y_n)
    \]
    where $\hat y_i$ now denotes $\prod_j x_j^{b_{ji}}$.
\end{lemma}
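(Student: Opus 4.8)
The plan is to deduce this directly from the principal-coefficient formula (\ref{eq: express cluster variable g vector f poly}) by applying the specialization homomorphism $\pi$ introduced in \Cref{subsection: coef free cluster}.

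First I would check that $\pi(X_{\ell;t}^{B;t_0}) = x_{\ell;t}^{B;t_0}$ for every $t\in\mathbb T_n$ and every $\ell$. This is an induction on the distance $d(t_0,t)$ in $\mathbb T_n$: the base case $t=t_0$ holds since $\pi(x_i)=x_i$, and for the inductive step, if $t$ and $t'$ are joined by a $k$-labeled edge with $d(t_0,t')=d(t_0,t)+1$, then comparing the exchange relation (\ref{eq: exchange relation principal}) with its coefficient-free counterpart and using that $\pi$ is a ring homomorphism sending every $y_i$ to $1$ — hence sending every coefficient $p_i\in\mathbb P=\operatorname{Trop}(y_1,\dots,y_n)$, being a monomial in the $y_j$, and likewise every $p_i\oplus 1$, to $1$ — one obtains
\[
    \pi(X_{k;t'}^{B;t_0}) = \pi\!\left(\frac{p_k\prod_i x_i^{[b_{ik}]_+}+\prod_i x_i^{[-b_{ik}]_+}}{(p_k\oplus 1)x_k}\right) = \frac{\prod_i x_i^{[b_{ik}]_+}+\prod_i x_i^{[-b_{ik}]_+}}{x_k} = x_{k;t'}^{B;t_0},
\]
while the remaining cluster variables are unchanged on both sides. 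This is exactly the compatibility of $\pi$ with the cluster structure already recalled in \Cref{subsection: coef free cluster}.

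Second I would apply $\pi$ to both sides of (\ref{eq: express cluster variable g vector f poly}). On the left this gives $x_{\ell;t}^{B;t_0}$ by the previous step. On the right, $\pi(\mathbf x^{\mathbf g_{\ell;t}^{B;t_0}})=\mathbf x^{\mathbf g_{\ell;t}^{B;t_0}}$ since this monomial involves no $y_i$; and since $F_{\ell;t}^{B;t_0}$ is a polynomial in the $y_i$ with integer coefficients, $\pi$ may be pushed inside, $\pi\bigl(F_{\ell;t}^{B;t_0}(\hat y_1,\dots,\hat y_n)\bigr)=F_{\ell;t}^{B;t_0}\bigl(\pi(\hat y_1),\dots,\pi(\hat y_n)\bigr)$. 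Finally $\pi(\hat y_i)=\pi\bigl(y_i\prod_j x_j^{b_{ji}}\bigr)=\prod_j x_j^{b_{ji}}$, which is precisely the $\hat y_i$ appearing in the statement. Combining these equalities yields the asserted identity; note that by the Laurent phenomenon together with the homogeneity in \Cref{subsection: principal grading g vector} all expressions live in $\mathbb Z[x_1^\pm,\dots,x_n^\pm]$, so these are equalities of honest Laurent polynomials.

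There is essentially no serious obstacle; the only point requiring a little care is the compatibility of $\pi$ with the mutation dynamics in the first step, and that is immediate once one observes that every $p_i$ and every $p_i\oplus 1$ lies in the multiplicative group $\mathbb P$ and is therefore killed (sent to $1$) by $\pi$.
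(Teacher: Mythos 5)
Your proposal is exactly the paper's argument: the paper proves this lemma in one line by applying the homomorphism $\pi$ from \Cref{subsection: coef free cluster} to the principal-coefficient identity (\ref{eq: express cluster variable g vector f poly}), which is what you do. You have merely filled in the routine details (the induction showing $\pi$ intertwines the two mutation dynamics, and pushing $\pi$ through the polynomial evaluation), all of which are correct — the only point worth noting is that since $p_k$ and $p_k\oplus 1$ are Laurent monomials in the $y_j$, one should read $\pi$ as extended to $\mathbb Z[x_1^\pm,\dots,x_n^\pm;y_1^\pm,\dots,y_n^\pm]$ for the middle step, which is unproblematic as $\pi(y_j)=1$ is a unit.
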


We remark that in the coefficient-free case, the patterns of assigning seeds to $\mathbb T_n$ actually remain unchanged when changing roots.

\subsection{Exchange graph and cluster complex}\label{subsection: cluster complex}

Let $\mathcal A$ be either the cluster algebra $\mathcal A_\bullet (B)$ or $\mathcal A(B)$. A seed in $\mathcal A$ as previously defined is labeled. Now an \emph{unlabeled seed} is an equivalence class of seeds under permutations of indices $\{1, \dots, n\}$.

\begin{definition}
    The \emph{exchange graph} $\mathbf E(\mathcal A)$ of $\mathcal A$ is the $n$-regular graph whose vertices are labeled by unlabeled seeds, and whose edges correspond to mutations. It can be viewed as the quotient graph of $\mathbb T_n$ by identifying labeled seeds up to permutations.
\end{definition}

\begin{conjecture}[{\cite[Section 1.5]{fomin2003cluster}}]\label{conj: cluster determins seed}
    Any seed $(\mathbf x, \mathbf p, B)$ (or $(\mathbf x, B)$) is uniquely determined by its cluster $\mathbf x$. Thus for any cluster $\mathbf x$ and any $x\in \mathbf x$, there is a unique cluster $\mathbf x'$ with $\mathbf x' \cap \mathbf x = \mathbf x \setminus \{x\}$.
\end{conjecture}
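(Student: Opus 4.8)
The plan is to prove the statement for the cluster algebras $\mathcal A(\mathcal O)$ that occupy this paper by transporting it to the arc complex, where it becomes transparent, and to indicate why the fully general skew-symmetrizable case is a genuinely deeper matter. Recall from \cite{felikson2012cluster} (extending \cite{fomin2008cluster}) that the cluster complex of $\mathcal A(\mathcal O)$ is isomorphic to $\mathbf{Arc}(\mathcal O)$, so that cluster variables correspond to arcs of $\mathcal O$, clusters to triangulations, and mutations to flips. A maximal simplex of $\mathbf{Arc}(\mathcal O)$, that is a triangulation $T$, is equal as a simplex to its own vertex set, so the cluster $\mathbf x_T = \{X_\gamma : \gamma\in T\}$ determines $T$; and $T$ in turn determines the exchange matrix $B(T)$ (its signed adjacency matrix), and—in the principal-coefficient setting—the coefficient tuple at $T$, which the $\mathbf g$-vectors (equivalently the $\mathbf c$-vectors) of the cluster variables $X_\gamma$, $\gamma\in T$, determine, and those in turn depend only on the arcs via the bijection above. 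This gives the first assertion for $\mathcal A(\mathcal O)$.

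For the ``thus'' clause I would use that $\mathbf{Arc}(\mathcal O)$ is a pseudomanifold: each codimension-one simplex $T\setminus\{\gamma\}$ is a face of exactly two maximal simplices, namely $T$ and its flip $T' = \mu_\gamma(T)$ (\cite{felikson2012cluster}). Hence, given a cluster $\mathbf x = \mathbf x_T$ and $x = X_\gamma\in\mathbf x$, any cluster $\mathbf x'$ with $\mathbf x'\cap\mathbf x = \mathbf x\setminus\{x\}$ corresponds to a triangulation containing the almost-triangulation $T\setminus\{\gamma\}$; the only candidates are $T$ and $T'$, so $\mathbf x_{T'}$ exists and is the unique cluster with the required property other than $\mathbf x$ itself. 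I emphasize that in this formulation the two assertions stand on equal footing, since the arc complex supplies the global input—sharing $n-1$ arcs forces a flip—which is not a formal consequence of the local mutation rules. Alternatively, once \Cref{thm: main thm 1} is in hand one may run the same argument inside $\Delta(\mathcal P(T))$, which is a pseudomanifold whose dual graph $\operatorname{s\tau-tilt}(\mathcal P(T))$ is $n$-regular by \cite{adachi2014tau, demonet2019tau}, and then transport the conclusion back along the isomorphisms $\Phi_T$.

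In the generality in which \Cref{conj: cluster determins seed} is stated—arbitrary skew-symmetrizable $B$—the statement is subtle, and the natural route would be the scattering-diagram approach of Gross--Hacking--Keel--Kontsevich. Working in $\mathcal A_\bullet(B)$, the $\mathbf g$-vectors of the cluster variables of a seed are the leading exponents of their Laurent expansions and index the chambers of the cluster scattering diagram; sign coherence of $\mathbf g$-vectors then forces $\{\mathbf g_{\ell;t}\}_\ell$ to be a $\mathbb Z$-basis whose chamber is unique, and $B_t$ is read off from the walls bounding that chamber, after which the coefficient-free case follows by specialization. The hard part is exactly this global step: two distinct seeds with the same cluster would generate identical subtraction-free Laurent polynomials, and ruling this out is invisible to the recursions of \Cref{prop: recurrence of f poly g vector}—it is precisely what the scattering-diagram machinery (or, in the skew-symmetric case, additive categorification) has to provide. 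For $\mathcal A(\mathcal O)$ this obstacle evaporates because $\mathbf{Arc}(\mathcal O)$ is itself the required global model.
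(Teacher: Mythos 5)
Be aware that the paper states this as a \emph{conjecture} and supplies no proof at that point; for the orbifold cluster algebras $\mathcal{A}(\mathcal{O})$ it is established only at the very end, as item~(1) of Corollary~\ref{cor: app in cluster algebra}, once the $\tau$-tilting categorification is in place. Your primary route --- passing to $\mathbf{Arc}(\mathcal{O})$ via the Felikson--Shapiro--Tumarkin theorem --- is a correct argument for $\mathcal{A}(\mathcal{O})$, and it is in essence how FST themselves settled this case, but the paper explicitly declares in the remark following Theorem~\ref{thm: structure cluster algebra triangulation} that it will \emph{not} rely on that theorem: the whole point of the representation-theoretic apparatus is to re-derive it independently. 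So what you present as the main argument is precisely what the paper declines to use, and what you relegate to an ``alternatively'' is closest to the paper's actual argument.

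About that alternative, two things. First, invoking Theorem~\ref{thm: main thm 1} at this stage is circular, since its statement already refers to $\Delta(\mathcal{A}(\mathcal{O}))$, a complex that is well-defined only once the conjecture holds. The paper avoids this by phrasing Theorem~\ref{thm: main theorem} purely in terms of $\mathbf{g}$-vectors and $F$-polynomials --- no cluster complex --- and only afterwards deducing the conjecture and the complex isomorphism. Second, you sketch the $\tau$-tilting route only for the ``thus'' clause (via $n$-regularity of $\stau{\mathcal{P}(T)}$), whereas the first assertion also needs an argument. The paper's argument there is: by Theorem~\ref{thm: main theorem}, each cluster variable equals the locally free Caldero--Chapoton function of an indecomposable $\tau$-rigid pair; non-isomorphic $\tau$-rigid pairs have distinct $\mathbf{g}$-vectors by \cite{demonet2019tau}, so a cluster determines its support $\tau$-tilting pair up to isomorphism; and from that pair (equivalently, from the $n$ exchange relations obtained by mutating it) one reads off the coefficient tuple and exchange matrix. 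Your closing paragraph on scattering diagrams and sign coherence is an accurate description of the state of the art for general skew-symmetrizable $B$, but it plays no role in this paper's argument and is tangential here.
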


\begin{definition}[{\cite{fomin2003cluster}}]
    Under the \Cref{conj: cluster determins seed}, the \emph{cluster complex} $\Delta(\mathcal A)$ of $\mathcal A$ is the simplicial complex on the vertex set of cluster variables whose maximal simplices are (unlabeled) clusters. The exchange graph $\mathbf E(\mathcal A)$ is then the dual graph of $\Delta(\mathcal A)$: the vertices of $\mathbf E(\mathcal A)$ are (unlabeled) clusters, with edges corresponding to pairs of clusters whose intersection has cardinality $n-1$.
\end{definition}

\begin{remark}
    \Cref{conj: cluster determins seed} is proved in \cite{fomin2008cluster} for cluster algebras associated to surfaces. Using quiver representations, this conjecture is proved in general for all skew-symmetric cluster algebras in \cite{derksen2010quivers}. Extending the DWZ representation-theoretic approach to the skew-symmetrizable cluster algebras associated to orbifolds, our results imply the conjecture in this case.
\end{remark}

\section{Orbifolds, triangulations, and associated algebras}\label{section: orbifolds}

In this section we view the orbifold model of a class of skew-symmetrizable cluster algebras \cite{felikson2012cluster} and then give the construction of gentle algebras associated to triangulations of orbifolds.

\subsection{Orbifolds and triangulations}\label{subsection: orbifold and triangulations}

Let $\Sigma$ be an oriented surface with non-empty boundary $\partial \Sigma$. Let $(\Sigma, \mathbb M, \mathbb O)$ be a triple of the surface $\Sigma$, a finite set $\mathbb M \subset \partial \Sigma$ called \emph{(boundary) marked points}, and a finite set $\mathbb O \subset \Sigma \setminus \partial \Sigma$ of special points called \emph{orbifold points}.

We call such a triple $\mathcal O = (\Sigma, \mathbb M, \mathbb O)$ an \emph{orbifold}.

An $\emph{arc}$ $\gamma$ in $\mathcal O$ is a continuous map $\gamma \colon [0, 1] \rightarrow \Sigma \setminus \mathbb O$ considered up to homotopy relative to the endpoints such that
\begin{itemize}
    \item both endpoints of $\gamma$ belong to $\mathbb M$;
    \item $\gamma$ has no self-intersections, except that its two endpoints may coincide;
    \item $\gamma$ does not cut out a monogon not containing points of $\mathbb O$;
    \item $\gamma$ is not homotopic to a boundary segment.
\end{itemize}

If an arc $\gamma$ cuts out a monogon containing exactly one orbifold point, we call $\gamma$ a \emph{pending arc}. Otherwise, it is called an \emph{ordinary arc}.

In \Cref{fig: orbifold example}, the points in $\mathbb M$ are drawn as $\bullet$ and orbifold points in $\mathbb O$ as $\star$.

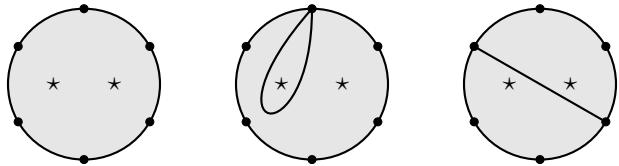
\begin{figure}[h]
    \centering
    \begin{tikzpicture}
        \filldraw[fill=gray!20, thick](0,0) circle (1);
        \node[] at (-0.4, 0) {$\star$};
        \node[] at (0.4, 0) {$\star$};
        
        \filldraw(0, -1) circle (1.5pt);
        \filldraw(0, 1) circle (1.5pt);
        \filldraw(-0.866, -0.5) circle (1.5pt);
        \filldraw(0.866, -0.5) circle (1.5pt);
        \filldraw(-0.866, 0.5) circle (1.5pt);
        \filldraw(0.866, 0.5) circle (1.5pt);

        \filldraw[fill=gray!20, thick](0+3,0) circle (1);
        \draw[thick] (0+3, 1) .. controls (-1.5+3, -0.7) and (0+3, -1) .. (0+3, 1);
        \node[] at (-0.4+3, 0) {$\star$};
        \node[] at (0.4+3, 0) {$\star$};
        
        \filldraw(0+3, -1) circle (1.5pt);
        \filldraw(0+3, 1) circle (1.5pt);
        \filldraw(-0.866+3, -0.5) circle (1.5pt);
        \filldraw(0.866+3, -0.5) circle (1.5pt);
        \filldraw(-0.866+3, 0.5) circle (1.5pt);
        \filldraw(0.866+3, 0.5) circle (1.5pt);

        \filldraw[fill=gray!20, thick](0+6,0) circle (1);
        \draw[thick] (-0.866+6, 0.5) to (0.866+6, -0.5);
        \node[] at (-0.4+6, 0) {$\star$};
        \node[] at (0.4+6, 0) {$\star$};
        
        \filldraw(0+6, -1) circle (1.5pt);
        \filldraw(0+6, 1) circle (1.5pt);
        \filldraw(-0.866+6, -0.5) circle (1.5pt);
        \filldraw(0.866+6, -0.5) circle (1.5pt);
        \filldraw(-0.866+6, 0.5) circle (1.5pt);
        \filldraw(0.866+6, 0.5) circle (1.5pt);
    \end{tikzpicture}
    \caption{Hexagon with two orbifold points; a pending arc; an ordinary arc}
    \label{fig: orbifold example}
\end{figure}

We say two arcs $\gamma$ and $\gamma'$ are compatible if they do not intersect in the interior of $\Sigma$. A \emph{triangulation} $T$ of $\mathcal O$ is a maximal collection of distinct pairwise compatible arcs. The (closures of) the connect components of $\Sigma\setminus \bigcup_{\gamma\in T} \gamma([0,1])$ are called \emph{triangles}. A triangle is \emph{singular} if it contains an orbifold point of $\mathbb O$ and otherwise \emph{regular}. Notice that any triangulation of $\mathcal O = (\Sigma, \mathbb M, \mathbb O)$ has $|\mathbb O|$ many singular triangles as there must be $|\mathbb O|$ pending arcs, each enclosing an orbifold point of $\mathbb O$.

Given a triangulation $T$, the \emph{flip} of $T$ at an arc $\gamma\in T$ is another triangulation $T' \neq T$ obtained by replacing $\gamma$ with a unique arc $\gamma' \neq \gamma$, which evidently always exists. We denote the flip by $\mu_\gamma(T) = T'$.

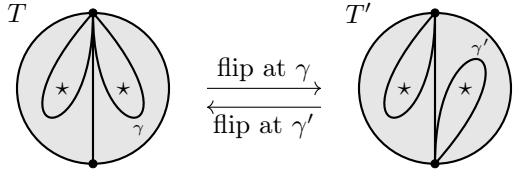
\begin{figure}[h]
    \centering
    \begin{tikzpicture}
        \filldraw[fill=gray!20, thick](0,0) circle (1);
        \draw[thick] (0,-1) -- (0,1);
        \draw[thick] (0,1) .. controls (-1.5, -0.7) and (0, -1) .. (0,1);
        \draw[thick] (0,1) .. controls (1.5, -0.7) and (0, -1) .. (0, 1);
        \node[] at (-0.4, 0) {$\star$};
        \node[] at (0.4, 0) {$\star$};
        \node[] at (0.6, -0.55) {\tiny $\gamma$};
        
        \filldraw(0, -1) circle (1.5pt);
        \filldraw(0, 1) circle (1.5pt);
        \node[] at (-1, 1) {$T$};

        \draw[->] (1.5, 0) -- (3, 0);
        \node[] at (2.25, 0.25) {flip at $\gamma$};
        \draw[->] (3,-0.25) -- (1.5, -0.25);
        \node[] at (2.25, -0.5) {flip at $\gamma'$};

        \filldraw[fill=gray!20, thick](2.5+2,0) circle (1);
        \node[] at (2.1+2, 0) {$\star$};
        \node[] at (2.9+2, 0) {$\star$};
        \draw[thick] (2.5+2,-1) -- (2.5+2,1);
        \draw[thick] (2.5+2,1) .. controls (1+2, -0.7) and (2.5+2, -1) .. (2.5+2,1);
        \draw[thick] (2.5+2,-1) .. controls (4+2, 0.7) and (2.5+2, 1) .. (2.5+2, -1);
        
        \node[] at (3.1+2, 0.55) {\tiny $\gamma'$};
        \node[] at (1.5+2, 1) {$T'$};
        \filldraw(2.5+2, -1) circle (1.5pt);
        \filldraw(2.5+2, 1) circle (1.5pt);
    \end{tikzpicture}
    \caption{Flip at a pending arc}
    \label{fig: flip of a pending arc}
\end{figure}

\subsection{Cluster algebras associated to orbifolds}

We start by associating a skew-symmetrizable matrix $B(T)$ to a triangulation $T$. For convenience, the arcs in $T$ are labeled by $\{1, \dots, |T|\}$ where the number $|T|$ is determined by $\mathcal O$ and shared by all triangulations. Precisely,
\[
    |T| = 6g + 3b + |\mathbb{M}| + |\mathbb{O}| - 4,
\]
where $g$ is the genus of $\Sigma$ and $b$ is the number of boundary components.

For any two arcs $i$ and $j$ in $T$, we define if $j$ is ordinary,
\[
    b_{i,j}\coloneqq |\{\text{triangles where $j$ follows $i$ clockwise}\}|-|\{\text{triangles where $i$ follows $j$ clockwise} \}|,
\]
and if $j$ is pending,
\[
    b_{i,j}\coloneqq 2|\{\text{triangles where $j$ follows $i$ clockwise}\}|-2|\{\text{triangles where $i$ follows $j$ clockwise} \}|.
\]
The $n\times n$ matrix $B(T)$ is defined to have entries $b_{i,j}$. Let $D(T) = \mathrm{diag} (d_i\mid i \in T)$ such that
\[
    d_i = \begin{cases}
        1 \quad & \text{if $i$ is ordinary}\\
        2 \quad & \text{if $i$ is pending}.
    \end{cases}
\]
The matrix $B=B(T)$ is skew-symmetrized by $D=D(T)$ in the sense that $DB$ is skew-symmetric, that is, $DB + (DB)^\intercal = 0$. In particular, the matrix $B$ becomes skew-symmetric when the set $\mathbb O$ of orbifold points is empty.

\begin{remark}\label{rmk: matrix}
(1) The matrix $B(T)$ defined above is equal to the matrix considered by the authors in \cite{LM1}, see the paragraph preceding Lemma 3.7 therein; (2) if $\orbset=\varnothing$, then $B(T)$ is equal to the matrix associated to $T$ by Fomin--Shapiro--Thurston \cite[Definition 4.1]{fomin2008cluster} and Fomin--Thurston \cite[Definition 5.15]{fomin2018cluster}; (3) for $\orbset\neq\varnothing$, Felikson--Shapiro--Tumarkin \cite{felikson2012cluster} associate to each triangulation $2^{\orbset}$ skew-symmetrizable matrices, based on the choice of a \emph{weight} ($1/2$ or $2$ in their setting) for each orbifold point. Each choice of weights corresponds to choosing a skew-symmetrizable matrix for the underlying \emph{diagram}; see \cite[Definition 7.3]{fomin2003cluster}, \cite[Remark 4.7]{Geuenich-LF1} or \cite[Definition 2.12, Remark 3.5-(2) and \S13.1]{Geuenich-LF2}. The constructions of the present paper correspond to a constant choice of weights, which is the exactly the one taken in \cite{Geuenich-LF1}. In the case of polygons with exactly one orbifold point, this choice yields skew-symmetrizable matrices of mutation type $C$ --the other choice yields mutation type $B$.
\end{remark}

We list all three types of regular triangles in \Cref{fig: regular triangles}. The associated $3 \times 3$ matrices are respectively
\[
    \begin{bmatrix}
    0 & -1 & 1\\
    1 & 0 & -1\\
    -1 & 1 & 0
    \end{bmatrix}, \quad\quad \begin{bmatrix}
                    0 & -1 & 2\\
                    1 & 0 & -2\\
                    -1 & 1 & 0
                    \end{bmatrix}, \quad\quad \begin{bmatrix}
                                    0 & -2 & 2\\
                                    1 & 0 & -2\\
                                    -1 & 2 & 0
                                    \end{bmatrix}. 
\]
If any side of a triangle is a boundary segment, delete the corresponding row and column of the matrix. The following lemma is straightforward to check.

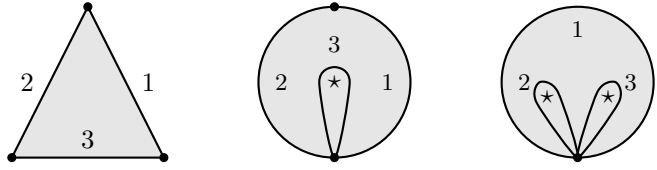
\begin{figure}[h]
    \centering
    \begin{tikzpicture}
        \filldraw[fill=gray!20, thick] (0, 0) -- (2, 0) -- (1, 2) -- (0, 0);
        \filldraw[black] (0, 0) circle (1.5pt);
        \filldraw[black] (2, 0) circle (1.5pt);
        \filldraw[black] (1, 2) circle (1.5pt);
        \draw[] (1, 0) node[above] {$3$};
        \draw[] (1.8, 1) node[] {$1$};
        \draw[] (0.2, 1) node[] {$2$};
    \end{tikzpicture}
    \quad\quad\quad
    \begin{tikzpicture}
        \filldraw[fill=gray!20, thick](0,0) circle (1);
        \filldraw[black] (0, 1) circle (1.5pt);
        \filldraw[black] (0, -1) circle (1.5pt);
        \draw[] (0, 0) node[]{$\star$};
        \draw[thick] plot [smooth cycle] coordinates {(0,-1) (-0.2, 0) (0, 0.2) (0.2, 0)};
        \draw[] (0, 0.5) node[]{\small $3$};
        \draw[] (-0.7, 0) node[]{\small $2$};
        \draw[] (0.7, 0) node[]{\small $1$};
    \end{tikzpicture}
    \quad\quad\quad
    \begin{tikzpicture}
        \filldraw[fill=gray!20, thick](0,0) circle (1);
        \filldraw[black] (0, -1) circle (1.5pt);
        \draw[] (-0.4, -0.2) node[]{$\star$};
        \draw[] (0.4, -0.2) node[]{$\star$};
        \draw[thick] plot [smooth cycle] coordinates {(0,-1) (-0.54, -0.27) (-0.5, 0) (-0.26, -0.13)};
        \draw[thick] plot [smooth cycle] coordinates {(0,-1) (0.54, -0.27) (0.5, 0) (0.26, -0.13)};
        \draw[] (0, 0.7) node[]{\small $1$};
        \draw[] (-0.7, 0) node[]{\small $2$};
        \draw[] (0.7, 0) node[]{\small $3$};
    \end{tikzpicture}
    \caption{Regular triangles}
    \label{fig: regular triangles}
\end{figure}

\begin{lemma}\label{lemma: flip and mutation matrix}
    If $T$ and $T'$ are related by a flip at $k\in T$, then $\mu_k(B(T)) = B(T')$.
\end{lemma}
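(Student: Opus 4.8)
The plan is to reduce the claim to a purely local check, triangle by triangle. The flip at $k\in T$ changes the triangulation only inside the (generalized) quadrilateral $R$ obtained by removing $k$ from $T$, i.e.\ the union of the (at most) two triangles of $T$ having $k$ as a side; arcs of $T$ disjoint from $R$ remain arcs of $T'$, and the triangles of $T$ not contained in $R$ are unchanged. Since both the matrix entries $b_{i,j}$ and the mutation rule \eqref{eq: matrix mutation} are defined by counting, with signs (and a factor $2$ for pending targets), the triangles in which one arc follows another clockwise, it suffices to verify the identity $\mu_k(B(T))=B(T')$ after restricting to the arcs incident to $R$. Thus I would first set up this localization carefully, noting that for $i,j$ both different from $k$ and not both incident to $R$, neither side of the asserted equality changes, so nothing needs to be checked there.

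Next I would enumerate the possible shapes of $R$. There are two cases: (i) $k$ is an ordinary arc, so $R$ is a genuine quadrilateral (possibly self-folded or with some sides on $\partial\Sigma$) bounded by $\gamma_1,\gamma_2,\gamma_3,\gamma_4$ read off from \Cref{fig: ptolemy relation quadrilateral}, the flip replacing $\gamma$ by the other diagonal $\gamma'$; (ii) $k$ is a pending arc enclosing an orbifold point, so $R$ is the region of \Cref{fig: flip of a pending arc} (equivalently \Cref{fig: exchange relation pending}), bounded by the two arcs $\gamma_1,\gamma_2$ — here one must remember $d_k=2$. Using the three model regular triangles of \Cref{fig: regular triangles} and the singular triangle of \Cref{fig: triangulation quiver} together with their displayed $3\times3$ exchange matrices, I would compute $B(T)$ restricted to $\{k,\gamma_1,\gamma_2,\gamma_3,\gamma_4\}$ (resp.\ $\{k,\gamma_1,\gamma_2\}$), apply the mutation formula \eqref{eq: matrix mutation} in direction $k$, and separately compute $B(T')$ restricted to the same arcs by reading the clockwise-following data from the flipped picture; then compare. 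The row/column of $k$ simply flips sign, which matches the first branch of \eqref{eq: matrix mutation}; the content is in the entries $b_{i,j}'=b_{i,j}+[-b_{ik}]_+b_{kj}+b_{ik}[b_{jk}]_+$ for $i,j$ among the $\gamma$'s, which one checks records exactly the change in the clockwise-adjacency counts caused by replacing the two triangles around $k$ by the two new ones.

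The case analysis has several sub-cases one must not overlook: some of $\gamma_1,\dots,\gamma_4$ may coincide (when $R$ is embedded with identifications on its boundary, e.g.\ in once-punctured settings adapted to orbifolds, or when $R$ has a self-folded side), some may be boundary segments (in which case the corresponding rows and columns are deleted, as noted after \Cref{fig: regular triangles}), and in case (ii) one must handle both the situation where the singular triangle sits inside a larger quadrilateral and the degenerate situations where sides of that quadrilateral lie on the boundary. In each such degeneration I would argue that deleting a boundary side commutes with mutation in the remaining directions, so the identity survives specialization. Also, since $k$ is being mutated, no relation of the form "$j$ pending with $j=k$" enters the formula for the surviving entries, so the factor-of-$2$ subtlety only appears through $b_{ik},b_{kj}$ with $i,j\neq k$, where it is already encoded consistently in the definition of $B(T)$; this is worth a sentence to dispel confusion about compatibility of the skew-symmetrizable mutation rule with the factor $2$.

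\textbf{Main obstacle.} The genuinely delicate point is bookkeeping in the degenerate configurations — self-folded triangles, coincident outer arcs, and sides on the boundary — where the naive local quadrilateral picture must be interpreted with care; the non-degenerate cases are a short direct matrix computation. I expect the cleanest route for case (i) with no self-folding is to simply cite \Cref{fig: ptolemy relation quadrilateral} and the first model matrix, and for case (ii) to do the explicit $3\times3$ (or $2\times2$ after boundary deletions) computation with $d_k=2$, then dispatch the remaining degenerations by the "mutation commutes with deleting a frozen/boundary direction" principle. Given that \Cref{lemma: flip and mutation matrix} is the orbifold analogue of the well-known surface statement of Fomin--Shapiro--Thurston and that the orbifold combinatorics is, as the authors remark, identical to that of \cite{LM1}, an alternative is to deduce it directly from the corresponding lemma in \cite{LM1}; but I would prefer to keep the argument self-contained via the local check above.
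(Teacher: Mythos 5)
The paper does not actually prove this lemma: after displaying the model $3\times 3$ matrices attached to the three types of regular triangle (and noting that boundary sides correspond to deleted rows and columns), it simply says ``the following lemma is straightforward to check'' and moves on. Your proposal supplies exactly the argument the authors are leaving to the reader: localize to the generalized quadrilateral around $k$ (which is correct, since both $b_{i,j}$ and the mutation rule only depend on triangles adjacent to $k$), split into the cases $k$ ordinary vs.\ $k$ pending, compare the model matrices before and after the flip, and then dispatch the degenerations via the principle that deleting a frozen/boundary index commutes with mutation at other indices. This is the right approach and matches the spirit of the paper.

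Two small refinements. First, the unpunctured setting here (marked points only on $\partial\Sigma$, pending arcs around orbifold points in place of loops around punctures) admits no self-folded triangles, so you can drop that sub-case from your enumeration; the genuine degenerations are exactly the two you remain with, namely some $\gamma_i$ coinciding (which can happen on non-disk surfaces and is handled automatically by the difference-of-counts definition of $b_{i,j}$) and some $\gamma_i$ lying on the boundary. Second, if you carry out the $3\times 3$ arithmetic, note that the mutation rule as printed in \eqref{eq: matrix mutation} has $[b_{jk}]_+$ in its last summand where the Fomin--Zelevinsky rule has $[b_{kj}]_+$; as printed, the formula applied at $k=3$ to the pending-triangle model matrix $\left[\begin{smallmatrix}0&-1&2\\1&0&-2\\-1&1&0\end{smallmatrix}\right]$ yields $b'_{2,1}=-5$, which is not even skew-symmetrizable against $b'_{1,2}=-1$. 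Use the standard form $b_{ij}+[-b_{ik}]_+b_{kj}+b_{ik}[b_{kj}]_+$ (equivalently $b_{ij}+\operatorname{sgn}(b_{ik})[b_{ik}b_{kj}]_+$), under which the local check goes through and recovers $B(T')$.
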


The following statement can be extracted from Felikson--Shapiro--Tumarkin \cite[Theorem 9.1, Corollary 9.3]{felikson2012cluster}, extending the result from the surface case in \cite{fomin2008cluster}.

\begin{theorem}\label{thm: structure cluster algebra triangulation}
    Let $\mathcal A(T)$ be either the cluster algebra $\mathcal A(B(T))$ or $\mathcal A_\bullet(B(T))$. Then the following hold:
    \begin{enumerate}
        \item Each seed in $\mathcal A(T)$ is determined by its cluster.
        \item The cluster complex $\Delta(\mathcal A(T))$ is isomorphic to the arc complex $\mathbf {Arc}(\mathcal O)$ such that the initial cluster variables correspond to arcs in $T$. Hence cluster variables are in bijection with arcs and clusters with triangulations.
        \item The exchange matrix of the seed associated to a triangulation $T'$ is $B(T')$.
    \end{enumerate}
\end{theorem}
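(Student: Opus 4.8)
The statement to prove is \Cref{thm: structure cluster algebra triangulation}, which is attributed to Felikson--Shapiro--Tumarkin \cite{felikson2012cluster} and extends the surface case of Fomin--Shapiro--Thurston \cite{fomin2008cluster}. Since this is a recollection of an established theorem rather than a new result, the plan is to cite the relevant statements from \cite{felikson2012cluster} and explain how parts (1)--(3) follow, filling in the minor translation between their conventions and ours. The key inputs are: the compatibility of flips with matrix mutation (which is exactly our \Cref{lemma: flip and mutation matrix}), the fact that the flip graph of $\mathbf{Arc}(\mathcal O)$ is connected, and the combinatorial rigidity result that a triangulation is determined by the set of arcs it contains.

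The steps, in order, would be as follows. First, I would establish (3) and the bijection between triangulations and clusters: starting from the initial triangulation $T$ with its seed $(\mathbf x, B(T))$ (or $(\mathbf x, \mathbf y, B(T))$), one assigns to every triangulation $T''$ obtained from $T$ by a sequence of flips the corresponding mutated seed; by \Cref{lemma: flip and mutation matrix} the $B$-matrix obtained this way is $B(T'')$, independent of the chosen sequence of flips, because two flip sequences joining $T$ to $T''$ differ by a sequence of flips returning to $T$, and the $B$-matrix is recovered combinatorially from the triangulation. This gives a well-defined surjection from triangulations to clusters compatible with flips/mutations, using that the flip graph of $\mathbf{Arc}(\mathcal O)$ is connected (proved in \cite{felikson2012cluster}). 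Second, for injectivity of this surjection — equivalently, for (2), the isomorphism $\Delta(\mathcal A(T))\cong \mathbf{Arc}(\mathcal O)$ on the level of maximal simplices and hence on all simplices — I would invoke the counting/uniqueness argument of \cite{felikson2012cluster}: distinct triangulations give distinct seeds because the correspondence respects the (local) combinatorics of the exchange relations in \Cref{fig: ptolemy relation quadrilateral} and \Cref{fig: exchange relation pending}, exactly as in \cite{fomin2008cluster}. Third, (1) then follows from (2): since two triangulations sharing all but one arc are related by a unique flip, and triangulations biject with clusters, any cluster in $\mathcal A(T)$ arises from a unique triangulation and hence determines its seed; this verifies \Cref{conj: cluster determins seed} in this setting.

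The main obstacle — or rather the one point requiring genuine care rather than bookkeeping — is the injectivity in (2): showing that two distinct triangulations cannot yield the same cluster of rational functions. This is where \cite{felikson2012cluster} does real work, adapting the surface argument of \cite{fomin2008cluster}; the subtlety is that the presence of pending arcs introduces the second type of exchange relation $X_\gamma X_{\gamma'} = X_{\gamma_1}^2 + X_{\gamma_2}^2$, so one must check that the combinatorial reconstruction of a triangulation from a cluster still goes through in the presence of orbifold points. Since the excerpt explicitly states this is a theorem of \cite{felikson2012cluster}, I would not reprove it but rather cite the precise statements there (their analogue of \cite[Theorems 4.4, 5.6, 6.1]{fomin2008cluster}) and remark that the only discrepancy with our setup is the normalization of $B(T)$ at pending arcs, which is harmless since it matches their convention as noted in Remark after \Cref{lemma: flip and mutation matrix}.
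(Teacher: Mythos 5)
The paper does not prove this theorem at all: it is stated as a citation to Felikson--Shapiro--Tumarkin \cite{felikson2012cluster}, and the remark immediately following it explicitly says the paper does not rely on it (and that \Cref{cor: app in cluster algebra} later gives an independent, representation-theoretic proof). Your proposal correctly identifies the statement as a cited result and your sketch of how \cite{felikson2012cluster} proceeds --- flips versus matrix mutation via \Cref{lemma: flip and mutation matrix}, connectedness of the flip graph, and the injectivity/uniqueness argument adapted from \cite{fomin2008cluster} --- is a reasonable summary, so this matches the paper's treatment.

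One small caution on your sketch, since you offer it as the outline one would have to fill in: the claim that the seed pattern descends from $\mathbb T_n$ to the flip graph is \emph{not} established merely by observing that $B(T'')$ is recovered combinatorially from $T''$. Well-definedness of the assignment \emph{of clusters} (not just of exchange matrices) to triangulations --- i.e.\ that two flip sequences joining $T$ to $T''$ yield the same tuple of rational functions --- is itself nontrivial and is really part of what (2) asserts; it does not follow from (3) or from \Cref{lemma: flip and mutation matrix}. In \cite{fomin2008cluster} and \cite{felikson2012cluster} this is handled via the detailed combinatorial analysis of the arc complex (the same work that gives injectivity), so you would want to cite it for both well-definedness and injectivity rather than presenting well-definedness as a consequence of the $B$-matrix being intrinsic to $T''$. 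Similarly, deducing (1) for the principal-coefficients version $\mathcal A_\bullet(B(T))$ from (2) and (3) requires an extra step (coefficients are a third component of the seed, recovered e.g.\ via the separation-of-additions formula from \cite{fomin2007cluster}); for the coefficient-free $\mathcal A(B(T))$ your deduction is immediate.
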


\begin{remark}
    In this paper we will not rely on the above theorem although some statements in the introduction are stated with its help. In fact, our representation-theoretic approach in this paper gives another proof of the theorem.
\end{remark}

\subsection{Gentle algebras associated to orbifolds}

Besides cluster algebras, we associate another class of algebras to orbifold triangulations. These (associative and non-commutative) algebras are finite-dimensional over a ground field $\Bbbk$, thus of a quite different flavor. However, we will see that the module categories of these algebras reveal cluster structures.

Let $T$ be a triangulation of an orbifold $\mathcal O$ and for convenience label the arcs of $T$ by $\{1, \dots, n = |T|\}$. Define a quiver $Q(T) = (Q_0, Q_1, t\colon Q_1 \rightarrow Q_0, h\colon Q_1 \rightarrow Q_0)$ as follows. The set of vertices $Q_0$ is the set of arcs $\{1, \dots, n\}$ in $T$. For each regular triangle $\Delta$, if an arc $i$ precedes another arc $j$ in $\Delta$ clockwise, then there is an arrow $a:j\rightarrow i$, i.e. $t(a) = j$ and $h(a) = i$. In this case we denote $a\in \Delta$. For each pending arc $k\in T$, there is a loop $\varepsilon_k$ at $k$, i.e. $t(\varepsilon_k) = h(\varepsilon_k) = k$. Denote by $\Bbbk Q$ the path algebra of a quiver $Q$.

Let $I(T)$ be the (2-sided) ideal generated by
\begin{equation}\label{eq: gentle relations}
    R(T) = \{ \varepsilon_k^2 \mid \text{$k\in T$ pending} \} \cup \{ a\cdot b \mid h(b) = t(a)\ \text{and}\  a, b\in \Delta\}
\end{equation}
where the second set runs over all regular triangles of $T$.

\begin{definition}\label{def: gentle algebra of orbifold}
    For any triangulation $T$ of an orbifold $\mathcal O$, we define the $\Bbbk$-algebra $\mathcal P(T)$ to be the quotient $\Bbbk Q(T)/I(T)$.
\end{definition}

\begin{remark}
It is not hard to see that $\mathcal P(T)$ is finite-dimensional over $\Bbbk$ and \emph{gentle} (as defined in \cite{AS87}). The gentleness of $P(T)$ will play a role only in Section \ref{section: arc representation}.
\end{remark}

\begin{example}\label{ex: digon two orbifold points}
We exhibit below the quiver $Q(T)$ of a triangulation $T$ of a digon with two orbifold points. The ideal $I$ is  $\langle \varepsilon_2^2, \varepsilon_3^2, ba, cb, ac \rangle$.
\[
    \begin{tikzpicture}
        \node[] at (-2, 0.2){$T = $};
        \filldraw[fill=gray!20, thick] (0, 0.2) circle (1.2);
        \filldraw[fill=gray!20, thick](0, -0.1) circle (0.9);
        \draw[] (-0.4, -0.2) node[]{$\star$};
        \draw[] (0.4, -0.2) node[]{$\star$};
        \draw[thick] plot [smooth cycle] coordinates {(0,-1) (-0.54, -0.27) (-0.5, 0) (-0.26, -0.13)};
        \draw[thick] plot [smooth cycle] coordinates {(0,-1) (0.54, -0.27) (0.5, 0) (0.26, -0.13)};
        \draw[] (0, 0.6) node[]{\small $1$};
        \draw[] (-0.7, 0) node[]{\small $2$};
        \draw[] (0.7, 0) node[]{\small $3$};
        \filldraw[black] (0, -1) circle (1.5pt);
        \filldraw[black] (0, 1.4) circle (1.5pt);
        \node[] at (3.5, 0.2){$Q(T) = $};
        \node[] at (6, 0.2){\begin{tikzcd}[column sep = small]
                & 1 \ar[dl, swap, "b"] & \\
                2 \ar[loop left, leftarrow, "\varepsilon_2"] \ar[rr, swap, "c"] & & 3  \ar[loop right, leftarrow, "\varepsilon_3"]\ar[ul, swap, "a"]
            \end{tikzcd}};    
    \end{tikzpicture}
\]
\end{example}

\subsection{Modulated graphs}\label{subsection: modulated graph}

We introduce yet more structures to our construction, which will be essential for what follows.

For each $i\in Q_0(T)$, let $H_i = \Bbbk [\varepsilon_i]/\varepsilon_i^{d_i}$ where $(d_i)_i$ is the symmetrizer, that is $d_i = 1$ for $i$ ordinary and $d_i = 2$ for $i$ pending. For $(i, j)\in Q_0(T)^2$, denote $E_{i,j} = \{a\in Q_1(T)\mid t(a)=j,\, h(a)=i\}$. When $i\neq j$, define the (free) $(H_i, H_j)$-bimodule
\[
    A(T)_{i, j} \coloneqq H_i\langle a\mid a\in E_{i,j}\rangle H_j.
\]
In other words, it is freely generated by all arrows from $j$ to $i$ as an $(H_i, H_j)$-bimodule. It is easy to see that $A(T)_{i,j}$ embeds in $\mathcal P(T)$ as a sub-bimodule.

We also define the (free) $(H_j, H_i)$-bimodule
\[
    \Delta(T)_{i, j} \coloneqq H_j\langle \bar a \mid a\in E_{i,j} \rangle H_i,
\]
where $\bar a$ is a formal symbol viewed as the reversed arrow of $a$.

The module $A(T)_{i,j}$ is free over $H_i$ of rank $d_j|E_{i,j}|$ with basis $L = \{a, \dots, a\varepsilon_j^{d_j-1}\mid a\in E_{i,j}\}$ and free over $H_j$ of rank $d_i|E_{i,j}|$ with basis $R = \{a, \dots, \varepsilon_i^{d_i-1} a\mid a\in E_{i,j}\}$. Consider $(H_j, H_i)$-bimodules
\[
    \Hom_{H_i}({A(T)_{i, j}}, H_i)\quad \text{and} \quad \Hom_{H_j}({A(T)_{i, j}}, H_j).
\]
They have dual bases $\{a^*_L, \dots, (a\varepsilon_j^{d_j-1})^*_L\mid a\in E_{i,j}\}$ (with respect to $L$) and $\{a^*_R, \dots, (\varepsilon_i^{d_i-1}a)^*_R\mid a\in E_{i,j}\}$ (with respect to $R$). These two bimodules are isomorphic. In fact, there are bimodule isomorphisms
\begin{align}
    \rho \colon {\Delta(T)_{i, j}} \rightarrow \Hom_{H_i}({A(T)_{i, j}}, H_i), \quad \varepsilon_j^{d_j-1-f}\bar a \mapsto (a \varepsilon_j^{f})^*_L; \label{eq: map rho left dual}\\
    \lambda \colon {\Delta(T)_{i, j}} \rightarrow \Hom_{H_j}({A(T)_{i, j}}, H_j), \quad \bar a \varepsilon_i^{d_i-1-f} \mapsto (\varepsilon_i^f a)^*_R. \label{eq: map lambda right dual}
\end{align}

\begin{example}
In \Cref{ex: digon two orbifold points}, we have
    \[
        A(T)_{1, 3} \cong \Bbbk \otimes_\Bbbk \Bbbk[\varepsilon_3]/(\varepsilon_3^2),\quad A(T)_{2,1} \cong \Bbbk[\varepsilon_2]/(\varepsilon_2^2) \otimes_\Bbbk \Bbbk, \quad A(T)_{3, 2} \cong \Bbbk[\varepsilon_3]/(\varepsilon_3^2)\otimes_\Bbbk \Bbbk[\varepsilon_2]/(\varepsilon_2^2).      
    \]
Notice that $c\in {A(T)_{3,2}}$ is a generator (of the bimodule). As an $H_3$-module of rank 2, $A(T)_{3, 2}$ has a basis $\{c, c \varepsilon_2\}$. Through $\rho$, the element $\bar c\in \Delta(T)_{3, 2}$ then defines an element in $\Hom_{H_3}(A(T)_{3, 2}, H_3)$ by $\bar c(c) = 0$ and $\bar c(c \varepsilon_2) = 1$.
\end{example}

\subsection{Flips of triangulations}\label{subsection: flip bimodules}

We now take a closer look at how the bimodules $\{ A(T)_{i, j} \mid i, j \in T, i\neq j \}$ and $\{ A(T')_{i, j} \mid i,j \in T', i\neq j\}$ are related, where $T' = \mu_k(T)$ is the flip of $T$ at some arc $k$. We assume that the arcs in $T$ are indexed by $\{1, \dots, n\}$ and so are those in $T'$.

In this subsection, we aim to specify for any distinct $i,j\in \{1,\dots, n\}$ other than $k$, an $(H_i,H_j)$-bimodule homomorphism
\[
    \theta_k(i,j)\colon A(T')_{i,j} \rightarrow e_i\mathcal{P}(T)e_j,
\]
where $e_v$ denotes the primitive idempotent associated with $v\in \{1, \dots, n\}$.

In view of flips of triangulations, it is straightforward to check that unless both $A(T)_{k,j}\neq 0$ and $A(T)_{i,k}\neq 0$, one always has $A(T)_{i,j} = A(T')_{i,j}$. In this situation, we simply define $\theta_k(i,j)$ to be the natural embedding
\[
    A(T')_{i,j} = A(T)_{i,j} \hookrightarrow e_i\mathcal P(T)e_j.
\]

Now we assume both $A(T)_{k,j}\neq 0$ and $A(T)_{i,k}\neq 0$. In this situation, we construct $\theta_k(i,j)$ by distinguishing two cases according to whether $k$ is pending.

The arc $k$ is confined in a unique (generalized) quadrilateral $S(k)$. The arcs $i$ and $j$ must appear as two sides of $S(k)$. If $k$ is ordinary, then $S(k)$ consists of two regular triangles whereas if $k$ is pending, $S(k)$ contains a regular and a singular triangle. In any case, the part of $\mathcal P(T)$ and that of $\mathcal P(T')$ outside $S(k)$ are identical. Inside $S(k)$, we identify in the obvious way that
\[
    \Delta (T)_{k, j} = A(T')_{j, k} \quad \text{and} \quad \Delta (T)_{i, k} = {A(T')_{k, i}}.
\]

\emph{Case 1.} Suppose that $k$ is pending. The flip at $k$ occurs inside $S(k)$ and is depicted below.
\begin{center}
    \begin{tikzpicture}
        \filldraw[fill=gray!20, thick](0,0) circle (1);
        \filldraw[black] (0, 1) circle (1.5pt);
        \filldraw[black] (0, -1) circle (1.5pt);
        \draw[] (0, 0) node[]{$\star$};
        \draw[thick] plot [smooth cycle] coordinates {(0,-1) (-0.2, 0) (0, 0.2) (0.2, 0)};
        \draw[] (0, 0.5) node[]{\small $k$};
        \draw[] (-0.8, 0) node[]{\small $j$};
        \draw[] (0.8, 0) node[]{\small $i$};
        \draw[->] (1.5, 0) -- (2.5, 0) node[midway, above]{$\mu_k$};

        \begin{scope}[shift={(4,0)}]
        \filldraw[fill=gray!20, thick](0,0) circle (1);
        \filldraw[black] (0, 1) circle (1.5pt);
        \filldraw[black] (0, -1) circle (1.5pt);
        \draw[] (0, 0) node[]{$\star$};
        \draw[thick] plot [smooth cycle] coordinates {(0, 1) (-0.2, 0) (0, -0.2) (0.2, 0)};
        \draw[] (0, -0.5) node[]{\small $k$};
        \draw[] (-0.8, 0) node[]{\small $j$};
        \draw[] (0.8, 0) node[]{\small $i$};
        \end{scope}
    \end{tikzpicture}
\end{center}
The associated local quivers (with possible loops omitted at $j$ or $i$) are
\begin{center}
\begin{tikzpicture}[scale=0.8]
    \begin{scope}[shift={(0,-0.5)}]
        \node[] (k) at (0, 0) {$k$};
        \node[] (j) at (-1, {sqrt(3)}) {$j$};
        \node[] (i) at (1, {sqrt(3)}) {$i$};

        \draw[->] (k) to node[right]{\scriptsize $b$} (i);
        \draw[->] (i) to node[above]{\scriptsize $c$} (j);
        \draw[->] (j) to node[left]{\scriptsize $a$} (k);

        \draw[->] (k) to[out=240, in=300, looseness=6] node[below]{\scriptsize $\varepsilon_k$} (k);
    \end{scope}

    \draw[->] (2,0) to node[above]{$\mu_k$} (3,0);

    \begin{scope}[shift={(5,0.5)}]
        \node[] (k) at (0, 0) {$k$};
        \node[] (j) at (-1, -{sqrt(3)}) {$j$};
        \node[] (i) at (1, -{sqrt(3)}) {$i$};

        \draw[->] (i) to node[right]{\scriptsize $\bar b$} (k);
        \draw[->] (j) to node[below]{\scriptsize $c'$} (i);
        \draw[->] (k) to node[left]{\scriptsize $\bar a$} (j);

        \draw[->] (k) to[out=60, in=120, looseness=6] node[above]{\scriptsize $\varepsilon_k$} (k);
    \end{scope}
\end{tikzpicture}
\end{center}
By examining possible triangles glued to $S(k)$, one sees that the only arrow from $i$ to $j$ in $Q(T')$ is $c'$. Thus $A(T')_{i,j} = H_i\langle c'\rangle H_j$, the free $(H_i,H_j)$-bimodule generated by $c'$. Now define
\begin{equation}\label{eq: def theta k pending}
    \theta_k(i,j)(c') = b\varepsilon_k a.
\end{equation}

\emph{Case 2.} Suppose that $k$ is ordinary. The flip at $k$ inside $S(k)$ is depicted below.

\begin{center}
    \begin{tikzpicture}[scale=0.8]
    \fill[gray!20] (0,0) rectangle (2,2);
    \draw[thick] (0,2) -- (2,0);
    \node at (1.2,1.1) {$k$};

    \fill[gray!20] (5,0) rectangle (7,2);
    \draw[thick] (5,0) -- (7,2);
    \node at (6.2,0.9) {$k$};

    \draw[thick] (0,0) rectangle (2,2); 
    \draw[thick] (5,0) rectangle (7,2); 

    \node[below] at (1,0) {$v$}; 
    \node[left] at (0,1) {$u$}; 
    \node[above] at (1,2) {$q$}; 
    \node[right] at (2,1) {$p$}; 

    \node[below] at (6,0) {$v$}; 
    \node[left] at (5,1) {$u$}; 
    \node[above] at (6,2) {$q$}; 
    \node[right] at (7,1) {$p$}; 

    \draw[->] (3,1) -- (4,1) node[midway, above] {$\mu_k$};
    \end{tikzpicture}
\end{center}
The associated local quivers (with possible loops omitted) are
\begin{center}
    \begin{tikzpicture}[scale=1.2]
        \begin{scope}[shift={(0,0)}]
            \node (k) at (0, 0) {$k$};
            \node (p) at (1, 0) {$p$};
            \node (q) at (0, 1) {$q$};
            \node (u) at (-1, 0) {$u$};
            \node (v) at (0, -1) {$v$};

            \draw[->] (k) to node[below]{\scriptsize $b_1$} (p);
            \draw[->] (p) to node[right]{\scriptsize $c_{(q,p)}$} (q);
            \draw[->] (q) to node[left]{\scriptsize $a_1$} (k);
            \draw[->] (v) to node[right]{\scriptsize $a_2$} (k);
            \draw[->] (k) to node[above]{\scriptsize $b_2$} (u);
            \draw[->] (u) to node[left]{\scriptsize $c_{(v,u)}$} (v);
        \end{scope}

        \draw[->] (2,0) to node[above]{$\mu_k$} (3,0);

        \begin{scope}[shift={(5,0)}]
            \node (k) at (0, 0) {$k$};
            \node (p) at (1, 0) {$p$};
            \node (q) at (0, 1) {$q$};
            \node (u) at (-1, 0) {$u$};
            \node (v) at (0, -1) {$v$};

            \draw[->] (k) to node[right]{\scriptsize $\bar{a}_1$} (q);
            \draw[->] (p) to node[above]{\scriptsize $\bar{b}_1$} (k);
            \draw[->] (k) to node[left]{\scriptsize $\bar{a}_2$} (v);
            \draw[->] (v) to node[right]{\scriptsize $c'_{(p, v)}$} (p);
            \draw[->] (u) to node[below]{\scriptsize $\bar{b}_2$} (k);
            \draw[->] (q) to node[left]{\scriptsize $c'_{(u,q)}$} (u);
        \end{scope}
    \end{tikzpicture}
\end{center}

\begin{remark}\label{rmk: possible arrow from q to i}
    (1) We observe that in either $Q(T)$ or $Q(T')$, there can be no arrow from one side of $S(k)$ to the next side in clockwise order. Indeed, such an arrow, say from $u$ to $q$, would necessarily come from an additional triangle containing the sides $u$ and $q$ such that gluing this triangle to $S(k)$ would make the upper left marked point as an interior marked point, a situation excluded throughout. However, arrows in the counterclockwise direction may occur, for example from $q$ to $u$. (2) Notice also that $u$ and $p$ may coincide or that $v$ and $q$ may coincide. 
\end{remark}

The arcs $i$ and $j$ are chosen from $\{u, v, p, q\}$. By the assumption of this case, $(i, j)$ can be $(u, q)$, $(u, v)$, $(p, q)$, $(p, v)$.

Let $(i, j) = (u, q)$. By the analysis in \Cref{rmk: possible arrow from q to i}, we can express 
\[
    A(T')_{u,q} = H_u\langle c'_{(u,q)} \rangle H_q \oplus B,
\]
where $B$ is an $(H_u, H_q)$-bimodule arising from the part of $Q(T)$ (equivalently $Q(T')$) lying outside $S(k)$. In particular, $B$ is also a direct summand of $A(T)_{u,q}$. Now we define $\theta_k(u,q)$ so that
\[
    \theta_k(u, q)(c'_{(u, q)}) = b_2a_1 \quad \text{and} \quad \theta_k(u, q)\vert_B = \mathrm{id}.
\]

Let $(i, j) = (u, v)$. When $v = q$, then $\theta_k(u,v) = \theta_k(u, q)$. If $v\neq q$, according to \Cref{rmk: possible arrow from q to i}, we have $A(T')_{(u,v)} = 0$ and thus $\theta_k(u,v) = 0$.

For $(i, j) = (p, v)$, the map $\theta_k(p, v)$ is constructed in the exact same way as $\theta_k(u, q)$. In particular,
\begin{equation}\label{eq: def theta ordinary}
    \theta_k(p, v)(c'_{(p,v)}) = b_1a_2.
\end{equation}
Once $\theta_k(p, v)$ is constructed, $\theta_k(p, q)$ is defined in the same way as $\theta_k(u, v)$.

\section{Mutations of representations}\label{section: mutation}
Let $T$ be a triangulation of an orbifold $\mathcal O$ and $\mathcal P(T)$ be the finite-dimensional algebra associated to $T$ as defined in the last section. The goal of this section is to define \emph{mutations} of representations of $\mathcal P(T)$. For a vertex $k$ of $Q(T)$, the \emph{mutation} $\mu_k$ is an operation turning $M\in \modu \mathcal P(T)$ into a representation $\mu_k(M)$ of $\mathcal P(\mu_k(T))$.

\subsection{\texorpdfstring{Representations of $\mathcal P(T)$}{Modules over P(T)}}\label{subsection: reps of P(T)}

By a \emph{representation} of $\mathcal P(T)$, we mean a finitely generated left $\mathcal P(T)$-module; all of such form an abelian category $\modu \mathcal P(T)$. The data of a module $M\in \modu \mathcal P(T)$ can be described as follows. 
\begin{itemize}
    \item For each vertex $i\in Q_0(T)$, there is an associated vector space $M(i) \coloneqq e_iM$ where $e_i$ is the primitive idempotent in $\Bbbk Q$ of vertex $i$.
    \item For each arrow $a \in Q_1(T)$, there is a linear map $M(a) \colon M(t(a)) \rightarrow M(h(a))$.
    \item The linear maps $\{M(a)\mid a \in Q_1(T)\}$ satisfy relations in $R(T)$, that is, for any $i\in Q_0(T)$ pending, $M(\varepsilon_i)^2 = 0$ and for any arrow $a$ following arrow $b$ in a regular triangle, $M(a)M(b) = 0$.
\end{itemize}

There is yet another equivalent way to describe the module structure of $M$ in terms of modulated graphs. This point of view will become dominant in the remaining sections. Recall that we have defined $(H_i, H_j)$-bimodules $A(T)_{i, j}$ in \Cref{subsection: modulated graph}. Now a module $M\in \modu \mathcal P(T)$ is determined by the data including
\begin{itemize}
    \item for each $i\in Q_0(T)$, a finitely generated $H_i$-module $M(i)$, and
    \item for any (ordered) pair of vertices $(i,j)\in Q_0(T)^2$ with $i \neq j$, an $H_i$-homomorphism
    \[
        M(i,j) \colon A(T)_{i, j} \otimes_{H_j} M(j) \rightarrow M(i).
    \]
\end{itemize}
To recover $M(a)$ for $a\in Q_1(T)$, simply let $M(\varepsilon_i)$ act on $M(i)$ by the action of $\varepsilon_i$ through the $H_i$-module structure, and when $h(a)\neq t(a)$ and $m\in M(t(a))$, let
\[
    M(a)(m) = M(h(a),t(a))(a\otimes m).
\]

The following notion will be essential for what follows.

\begin{definition}[Locally free representation]\label{def: loc free reps}
    We say that a representation $M$ of $\mathcal P(T)$ is \emph{locally free} if $M(i)$ is a free $H_i$-module for any $i\in Q_0(T)$, that is, $M(i) = H_i^{\oplus r}$ for some $r\in \mathbb N$.
\end{definition}

\subsection{Dualities between bimodules}

Consider a more general situation. Let $A$ be an $(R, S)$-bimodule such that it is free of finite rank both as a left $R$-module and as a right $S$-module. Let $M\in \modu R$ and $N\in \modu S$. The tensor-hom adjunction gives the following canonical isomorphism
\[
    \Hom_{R} (A \otimes_{S} N, M) \cong \Hom_{S} (N, \Hom_{R}(A, M)).
\]
Through this isomorphism, any $R$-morphism $f \colon A\otimes_S N \rightarrow M$ corresponds to an $S$-morphism
\[
    {^\circ f} \colon N \rightarrow \Hom_R(A, M),\quad {^\circ f}(n)(a) = f(a\otimes n).
\]

Furthermore, since $A$ is a free $R$-module, we have another canonical isomorphism
\[
    \Hom_{R}({A}, {M}) \cong \Hom_{R}({A}, R)\otimes_{R} M.
\]
Then ${^\circ f}$ can be further identified with an element in $\Hom_S(N, \Hom_R(A, R)\otimes_R M)$, which we denote again by ${^\circ f}$. Given an $R$-basis ${a_1, \dots a_\ell}$ for $A$, write the dual basis as ${a_1^*, \dots, a_\ell^*}$. For $n\in N$, we have
\begin{equation}\label{eq: dual map in basis}
    {^\circ f}(n) = \sum_{i=1}^\ell a_i^*\otimes f(a_i\otimes n) \in \Hom_R(A, R)\otimes_R M.
\end{equation}

Recall that for $M\in \modu \mathcal P(T)$ and $(i, j)\in Q_0(T)^2$ with $i\neq j$, there is the structure morphism $M(i, j)\colon A(T)_{i, j}\otimes_{H_j} M(j) \rightarrow M(i)$. It corresponds to the $H_j$-homomorphism
\begin{equation}\label{eq: dual structure map}
    ^\circ M(i, j) \colon M(j) \rightarrow \Hom_{H_i}(A(T)_{i, j}, H_i) \otimes_{H_i} M(i),
\end{equation}
where the codomain can be identified with $\Delta (T)_{i, j} \otimes_{H_i} M(i)$ through $\rho$ in (\ref{eq: map rho left dual}).

\begin{example} \label{ex: duality between bimodule}
    Let $\mathcal O$ be a monogon with two orbifold points. Then any triangulation $T$ induces $Q(T)$ of the form $\begin{tikzcd}
        1 \ar[r, "a"] \ar[loop left, "\varepsilon_1"] & 2 \ar[loop right, "\varepsilon_2"]
    \end{tikzcd}$. 
    Let $M\in \modu \mathcal P(T)$. The map $M(2, 1) \colon  A(T)_{2, 1} \otimes_{H_1} M(1) \rightarrow M(2)$ is given by $a\otimes m \mapsto M(a)(m)\in M(2)$ for $m\in M(1)$, whereas the map $^\circ M(2,1) \colon \, M(1) \rightarrow \Delta (T)_{2, 1} \otimes_{H_2} M(2)$ is given by for any $m\in M(1)$,
    \[
        ^\circ M(2, 1) (m) =  \varepsilon_1\bar a \otimes M(a)(m) + \bar a \otimes M(a\varepsilon_1)(m).
    \]
\end{example}

\subsection{The construction of mutation}\label{subsection: construction of mutation}

We now present the construction of the mutation $\overline M = \mu_k(M) \in \modu \mathcal P(\mu_k(T))$ for $M\in \modu \mathcal P(T)$. Notice that the algebras $H_i$ do not change.

\textbf{Step I. Preparations.}

For $k\in T$, let $T(k, -) \coloneqq \{j\in T \mid b_{kj}>0\}$ and $T(-, k) \coloneqq \{i\in T \mid b_{ik} > 0\}$ be respectively incoming and outgoing vertices in $Q(T)$ with respect to $k$. We form two $H_k$-modules
\begin{equation}\label{eq: def m in and m out}
    M_{\mathrm{in}}(k) = \bigoplus_{j\in T(k, -)} A(T)_{k, j} \otimes_{H_j} M(j)\quad \text{and} \quad M_{\mathrm{out}}(k) = \bigoplus_{i\in T(-, k)} \Delta(T)_{i, k} \otimes_{H_i} M(i),
\end{equation}
and the following diagram of $H_k$-modules
\begin{equation}\label{eq: mutation diagram}
    \begin{tikzcd}[column sep=small]
        & M(k) \ar[rd, "\beta_{k;M}"] & \\
        M_{\mathrm{in}}(k) \ar[ur, "\alpha_{k;M}"] & & M_{\mathrm{out}}(k) \ar[ll, "\gamma_{k;M}"].
    \end{tikzcd}
\end{equation}
We write $\alpha_k = \alpha_{k;M}$, $\beta_k = \beta_{k;M}$, $\gamma_k = \gamma_{k;M}$ whenever $M$ is understood.

The map $\alpha_k$ has components $\alpha_k(k,j) = M(k, j) \colon A(T)_{k, j} \otimes_{H_j} M(j) \rightarrow M(k)$ such that for any $a\in A(T)_{k, j}$ and $m\in M(j)$,
\[
    M(k, j)(a\otimes m) \coloneqq M(a)(m) \in M(k).
\]
The map $\beta_k$ has components $\beta_k(i,k) = {^\circ} M(i, k) \colon M(k) \rightarrow \Delta(T)_{i, k} \otimes_{H_i} M(i)$. Using \eqref{eq: dual map in basis} and \eqref{eq: dual structure map}, we can express for any $m\in M(k)$,
\begin{equation}\label{eq: beta uniform}
    ^\circ M(i, k)(m) = \sum_{b\in E_{i,k}}\sum_{f = 0}^{d_k - 1} \varepsilon_k^f \bar b \otimes M(b \varepsilon_k^{d_k - 1 - f})(m).
\end{equation}
Notice that when $k$ is pending, the set $E_{i,k}$ contains a unique arrow for $i\in T(-,k)$. So more explicitly
\begin{equation}\label{eq: expression of beta}
    ^\circ M(i, k)(m) = \begin{dcases}
        \varepsilon_k \bar b \otimes M(b)(m) + \bar b \otimes M(b\varepsilon_k)(m) \quad &\text{if $k$ is pending,}\\
        \sum_{b\in E_{i,k}}\bar b \otimes M(b)(m) \quad &\text{if $k$ is ordinary},
        \end{dcases}
\end{equation}
where, if $k$ is pending, $b$ denotes the unique element in $E_{i,k}$.

Now we define $\gamma_k$ via the triangulation $T$. Let $i\in T(-, k)$ and $b\in E_{i,k}$. The arrow $b$ arises from a unique triangle formed by $i$, $k$ and a third side. If the third side belongs to the boundary, define $\gamma_k(\bar b\otimes m) = 0$ for any $m\in M(i)$. If it is an arc $j\in T(k, -)$, consider the associated local quiver (with possible loops omitted):
\begin{equation}\label{eq: local quiver gamma}
    \begin{tikzcd}[column sep = small]
        & k \ar[rd, "b"] & \\
        j \ar[ru, "a"] & & i \ar[ll, "c"]
    \end{tikzcd}
\end{equation}
In this situation, define for any $m\in M(i)$,
\begin{equation} \label{eq: def of gamma}
    \gamma_k(\bar b \otimes m) \coloneqq a\otimes M(c)(m) \in {_kA(T)_j\otimes_{H_j} M(j)}.
\end{equation}
Since we have the decomposition 
\begin{equation}\label{eq: express delta tensor M}
    \Delta(T)_{i,k}\otimes_{H_i}M(i) = \bigoplus_{b\in E_{i,k}}H_k(\bar b\otimes M(i)),
\end{equation}
the formula \eqref{eq: def of gamma} determines $\gamma_k$ by running through all $i\in T(-,k)$. For later use, we denote the components of $\gamma_k$ as
\[
    \gamma_k(j, i) \colon \Delta(T)_{i,k}\otimes_{H_i}M(i) \rightarrow A(T)_{k,j}\otimes_{H_j}M(j).
\]

We will show the following two lemmas in \Cref{subsection: alpha beta gamma}.

\begin{lemma}\label{lemma: relations of alpha beta gamma}
    For any $M\in \modu \mathcal P(T)$, the maps $\alpha_k, \beta_k, \gamma_k$ satisfy
    \[
        \alpha_k \gamma_k = 0 \quad \text{and} \quad \gamma_k \beta_k = 0.
    \]
\end{lemma}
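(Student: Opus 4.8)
The plan is to verify the two identities $\alpha_k \circ \gamma_k = 0$ and $\gamma_k \circ \beta_k = 0$ componentwise, reducing everything to the local picture inside the (generalized) quadrilateral $S(k)$ around $k$, exactly as set up in \Cref{subsection: flip bimodules}. For each identity I would fix indices and trace a generator through the defining formulas for $\alpha_k$ (given by the structure maps $M(k,j)$), $\beta_k$ (given by the adjoint maps ${}^\circ M(i,k)$), and $\gamma_k$ (given by \eqref{def: definition gamma}), and show that the output lands in the span of a relation of $I(T)$ applied to $M$, hence is zero.

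First I would treat $\alpha_k \circ \gamma_k = 0$. A component of this composite is $\alpha_k(j',-) \circ \gamma_k(j,i) \colon \Delta(T)_{i,k}\otimes M(i) \to M(k)$; by the definition of $\gamma_k$ this is nonzero only when $i,j$ lie in a common triangle of $T$, giving the local quiver $j \xrightarrow{a} k \xrightarrow{b} i$ with $c\colon i\to j$, and only the $j'=j$ component of $\alpha_k$ survives. Applying the formulas, $\bar b \otimes m \mapsto a\otimes M(c)(m) \mapsto M(a)(M(c)(m)) = M(ac)(m)$, and since $ac$ (with whatever power of $\varepsilon$ inserted in the pending case) is a path that contains a composition $a\cdot c$ of two arrows consecutive in the triangle — precisely a generator of the gentle ideal $I(T)$ — this vanishes because $M$ is a $\mathcal P(T)$-module. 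The pending case requires keeping track of the extra $H_k = \Bbbk[\varepsilon_k]/\varepsilon_k^2$ factors coming from the sums in $\beta_k$ and from the bimodule bases, but the same relation $ac=0$ (together with $\varepsilon_k^2=0$) kills every term.

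Next I would treat $\gamma_k \circ \beta_k = 0$. A component is $\gamma_k(j,i)\circ {}^\circ M(i',k)\colon M(k)\to {}_k A(T)_j\otimes M(j)$, again nonzero only when $i'=i$ and $i,j$ share a triangle with arrows $a\colon j\to k$, $b\colon k\to i$, $c\colon i\to j$. Using the explicit formula for ${}^\circ M(i,k)$, a vector $m\in M(k)$ maps (in the pending case) to $\varepsilon_k\bar b \otimes M(b)(m) + \bar b\otimes M(b\varepsilon_k)(m)$, and then $\gamma_k(j,i)$ sends this to $a\otimes M(c)(M(b)(m))$ plus the $\varepsilon_k$-twisted analogue, i.e. to (sums of) $a\otimes M(cb)(m)$. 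Since $cb$ is again a composition of two arrows consecutive in the triangle, it is a generator of $I(T)$, so $M(cb)=0$ and the whole expression vanishes; in the ordinary case there is just the single term $a\otimes M(cb)(m)=0$.

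The main obstacle I anticipate is bookkeeping rather than conceptual: one must be careful about which arrow $c$ arises (the convention in \eqref{def: definition gamma} that $c\colon i\to j$ comes from the \emph{same} triangle), about the directions of the bimodule actions, and — in the pending case — about correctly expanding ${}^\circ M(i,k)$ and $\gamma_k$ over the $H_k$-bases so that every term of the sum is seen to be of the form $M(\text{path through a forbidden }2\text{-path})(m)$ or involves $\varepsilon_k^2$. Once the local quiver is drawn and the formulas are substituted, each identity collapses to one of the gentle relations $ac=0$ or $cb=0$ (or $\varepsilon_k^2=0$), so I expect the proof to be short modulo this careful matching of conventions with \Cref{subsection: flip bimodules}.
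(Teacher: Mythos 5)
Your proposal is correct and follows essentially the same route as the paper: reduce to the component $\gamma_k(j,i)$ supported on the triangle containing $k$, trace generators through the explicit formulas for $\alpha_k,\beta_k,\gamma_k$, and observe that each composite lands on a relation $a\cdot c=0$ or $c\cdot b=0$ of $I(T)$. The only cosmetic difference is that you invoke $\varepsilon_k^2=0$ in the pending case, whereas the paper's computation shows the triangle relations alone already kill every term of the $H_k$-expansion of $\beta_k$.
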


\begin{lemma}\label{lemma: image gamma free}
    The $H_k$-morphism $\gamma_k \colon M_{\mathrm{out}}(k) \rightarrow M_{\mathrm{in}}(k)$ is between free $H_k$-modules and the image $\image \gamma_k$ is free.
\end{lemma}

\textbf{Step II. $H_i$-modules and structure morphisms.}

To construct the mutation $\overline M = \mu_k(M)\in \modu \mathcal P(T')$, as pointed out in \Cref{subsection: reps of P(T)} it suffices to determine
\begin{enumerate}
    \item for each $i\in T'$, an $H_i$-module $\overline M(i)$;
    \item for any pair $(i, j)$ with $i\neq j$, an $H_j$-morphism $\overline M(i, j) \colon A(T')_{i, j}\otimes M(j)\rightarrow M(i)$ or equivalently ${^\circ}\overline M(i, j)\colon M(j) \rightarrow \Delta(T')_{i, j} \otimes M_i$.
\end{enumerate}

For (1), we set $\overline M(i) = M(i)$ for any $i\neq k$ and
\[
    \overline M(k) \coloneqq \frac{\ker \gamma_k}{\image \beta_k} \oplus \image \gamma_k \oplus \frac{\ker \alpha_k}{\image \gamma_k}.
\]
Note that the inclusions $\image \beta_k \subset \ker \gamma_k$ and $\image \gamma_k \subset \ker \alpha_k$ are implied by \Cref{lemma: relations of alpha beta gamma}.

For (2), if the pair $(i, j)$ is none of the cases
\begin{itemize}
    \item [(A)] one of $i$ and $j$ is $k$;
    \item [(B)] $j\in T(k, -)$ and $i\in T(-, k)$,
\end{itemize}
we set $\overline M(i, j) \coloneqq M(i, j)$ as in this situation $A(T)_{i,j}=A(T')_{i,j}$ (see \Cref{subsection: flip bimodules}). For case (B), that is if $j\in T(k, -)$ and $i\in T(-, k)$, we define
\begin{equation}\label{eq: def struct morph using theta}
    \overline M(i, j)\colon A(T')_{i, j} \otimes_{H_j} \overline M(j) \rightarrow \overline M(i),\quad a\otimes m\mapsto M(\theta_k(i, j)(a))(m),
\end{equation}
where the $(H_i, H_j)$-morphism
\begin{equation*}\label{eq: theta}
    \theta_k(i, j) \colon A(T')_{i, j} \rightarrow e_i\mathcal{P}(T)e_j
\end{equation*}
is constructed in \Cref{subsection: flip bimodules}.

Now it only remains to define the structure morphisms for case (A), i.e., one of $i$ and $j$ is $k$, the $H_k$-morphisms
\[
    \overline M(k, i) \colon A(T')_{k, i} \otimes_{H_i} M(i) \rightarrow \overline M(k) \quad \text{and} \quad {^\circ \overline M}(j, k) \colon \overline M(k) \rightarrow \Delta (T')_{j, k} \otimes_{H_j} M(j).
\]
Notice that for $T' = \mu_k(T)$ we have identifications
\[
    A(T')_{k, i} = \Delta(T)_{i, k} \quad \text{and} \quad A(T)_{k, j} = \Delta(T')_{j, k}.
\]
We then define $\overline M(k, i)$ and $^\circ \overline M(j, k)$ to respectively be components of
\[
    \bar \alpha_k \colon \overline M_{\mathrm{in}}(k) \coloneqq M_{\mathrm{out}}(k) \rightarrow \overline M(k)\quad \text{and}\quad \bar \beta_k \colon \overline M(k) \rightarrow \overline M_{\mathrm{out}}(k) \coloneqq M_{\mathrm{in}}(k),
\]
which are to be defined as follows.

We borrow the notations from \cite[(10.8), (10.9)]{derksen2008quivers} that whenever there is a pair $U_1\subset U_2$ of modules, denote by $\iota \colon U_1 \rightarrow U_2$ the inclusion and by $\pi \colon U_2 \rightarrow U_2/U_1$ the natural quotient map. We will use abusively the notations $\iota$ and $\pi$ when their domains and codomains are unambiguous.

Since by \Cref{lemma: image gamma free}, the $H_k$-module $\image \gamma_k$ is free (and thus so is $\ker \gamma_k$), we can choose the following \emph{splitting data}:
\begin{enumerate}\label{splitting data}
    \item an $H_k$-morphism $\rho \colon M_{\mathrm{out}}(k) \rightarrow \ker \gamma_k$ such that $\rho \iota = \operatorname{id}_{\ker \gamma_k}$;
    \item an $H_k$-morphism $\sigma \colon \ker \alpha_k / \image \gamma_k \rightarrow \ker \alpha_k$ such that $\pi \sigma = \operatorname{id}_{\ker \alpha_k /\image \gamma_k}$.
\end{enumerate}
Now we define two $H_k$-morphisms
\[
    \bar \alpha_k = \begin{pmatrix}
    \pi \rho \\
    \gamma_k \\
    0
    \end{pmatrix}
    \quad \text{and} \quad \bar \beta_k = \begin{pmatrix}
        0 & \iota & \iota \sigma
    \end{pmatrix}.
\]
At this point the construction of $\mu_k(M) = \overline M$ is complete. The following proposition confirms that $\overline M$ is indeed a $\mathcal P(T')$-representation. The proof will be given in \Cref{subsection: proof define rep mutations}.

\begin{proposition}\label{prop: define rep mutations}
    The $H_i$-modules $\overline M(i)$ for each $i\in T'$ together with the structure morphisms $\overline M(i, j)$ or $^\circ \overline M(i, j)$ for $(i, j)\in (T')^2$ with $i\neq j$ indeed define a $\mathcal P(T')$-representation $\overline M$, that is, the structure morphisms satisfy the defining relations of $\mathcal P(T')$. 
\end{proposition}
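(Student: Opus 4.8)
The plan is to verify that the data $(\overline M(i),\ \overline M(i,j))$ obeys the defining relations of $\mathcal P(T')=\Bbbk Q(T')/I(T')$: that $\overline M(\varepsilon_\ell)^2=0$ for every pending $\ell\in T'$, and that $\overline M(a)\circ\overline M(b)=0$ for every pair of arrows $a,b$ consecutive inside a regular triangle of $T'$. Because $\overline M(i)=M(i)$ and $\overline M(i,j)=M(i,j)$ for every vertex and every pair not meeting the (generalized) quadrilateral $S(k)$, and because $\mathcal P(T)$ and $\mathcal P(T')$ coincide outside $S(k)$ by \Cref{subsection: flip bimodules}, every relation of $\mathcal P(T')$ not supported in $S(k)$ holds since $M$ is a $\mathcal P(T)$-module. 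So the task reduces to the finitely many relations of $\mathcal P(T')$ supported in $S(k)$, which I would handle separately for $k$ ordinary ($S(k)$ two regular triangles) and $k$ pending ($S(k)$ a regular triangle glued to a singular one), following \cite[Section~10]{derksen2008quivers}. One first checks that the new structure maps at $k$ -- the components of $\bar\alpha_k$ and $\bar\beta_k$ -- are genuine $H_k$-morphisms; this is where \Cref{lemma: image gamma} is used: $\image\gamma_k$ is free over the self-injective algebra $H_k$, hence projective and injective, so the short exact sequences $0\to\ker\gamma_k\to M_{\mathrm{out}}(k)\to\image\gamma_k\to 0$ and $0\to\image\gamma_k\to\ker\alpha_k\to\ker\alpha_k/\image\gamma_k\to 0$ split $H_k$-linearly, and the splitting data $\rho,\sigma$ can therefore be taken $H_k$-linear, so $\bar\alpha_k,\bar\beta_k$ are $H_k$-linear.

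The relation $\overline M(\varepsilon_\ell)^2=0$ is immediate for $\ell\neq k$, and for $\ell=k$ it holds because $\overline M(k)$ is, by construction, a direct sum of sub- and quotient-modules of free $H_k$-modules (the maps $\alpha_k,\beta_k,\gamma_k$ being $H_k$-linear), hence an $H_k$-module on which $\varepsilon_k$ squares to zero. For a composition relation I would fix a regular triangle of $T'$ in $S(k)$ and one of its consecutive pairs $(a,b)$. If both $a$ and $b$ are new arrows at $k$ (one a component of $\bar\beta_k$, one of $\bar\alpha_k$), a one-line computation with $\bar\alpha_k=(\pi\rho,\ \gamma_k,\ 0)^{\mathsf T}$ and $\bar\beta_k=(0,\ \iota,\ \iota\sigma)$ gives $\bar\beta_k\circ\bar\alpha_k=\gamma_k$, so $\overline M(a)\circ\overline M(b)$ is the matching piece of $\gamma_k$. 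For $k$ ordinary this is a component $\gamma_k(j',i')$ attached to arcs $i',j'$ that become co-triangular only after the flip, hence are not co-triangular in $T$, so it vanishes by (\ref{def: definition gamma}). For $k$ pending there is one such pair, with $i',j'$ co-triangular in $T$ and $\gamma_k$ itself nonzero; here I expect to use that, under the dualities $\rho,\lambda$ of (\ref{eq: map rho left dual})--(\ref{eq: map lambda right dual}), the undecorated arrow pairs with the top power $\varepsilon_k^{d_k-1}$, so that the coordinate of $\bar\beta_k\circ\bar\alpha_k(\bar b\otimes m)=\gamma_k(\bar b\otimes m)=a\otimes M(c)(m)$ carrying the $\varepsilon_k$-decoration in the ``wrong'' slot is $0$ (the other coordinate recovers $M(c)$, consistently with $\theta'_k(j,i)(c)=\bar a\varepsilon_k\bar b$).

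If instead the consecutive pair is one new arrow at $k$ and one ``composed'' arrow $c'$, whose action is $\overline M(i,j)(c'\otimes-)=M(\theta_k(i,j)(c'))$ with $\theta_k$ as in \Cref{subsection: flip bimodules} ($\theta_k(c')=b_ia_q$ or $b_pa_j$ for $k$ ordinary, $\theta_k(c')=b\varepsilon_k a$ for $k$ pending), the plan is to write $c'$ out through $\theta_k$, so that $\overline M(c')$ becomes $M$ of a path of $\mathcal P(T)$ factoring through $k$; composing with the neighbouring component of $\bar\alpha_k$ or $\bar\beta_k$ and simplifying with $\rho\iota=\operatorname{id}_{\ker\gamma_k}$, $\pi\sigma=\operatorname{id}_{\ker\alpha_k/\image\gamma_k}$, $\image\gamma_k\subseteq\ker\alpha_k$ and $\image\beta_k\subseteq\ker\gamma_k$ (\Cref{lemma: relations of alpha beta gamma}), the composite should collapse to $M$ of a path containing one of the gentle relators $ba$, $cb$, $ac$ of $\mathcal P(T)$, hence zero. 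Concretely, for $k$ ordinary one finds $\overline M(\bar b_i)\circ\overline M(c'_{(i,q)})=\bar\alpha_k(\bar b_i\otimes M(b_ia_q)(m))$, whose $\gamma_k$-component vanishes because $c_{(j,i)}b_i=0$ and whose $\ker\gamma_k/\image\beta_k$-component vanishes because $b_pa_q=0$ puts $\bar b_i\otimes M(b_ia_q)(m)$ into $\image\beta_k$; for $k$ pending, $\overline M(c')\circ\overline M(\bar a)$ vanishes by $ac=0$ and $ba=0$, and $\overline M(\bar b)\circ\overline M(c')$ by $cb=0$ and $ba=0$. Once all these checks are done, $\overline M$ is a $\mathcal P(T')$-representation.

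I expect the only real difficulty to be the pending case. There $d_k=2$, all maps in (\ref{eq: mutation diagram}) acquire $\varepsilon_k$-components, the composed arrows $\theta_k(c')=b\varepsilon_k a$ and $\theta'_k(c)=\bar a\varepsilon_k\bar b$ have interior $\varepsilon_k$'s, and translating a relation such as $\bar a\bar b=0$ into a statement about $\gamma_k$ must pass through the duality isomorphisms $\rho,\lambda$, whose built-in reversal of the $\varepsilon_k$-grading is precisely what makes the relevant coordinate vanish even though $\gamma_k\neq 0$. Carefully tracking these coordinates -- and checking, via \Cref{lemma: image gamma}, that the $H_k$-structure is respected throughout -- is the nontrivial ingredient; modulo that bookkeeping, each of the finitely many relations of $\mathcal P(T')$ in $S(k)$ reduces to one of the three gentle relations of $\mathcal P(T)$ already satisfied by $M$.
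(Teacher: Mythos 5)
Your proposal is correct and follows essentially the same approach as the paper: a direct, case-by-case verification (for $k$ pending versus ordinary) that the new structure maps satisfy the gentle relations of $\mathcal P(T')$ inside $S(k)$, reducing each via $\bar\beta_k\bar\alpha_k=\gamma_k$, $\alpha_k\gamma_k=0=\gamma_k\beta_k$, $\image\beta_k\subseteq\ker\bar\alpha_k$, and the grading-reversal in $\lambda$ to a relation $ba=cb=ac=0$ already holding in $M$. The paper simply carries out the matrix computations you sketch; your preliminary observations (that relations outside $S(k)$ are inherited, that $H_k$-linearity of the splitting data follows from freeness of $\image\gamma_k$, and that $\overline M(\varepsilon_k)^2=0$ is automatic) are correct and left implicit in the paper.
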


\subsection{Proof of \Cref{lemma: relations of alpha beta gamma} and \Cref{lemma: image gamma free}} \label{subsection: alpha beta gamma}

\begin{proof}[Proof of \Cref{lemma: relations of alpha beta gamma}]
    We first prove $\alpha_k \gamma_k = 0$. Let $i\in T(-, k)$ and $b\in E_{i,k}$. Suppose that $b$ arises from a triangle formed by $i$, $k$ and $j\in T(k, -)$; otherwise $\gamma_k(\bar b\otimes m) = 0$ for any $m\in M(i)$. Recall the quiver \eqref{eq: local quiver gamma} with arrows $a,b,c$. Then $\gamma_k(\bar b\otimes m) = a\otimes M(c)(m)$ for any $m\in M(i)$, and thus 
    \[
        \alpha_k \gamma_k(\bar b\otimes m) = \alpha_k(k,j)(a\otimes M(c)(m))=M(a)(M(c)(m))=0.
    \]
    The last equality is due to the relation $a\cdot c=0$ in the algebra $\mathcal{P}(T)$.

    Now we prove $\gamma_k \beta_k = 0$. It suffices to show $\gamma_k(\beta_k(i,k)(m))=0$ for any $i\in T(-,k)$ and any $m\in M(k)$. Recall from \eqref{eq: beta uniform} we have
    \[
        \beta_k(k, i) (m) = \sum_{b\in E_{i,k}}\sum_{t = 0}^{d_k - 1} \varepsilon_k^t \bar b \otimes M(b \varepsilon_k^{d_k - 1 - t})(m).
    \]
    For each $b\in E_{i,k}$, consider again the unique triangle with sides $i$ and $k$ that gives rise to $b$. If the third side belongs to the boundary, then $\gamma_k(\bar b) = 0$. If the third side is $j\in T(k, -)$, then using the quiver \eqref{eq: local quiver gamma} and the formula \eqref{eq: def of gamma}, we compute
    \[
        \gamma_k\left( \varepsilon_k^t\bar b \otimes M(b\varepsilon_k^{d_k-1-t})(m) \right) = \varepsilon_k^t a\otimes M(c)(M(b\varepsilon_k^{d_k-1-t})(m)).
    \]
    It vanishes because $M(c)M(b) = 0$ for any $\mathcal P(T)$-representation $M$. Combining the two cases, we obtain $\gamma_k(\beta_k(i,k)(m))=0$, completing the proof.
\end{proof}

\begin{proof}[Proof of \Cref{lemma: image gamma free}]
    The statement is only non-trivial when $k$ is pending. Being a pending arc, $k$ must be confined in a regular triangle with $i$ and $j$ as the other two sides. In view of the diagram \eqref{eq: local quiver gamma}, we have
    \[
        \gamma_k \colon H_k(\bar b\otimes M(i)) \rightarrow H_k(a\otimes M(j)), \quad \bar b\otimes m \mapsto a\otimes M(c)(m).
    \]
    Therefore the map $\gamma_k$ is between free $H_k$-modules and $\image \gamma_k = H_k(a\otimes M(c)(M(i))$, a free $H_k$-module of rank $\dim_\Bbbk a\otimes M(c)(M(i))$.
\end{proof}

\subsection{Proof of \Cref{prop: define rep mutations}}\label{subsection: proof define rep mutations}

Before proving \Cref{prop: define rep mutations}, we show

\begin{lemma}\label{lemma: ker bar alpha = im beta & ker alpha = im bar beta}
    Let $\overline M = \mu_k(M)$ as constructed in \Cref{subsection: construction of mutation}, where $\bar \alpha_k$ and $\bar \beta_k$ are defined. Then
    \begin{enumerate}
        \item $\ker \bar \alpha_k = \image {\beta}_k$ and $\ker \alpha_k = \image \bar \beta_k$;
        \item $\bar \alpha_k \bar \beta_ k = \gamma_k$.
    \end{enumerate}
\end{lemma}

\begin{proof}
    We first prove part (1). Recall that we have defined
    \[
        \bar \alpha_k = \begin{pmatrix}
            \pi\rho \\
            \gamma_k\\
            0
        \end{pmatrix} \colon M_\mathrm{out}(k) \rightarrow \overline M(k) = \frac{\ker \gamma_k}{\image \beta_k} \oplus \image \gamma_k \oplus \frac{\ker \alpha_k}{\image \gamma_k},
    \]
    where $\rho \colon M_\mathrm{out}(k) \rightarrow \ker \gamma_k$ is part of the splitting data and $\pi$ denotes the natural quotient $\ker \gamma_k \rightarrow \ker \gamma_k/\image \beta_k$. Then we have
    \[
        \ker \bar \alpha_k = \{m\in M_\mathrm{out}(k)\mid \rho(m)\in \image \beta_k\} \cap \ker \gamma_k = \image \beta_k.
    \]

    We have also defined
    \[
        \bar \beta_k = (0, \iota, \iota\sigma) \colon \overline M(k) = \frac{\ker \gamma_k}{\image \beta_k} \oplus \image \gamma_k \oplus \frac{\ker \alpha_k}{\image \gamma_k} \rightarrow M_\mathrm{in}(k),
    \]
    where $\sigma \colon \ker \alpha_k / \image \gamma_k \rightarrow \ker \alpha_k$ is part of the splitting data and $\iota$ denotes the inclusion $\ker \alpha_k \subset M_\mathrm{in}(k)$. It is clear that $\image \bar \beta_k \subset \ker \alpha_k$. For any $x\in \ker \alpha_k$, using $\sigma$, we can decompose $x$ into the sum of $x-\sigma\pi(x)\in \image \gamma_k$ and $\sigma\pi(x)\in \ker\alpha_k/\image \gamma_k$. So we can express $x = \bar\beta_k((0, x-\sigma\pi(x), \sigma\pi(x)))$, which implies $\ker \alpha_k \subset \image \bar \beta_k$, hence $\ker \alpha_k = \image \bar \beta_k$.

    Part (2) follows directly from the definitions of $\bar \alpha_k$ and $\bar \beta_k$.
\end{proof}

\begin{proof}[Proof of \Cref{prop: define rep mutations}]\label{proof: define rep mutations}
    We show that $\overline M$, as constructed in \Cref{subsection: construction of mutation}, is indeed a representation of $\mathcal P(T')$. According to \Cref{subsection: reps of P(T)}, it suffices to check $\overline M(a)\overline M(b)=0$ for any arrow $a$ following another arrow $b$ in a regular triangle in $T'$. Notice that $\overline M(a)=M(a)$ for any arrow $a$ lying outside the quadrilateral $S(k)$ where $k$ is a diagonal (see \Cref{subsection: flip bimodules}). Therefore, it only remains to verify the relations for arrows in $Q_1(T')$ within $S(k)$.

    \textit{Case 1.} We start with the case where $k$ is pending.
    \begin{center}
        \begin{tikzpicture}
            \filldraw[fill=gray!20, thick](0,0) circle (1);
            \filldraw[black] (0, 1) circle (1.5pt);
            \filldraw[black] (0, -1) circle (1.5pt);
            \draw[] (0, 0) node[]{$\star$};
            \draw[thick] plot [smooth cycle] coordinates {(0,-1) (-0.2, 0) (0, 0.2) (0.2, 0)};
            \draw[] (0, 0.5) node[]{\small $k$};
            \draw[] (-0.8, 0) node[]{\small $j$};
            \draw[] (0.8, 0) node[]{\small $i$};
            \draw[->] (1.5, 0) -- (2.5, 0) node[midway, above]{$\mu_k$};
        \end{tikzpicture}
        \quad
        \begin{tikzpicture}
            \filldraw[fill=gray!20, thick](0,0) circle (1);
            \filldraw[black] (0, 1) circle (1.5pt);
            \filldraw[black] (0, -1) circle (1.5pt);
            \draw[] (0, 0) node[]{$\star$};
            \draw[thick] plot [smooth cycle] coordinates {(0, 1) (-0.2, 0) (0, -0.2) (0.2, 0)};
            \draw[] (0, -0.5) node[]{\small $k$};
            \draw[] (-0.8, 0) node[]{\small $j$};
            \draw[] (0.8, 0) node[]{\small $i$};
        \end{tikzpicture}
    \end{center}
    The corresponding quivers of $T$ and $T' = \mu_k(T)$ are (partly) given below:
    \begin{center}
        \begin{tikzpicture}[scale=0.8]
            \begin{scope}[shift={(0,-0.5)}]
                \node[] (k) at (0, 0) {$k$};
                \node[] (j) at (-1, {sqrt(3)}) {$j$};
                \node[] (i) at (1, {sqrt(3)}) {$i$};
    
                \draw[->] (k) to node[right]{\scriptsize $b$} (i);
                \draw[->] (i) to node[above]{\scriptsize $c$} (j);
                \draw[->] (j) to node[left]{\scriptsize $a$} (k);
    
                \draw[->] (k) to[out=240, in=300, looseness=6] node[below]{\scriptsize $\varepsilon_k$} (k);
            \end{scope}
    
            \draw[->] (2,0) to node[above]{$\mu_k$} (3,0);
    
            \begin{scope}[shift={(5,0.5)}]
                \node[] (k) at (0, 0) {$k$};
                \node[] (j) at (-1, -{sqrt(3)}) {$j$};
                \node[] (i) at (1, -{sqrt(3)}) {$i$};
    
                \draw[->] (i) to node[right]{\scriptsize $\bar b$} (k);
                \draw[->] (j) to node[below]{\scriptsize $c'$} (i);
                \draw[->] (k) to node[left]{\scriptsize $\bar a$} (j);
    
                \draw[->] (k) to[out=60, in=120, looseness=6] node[above]{\scriptsize $\varepsilon_k$} (k);
            \end{scope}
        \end{tikzpicture}
    \end{center}
    We now describe the three $\Bbbk$-linear maps $\overline M(\bar a)$, $\overline M(\bar b)$ and $\overline M(c')$.

Recall $\bar \beta_k \colon \overline M(k) \rightarrow A(T)_{k, j} \otimes_{H_j} M(j)$ defined in \Cref{subsection: construction of mutation}. Now with the isomorphism $ A(T')_{j, k} = \Delta(T)_{k, j} \cong \Hom_{H_j}(A(T)_{k, j}, H_j)$ from (\ref{eq: map lambda right dual}), we have
\[
    \overline M(\bar a)(m) = \langle \bar a, \bar \beta_k(m) \rangle \quad \text{for any $m\in \overline M(k)$},
\]
where $\langle -, - \rangle$ denotes the natural pairing $\Hom_{H_j}(A(T)_{k, j}, H_j) \times A(T)_{k, j} \otimes_{H_j} M(j) \rightarrow M(j)$. We have also defined $\bar \alpha_k \colon \Delta(T)_{i, k} \otimes _{H_i}M(i) \rightarrow \overline M(k)$ in \Cref{subsection: construction of mutation}. The map $\overline M(\bar b)$ is given by
\begin{equation}\label{eq: overline M bar b}
    \overline M(\bar b)(m) = \bar \alpha_k (\bar b \otimes m) \quad \text{for any $m\in M(i)$}.
\end{equation}
The map $\overline M(c')$ is defined through $\theta_k(i, j)$ (see \eqref{eq: def struct morph using theta} and \eqref{eq: def theta k pending}). We have
\[
    \overline M(c')(m) = M(b\varepsilon_ka)(m) \quad \text{for any $m\in M(j)$}.
\]

We check (1) $\overline M(c')\overline M(\bar a) = 0$; (2) $\overline M(\bar b) \overline M(c') = 0$; (3) $\overline M(\bar a)\overline M(\bar b) = 0$.

(1) Let $m\in \overline M(k)$. Then $\bar \beta_k(m)$ belongs to the kernel of $\alpha_k(k, j)$ by \Cref{lemma: ker bar alpha = im beta & ker alpha = im bar beta}. Thus if writing $\bar \beta_k(m) = a\otimes m_1 + \varepsilon_ka\otimes m_2$ for $m_1, m_2 \in M(j)$, then $M(a)(m_1) + M(\varepsilon_k a)(m_2) = 0$. Now $\overline M(\bar a)(m) = \langle \bar a, a\otimes m_1 + \varepsilon_k a \otimes m_2 \rangle = m_2$. Then we have
\[
    \overline M(c')(\overline M(\bar a)(m)) = M(b)(M(\varepsilon_ka)(m_2)) = M(b)(M(a)(-m_1)) = 0
\]
since $M(b)M(a) = 0$.

(2) Let $m\in M(j)$. We only need to show that $\bar b \otimes M(b\varepsilon_ka)(m)$ is in $\ker \bar \alpha_k$, which coincides with $\image \beta_k$ by \Cref{lemma: ker bar alpha = im beta & ker alpha = im bar beta}. In fact, by \eqref{eq: expression of beta} where $k$ is pending, we have
\[
    \beta_k( M(a)(m)) = \bar b \otimes M(b\varepsilon_ka)(m) + \varepsilon_k \bar b \otimes M(ba)(m) = \bar b \otimes M(b\varepsilon_ka)(m).
\]

(3) Let $m\in M(i)$. Then $\overline M(\bar b)(m) = \bar \alpha_k(\bar b \otimes m)$. Now $\bar \beta_k\bar \alpha_k(\bar b \otimes m) = \gamma_k(\bar b \otimes m)$ by part (2) of \Cref{lemma: ker bar alpha = im beta & ker alpha = im bar beta}, which further equals $a\otimes M(c)(m)$ by \eqref{eq: def of gamma}. Thus we have
\[
    \overline M(\bar a)(\overline M(\bar b)(m)) = \langle \bar a, a\otimes M(c)(m) \rangle = 0.
\]

\textit{Case 2.} Now suppose that $k\in T$ is ordinary. The flip at $k$ is given below:
\begin{center}
\begin{tikzpicture}[scale=0.8]
    \fill[gray!20] (0,0) rectangle (2,2);
    \draw[thick] (0,2) -- (2,0);
    \node at (1.2,1.1) {$k$};

    \fill[gray!20] (5,0) rectangle (7,2);
    \draw[thick] (5,0) -- (7,2);
    \node at (6.2,0.9) {$k$};

    \draw[thick] (0,0) rectangle (2,2); 
    \draw[thick] (5,0) rectangle (7,2); 

    \node[below] at (1,0) {$v$}; 
    \node[left] at (0,1) {$u$}; 
    \node[above] at (1,2) {$q$}; 
    \node[right] at (2,1) {$p$}; 

    \node[below] at (6,0) {$v$}; 
    \node[left] at (5,1) {$u$}; 
    \node[above] at (6,2) {$q$}; 
    \node[right] at (7,1) {$p$}; 

    \draw[->] (3,1) -- (4,1) node[midway, above] {$\mu_k$};
    \end{tikzpicture}
\end{center}
The corresponding quivers (with possible loops omitted) are
\begin{center}
    \begin{tikzpicture}[scale=1.2]
        \begin{scope}[shift={(0,0)}]
            \node (k) at (0, 0) {$k$};
            \node (p) at (1, 0) {$p$};
            \node (q) at (0, 1) {$q$};
            \node (u) at (-1, 0) {$u$};
            \node (v) at (0, -1) {$v$};

            \draw[->] (k) to node[below]{\scriptsize $b_1$} (p);
            \draw[->] (p) to node[right]{\scriptsize $c_{(q,p)}$} (q);
            \draw[->] (q) to node[left]{\scriptsize $a_1$} (k);
            \draw[->] (v) to node[right]{\scriptsize $a_2$} (k);
            \draw[->] (k) to node[above]{\scriptsize $b_2$} (u);
            \draw[->] (u) to node[left]{\scriptsize $c_{(v,u)}$} (v);
        \end{scope}

        \draw[->] (2,0) to node[above]{$\mu_k$} (3,0);

        \begin{scope}[shift={(5,0)}]
            \node (k) at (0, 0) {$k$};
            \node (p) at (1, 0) {$p$};
            \node (q) at (0, 1) {$q$};
            \node (u) at (-1, 0) {$u$};
            \node (v) at (0, -1) {$v$};

            \draw[->] (k) to node[right]{\scriptsize $\bar{a}_1$} (q);
            \draw[->] (p) to node[above]{\scriptsize $\bar{b}_1$} (k);
            \draw[->] (k) to node[left]{\scriptsize $\bar{a}_2$} (v);
            \draw[->] (v) to node[right]{\scriptsize $c'_{(p, v)}$} (p);
            \draw[->] (u) to node[below]{\scriptsize $\bar{b}_2$} (k);
            \draw[->] (q) to node[left]{\scriptsize $c'_{(u,q)}$} (u);
        \end{scope}
    \end{tikzpicture}
\end{center}
We need to show the maps $\overline M(a)$ ($a$ an arrow in the right quiver above) satisfy relations in $\mathcal P(T')$. We prove the three relations
\[
    (1)\ \overline M(\bar a_2) \overline M(\bar b_1) = 0;\ (2)\ \overline M(c'_{(p, v)}) \overline M(\bar a_2) = 0;\ (3)\ \overline M(\bar b_1)  \overline M(c'_{(p, v)}) = 0
\]
in the lower right triangle formed by $v, k, p$. The rest three relations can be proven similarly.

We have $\bar \beta_k(v,k)\colon M(k)\rightarrow A(T)_{k,v}\otimes_{H_v}M(v)$. (Note that $q$ and $v$ might coincide.) Using the isomorphism $ A(T')_{v, k} = \Delta(T)_{k, v} \cong \Hom_{H_j}(A(T)_{k, v}, H_v)$ from (\ref{eq: map lambda right dual}), we have
\[
    \overline M(\bar a_2)(m) = \langle \bar a_2, \bar \beta_k(v,k)(m) \rangle \quad \text{for any $m\in \overline M(k)$},
\]
where $\langle -, - \rangle$ denotes the natural pairing $\Hom_{H_v}(A(T)_{k, v}, H_v) \times A(T)_{k, v} \otimes_{H_v} M(v) \rightarrow M(v)$. We also have $\bar \alpha_k(k, p)\colon \Delta(T)_{p,k}\otimes_{H_p} M(p)\rightarrow M(k)$. Then $\overline M(\bar b_1)(m) = \bar \alpha_k(k, p)(\bar b_1\otimes m)$ for any $m\in M(p)$.

The map $\overline M(c'_{(p, v)})$ is given through $\theta_k(p, v)$: for any $m\in M(v)$,
\begin{equation}\label{eq: map of created arrow}
    \overline M(c'_{(p, v)})(m) = M(b_1a_2)(m);
\end{equation}
see \eqref{eq: def struct morph using theta} and \eqref{eq: def theta ordinary}.

Now we check the relations.

(1) Let $m\in M(p)$. We have $\overline{M}(\bar a_2)(\overline{M}(\bar b_1)(m)) = \langle \bar a_2, \bar \beta_k(v, k)\bar \alpha_k(k, p)(\bar b_1\otimes m)\rangle$. By part(2) of \Cref{lemma: ker bar alpha = im beta & ker alpha = im bar beta}, this equals $\langle \bar a_2, \gamma_k(v,p)(\bar b_1\otimes m) \rangle$. However by the definition of $\gamma$ (see \Cref{subsection: construction of mutation}), we have $\gamma_k(v, p)(\bar b_1\otimes m) = 0$, thus $\overline{M}(\bar a_2)(\overline{M}(\bar b_1)(m))=0$.

(2) Let $m\in \overline M(k)$. We want to show
\[
    \overline M(c'_{(p, v)})(\overline M(\bar a_2)(m)) = M(b_1a_2)(\langle\bar a_2, \bar \beta_k(v,k)(m) \rangle) = 0.
\]
We express $\bar\beta_k(m) = a_1\otimes m_1 + a_2\otimes m_2$ for $m_1\in M(q)$ and $m_2\in M(v)$. Notice whether $q=v$ or not, this expression is always valid. Since $\image \beta_k\subset \ker \alpha_k$ by part (1) of \Cref{lemma: ker bar alpha = im beta & ker alpha = im bar beta}, we have $M(a_1)(m_1)+M(a_2)(m_2)=0$. Then we compute
\[
    M(b_1a_2)(\langle\bar a_2, \bar \beta_k(v,k)(m) \rangle) = M(b_1a_2)(m_2) = M(b_1)(-M(a_1)(m_1)) = 0,
\]
where the last step is because $M(b_1)M(a_1)=0$.

(3) Let $m\in M(v)$. We want to show 
\[
    \overline{M}(\bar b_1)(\overline{M}(c'_{(p,v})(m)) = \bar \alpha_k(k,p)(\bar b_1\otimes M(b_1a_2)(m)) = 0.
\]
By part (1) of \Cref{lemma: ker bar alpha = im beta & ker alpha = im bar beta}, it is equivalent to $\bar b_1\otimes M(b_1a_2)(m)\in \image \beta_k$. Using the ordinary case of \eqref{eq: expression of beta}, we have
\[
    \beta_k(M(a_2)(m)) = \bar b_1 \otimes M(b_1)(M(a_2)(m)) + \bar b_2 \otimes M(b_2)(M(a_2)(m)).
\]
The second summand vanishes because $M(b_2)M(a_2)=0$ while the first summand equals $\bar b_2\otimes M(b_1a_2)(m)$, as desired.
\end{proof}

\subsection{Independence of Splitting data}
In this section we show that our construction of $\mathcal P(T)$-module $\overline M$ is independent of the splitting data in \Cref{splitting data}.

\begin{lemma}\label{lemma: independence of split}
    The isomorphism class of $\overline M$ does not depend on the choices of the maps $\rho$ and $\sigma$.
\end{lemma}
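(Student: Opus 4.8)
The plan is to mimic the argument in \cite[Section 10]{derksen2008quivers} showing the isomorphism type of a DWZ-mutation is independent of the splitting. The key observation is that a change of splitting data is an automorphism of certain subquotients, which can be lifted to an isomorphism of the whole module $\overline M$. Concretely, suppose $(\rho, \sigma)$ and $(\rho', \sigma')$ are two choices of splitting data as in \Cref{splitting data}. Both give an $H_k$-module
\[
    \overline M(k) = \frac{\ker \gamma_k}{\image \beta_k} \oplus \image \gamma_k \oplus \frac{\ker \alpha_k}{\image \gamma_k},
\]
with the same underlying space but possibly different maps $\bar\alpha_k, \bar\beta_k$, and $\overline M(i) = M(i)$ for $i \neq k$, with the same structure maps everywhere except those incident to $k$. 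So I would build an isomorphism $\Psi \colon \overline M \to \overline M'$ which is the identity on $\overline M(i)$ for $i \neq k$ and a suitable $H_k$-automorphism $\Psi_k$ of $\overline M(k)$ on the middle vertex, and then check that $\Psi_k$ intertwines the two triples of structure maps at $k$.

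**First** I would record how $\bar\alpha_k$ and $\bar\beta_k$ change. Since $\rho\iota = \rho'\iota = \operatorname{id}_{\ker\gamma_k}$, the difference $\rho - \rho'$ kills $\ker\gamma_k$, hence factors through $M_{\mathrm{out}}(k)/\ker\gamma_k \cong \image\gamma_k$ (via $\gamma_k$), giving an $H_k$-map $u \colon \image\gamma_k \to \ker\gamma_k$ with $\rho - \rho' = u\gamma_k$ (suitably interpreted through $\iota,\pi$). Similarly $\pi\sigma = \pi\sigma' = \operatorname{id}$ forces $\sigma - \sigma'$ to land in $\image\gamma_k \subset \ker\alpha_k$, giving an $H_k$-map $v \colon \ker\alpha_k/\image\gamma_k \to \image\gamma_k$ with $\sigma - \sigma' = \iota v$ (again interpreted via $\iota$ into $\ker\alpha_k$). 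Using these two maps $u$ and $v$, I would then define $\Psi_k$ on the three summands; the natural guess, following \cite{derksen2008quivers}, is a ``unipotent'' automorphism of the form
\[
    \Psi_k = \begin{pmatrix}
        \operatorname{id} & \pi u & * \\
        0 & \operatorname{id} & v \\
        0 & 0 & \operatorname{id}
    \end{pmatrix}
\]
where the top-right entry $*$ is a correction term making $\Psi_k \bar\alpha_k = \bar\alpha_k'$ and $\bar\beta_k' \Psi_k = \bar\beta_k$; one computes $*$ explicitly so these compatibilities hold, and since $\Psi_k$ is upper-triangular unipotent it is automatically an $H_k$-automorphism.

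**Then** I would verify the two intertwining identities $\Psi_k \circ \bar\alpha_k = \bar\alpha_k'$ and $\bar\beta_k' \circ \Psi_k = \bar\beta_k$ by a direct matrix computation using $\bar\alpha_k = (\pi\rho,\ \gamma_k,\ 0)^{\mathsf t}$, $\bar\beta_k = (0,\ \iota,\ \iota\sigma)$ and the analogous primed versions, substituting $\rho = \rho' + u\gamma_k$ and $\sigma = \sigma' + \iota v$. This pins down $*$ and completes the check that $\Psi = (\operatorname{id}_{M(i)})_{i\neq k} \oplus \Psi_k$ is a morphism of $\mathcal P(T')$-representations: on pairs $(i,j)$ not incident to $k$ the structure maps agree and $\Psi$ is the identity there; on the pairs incident to $k$ (cases (A) and (B) in \Cref{subsection: construction of mutation}) the structure maps are built from $\bar\alpha_k, \bar\beta_k$ (and from $\theta_k$, which does not involve the splitting), so compatibility follows from the two intertwining identities together with naturality of $\iota, \pi$, and the pairings $\rho, \lambda$ of (\ref{eq: map rho left dual})--(\ref{eq: map lambda right dual}). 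As $\Psi_k$ is invertible, $\Psi$ is an isomorphism, giving $\overline M \cong \overline M'$.

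**The main obstacle** I anticipate is getting the top-right correction term $*$ and the precise bookkeeping of $\iota$'s and $\pi$'s right — in particular making sure $u$ and $v$ are well-defined $H_k$-maps (this uses freeness of $\image\gamma_k$ and hence of $\ker\gamma_k$ from \Cref{lemma: image gamma}, so the relevant short exact sequences of $H_k$-modules split) and that $\Psi_k$ genuinely lands in $H_k$-module maps rather than just $\Bbbk$-linear ones. Everything else is a formal transcription of \cite[Proposition 10.7 / its proof]{derksen2008quivers} to the present $H_k$-equivariant setting, where the only new input is that all the modules and maps in sight are over the (possibly non-semisimple) local ring $H_k = \Bbbk[\varepsilon_k]/\varepsilon_k^{d_k}$, which is harmless because $\image\gamma_k$ is free.
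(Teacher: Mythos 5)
Your proposal is correct and takes essentially the same approach as the paper: you define a unipotent upper-triangular $H_k$-automorphism of $\overline M(k)$ using the differences $u, v$ of the splittings (the paper's $\xi, -\eta$, with a direction/sign convention swap) and extend by the identity elsewhere. The correction term $*$ you leave to compute turns out to be zero, which is exactly what the paper's matrix $\delta_k$ records.
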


\begin{proof}
    Suppose that we have another set of choices $\rho'$ and $\sigma'$ to define $\overline M'$. The two maps $\rho, \rho'\colon M_{\mathrm{out}}(k) \rightarrow \ker \gamma_k$ differ by $\xi\gamma_k$, i.e. $\rho' = \rho + \xi\gamma_k$ for some $\xi\colon \image \gamma_k \rightarrow \ker \gamma_k$. The two maps $\sigma, \sigma' \colon \ker \alpha_k/\image \gamma_k \rightarrow \ker \alpha_k$ differ by $\eta\colon \ker \alpha_k/\image \gamma_k \rightarrow \image \gamma_k$, i.e. $\sigma' = \sigma + \eta$.

    Let the $H_i$-isomorphisms $\delta_i \colon \overline M'(i) \rightarrow \overline M(i)$ be identity for $i\neq k$ and $\delta_k\colon \overline M'(k) \rightarrow \overline M(k)$ be
    \[
        \begin{bmatrix}
            \mathrm{id} & \pi\xi& \\
            & \mathrm{id} & -\eta \\
            & & \mathrm{id}
        \end{bmatrix} \in \operatorname{End}_{H_k}\left( \frac{\ker \gamma_k}{\image \beta_k} \oplus \image \gamma_k \oplus \frac{\ker \alpha_k}{\image \gamma_k} \right).
    \]
It is easy to check that the tuple $\delta \coloneqq (\delta_i)_i$ defines a $\mathcal P(\mu_k(T))$-isomorphism $\delta \colon \overline M' \rightarrow \overline M$.
\end{proof}

\subsection{Double mutation} \label{subsection: mutation involution}
We now turn to the double mutation $\mu_k^2(M)$. Denote $\mu_k(M)$ as before by $\overline M$. We have accordingly the maps
\[
    \bar \alpha_k \colon \overline M_{\mathrm{in}}(k) \rightarrow \overline M(k),\quad \bar \beta_{k} \colon \overline M(k) \rightarrow \overline M_{\mathrm{out}}(k),\quad \bar \gamma_k \colon \overline M_{\mathrm{out}}(k) \rightarrow \overline M_{\mathrm{in}} (k),
\]
where $\bar \gamma_k$ is defined through the $\mathcal P(T')$-module structure of $\overline M$, exactly as $\gamma_k$ is defined for $M$.

\begin{lemma}\label{lemma: bar gamma = beta alpha}
    $\bar \gamma_k = \beta_k\alpha_k$.
\end{lemma}

\begin{proof}
    We refer to the proof of \Cref{prop: define rep mutations} for the two cases where $k$ can be pending or ordinary. We use the formula \eqref{eq: def of gamma} to compute $\bar \gamma_k$ in both cases. 
    
    If $k$ is pending, the map $\bar \gamma_k$ is given by for any $m\in M(j)$,
    \[
        \bar \gamma_k(a\otimes m) = \bar b \otimes \overline M(c')(m) = \bar b \otimes M(b\varepsilon_k a)(m) = \beta_k \alpha_k (a\otimes m).
    \]

    If $k$ is ordinary, let $m\in M(v)$. Then by \eqref{eq: map of created arrow},
    \[
        \bar \gamma_k(a_2\otimes m) = \bar b_1 \otimes \overline M(c'_{(p, v)})(m) = \bar b_1\otimes M(b_1a_2)(m) = \beta_k\alpha_k(a_2\otimes m).
    \]
    Similarly, for any $m\in M(q)$, we have $\bar \gamma_k(a_1\otimes m) = \beta_k\alpha_k(a_1\otimes m)$.

    Combining the two cases, we conclude $\bar \gamma_k = \beta_k \alpha_k$.
\end{proof}

We omit the subscript $k$ in the maps $\alpha, \beta, \gamma$ in the following proposition and the proof.

\begin{proposition}\label{prop: double mutation}
Let $M$ be locally free. Assume that the $H_k$-modules $\ker \beta$ and $\image \alpha$ are both free. Then the $H_k$-submodule $\ker \beta \cap \image \alpha$ of $M(k)$ is free. Furthermore, we have
\[
    M \cong \mu_k^2(M) \oplus \dfrac{\ker \beta}{\ker \beta \cap \image \alpha}, 
\]
where the second summand is regarded as a $\mathcal P(T)$-module supported at vertex $k$.
\end{proposition}

\begin{proof}
    We first show that under the assumptions, the $H_k$-module $\ker \beta \cap \image \alpha$ is free. In fact, there is a natural short exact sequence
    \[
        0 \rightarrow \ker \beta \cap \image \alpha \rightarrow \image \alpha \rightarrow \image \beta \alpha \rightarrow 0.
    \]
    Notice that $\image \beta\alpha = \image \bar \gamma$ by \Cref{lemma: bar gamma = beta alpha}. The latter is free over $H_k$ by \Cref{lemma: image gamma free}. Hence $\ker \beta \cap \image \alpha$ is free given that $\image \alpha$ is free.

    We now compute the module structure of $\overline {\overline{M}} \coloneqq \mu_k^2(M)$. In fact, by construction the only part of $\overline {\overline M}$ possibly different from $M$ is within the diagram:
    \[
        \begin{tikzcd}[column sep = small]
            & \overline {\overline M}(k) = \dfrac{\ker \bar \alpha}{\image \bar \gamma}\oplus \image \bar \gamma \oplus \dfrac{\ker \bar \gamma}{\image \bar \beta} \ar[dr, "\bar {\bar \beta}"]& \\
            M_{\mathrm{in}}(k) \ar[ur, "\bar {\bar \alpha}"] & & M_{\mathrm{out}}(k) \ar[ll, "\bar{\bar \gamma}"]
        \end{tikzcd}
    \]
    First we notice that $\bar {\bar \gamma} = \bar \beta \bar \alpha = \gamma$ by \Cref{lemma: bar gamma = beta alpha} and part (2) of \Cref{lemma: ker bar alpha = im beta & ker alpha = im bar beta}. The definitions of $\bar{\bar \alpha}$ and $\bar{\bar \beta}$ depend on a choice of splitting data $\bar \rho \colon M_{\mathrm{in}}(k) \rightarrow \ker\bar \gamma$ and $\bar \sigma \colon \ker \bar \alpha /\image \bar \gamma \rightarrow \ker \bar \alpha$, which by \Cref{lemma: bar gamma = beta alpha} and \Cref{lemma: ker bar alpha = im beta & ker alpha = im bar beta} are 
    \[
        \bar \rho \colon M_{\mathrm{in}}(k) \rightarrow \ker \beta \alpha \quad \text{and} \quad \bar \sigma \colon \image \beta/ \image \beta\alpha \rightarrow \image \beta.
    \]
    Such splittings always exist because $\ker \beta\alpha$ and $\image \beta\alpha$ are free over $H_k$. Notice that the surjective map $\alpha \colon \ker \beta\alpha \rightarrow \image \alpha \cap \ker \beta$ induces an isomorphism $\image \alpha \cap \ker \beta = \ker \beta\alpha/\ker \alpha (= \ker \bar \gamma/\image \bar \beta)$. Now the module structure of $\overline {\overline M}$ can be written as
    \begin{equation}\label{eq: module structure twice mutation}
        \begin{tikzcd}[column sep = huge]
            M_{\mathrm{in}}(k) \ar[r, "{(0,\, \beta\alpha,\, \alpha \bar \rho)^\intercal}"] & \dfrac{\image \beta}{\image \beta\alpha} \oplus \image \beta\alpha \oplus (\image \alpha \cap \ker \beta) \ar[r, "{(\iota \bar \sigma,\, \iota,\, 0)}"] & M_{\mathrm{out}}(k).
        \end{tikzcd}
    \end{equation}

    The retraction $\bar \rho \colon M_{\mathrm{in}}(k) \rightarrow \ker \beta\alpha$ (quotienting out $\ker \alpha$) induces $\tilde \rho \colon \image \alpha \rightarrow \image \alpha \cap \ker \beta$, which satisfies $\tilde \rho \iota = \mathrm{id}_{\image \alpha \cap \ker \beta}$ and $\tilde \rho \alpha = \alpha \bar \rho \colon M_{\mathrm{in}}(k) \rightarrow \ker \beta\alpha$. We can extend $\tilde \rho$ to a retraction $\rho_1 \colon M(k) \rightarrow \ker \beta$ (which exists given that $\ker \beta$ is free and $M(k)$ is free), i.e., such that $\rho_1\iota = \mathrm{id}_{\ker \beta}$ and $\iota\bar\rho = \rho_1\iota$. Summarizing, we have the following commutative diagram
    \[
        \begin{tikzcd}
            M_{\mathrm{in}}(k) \ar[d, "\bar \rho "]\ar[r, "\alpha"] & \image \alpha \ar[d, "\tilde \rho"] \ar[r, hook, "\iota"] & M(k) \ar[d, "\rho_1"] \\
            \ker \beta \alpha \ar[r, "\alpha"] & \image \alpha \cap \ker \beta \ar[r, hook, "\iota"] & \ker \beta,
        \end{tikzcd}
    \]
    where $\tilde \rho$, $\bar \rho$ and $\rho_1$ are all retractions of natural inclusions.
    
    We choose another splitting $\ker \beta = (\image \alpha \cap \ker \beta) \oplus \ker \beta/(\image \alpha \cap \ker \beta)$. Together with the splittings determined by $\rho_1$ and $\bar \sigma$, the module structure of $M$ involving $M_{\mathrm{in}}(k) \xrightarrow{\alpha} M(k) \xrightarrow{\beta} M_{\mathrm{out}}(k)$ is shown as follows isomorphic to    
    \begin{equation}\label{eq: module structure m}
        \begin{tikzcd}[column sep = huge, row sep = tiny]
                \quad M_{\mathrm{in}}(k) \ar[r, "{(\beta\alpha,\, \rho_1\alpha)^\intercal}"] & \image \beta \oplus \ker \beta \ar[r, "{(\iota,\, 0)}"] & M_{\mathrm{out}}(k)\\
                \cong M_{\mathrm{in}}(k) \ar[r, "{(0,\, \beta\alpha,\, \rho_1\alpha)^\intercal}"] & \dfrac{\image \beta}{\image \beta\alpha} \oplus \image \beta\alpha \oplus \ker \beta \ar[r, "{(\iota\bar \sigma,\, \iota,\, 0)}"] & M_{\mathrm{out}}(k)\\
                \cong M_{\mathrm{in}}(k) \ar[r, "{(0,\,\beta\alpha,\, \alpha\bar \rho,\, 0)^\intercal}"] & \dfrac{\image \beta}{\image \beta\alpha} \oplus \image \beta\alpha \oplus (\image \alpha \cap \ker \beta) \oplus \dfrac{\ker \beta}{\image \alpha \cap \ker \beta} \ar[r, "{(\iota\bar\sigma,\, \iota,\, 0,\, 0)}"] & M_{\mathrm{out}}(k).
        \end{tikzcd}
    \end{equation}
    Comparing the last line of (\ref{eq: module structure m}) with (\ref{eq: module structure twice mutation}), we have the desired isomorphism
    \[
        M \cong \overline{\overline M} \oplus \dfrac{\ker \beta}{\image \alpha \cap \ker \beta}.
    \]

\end{proof}

\subsection{Mutations with decorations}\label{subsection: mutation decorations}
In this section we introduce the notion of decorated representations and extend the notion of mutation to this setting.

\begin{definition}\label{def: decorated representation}
A \emph{decorated $\mathcal P(T)$-module} $\mathcal M = (M, V)$ is a pair of a $\mathcal P(T)$-module $M$ and a tuple $V = (V_i)$ of $H_i$-modules $V_i$. It is called \emph{locally free} if $M$ is locally free and for any $i\in I$, $V_i$ is free over $H_i$.
\end{definition}

\begin{definition}\label{def: decorated mutation}
The \emph{decorated mutation} $\mu_k(\mathcal M)$ of $\mathcal M$ is defined to be the decorated $\mathcal P(\mu_k(T))$-module $(\mu_k(M)\oplus V_k, (\overline V_i))$, where $\overline V_i \coloneqq V_i$ for $i\neq k$, $\overline V_k \coloneqq \ker \beta_k/(\ker \beta_k \cap \image \alpha_k)$, and $V_k$ in the direct sum $\mu_k(M)\oplus V_k$ is regarded as a $\mathcal P(\mu_k(T))$-module.
\end{definition}

Denote by $E_k$ the $\mathcal P(T)$-module $M$ such that $M = M(k) = H_k$ and thus all structure morphisms vanish. From the above definition one sees immediately that $\mu_k(0, H_k) = (E_k, 0)$ and $\mu_k(E_k, 0) = (0, H_k)$.

The following is a direct corollary of \Cref{prop: double mutation}.

\begin{corollary}\label{cor: decorated mutation involution}
For a locally free decorated $\mathcal P(T)$-module $\mathcal M = (M, V)$ such that $\ker \beta_{k;M}$ and $\image \alpha_{k;M}$ are free, the decorated mutation $\mu_k$ is involutive, i.e., $\mu_k^2(\mathcal M) \cong \mathcal M$.
\end{corollary}

\begin{proof}
By definition we have $\mu_k(\mathcal M) = (\mu_k(M)\oplus V_k, (\overline V_i))$ and 
\[
    \mu_k^2(\mathcal M) = (\mu^2_k(M) \oplus \mu_k(V_k) \oplus \overline V_k, (\overline {\overline V}_i)_i)    
\]
where $\overline V_k = \ker \beta_{k;M}/(\ker \beta_{k;M} \cap \image \alpha_{k;M})$, $\overline{\overline V}_i = V_i$ for $i\neq k$, and 
\[
    \overline{\overline V}_k = \ker \beta_{k;M'}/ (\ker \beta_{k;M'} \cap \image \alpha_{k;M'}),
\]
where $M' = \mu_k(M)\oplus V_k \in \modu \mathcal P(\mu_k(T))$. It is clear that $\overline{\overline V}_k = V_k$ and $\mu_k(V_k) = 0$. By \Cref{prop: double mutation}, we have $\mu_k^2(M) \oplus \overline V_k \cong M$ if $\ker \beta_k$ and $\image \alpha_k$ are free, hence $\mu_k^2(\mathcal M) \cong \mathcal M$.
\end{proof}

From now on unless otherwise stated, the actual meaning of $\mu_k$ (with or without decoration) should just depend on the argument (being a module or a decorated module).

\begin{lemma}\label{lemma: when is mutation locally free}
    Let $\mathcal M = (M, V)$ be locally free. Then $\mu_k(\mathcal M) = (\overline M, \overline V)$ is locally free if and only if both $\ker \beta_k$ and $\image \alpha_k$ are free.
\end{lemma}

\begin{proof}
    By \Cref{lemma: image gamma free}, $\image \gamma_k$ is always free. Since the domain of $\gamma_k$ is free, $\ker \gamma_k$ is also free. By \Cref{def: decorated mutation},
    \[
        \overline M(k) = \frac{\ker \gamma_k}{\image \beta_k} \oplus \image \gamma_k \oplus \frac{\ker \alpha_k}{\image \gamma_k}\oplus V_k \quad \text{and} \quad \overline V_k = \frac{\ker \beta_k}{\ker \beta_k \cap \image \alpha_k},
    \]
    while other components of $\mu_k(\mathcal{M})$ are free given that $\mathcal M$ is locally free.

    Since the domains of $\beta_k$ and of $\alpha_k$ are both free (given that $M$ is locally free), $\ker \beta_k$ and $\image \alpha_k$ are free if and only if $\image \beta_k$ and $\ker \alpha_k$ are free.

    If $\ker \beta_k$ and $\image \alpha_k$ are free, by \Cref{prop: double mutation}, $\overline V_k$ is free. It is also clear that $\overline M(k)$ is free. Thus $\mu_k(\mathcal{M})$ is locally free.

    If $\mu_k(\mathcal{M})$ is free, then all above direct summands of $\overline M(k)$ are free. In particular, $\image \beta_k$ and $\ker \alpha_k$ are free, and thus $\ker \beta_k$ and $\image \alpha_k$ are free.
\end{proof}

\subsection{\texorpdfstring{An example of type $C_3$}{An example of type C3}}\label{subsection: example c3}
The quivers below arise from triangulations of a $4$-gon with one orbifold point related by a flip. The associated exchange matrices are of mutation type $C_3$ \cite{fomin2003cluster}.

\begin{center}
\begin{tikzpicture}[scale=0.8]
    \begin{scope}[shift={(0,-0.5)}]
        \node[] (3) at (0, 0) {$3$};
        \node[] (2) at (-1, {sqrt(3)}) {$2$};
        \node[] (1) at (1, {sqrt(3)}) {$1$};

        \draw[->] (3) to node[right]{\scriptsize $b$} (1);
        \draw[->] (1) to node[above]{\scriptsize $c$} (2);
        \draw[->] (2) to node[left]{\scriptsize $a$} (3);

        \draw[->] (3) to[out=240, in=300, looseness=6] node[below]{\scriptsize $\varepsilon_3$} (3);
    \end{scope}

    \draw[->] (2,0) to node[above]{$\mu_3$} (3,0);

    \begin{scope}[shift={(5,0.5)}]
        \node[] (3) at (0, 0) {$3$};
        \node[] (2) at (-1, -{sqrt(3)}) {$2$};
        \node[] (1) at (1, -{sqrt(3)}) {$1$};

        \draw[->] (1) to node[right]{\scriptsize $\bar b$} (3);
        \draw[->] (2) to node[below]{\scriptsize $c'$} (1);
        \draw[->] (3) to node[left]{\scriptsize $\bar a$} (2);

        \draw[->] (3) to[out=60, in=120, looseness=6] node[above]{\scriptsize $\varepsilon_3$} (3);
    \end{scope}
\end{tikzpicture}
\end{center}

We showcase below two instances of mutations of representations. Both $M$ and $N$ are locally free. However, $\mu_k(N)$ is not locally free as $\mu_k(N)(3)$ is not free over $H_3$.

\begin{center}
    \begin{tikzpicture}[scale=0.8]

    \node at (-2.5, 0) {$M = $};
    
    \begin{scope}[shift={(0,-0.5)}]
        \node[] (3) at (0, 0) {$\Bbbk^2$};
        \node[] (2) at (-1, {sqrt(3)}) {$\Bbbk$};
        \node[] (1) at (1, {sqrt(3)}) {$\Bbbk$};

        \draw[->] (3) to node[right]{\scriptsize $(0, 1)$} (1);
        \draw[->] (1) to node[above]{\scriptsize $0$} (2);
        \draw[->] (2) to node[left]{\scriptsize $(1, 0)^\intercal$} (3);

        \draw[->] (3) to[out=240, in=300, looseness=6] node[above left, xshift=-5pt]{\scriptsize $\begin{psmallmatrix}
            0 & 0 \\
            1 & 0
        \end{psmallmatrix}$} (3);
    \end{scope}

    \draw[->] (2,0) to node[above]{$\mu_3$} (3,0);

    \begin{scope}[shift={(5,0.5)}]
        \node[] (3) at (0, 0) {$0$};
        \node[] (2) at (-1, -{sqrt(3)}) {$\Bbbk$};
        \node[] (1) at (1, -{sqrt(3)}) {$\Bbbk$};

        \draw[->] (1) to node[right]{\scriptsize $0$} (3);
        \draw[->] (2) to node[below]{\scriptsize $\mathrm{id}$} (1);
        \draw[->] (3) to node[left]{\scriptsize $0$} (2);

        \draw[->] (3) to[out=60, in=120, looseness=6] node[above]{\scriptsize $0$} (3);
    \end{scope}

    \node at (7.5, 0) {$= \mu_3(M)$};
\end{tikzpicture}
\end{center}

\begin{center}
    \begin{tikzpicture}[scale=0.8]

    \node at (-2.5, 0) {$N = $};
    
    \begin{scope}[shift={(0,-0.5)}]
        \node[] (3) at (0, 0) {$\Bbbk^2$};
        \node[] (2) at (-1, {sqrt(3)}) {$\Bbbk$};
        \node[] (1) at (1, {sqrt(3)}) {$\Bbbk$};

        \draw[->] (3) to node[right]{\scriptsize $(1, 0)$} (1);
        \draw[->] (1) to node[above]{\scriptsize $0$} (2);
        \draw[->] (2) to node[left]{\scriptsize $(0, 1)^\intercal$} (3);

        \draw[->] (3) to[out=240, in=300, looseness=6] node[above left, xshift=-5pt]{\scriptsize $\begin{psmallmatrix}
            0 & 0 \\
            1 & 0
        \end{psmallmatrix}$} (3);
    \end{scope}

    \draw[->] (2,0) to node[above]{$\mu_3$} (3,0);

    \begin{scope}[shift={(5,0.5)}]
        \node[] (3) at (0, 0) {$\Bbbk^2$};
        \node[] (2) at (-1, -{sqrt(3)}) {$\Bbbk$};
        \node[] (1) at (1, -{sqrt(3)}) {$\Bbbk$};

        \draw[->] (1) to node[right]{\scriptsize $(0, 1)^\intercal$} (3);
        \draw[->] (2) to node[below]{\scriptsize $\mathrm{id}$} (1);
        \draw[->] (3) to node[left]{\scriptsize $(1, 0)$} (2);

        \draw[->] (3) to[out=60, in=120, looseness=6] node[above]{\scriptsize $0$} (3);
    \end{scope}

    \node at (7.5, 0) {$= \mu_3(N)$};
\end{tikzpicture}
\end{center}

\section{\texorpdfstring{Recurrence of $\mathbf g$-vectors and $F$-polynomials}{Recurrence of g-vectors and F-polynomials}} \label{section: recurrence}

In this section, we introduce $\mathbf g$-vectors and $F$-polynomials for locally free (decorated) representations. Their importance arises from that for certain representations they have the exact same recurrence under mutations as the $\mathbf g$-vectors and $F$-polynomials defined for cluster variables in \Cref{section: preliminaries}.

\subsection{$\mathbf g$-vectors of decorated representations}

Following Derksen--Weyman--Zelevinsky \cite[(1.13)]{derksen2010quivers} in the skew-symmetric case, we associate an integer-valued vector $\mathbf g(\mathcal M) = (g_i)_i\in \mathbb Z^n$ to a decorated locally free representation $\mathcal M = (M, V)$ as follows. Let $k\in I = \{1, \dots, n\}$. Recall the following diagram (of free $H_k$-modules) \eqref{eq: mutation diagram} from \Cref{subsection: construction of mutation}:
\[
    \begin{tikzcd}[column sep = small]
        & M(k) \ar[rd, "\beta_k"]& \\
        M_{\mathrm{in}}(k) \ar[ru, "\alpha_{k}"] && M_{\mathrm{out}}(k) \ar[ll, "\gamma_{k}"]
    \end{tikzcd}
\]
Notice that $\ker \gamma_k$ is always free over $H_k$ by \Cref{lemma: image gamma free}. We define
\begin{equation}\label{eq: def g vector}
    g_k = g_k(\mathcal M) \coloneqq  \rk_{H_k} \ker \gamma_k - \rk_{H_k} M(k) + \rk_{H_k} V_k \in \mathbb Z.
\end{equation}
When the base ring is clear, we omit the subscript from $\rk$. The vector $\mathbf g(\mathcal M)$ is called the \emph{$\mathbf g$-vector} of $\mathcal M$. For just a module $M\in \modu \mathcal P(T)$, we write $\mathbf g(M)  = \mathbf g((M, 0))$. It is clear that $\mathbf g$-vector is additive with respect to direct sums.

For $M$ such that $\ker \beta_k$ is free (see \Cref{subsection: mutation involution}), define 
\begin{equation}\label{eq: define h vector}
    h_k = h_k(\mathcal M) \coloneqq - \rk \ker \beta_k.
\end{equation}
If $h_i$ is defined for every $i\in I$, they form a (non-positive) integer vector $\mathbf h(\mathcal M) = (h_i)_{i}\in \mathbb Z^n$.

\begin{lemma}\label{lemma: recursion of g vector}
Let $\mathcal M = (M, V)$ be a decorated locally free $\mathcal P(T)$-module. Assume that the decorated mutation $\overline {\mathcal M} = (\overline M, \overline V) \coloneqq \mu_k(\mathcal M)$ is again locally free. Then the $\mathbf g$-vectors $\mathbf g({\overline {\mathcal M}}) = (g'_j)_{j}$ and $\mathbf g(\mathcal M) = (g_j)_{j}$ satisfy
\[
    g_j' = \begin{cases} -g_k \quad & \text{if $j = k$};\\
        g_j + [b_{j,k}]_+g_k - b_{j,k}h_k \quad & \text{if $j \neq k$},
    \end{cases}
\]
and $g_k = h_k - h_k'$ where $h_k'\coloneqq h_k(\overline{\mathcal M})$.
\end{lemma}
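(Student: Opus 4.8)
The strategy is to compute both sides directly from the definitions, exploiting the explicit description of the $H_k$-module structure of $\overline M(k)$, together with the fact (established in Lemmas 3.x and 4.x) that the relevant kernels and images are free over $H_k$. First I would dispose of the case $j=k$: by \Cref{lemma: ker alpha = image bar beta}(3) we have $\bar\gamma_k = \beta_k\alpha_k$, so $\ker\bar\gamma_k$ sits in the short exact sequence $0\to\ker\alpha_k\to\ker\bar\gamma_k\to\ker\beta_k\cap\image\alpha_k\to 0$ (after identifying $\overline M_{\mathrm{out}}(k)=M_{\mathrm{in}}(k)$). Combined with the definition $\overline M(k)=\tfrac{\ker\gamma_k}{\image\beta_k}\oplus\image\gamma_k\oplus\tfrac{\ker\alpha_k}{\image\gamma_k}$, an additive-rank count gives $\rk\ker\bar\gamma_k-\rk\overline M(k)+\rk\overline V(k)$, and one checks this equals $-g_k$ using $\overline V(k)=\ker\beta_k/(\ker\beta_k\cap\image\alpha_k)$ and the identity $\rk M(k)=\rk\image\alpha_k+\rk\ker\alpha_k$ (valid since all terms are free over $H_k$). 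The bookkeeping is routine once one writes $\rk\ker\gamma_k=\rk M_{\mathrm{out}}(k)-\rk\image\gamma_k$ and similarly for $\ker\bar\gamma_k$.

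For $j\neq k$, the point is that the diagram at vertex $j$ changes in a controlled way under $\mu_k$: the spaces $M_{\mathrm{in}}(j)$, $M_{\mathrm{out}}(j)$ receive new summands precisely indexed by the triangles where $k$ is a side, reflecting the sign pattern $[b_{j,k}]_+$ versus $[-b_{j,k}]_+$ in \Cref{eq: def m in and m out}. Concretely, when $b_{j,k}>0$ the vertex $j$ lay in $T(-,k)$ and in the flipped triangulation the arrow(s) through $k$ are replaced, contributing a copy of $\overline M(k)$ (or rather of one of its three summands) to $M_{\mathrm{out}}(j)$; when $b_{j,k}<0$ the analogous change happens on the $M_{\mathrm{in}}(j)$ side but does not affect $\ker\gamma_j$ up to the free-rank count. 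I would organize this as: (i) identify, using the local quiver pictures of \Cref{subsection: flip bimodules}, which summands of $\overline M_{\mathrm{in}}(j)$ and $\overline M_{\mathrm{out}}(j)$ differ from those of $M$; (ii) show that the new $\gamma'_j$ restricted to the unchanged summands agrees with $\gamma_j$, while on the new summand (coming from $\overline M(k)$) it is built from $\bar\alpha_k$/$\bar\beta_k$ and the structure maps $\theta_k$; (iii) compute $\rk\ker\gamma'_j$ in terms of $\rk\ker\gamma_j$, the rank of the new summand (which is $|b_{j,k}|\cdot(\text{something})$), and the rank of its image. The ranks $\rk\overline M(j)=\rk M(j)$ and $\rk\overline V(j)=\rk V(j)$ are unchanged, so the difference $g'_j-g_j$ is entirely the difference of kernel ranks, which should collapse to $[b_{j,k}]_+g_k-b_{j,k}h_k$ after substituting $h_k=-\rk\ker\beta_k$ and the expression for $g_k$ from \Cref{eq: def g vector}.

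Finally, the relation $g_k=h_k-h_k'$ follows by computing $h_k'=-\rk\ker\bar\beta_k$ and using \Cref{lemma: ker alpha = image bar beta}: there $\coker\bar\beta_k=\image\alpha_k$ and $\ker\bar\alpha_k=\image\beta_k$, so a rank count on $\bar\beta_k\colon\overline M(k)\to M_{\mathrm{in}}(k)$ gives $\rk\ker\bar\beta_k=\rk\overline M(k)-\rk\image\bar\beta_k=\rk\overline M(k)-(\rk M_{\mathrm{in}}(k)-\rk\coker\bar\beta_k)$, which after plugging in $\coker\bar\beta_k=\image\alpha_k$ and the formula for $\rk\overline M(k)$ yields the claimed identity.

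The main obstacle I anticipate is step (ii)--(iii) for $j\neq k$: one must carefully match the new summand of $\overline M_{\mathrm{out}}(j)$ (respectively $\overline M_{\mathrm{in}}(j)$) with the correct piece of the decomposition $\overline M(k)=\tfrac{\ker\gamma_k}{\image\beta_k}\oplus\image\gamma_k\oplus\tfrac{\ker\alpha_k}{\image\gamma_k}$ and verify that the induced map to/from $M(j)$ has the predicted rank — this is where the local geometry of the (generalized) quadrilateral $S(k)$, the multiplicity $|b_{j,k}|\in\{1,2\}$ (depending on whether $k$ or $j$ is pending), and the precise form of $\bar\alpha_k,\bar\beta_k$ all enter simultaneously. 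Handling the pending-arc cases (where $H_k=\Bbbk[\varepsilon_k]/\varepsilon_k^2$ and ranks are halved relative to $\Bbbk$-dimensions) in parallel with the ordinary case, without conflating $\rk_{H_k}$ with $\dim_\Bbbk$, will require the most care. Everything else is linear algebra over the local rings $H_i$, made legitimate by the standing local-freeness hypothesis on $M$, $\mathcal M$, and $\overline{\mathcal M}$.
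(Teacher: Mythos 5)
Your plan is correct and follows essentially the same strategy as the paper: reduce each component of the $\mathbf g$-vector recurrence to a rank count over $H_k$, exploiting the local freeness of the modules appearing in the diagram at vertex $k$, the explicit three-term decomposition of $\overline M(k)$, and Lemma \ref{lemma: ker alpha = image bar beta}. The only organizational differences are cosmetic: the paper proves $g_k = h_k - h_k'$ first by reading $\ker\bar\beta_k=(\ker\gamma_k/\image\beta_k)\oplus V(k)$ directly off the matrix form of $\bar\beta_k$ and then gets $g'_k=-g_k$ by the involution $\mu_k^2\cong\mathrm{id}$, whereas you compute $\ker\bar\gamma_k$ from the short exact sequence $0\to\ker\alpha_k\to\ker\bar\gamma_k\to\ker\beta_k\cap\image\alpha_k\to 0$ and use $\coker\bar\beta_k=\image\alpha_k$; both are equivalent rank--nullity arguments. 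For $j\neq k$ the paper reduces via the symmetry $\mathcal M\leftrightarrow\overline{\mathcal M}$ to the case $b_{j,k}<0$ and then checks the single changing component $\gamma_j(i,k)\mapsto\bar\gamma_j$ explicitly in the two subcases ($k$ pending or ordinary), which is exactly the step you correctly flag as the main obstacle; one small correction to your description is that when $b_{j,k}>0$ the $k$-indexed summand sits in $M_{\mathrm{in}}(j)$ (not $M_{\mathrm{out}}(j)$) and moves to $\overline M_{\mathrm{out}}(j)$ after the flip, and vice versa when $b_{j,k}<0$, but this does not affect the overall shape of the argument.
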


\begin{proof}

By \Cref{lemma: when is mutation locally free}, that $\overline {\mathcal M}$ is locally free is equivalent to the condition that $\ker \beta_k$ and $\image \alpha_k$ are free. Hence both $h_k$ and $h_k'$ are well-defined as the decorated mutation in the current situation is involutive. We next show that $ - h_k' = - h_k + g_k$, i.e.,
\begin{equation}\label{eq: desired formula for ker bar beta}
    \rk \ker \bar \beta_k = \rk \ker \beta_k + \rk \ker \gamma_k - \rk M(k) + \rk V_k.
\end{equation}
Write $\bar \alpha_i = \alpha_{i; \overline{M}}$, $\bar \beta_i = \beta_{i; \overline{M}}$, $\bar \gamma_i = \gamma_{i; \overline{M}}$ for $i\in I$ as in \eqref{eq: mutation diagram}. The map
\[
    \bar \beta_k \colon \frac{\ker \gamma_k}{\image \beta_k} \oplus \image \gamma_k \oplus \frac{\ker \alpha_k}{\image \gamma_k}\oplus V_k \longrightarrow M_{\mathrm{in}}(k)
\]
satisfies $\bar \beta_k = \iota \pi$ where $\iota$ denotes the inclusion $\ker \alpha_k \subset M_{\mathrm{in}}(k)$ and $\pi$ is the projection onto the second and the third summand followed by an isomorphism with $\ker \alpha_k$ given by the splitting data $\sigma\colon \ker \alpha_k/\image \gamma_k \rightarrow \ker \alpha_k$. Thus $\ker \bar \beta_k = (\ker \gamma_k / \image \beta_k)\oplus V_k$ and the desired equation (\ref{eq: desired formula for ker bar beta}) follows. It then follows immediately that $g_k' = -g_k$.

It remains to show that for $j\neq k$, $g_j' = g_j + [b_{j,k}]_+g_k - b_{j,k}h_k$, i.e.,
\begin{equation}\label{eq: desired formula for ker gamma}
    \rk_{H_j} \ker \bar \gamma_j = \begin{cases}
    \rk_{H_j} \ker \gamma_j + b_{j,k}\cdot \rk_{H_k} \ker \bar \beta_k \quad & \text{if $b_{j,k}\geq 0$}, \\
    \rk_{H_j} \ker \gamma_j + b_{j,k}\cdot \rk_{H_k} \ker \beta_k \quad & \text{if $b_{j,k}\leq 0$}.
    \end{cases}
\end{equation}
Notice that computing the quantity $g_j$ only uses the quadrilateral in $T$ where $j$ is a diagonal. Thus if $j$ and $k$ are not adjacent in $Q(T)$, mutation at $k$ does not change the component $g_j$. It then suffices to assume $b_{j,k}\neq 0$, and in view of the symmetry between $\mathcal M$ and $\overline {\mathcal M}$, it is enough to prove (\ref{eq: desired formula for ker gamma}) for the case $b_{j,k}<0$.

For every non-loop arrow $a\in Q_1(T)$ with $t(a) = j$, there is a unique non-loop arrow $c\in Q_1(T)$ with $h(c) = j$ (if it exists) so that the two arrows come from the same regular triangle $\triangle$. In this way, we see that $\ker \gamma_j$ is the direct sum of the kernel of every component 
\[
    \gamma_{j}^{(a)}\colon H_j(\bar a\otimes M(h(a))) \rightarrow H_j(c\otimes M(t(c))),\quad \gamma_j^{(a)}(\bar a\otimes m) = c\otimes M(b)(m),
\]
where $b$ is the third arrow in $\triangle$. (If such $c$ does not exist, we adopt the convention that the codomain of $\gamma_j^{(a)}$ is zero.)

\textit{Case 1.} Suppose $k$ is pending and let $i$, $j$ and $k$ form a regular triangle (counterclockwise) in $T$; see for example the figure in \Cref{proof: define rep mutations}. By definition we have
\begin{align}
    &\gamma_j(i, k) = \gamma_j^{(\bar a)} \colon \Delta(T)_{k, j} \otimes_{H_k} M(k) \rightarrow A(T)_{j, i} \otimes_{H_i} M(i), \quad \bar a\otimes m \mapsto c \otimes M(b)(m); \label{eq: formula gamma j} \\
    &\bar \gamma_j(k, i) = \bar \gamma_j^{(\bar c')} \colon \Delta(T')_{i, j} \otimes_{H_i} M(i) \rightarrow A(T')_{j, k} \otimes_{H_k} \overline M(k), \quad \bar c' \otimes m \mapsto \bar a \otimes \overline M(\bar b)(m). \label{eq: formula bar gamma j}
\end{align}
It suffices to show
\begin{equation}\label{eq: simplified gamma}
    \rk_{H_j} \ker \bar \gamma_j(k, i)  = \rk_{H_j} \ker \gamma_j(i, k) + b_{j,k}\cdot \rk_{H_k} \ker \beta_k.
\end{equation}
Indeed, the arc $j$ may be contained in another regular triangle $\triangle$ in $T$, which is also shared by $T'$. According to the above analysis, the other component of $\gamma_j$ and that of $\bar \gamma_j$ both arising from $\triangle$ are identical and thus do not have any impact on (\ref{eq: desired formula for ker gamma}).

By the formulas \eqref{eq: formula gamma j} and \eqref{eq: formula bar gamma j}, we have
\begin{align} \label{eq: description ker gamma}
\begin{split}
    & \ker \gamma_j(i, k) = H_j \{\bar a \otimes m \mid m\in \ker M(b)\} \cong H_j\otimes_\Bbbk \ker M(b), \\
    & \ker \bar \gamma_j(k, i) = H_j \{\bar c' \otimes m \mid m \in \ker \overline M(\bar b)\} \cong H_j\otimes_\Bbbk \ker \overline M(\bar b).
\end{split}
\end{align}
Consider the $\Bbbk$-linear map $M(b\varepsilon_k) \colon M(k) \rightarrow M(i)$. We claim
\[
    \ker \overline{M}(\bar b) = M(b\varepsilon_k)(\ker M(b)).
\]
By \eqref{eq: overline M bar b}, we have $\overline M(\bar b) = \bar \alpha_k(\bar b\otimes m)$ for $m\in M(i)$. Thus $\ker \overline M(\bar b) = \{m\in M(i)\mid \bar b\otimes m\in \ker \bar \alpha_k\}$. By part (1) of \Cref{lemma: ker bar alpha = im beta & ker alpha = im bar beta}, $\ker \bar \alpha_k = \image \beta_k$. Further we have
\[
    \image \beta_k = \{\bar b \otimes M(b\varepsilon_k)(m') + \varepsilon_k\bar b\otimes M(b)(m')\mid m'\in M(k)\}.
\]
Therefore we can describe
\[
    \ker \overline M(\bar b) = \{m\in M(i)\mid m = M(b\varepsilon_k)(m'),\, M(b)(m')=0,\, m'\in M(k)\} = M(b\varepsilon_k)(\ker M(b)),
\]
as claimed. Now $M(b\varepsilon_k)$ restricts to a surjective map $M(b\varepsilon_k) \colon \ker M(b) \rightarrow \ker \overline M(\bar b)$. The kernel of this map is $\{m\in M(k)\mid M(b)(m)=0,\, M(b\varepsilon_k)(m)=0\} = \ker \beta_k$. Then we have $\dim_\Bbbk \ker M(b) = \dim_\Bbbk \ker \overline M(\bar b) + \dim_\Bbbk \ker \beta_k$ and by (\ref{eq: description ker gamma})
\[
    \rk_{H_j} \ker \gamma_j(i, k) = \rk_{H_j} \ker \bar \gamma_j (k, i) + \dim_\Bbbk \ker \beta_k.
\]
Notice that when $k$ is pending, $b_{j,k} = -2 = -d_k$ and thus $\dim_\Bbbk \ker \beta_k = -b_{j,k} \rk_{H_k} \ker \beta_k$. The equation (\ref{eq: simplified gamma}) follows.

\textit{Case 2}. Let $k$ be ordinary. Recall the following local quivers (with possible loops omitted) under the flip $\mu_k$, for example as in \Cref{proof: define rep mutations}.
\begin{center}
    \begin{tikzpicture}[scale=1.2]
        \begin{scope}[shift={(0,0)}]
            \node (k) at (0, 0) {$k$};
            \node (p) at (1, 0) {$p$};
            \node (q) at (0, 1) {$q$};
            \node (u) at (-1, 0) {$u$};
            \node (v) at (0, -1) {$v$};

            \draw[->] (k) to node[below]{\scriptsize $b_1$} (p);
            \draw[->] (p) to node[right]{\scriptsize $c_1$} (q);
            \draw[->] (q) to node[left]{\scriptsize $a_1$} (k);
            \draw[->] (v) to node[right]{\scriptsize $a_2$} (k);
            \draw[->] (k) to node[above]{\scriptsize $b_2$} (u);
            \draw[->] (u) to node[left]{\scriptsize $c_2$} (v);
        \end{scope}

        \draw[->] (2,0) to node[above]{$\mu_k$} (3,0);

        \begin{scope}[shift={(5,0)}]
            \node (k) at (0, 0) {$k$};
            \node (p) at (1, 0) {$p$};
            \node (q) at (0, 1) {$q$};
            \node (u) at (-1, 0) {$u$};
            \node (v) at (0, -1) {$v$};

            \draw[->] (k) to node[right]{\scriptsize $\bar{a}_1$} (q);
            \draw[->] (p) to node[above]{\scriptsize $\bar{b}_1$} (k);
            \draw[->] (k) to node[left]{\scriptsize $\bar{a}_2$} (v);
            \draw[->] (v) to node[right]{\scriptsize $d_1$} (p);
            \draw[->] (u) to node[below]{\scriptsize $\bar{b}_2$} (k);
            \draw[->] (q) to node[left]{\scriptsize $d_2$} (u);
        \end{scope}
    \end{tikzpicture}
\end{center}
Now $j$ can be $q$ or $v$. (Note that $q$ and $v$ may coincide.) It suffices to prove the case $j=v$ by symmetry.

By the analysis before \emph{Case 1} and as similarly argued in \emph{Case 1}, any other component of $\gamma_v$ and that of $\bar \gamma_v$ arising from outside the above quivers are identical and thus do not have any impact on \eqref{eq: desired formula for ker gamma}. Now we have two sub-cases according to whether $q$ and $v$ coincide.

\emph{Case 2.1.} Suppose $q\neq v$. Then we have the following component of $\gamma_v$ and that of $\bar \gamma_v$:
\begin{align*}
    & \gamma_{v}^{(a_2)}\colon H_v(\bar a_2\otimes M(k)) \rightarrow H_v(c_2\otimes M(u)),\quad \bar a_2\otimes m \mapsto c_{2}\otimes M(b_2)(m); \\
    & \bar \gamma_{v}^{(\bar d_1)} \colon H_v(\bar d_1\otimes M(p)) \rightarrow H_v(\bar a_2\otimes M(k)),\quad \bar d_1\otimes m \mapsto \bar a_2\otimes \overline M(\bar b_1)(m).
\end{align*}
Then \eqref{eq: desired formula for ker gamma} reduces to
\[
    \rk_{H_v} \ker \bar \gamma_v^{(\bar d_1)} = \rk_{H_v} \ker \gamma_v^{(a_2)} + b_{v, k}\cdot \rk_{H_k} \ker \beta_k.
\]
Notice that 
\begin{align*}
    & \ker \gamma_v^{(a_2)} = H_v\{\bar a_2\otimes m\mid m\in \ker M(b_2)\} \cong H_v\otimes_\Bbbk \ker M(b_2),\\
    & \ker \bar \gamma_v^{(\bar d_1)} = H_v\{\bar d_1 \otimes m \mid m \in \ker \overline M(\bar b_1)\} \cong H_v\otimes \ker \overline M(\bar b_1).
\end{align*}
The $\Bbbk$-linear map $M(b_1)\colon M(k) \rightarrow M(p)$ restricts to a surjective map $M(b_1) \colon \ker M(b_2) \rightarrow \ker \overline M(\bar b_1)$, having kernel $\{m\in M(k) \mid M(b_1)(m) = 0,\ M(b_2)(m) = 0\}$, which clearly coincides with $\ker \beta_k$. Therefore we have
\[
    \dim_\Bbbk \ker M(b_2) = \dim_\Bbbk \ker \overline M(\bar b_1) + \dim_\Bbbk \ker \beta_k.
\]
Then equation (\ref{eq: desired formula for ker gamma}) follows as $b_{v, k} = -1$ in this case.

\emph{Case 2.2} Suppose $v=q$. We update the quivers:
\begin{center}
    \begin{tikzpicture}[scale=1.5]
        \begin{scope}[shift={(0,0)}]
            \node (k) at (0, 0) {$k$};
            \node (p) at (1, 0) {$p$};
            
            \node (u) at (-1, 0) {$u$};
            \node (v) at (0, 1) {$v$};

            \draw[->] (k) to node[below]{\scriptsize $b_1$} (p);
            \draw[->] (p) to node[right]{\scriptsize $c_1$} (v);
            \draw[->, bend right = 10] (v) to node[left]{\scriptsize $a_2$} (k);
            \draw[->, bend left = 10] (v) to node[right]{\scriptsize $a_1$} (k);
            \draw[->] (k) to node[below]{\scriptsize $b_2$} (u);
            \draw[->] (u) to node[left]{\scriptsize $c_2$} (v);
        \end{scope}

        \draw[->] (2.1,0.3) to node[above]{$\mu_k$} (2.9,0.3);

        \begin{scope}[shift={(5,0)}]
            \node (k) at (0, 0) {$k$};
            \node (p) at (1, 0) {$p$};

            \node (u) at (-1, 0) {$u$};
            \node (v) at (0, 1) {$v$};

            \draw[->, bend right=10] (k) to node[right]{\scriptsize $\bar{a}_2$} (v);
            \draw[->] (p) to node[below]{\scriptsize $\bar{b}_1$} (k);
            \draw[->, bend left=10] (k) to node[left]{\scriptsize $\bar{a}_1$} (v);
            \draw[->] (v) to node[right]{\scriptsize $d_1$} (p);
            \draw[->] (u) to node[below]{\scriptsize $\bar{b}_2$} (k);
            \draw[->] (v) to node[left]{\scriptsize $d_2$} (u);
        \end{scope}
    \end{tikzpicture}
\end{center}
Notice that no other vertices are connected to $v$ (in either $Q(T)$ or $Q(T')$). In this case, $v$ is necessarily ordinary and $b_{v,k}= -2$. Then \eqref{eq: desired formula for ker gamma} follows from combining the two equations:
\begin{align*}
    & \rk_{H_v} \ker \bar \gamma_v^{(\bar d_1)} = \rk_{H_v} \ker \gamma_v^{(a_2)} -\rk_{H_k} \ker \beta_k, \\
    & \rk_{H_v} \ker \bar \gamma_v^{(\bar d_2)} = \rk_{H_v} \ker \gamma_v^{(a_1)} -\rk_{H_k} \ker \beta_k,
\end{align*}
each of which can be proven similarly as in the previous sub-case.
\end{proof}

\subsection{\texorpdfstring{Recurrence of $F$-polynomials}{Recurrence of F-polynomials}}

Let $F(M)(y_1, \dots, y_n)$ denote the \emph{locally free} $F$-polynomial of $M\in \modu \mathcal P(T)$ as defined by (\ref{eq: loc free f poly}). For $\mathcal M = (M, V)$, let $F({\mathcal M}) = F(M)$.

\begin{proposition}\label{prop: recursion of f polynomial}
Let $\mathcal M = (M, V)$ be a locally free decorated $\mathcal P(T)$-module such that its mutation $\mu_k(\mathcal M) = \overline {\mathcal M} = (\overline M, \overline V)$ is again locally free. Their $F$-polynomials are related by
\[
    (y_k+1)^{h_k} F(\mathcal M)(y_1, \dots, y_n) = (y'_k + 1)^{h_k'} F({\overline{\mathcal M}})(y_1', \dots, y_n').
\]
where $y_k' = y_k^{-1}$ and for $i\neq k$, $y_i' = y_iy_k^{[b_{ki}]_+}(1 + y_k)^{-b_{ki}}$.
\end{proposition}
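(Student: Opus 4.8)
The plan is to follow the strategy of Derksen--Weyman--Zelevinsky in \cite{derksen2010quivers} (proof of their Proposition~2.4), adapting it to the locally free setting. The recurrence for $F$-polynomials is equivalent, via the defining formula \eqref{eq: loc free f poly}, to a comparison between the locally free quiver Grassmannians of $M$ and of $\overline M$. The key combinatorial identity to prove is that for each rank vector, the Euler characteristics of the relevant Grassmannians of $M$ and $\overline M$ match up after the stated substitution and the correction factor $(y_k+1)^{h_k}$. First I would recall that since mutation only alters the module structure inside the (generalized) quadrilateral $S(k)$, a locally free submodule $N \subseteq M$ is determined by its restrictions away from $k$ together with a compatible $H_k$-submodule of $M(k)$ sitting appropriately between $\image\alpha_k$ and $\ker\gamma_k$-type constraints; the same holds for $\overline M$ with the roles of $\alpha_k,\beta_k,\gamma_k$ replaced by $\bar\alpha_k,\bar\beta_k,\bar\gamma_k$ as catalogued in \Cref{lemma: ker alpha = image bar beta}.

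The main steps, in order, are: (i) fix rank vectors $\mathbf r$ for submodules of $M$ and $\mathbf r'$ for submodules of $\overline M$ that agree in every coordinate except $k$, and express the relation between $r_k$ and $r_k'$ using the local exact sequences relating $M(k)$ and $\overline M(k)$ (this is where $h_k = -\rk\ker\beta_k$ enters, exactly as in \Cref{lemma: recursion of g vector}); (ii) following \cite[\S5]{derksen2010quivers}, stratify $\operatorname{Gr}_{\mathrm{l.f.}}(\mathbf r, M)$ according to the "overlap data'' of a submodule $N$ with the subspaces $\image\beta_k$, $\ker\gamma_k$, $\image\gamma_k$, $\ker\alpha_k$ inside $M(k)$, and do the analogous stratification on the $\overline M$ side; (iii) construct, stratum by stratum, fibre bundles (with affine-space or Grassmannian-of-a-fixed-vector-space fibres) that identify corresponding strata up to the shift coming from the extra free $H_k$-summand $\ker\beta_k/(\ker\beta_k\cap\image\alpha_k)$ appearing in $\mu_k^2$ by \Cref{prop: twice mutation}; (iv) take Euler characteristics, use multiplicativity of $\chi$ for such bundles and $\chi$ of projective space / affine space, and assemble the generating functions — the factor $(y_k+1)^{h_k}$ on the left and $(y_k'+1)^{h_k'}$ on the right arise precisely as the contributions of these $\ker\beta$-type free modules (a projective space $\mathbb P^{d}$ of the appropriate dimension contributes $(y_k+1)$ raised to a power, tropicalized correctly). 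Throughout, the substitution $y_k' = y_k^{-1}$, $y_i' = y_i y_k^{[b_{ki}]_+}(1+y_k)^{-b_{ki}}$ for $i \neq k$ is forced by bookkeeping the rank-vector shift in coordinate $k$ against the unchanged coordinates, mirroring the $B$-matrix mutation rule.

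The only genuinely new ingredient compared to \cite{derksen2010quivers} is that everything must be carried out with $H_k$-modules rather than $\Bbbk$-vector spaces: the relevant "inner'' Grassmannians are Grassmannians of free $H_k$-submodules, and one needs that these are again nice varieties with well-behaved Euler characteristic, and that the bundles in step (iii) are $H_k$-equivariant. Since $H_k = \Bbbk[\varepsilon_k]/\varepsilon_k^{d_k}$ with $d_k\in\{1,2\}$ and all modules in sight are locally free, a free $H_k$-submodule of $H_k^{\oplus r}$ of rank $s$ is the same as a point of $\operatorname{Gr}(s,r)$ over $\Bbbk$ together with no further data in the ordinary case, and in the pending case one checks directly (using $\varepsilon_k^2=0$) that the locus of $\varepsilon_k$-stable free-rank-$s$ submodules retracts onto an ordinary Grassmannian, so the Euler-characteristic computations reduce to the vector-space case at the cost of replacing dimensions by ranks and inserting the factor $d_k$ exactly where $b_{j,k} = -d_k$ forced it in \Cref{lemma: recursion of g vector}.

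I expect the main obstacle to be step (iii): producing the stratification-by-stratification bijections and verifying they are algebraic fibre bundles with contractible-or-projective-space fibres, while keeping track of the $H_k$-module structure. This is the technical heart of \cite{derksen2010quivers} in the quiver case, and the locally free refinement requires re-checking that each fibre, which classifies the "missing'' part of a submodule after fixing its image in the mutated data, is itself a Grassmannian of free $H_k$-modules or an affine space of $H_k$-homomorphisms; once that is in place, the passage to Euler characteristics and the assembly of the generating-function identity is formal.
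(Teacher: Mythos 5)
Your overall strategy matches the paper's: both adapt the stratification argument of Derksen--Weyman--Zelevinsky to the locally free setting, comparing submodules of $M$ and of $\overline M$ by fixing what happens away from vertex $k$ and analysing what can sit at $k$. However, you leave the decisive step (iii) as a stated ``obstacle'' rather than resolving it, and that step is where the whole proof lives; what you write is a plausible plan, not a proof.

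The specific content you are missing is the choice of stratification parameters and the precise relation between them on the two sides. The paper does \emph{not} stratify $\operatorname{Gr}_{\mathrm{l.f.}}(\mathbf e,M)$ by ``overlap data'' of $N$ with $\image\beta_k,\ker\gamma_k$ inside $M(k)$, and it does not build a stratum-to-stratum fibre bundle between the two Grassmannians. Instead it records only the truncated tuple $(N(i))_{i\neq k}$, and attaches to it two integers $r$ and $s$: the rank of the injective envelope of $\alpha_k(N_{\mathrm{in}}(k))$ and the rank of a maximal free submodule of $\beta_k^{-1}(N_{\mathrm{out}}(k))$. Calling the set of such truncated tuples $Z_{e';r,s}(M)$, the crucial observation is the \emph{set-theoretic equality} $Z_{e';r,s}(M)=Z_{e';\bar r,\bar s}(\overline M)$ with $\bar r=\sum_i[-b_{ki}]_+e_i-h_k-s$ and $\bar s=\sum_i[b_{ki}]_+e_i-h_k'-r$, so that no matching of strata needs to be constructed at all; the Euler characteristics of the truncated data literally agree. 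The $k$-component then contributes a fibre whose Euler characteristic is the binomial coefficient $\binom{s-r}{e_k-r}$ (this is where the locally free Grassmannian over $H_k$ is controlled, via the cited result from the earlier paper), and the generating-function identity follows by summing $\sum_{e_k}\binom{s-r}{e_k-r}y_k^{e_k}=y_k^r(y_k+1)^{s-r}$ and performing the substitution. Your proposal gestures at binomial-type contributions but never identifies $(r,s)$, never produces the formulas for $(\bar r,\bar s)$, and never makes the observation that the base of the fibration is the \emph{same} variety for $M$ and $\overline M$ — without these, the bookkeeping that forces $(y_k+1)^{h_k}$ and the exact $y_i\mapsto y_i'$ substitution cannot be verified. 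Also, your aside about $\ker\beta/(\ker\beta\cap\image\alpha)$ from $\mu_k^2$ is a red herring here: that module only enters the \emph{decoration} $\overline V_k$, and the locally free $F$-polynomial $F(\mathcal M)=F(M)$ ignores the decoration entirely.
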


\begin{proof}

Let $N = (N(1), \dots, N(n))$ be an $n$-tuple where each $N(i)$ is a free $H_i$-submodule of $M(i)$. It defines a point in $\operatorname{Gr_{l.f.}}(\mathbf e, M)$ where $e =  (e_i)_i \coloneqq (\rk_{H_i} N(i))_i$ if
\begin{equation}\label{eq: condition 1}
    M(c)(N(j)) \subset N(i) \quad \text{for any arrow $c\colon j\rightarrow i$ of $Q(T)$ not incident to $k$ and}
\end{equation}
\begin{equation}\label{eq: condition 2}
    \alpha_k(N_{\mathrm{in}}(k)) \subset N(k) \subset \beta_k^{-1}(N_{\mathrm{out}}(k))
\end{equation}
where $N_{\mathrm{in}}(k)$ and $N_{\mathrm{out}}(k)$ are defined as in (\ref{eq: def m in and m out}).

Let $e' = (e_i)_{i\neq k}$. For every pair of non-negative integers $r\leq s$, let $Z_{e';r,s}$ be the set of tuples $(N(i))_{i\neq k}$ where each $N(i)$ is a rank $e_i$ free submodule of $M(i)$ such that
\begin{enumerate}
    \item[(C1)] $(N(i))_{i\neq k}$ satisfies (\ref{eq: condition 1});
    \item[(C2)] there exists a free submodule $N(k)$ of $M(k)$ such that the $n$-tuple $(N(i))_i$ satisfies (\ref{eq: condition 2});
    \item[(C3)] the injective envelope of $\alpha_k(N_{\mathrm{in}}(k))$ has rank $r$ and a maximal free submodule of $\beta_k^{-1}(N_{\mathrm{out}}(k))$ has rank $s$.
\end{enumerate}

Let $\tilde{Z}_{e;r,s}(M)$ be the subset of $\operatorname{Gr_{l.f.}}(\mathbf e, M)$ of tuples $(N(i))_i$ such that $(N(i))_{i\neq k}$ belongs to $Z_{e';r,s}(M)$. Decompose $Z_{e';r,s}(M)$ further into locally closed subsets $Z_{e';r,s}(M) = \bigsqcup_{j\in J} Z_j$ such that the map $\tilde{Z}_{e;r,s}(M) \rightarrow Z_{e';r,s}(M)$ (by forgetting $N(k)$) is a fiber bundle on each $Z_j$. Notice that the fiber over some $(N(i))_{i\neq k}$ is the set of $N(k)$ (of rank $e_k$) satisfying (\ref{eq: condition 2}). By \cite[Proposition 4.9]{mou2022locally}, it has the Euler characteristic $\binom{s-r}{e_k-r}$. Then we have
\[
    \chi({Z}_{e';r,s}(M)) = \sum_{j\in J} \binom{s-r}{e_k-r} Z_j = \binom{s-r}{e_k-r} \chi ( \tilde{Z}_{e;r,s}(M))
\]
and thus
\begin{equation}\label{eq: decomposition of quiver grass}
    \chi(\operatorname{Gr_{l.f.}}(\mathbf e, M)) = \chi\left(\bigsqcup_{r,s} \tilde{Z}_{e;r,s}(M)\right) = \sum_{r,s} \binom{s-r}{e_k-r} \chi(Z_{e';r,s}(M)).
\end{equation}
The $F$-polynomial of $M$ can be rewritten as
\begin{equation}\label{eq: rewrite f-polynomial for M}
    F(M)(y_1, \dots, y_n) = \sum_{e',r,s} \chi(Z_{e';r,s}(M))y_k^r(y_k+1)^{s-r}\prod_{i\neq k}^n y_i^{e_i}.
\end{equation}

Recall the following diagrams for $M$ and $\overline M$:
\[
    \begin{tikzcd}[column sep = small]
        & M(k) \ar[rd, "\beta_k"]& \\
        M_{\mathrm{in}}(k) \ar[ru, "\alpha_k"] && M_{\mathrm{out}}(k) \ar[ll, "\gamma_k"]
    \end{tikzcd}
    \xlongrightarrow[]{\mu_k}
    \begin{tikzcd}[column sep = small]
        & \overline M(k) \ar[ld, swap, "\bar \beta_k"]& \\
        M_{\mathrm{in}}(k) \ar[rr, swap, "\bar \gamma_k"]  &&  M_{\mathrm{out}}(k) \ar[lu, swap, "\bar \alpha_k"]
    \end{tikzcd}
\]
We next show the following two sets are identical
\begin{equation}\label{eq: equality two varieties}
    Z_{e';r,s}(M) = Z_{e';\bar r, \bar s}(\overline M)
\end{equation}
when
\begin{equation}\label{eq: relations between parameters}
    \bar r \coloneqq \sum_{i} [-b_{ki}]_+e_i - h_k - s, \quad \bar s \coloneqq \sum_i [b_{ki}]_+e_i - h_k' - r.
\end{equation}
We prove (\ref{eq: equality two varieties}) by showing that any $(N(i))_{i\neq k} \in Z_{e';r,s}$ also belongs to $Z_{e'; \bar r, \bar s}$, that is, $(N(i))_{i\neq k}$ satisfies (C1)-(C3) for the $\mathcal P(\mu_k(T))$-module $\overline M$; the other inclusion is obtained in view of the symmetry between $\mathcal M$ and $\overline{\mathcal M}$.

To check (C1), it suffices to examine the newly created arrows in $Q(\mu_k(T))$ not incident to $k$. The linear maps associated to those arrows are encoded in $\bar \gamma_k$, which equals $\beta_k \alpha_k$. As $\alpha_k(N_{\mathrm{in}}(k)) \subset \beta_k^{-1}(N_{\mathrm{out}}(k))$, we have $\bar \gamma_k (N_{\mathrm{in}}(k)) \subset N_{\mathrm{out}}(k)$ and hence (C1) is satisfied by $(N(i))_{i\neq k}$.

For (C2), first we have $\bar \beta_k \bar \alpha_k (N_{\mathrm{out}}(k)) = \gamma_k (N_{\mathrm{out}}(k)) \subset N_{\mathrm{in}}(k)$. Then we examine how $\bar \alpha_k(N_{\mathrm{out}}(k))$ is embedded in $\bar \beta_k^{-1}(N_{\mathrm{in}}(k))$. We have $\bar \alpha_k (N_{\mathrm{out}}(k)) = N_{\mathrm{out}}(k)/(N_{\mathrm{out}}(k) \cap \image {\beta_k}) \subset \coker \beta_k$. Recall that to define $\overline M$, a choice of splitting $M_\mathrm{out}(k) = \ker \gamma_k \oplus \image \gamma_k$ has been made. Since $\image \beta_k \subset \ker \gamma_k$ by \Cref{lemma: relations of alpha beta gamma}, we obtain an induced splitting $\coker \beta_k = (\coker \beta_k/\image \gamma_k) \oplus \image \gamma_k$. The splitting data also induces a splitting
\[
    \bar \alpha_k (N_{\mathrm{out}}(k)) = \frac{N_{\mathrm{out}}(k)/(N_{\mathrm{out}}(k) \cap \image \beta_k)}{\gamma_k(N_{\mathrm{out}}(k))} \oplus \gamma_k(N_{\mathrm{out}}(k)).
\]
The natural map from $N_{\mathrm{out}}(k)$ to $\coker \beta_k$ induces an inclusion of the first summand into $\coker \beta_k/\image \gamma_k$. The second summand (which is free by the definition of $\gamma_k$) embeds into $N_{\mathrm{in}}(k) \cap \ker \alpha_k$ since $\alpha_k\gamma_k = 0$.

On the other hand, we have
\begin{equation}\label{eq: decomposition of bar beta inverse}
    \bar \beta_k^{-1}(N_{\mathrm{in}}(k)) = \frac{\coker \beta_k}{\image \gamma_k} \oplus (N_{\mathrm{in}}(k) \cap \ker \alpha_k) \oplus V_k,
\end{equation}
where the first summand is assumed to be free. Therefore $\bar \alpha_k(N_{\mathrm{out}}(k))$ is contained in the free submodule $\coker \beta_k/\image \gamma_k \oplus \gamma_k(N_{\mathrm{out}}(k))$ of $\bar \beta_k^{-1}(N_{\mathrm{in}}(k))$. Now we see that (C2) is also satisfied.

Last we check (C3) with $\bar r$ and $\bar s$ given in (\ref{eq: relations between parameters}). Consider the exact sequence
\[
    0 \rightarrow \ker \beta_k \rightarrow \beta_k^{-1}(N_{\mathrm{out}}(k)) \xrightarrow[]{\beta} N_{\mathrm{out}}(k) \cap \image \beta_k \rightarrow 0.
\]
Since $\ker \beta_k$ is assumed to be free (of rank $-h_k$), the exact sequence splits $\beta_k^{-1}(N_{\mathrm{out}}(k)) = \ker \beta_k \oplus (N_{\mathrm{out}}(k) \cap \image \beta_k)$. Then $N_{\mathrm{out}}(k) \cap \image \beta_k$ has a maximal free submodule of rank $s + h_k$. Now with $\rk N_{\mathrm{out}}(k) = \sum_i [-b_{ki}]_+e_i$, we obtain
\[
    \rk E(\bar \alpha(N_{\mathrm{out}}(k)))= \rk E\left( \frac{N_{\mathrm{out}}(k)}{N_{\mathrm{out}}(k) \cap \image \beta_k} \right)= \rk N_{\mathrm{out}}(k) - (h_k + s) = \bar r,
\]
where $E(\cdot)$ denotes the injective hull.

For $\bar \beta_k^{-1}(N_{\mathrm{in}}(k))$, the first and third summands together form $\ker \bar \beta_k$, which is free of rank $-h_k'$. Denote by $F(\cdot)$ a maximal free submodule, which is unique up to isomorphism. Since $\alpha(N_{\mathrm{in}}(k)) = N_{\mathrm{in}}(k) / (N_{\mathrm{in}}(k) \cap \ker \alpha)$, we have
\[
    \rk F( N_{\mathrm{in}}(k) \cap \ker \alpha ) = \rk N_{\mathrm{in}}(k) - \rk E( \alpha(N_{\mathrm{in}}(k))) = \sum_i [b_{ki}]_+e_i - r
\]
and thus $\rk F(\bar \beta^{-1}(N_{\mathrm{in}}(k))) = \bar s$. We have finished showing (\ref{eq: equality two varieties}).

Now consider the decomposition (\ref{eq: decomposition of quiver grass}) but for $\overline M$ and rewrite $F_{\overline M}$ in the way as in (\ref{eq: rewrite f-polynomial for M}):
\begin{equation}
    F({\overline M})(y_1', \dots, y_n') = \sum_{e', \bar r, \bar s} \chi(Z_{e';\bar r, \bar s}(\overline M)) (y_k')^{\bar r}(y_k'+1)^{\bar s - \bar r} \prod_{i\neq k}^n (y_i')^{e_i}.
\end{equation}
Using (\ref{eq: equality two varieties}), (\ref{eq: relations between parameters}) and the relation between $y_i$ and $y_i'$, we express
\begin{multline*}
        F({\overline M})(y_1', \dots, y_n') = \sum_{e', r, s}\chi(Z_{e';r,s}(M))
        y_k^{s+h_k-\sum [-b_{ki}]_+e_1}\\
        \cdot (y_k^{-1} + 1)^{s-r+ \sum [b_{ki}]_+e_i- \sum [-b_{ki}]_+e_i + h_k - h_k'} y_k^{\sum [b_{ki}]_+e_i}(1 + y_k)^{- \sum b_{ki} e_i}\prod_{i\neq k}^n y_i^{e_i}.
\end{multline*}
\begin{equation*}
    = y_k^{h_k'}(y_k+1)^{h_k-h_k'}\sum_{e', r, s}\chi(Z_{e';r,s}(M)) y_k^{r}(y_k+1)^{s-r}\prod_{i\neq k}^n y_i^{e_i}.
\end{equation*}
Comparing the above expression with (\ref{eq: rewrite f-polynomial for M}) finishes the proof.
\end{proof}

Suppose the vector $\mathbf h(M)$ is defined (see (\ref{eq: define h vector})). The next proposition describes how $\mathbf h(M)$ can be obtained from the locally free $F$-polynomial $F(M)$.

\begin{proposition}[cf. {\cite[Proposition 3.3]{derksen2010quivers}}]\label{prop: rep h vector from f poly}
    Under the assumption that $F(M)\in \mathbb Q_\mathrm{sf}(y_1, \dots, y_n)$ (the semifield of subtraction-free rational expressions), the vector $\mathbf h(M) = (h_1, \dots, h_n)$ satisfies the equation
    \begin{equation}
        x_1^{h_1} \dots x_n^{h_n} = F(M) \mid_{\mathrm{Trop}}(x_i^{-1}\prod_{j\neq i} x_j^{[-b_{j,i}]_+} \leftarrow y_i).
    \end{equation}
\end{proposition}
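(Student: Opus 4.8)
The plan is to evaluate the right-hand side directly from the definition \eqref{eq: loc free f poly} of the locally free $F$-polynomial and, for each $k$, to identify the exponent of $x_k$ with $h_k=-\rk_{H_k}\ker\beta_k$ (we tacitly assume $\mathbf h_M$ is defined, i.e.\ that every $\ker\beta_i$ is free). Since the substitution $y_i\leftarrow x_i^{-1}\prod_{j\neq i}x_j^{[-b_{j,i}]_+}$ is monomial and the evaluation takes place in $\operatorname{Trop}(x_1,\dots,x_n)$, the exponents of the various $x_k$ may be read off one at a time; fixing $k$, the monomial $\prod_i y_i^{r_i}$ acquires $x_k$-exponent $-r_k+\sum_{i\neq k}[-b_{k,i}]_+r_i$. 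Because $F(M)\in\mathbb Q_{\mathrm{sf}}(y_1,\dots,y_n)$, its tropical evaluation is well defined and the sought exponent of $x_k$ is the minimum of the linear form above over the Newton polytope of $F(M)$, equivalently over the rank vectors $\mathbf r$ with $\chi(\mathrm{Gr}_{\mathrm{l.f.}}(\mathbf r,M))\neq 0$. For such $\mathbf r$ the locally free Grassmannian is non-empty; picking a locally free submodule $N\subseteq M$ with $\rank N=\mathbf r$ and arguing exactly as in the proof of \Cref{prop: recursion of f polynomial}, one gets $\sum_{i\neq k}[-b_{k,i}]_+r_i=\rk_{H_k}N_{\mathrm{out}}(k)$, where $N_{\mathrm{out}}(k)=\bigoplus_{i\in T(-,k)}\Delta(T)_{i,k}\otimes_{H_i}N(i)$. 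Thus the assertion reduces to the identity
\[
    \min\Bigl\{\,\rk_{H_k}N_{\mathrm{out}}(k)-\rk_{H_k}N(k)\ \Bigm|\ N\subseteq M\text{ locally free},\ \chi\bigl(\mathrm{Gr}_{\mathrm{l.f.}}(\rank N,M)\bigr)\neq 0\,\Bigr\}=-\rk_{H_k}\ker\beta_k .
\]

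For the lower bound I would use that a locally free submodule satisfies \eqref{eq: condition 2}, so $N(k)$ is a free $H_k$-submodule of $\beta_k^{-1}(N_{\mathrm{out}}(k))$, whence $\rk N(k)$ is at most the maximal free rank inside $\beta_k^{-1}(N_{\mathrm{out}}(k))$; since $\ker\beta_k$ is free it is injective over the self-injective algebra $H_k$, so the exact sequence $0\to\ker\beta_k\to\beta_k^{-1}(N_{\mathrm{out}}(k))\to N_{\mathrm{out}}(k)\cap\image\beta_k\to 0$ splits, and, $N_{\mathrm{out}}(k)$ being free, this gives $\rk N(k)\le\rk_{H_k}\ker\beta_k+\rk_{H_k}N_{\mathrm{out}}(k)$, i.e.\ $\rk_{H_k}N_{\mathrm{out}}(k)-\rk_{H_k}N(k)\ge-\rk_{H_k}\ker\beta_k$. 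For attainment I would exhibit the submodule $N^0$ with $N^0(k)=\ker\beta_k$ and $N^0(i)=0$ for $i\neq k$: it is a submodule since $\ker\beta_k$ is an $H_k$-submodule of $M(k)$ and, for every arrow $b\colon k\to i$ of $Q(T)$, the map $M(b)$ (and hence each composite $M(b\varepsilon_k^f)$) appears as a coefficient of the component ${}^\circ M(i,k)$ of $\beta_k$, so that $\ker\beta_k\subseteq\ker M(b)$; it is locally free because $\ker\beta_k$ is free; it realizes the value $0-(-h_k)=h_k$; and any locally free submodule with the same rank vector has $k$-component a free $H_k$-submodule of $\ker\beta_k$ of rank $-h_k$, hence equal to $\ker\beta_k$, so $\mathrm{Gr}_{\mathrm{l.f.}}(\rank N^0,M)$ is a single point and $\chi=1\neq 0$. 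The two estimates together force the minimum to be $h_k$.

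The step I expect to require the most care is the passage from the tropical value of $F(M)$ to the minimum of the exponent vectors actually occurring in $F(M)$: since the Euler characteristics $\chi(\mathrm{Gr}_{\mathrm{l.f.}}(\mathbf r,M))$ need not be non-negative, this is exactly where the hypothesis $F(M)\in\mathbb Q_{\mathrm{sf}}$ is used, via the standard facts that tropical evaluation is independent of the chosen subtraction-free presentation and that the vertices of the Newton polytope of a subtraction-free polynomial carry nonzero coefficients, so that the tropicalization does compute the support minimum. The remaining ingredients — the identity $\sum_{i\neq k}[-b_{k,i}]_+r_i=\rk_{H_k}N_{\mathrm{out}}(k)$, the splitting over $H_k$, and the check that $N^0$ is a submodule — are routine and essentially repeat computations already carried out in the proofs of \Cref{lemma: recursion of g vector} and \Cref{prop: recursion of f polynomial}.
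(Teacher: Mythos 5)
Your proof is correct, and it supplies exactly the details that the paper leaves out: the paper's own ``proof'' of \Cref{prop: rep h vector from f poly} is a one-sentence reference to Derksen--Weyman--Zelevinsky (``It follows almost exactly from the proof in [DWZ]\ldots''). Your argument is a faithful adaptation of the DWZ approach to locally free subrepresentations, so it is the same route the paper points to, carried out in full.

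Two minor stylistic remarks. First, the lower bound does not actually require the splitting of $\beta_k^{-1}(N_{\mathrm{out}}(k))$: for any locally free submodule $N$, the exact sequence
\[
0\to N(k)\cap\ker\beta_k\to N(k)\to\beta_k(N(k))\to 0,\qquad \beta_k(N(k))\subseteq N_{\mathrm{out}}(k),
\]
already gives $\dim_\Bbbk N(k)\le\dim_\Bbbk\ker\beta_k+\dim_\Bbbk N_{\mathrm{out}}(k)$, hence $\rk N(k)\le -h_k+\rk N_{\mathrm{out}}(k)$, since all three modules have the form ``free times $d_k$ dimensions'' bound. Second, you are right to flag the tropical step as the delicate one: the relevant fact is that for a polynomial $F\in\mathbb Q_{\mathrm{sf}}$ written as $P/Q$ with $P,Q$ having nonnegative coefficients, the equality of Newton polytopes $\mathrm{Newt}(P)=\mathrm{Newt}(F)+\mathrm{Newt}(Q)$ shows that the tropical value of $F$ is the support function (in the $\min$ sense) of $\mathrm{Newt}(F)$, i.e.\ the minimum over the support of $F$. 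This is the standard fact used in DWZ and you identify it correctly; spelling out the Newton-polytope justification, as above, is perhaps worth one more line. Everything else --- the identity $\sum_{i\neq k}[-b_{k,i}]_+ r_i=\rk_{H_k}N_{\mathrm{out}}(k)$, the attainment at the submodule $N^0$ concentrated in $\ker\beta_k$, and the uniqueness of that point in $\operatorname{Gr}_{\mathrm{l.f.}}((-h_k)\mathbf e_k,M)$ --- is checked accurately.
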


\begin{proof}
    It follows almost exactly from the proof in \cite{derksen2010quivers} except that now we are taking locally free subrepresentations instead of the usual ones.
\end{proof}

\section{\texorpdfstring{$\tau$-rigid modules and $\tau$-tilting theory}{tau-rigid modules and tau-tilting theory}}\label{section: tau tilting}

Not every locally free module $M$ has again locally free mutations, see \Cref{lemma: when is mutation locally free} and \Cref{subsection: example c3}. However, we will see that $\tau$-rigid $\mathcal P(T)$-modules are locally free and are preserved under mutations. In this section, we review basics of $\tau$-rigid modules and the $\tau$-tilting theory of Adachi--Iyama--Reiten \cite{adachi2014tau}.

\subsection{\texorpdfstring{$\tau$-rigid modules}{tau-rigid modules}}
Let $A$ be a finite-dimensional $\Bbbk$-algebra where $\Bbbk$ is some ground field and $\modu A$ denote the category of finitely generated left $A$-modules. There are dualities
\[
    D \coloneqq \Hom_\Bbbk(-, \Bbbk) \colon \modu A \leftrightarrow \modu A^{\mathrm{op}}\quad \text{and} \quad (-)^*\coloneqq \Hom_A(-, A)\colon \operatorname{proj} A \leftrightarrow \operatorname{proj} A^\mathrm{op}.
\]
They induce equivalences (called \emph{Nakayama functors})
\[
    \nu \coloneqq D(-)^* \colon \operatorname{proj} A \rightarrow \operatorname{inj} A \quad \text{and} \quad \nu^{-1} \coloneqq (-)^*D \colon \operatorname{inj} A \rightarrow \operatorname{proj} A.
\]
For $X\in \modu A$ with a minimal projective presentation (resp. minimal injective copresentation)
\[
    \begin{tikzcd}
        P_1 \ar[r, "d_1"] & P_0 \ar[r, "d_0"] & X \ar[r] & 0
    \end{tikzcd}
    \quad (\text{resp.}
    \begin{tikzcd}
        0 \ar[r] & X \ar[r, "d_0"] & I_0 \ar[r, "d_1"] & I_1
    \end{tikzcd}
    ),
\]
we define $\tau X \in \modu A$ (resp. $\tau^{-1}X\in \modu A$) by exact sequences
\[
    \begin{tikzcd}[column sep = normal]
        0 \ar[r] & \tau X \ar[r] & \nu P_1 \ar[r, "\nu d_1"] & \nu P_0
    \end{tikzcd}
    \quad (\text{resp.}
    \begin{tikzcd}
        \nu^{-1} I_0 \ar[r, "\nu^{-1}d_1", shorten >= -0.3ex] & \nu^{-1} I_1 \ar[r] & \tau^{-1} X \ar[r] & 0
    \end{tikzcd}
    ).
\]
The module $\tau X$ (resp. $\tau^{-1}X$) is called the Auslander-Reiten (AR) (resp. inverse) translation of $X$.

\begin{definition}
    An $A$-module $M$ is called $\tau$-rigid if $\Hom_A(M, \tau M) = 0$ and is called $\tau^{-1}$-rigid if $\Hom_A(\tau^{-1}M, M) = 0$.
\end{definition}

Any $\tau$-rigid (or $\tau^{-1}$-rigid) module $M$ is rigid, i.e. $\operatorname{Ext}^1_A(M, M) = 0$ by the AR duality, which says $\underline{\Hom}_A(X, Y) \cong D\operatorname{Ext}_A^1(Y, \tau X)$ for any $X, Y\in \modu A$ where $\underline{\Hom}_A(X, Y)$ is the quotient of $\Hom_A(X, Y)$ by the subspace of morphisms factoring through projective modules. The AR duality also leads to the following proposition.

\begin{proposition}[\cite{MR0617088}] \label{prop: characterize tau rigid}
    Let $M$ and $N$ be a pair of modules in $\modu A$. We have
    \begin{enumerate}
        \item $\Hom_A (\tau^{-1}M, N) = 0$ if and only if $\operatorname{Ext}^1_A(N', M) = 0$ for all submodules $N'$ of $N$;
        \item $\Hom_A (N, \tau M) = 0$ if and only if $\operatorname{Ext}^1_A(M, N'') = 0$ for all factor modules $N''$ of $N$.
    \end{enumerate}
\end{proposition}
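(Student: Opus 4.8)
The plan is to prove both statements by dualizing to $A^{\mathrm{op}}$ and invoking the Auslander--Reiten duality formula $\underline{\Hom}_A(X,Y)\cong D\operatorname{Ext}^1_A(Y,\tau X)$ together with its transpose version $\overline{\Hom}_A(X,Y)\cong D\operatorname{Ext}^1_A(\tau^{-1}Y,X)$, where $\overline{\Hom}$ denotes the quotient of $\Hom$ by morphisms factoring through injectives. I would first record that statement (2) follows from statement (1) by the standard duality $D\colon\modu A\leftrightarrow\modu A^{\mathrm{op}}$: applying $D$ turns $\tau$ into $\tau^{-1}$ (for $A^{\mathrm{op}}$), turns $M$ into $DM$, turns factor modules of $N$ into submodules of $DN$, and turns $\operatorname{Ext}^1_A(M,N'')$ into $\operatorname{Ext}^1_{A^{\mathrm{op}}}(DN'',DM)$; so it suffices to prove (1).

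For statement (1), I would argue as follows. Suppose first that $\operatorname{Ext}^1_A(N',M)=0$ for every submodule $N'\subseteq N$. Let $f\colon\tau^{-1}M\to N$ be any homomorphism; I want $f=0$. Factor $f$ through its image: $\tau^{-1}M\twoheadrightarrow \image f\hookrightarrow N$. Since $\image f$ is a submodule of $N$ we have $\operatorname{Ext}^1_A(\image f,M)=0$, hence by the AR duality formula $\overline{\Hom}_A(\tau^{-1}M,\image f)=0$, i.e. every map $\tau^{-1}M\to\image f$ factors through an injective module. But $\tau^{-1}M$ has no nonzero injective summands (it is the cokernel in the defining copresentation, so $\nu^{-1}$ applied to a minimal copresentation produces a module with no injective summands), which forces the surjection $\tau^{-1}M\twoheadrightarrow\image f$ to factor through an injective; a surjection factoring through an injective has injective image, so $\image f$ is injective and is a quotient of $\tau^{-1}M$, whence $\image f$ is a summand of $\tau^{-1}M$ — contradiction unless $\image f=0$. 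Conversely, suppose $\Hom_A(\tau^{-1}M,N)=0$ and let $N'\subseteq N$ with inclusion $\iota\colon N'\hookrightarrow N$. Given a class $\xi\in\operatorname{Ext}^1_A(N',M)$, by the AR formula it corresponds to a morphism $g\colon\tau^{-1}M\to N'$ (up to morphisms through injectives); composing with $\iota$ gives $\iota g\in\Hom_A(\tau^{-1}M,N)=0$, so $g$ factors through an injective, so $\xi=0$ in $\overline{\Hom}$-terms, i.e. $\operatorname{Ext}^1_A(N',M)=0$. Here one uses that $\operatorname{Ext}^1_A(-,M)$ sends the inclusion $N'\hookrightarrow N$ to a map of $\operatorname{Ext}$ groups compatible, via AR duality, with post-composition $\Hom_A(\tau^{-1}M,N')\to\Hom_A(\tau^{-1}M,N)$.

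The main obstacle I anticipate is bookkeeping the two flavors of stable categories ($\underline{\Hom}$ modulo projectives versus $\overline{\Hom}$ modulo injectives) and making sure the AR duality is applied in the correct variance — in particular checking that $\tau^{-1}M$ has no injective summands so that ``factors through an injective'' can be upgraded to the desired vanishing, and verifying the naturality of the AR isomorphism in the module $N$ so that restricting to submodules behaves correctly. Since this is a classical result of Auslander--Smal\o\ \cite{MR0617088}, in the write-up I would either cite it directly or give the short argument above; no computation specific to $\mathcal P(T)$ is needed, as the statement is purely about an arbitrary finite-dimensional algebra.
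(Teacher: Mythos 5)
Your high-level plan---reduce (2) to (1) by duality, then invoke the Auslander--Reiten formula---is sound, but the hard direction of (1) contains a genuine gap, and the write-up also systematically confuses ``injective'' with ``projective'' and $\overline{\Hom}$ with $\underline{\Hom}$. The relevant form of the AR formula here is
\[
\operatorname{Ext}^1_A(N', M) \cong D\underline{\Hom}_A(\tau^{-1}M, N'),
\]
that is, stable Hom modulo maps factoring through \emph{projectives}; correspondingly, $\tau^{-1}M$ has no nonzero \emph{projective} direct summands (it is the transpose $\operatorname{Tr}(DM)$ computed from a minimal presentation). So the correct consequence of $\operatorname{Ext}^1_A(\image f, M)=0$ is that the surjection $\bar f\colon \tau^{-1}M \twoheadrightarrow \image f$ factors through a projective.

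The real problem is the next step. The claim that a surjection factoring through an injective (or, after the dualization, through a projective) has injective (resp.\ projective) image is false: over $A=\Bbbk[x]/(x^2)$ the quotient map $A\twoheadrightarrow \Bbbk$ factors through the projective--injective module $A$ itself, yet $\Bbbk$ is neither projective nor injective. What you actually need is the lemma: \emph{if $X$ has no nonzero projective direct summands and a surjection $g\colon X\twoheadrightarrow Y$ factors through a projective, then $Y=0$.} One proof: any factorization of $g$ through a projective can be rerouted through the projective cover $\pi\colon P(Y)\to Y$, giving $g=\pi u$ for some $u\colon X\to P(Y)$; since $g$ is onto and $\ker\pi$ is superfluous, $u$ is onto, hence split because $P(Y)$ is projective, exhibiting $P(Y)$ as a projective direct summand of $X$ and forcing $P(Y)=0$, so $Y=0$. (Alternatively, form the pullback of $\bar f$ and a surjection $v\colon P\twoheadrightarrow\image f$ from a projective, observe it is isomorphic both to $\tau^{-1}M\oplus\ker v$ and to $\ker\bar f\oplus P$, and apply Krull--Schmidt.) Inserting this lemma in place of the false step repairs the argument. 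The converse direction of (1) is fine once rephrased with $\underline{\Hom}$, and is simpler than you make it: $\Hom_A(\tau^{-1}M,N)=0$ together with $\iota$ being mono already force $g=0$ outright, with no appeal to factoring through injectives. Note the paper gives no proof of this proposition; it simply cites \cite{MR0617088}.
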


\begin{lemma}\label{lemma: tau rigid implies loc free}
    Any $\tau$-rigid (or $\tau^{-1}$-rigid) $\mathcal P(T)$-module is locally free.
\end{lemma}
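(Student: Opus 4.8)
The plan is to prove the formally cleaner statement that \emph{every} rigid $\mathcal P(T)$-module is locally free, which covers both cases at once since any $\tau$-rigid or $\tau^{-1}$-rigid module satisfies $\operatorname{Ext}^1_{\mathcal P(T)}(M,M)=0$ (recalled above). The strategy is to reduce to elementary linear algebra over the local ring $H_k$ and to produce an explicit non-split self-extension. First I would note that since $H_i=\Bbbk$ at every ordinary vertex $i$ and every vector space over a field is free, local freeness of $M$ can fail only at a pending vertex $k$; arguing by contradiction, assume $M(k)=e_kM$ is not free over $H_k=\Bbbk[\varepsilon_k]/(\varepsilon_k^2)$. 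Writing $\varepsilon:=M(\varepsilon_k)\in\operatorname{End}_\Bbbk M(k)$, the relation $\varepsilon^2=0$ always holds, and non-freeness of $M(k)$ is precisely the strict inclusion $\operatorname{im}\varepsilon\subsetneq\ker\varepsilon$.

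To construct the self-extension $0\to M\to Y\to M\to 0$, I would choose $v\in\ker\varepsilon\setminus\operatorname{im}\varepsilon$ and a linear functional $f$ on $M(k)$ with $f|_{\operatorname{im}\varepsilon}=0$ and $f(v)=1$ (possible since $v\notin\operatorname{im}\varepsilon$), put $\phi:=f(-)\,v\in\operatorname{End}_\Bbbk M(k)$ — so that $\varepsilon\phi=0$ (as $\operatorname{im}\phi=\Bbbk v\subseteq\ker\varepsilon$) and $\phi\varepsilon=0$ (as $\operatorname{im}\varepsilon\subseteq\ker f$), hence $\varepsilon\phi+\phi\varepsilon=0$ — and let $Y$ coincide with $M\oplus M$ at every vertex and along every arrow except that $Y(\varepsilon_k):=\left(\begin{smallmatrix}\varepsilon&\phi\\0&\varepsilon\end{smallmatrix}\right)$. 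The key structural remark that makes $Y$ a well-defined $\mathcal P(T)$-module is that, by the description of $R(T)$, the loop $\varepsilon_k$ occurs in no relation of $\mathcal P(T)$ other than $\varepsilon_k^2$: every other relation is a product $a\cdot b$ of two non-loop arrows inside a regular triangle and therefore holds for $Y$ because it holds for $M$ and the corresponding structure maps of $Y$ are block-diagonal, while $\varepsilon_k^2=0$ holds for $Y$ because $\varepsilon^2=0$ and $\varepsilon\phi+\phi\varepsilon=0$. Inclusion of the first summand and projection onto the second then give the short exact sequence. It is non-split: a $\mathcal P(T)$-linear section $s$ of $Y\to M$, read at vertex $k$, has the form $s_k(m)=(\psi(m),m)$ for some $\psi\in\operatorname{End}_\Bbbk M(k)$, and $\mathcal P(T)$-linearity of $s$ along $\varepsilon_k$ forces $\phi=\psi\varepsilon-\varepsilon\psi$; this is impossible, since $\operatorname{tr}\phi=f(v)=1$ whereas the trace of a commutator vanishes (all the spaces in sight are finite dimensional because $\mathcal P(T)$ is). Hence $\operatorname{Ext}^1_{\mathcal P(T)}(M,M)\neq 0$, contradicting rigidity, so $M$ is locally free.

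The only step requiring genuine care, and the only point where the particular shape of $\mathcal P(T)$ is used, is the verification that twisting \emph{only} the loop $\varepsilon_k$ does not violate any defining relation; as indicated, this is immediate from $R(T)=\{\varepsilon_k^2 : k\ \text{pending}\}\cup\{a\cdot b : h(b)=t(a),\ a,b\in\Delta\}$, because the loops are disjoint from the arrows appearing in the triangle relations. Everything else — choosing $v$ and $f$, the identities $\varepsilon\phi=\phi\varepsilon=0$, and the trace computation — is routine linear algebra, and the same argument applies verbatim to a $\tau^{-1}$-rigid module since it invokes only $\operatorname{Ext}^1_{\mathcal P(T)}(M,M)=0$.
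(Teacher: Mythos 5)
Your proposal is correct and follows essentially the same strategy as the paper's proof: show the stronger claim that every rigid $\mathcal P(T)$-module is locally free, reduce to a pending vertex $k$, use non-freeness of $M(k)$ over $H_k=\Bbbk[\varepsilon_k]/(\varepsilon_k^2)$ to build an $H_k$-level self-extension that perturbs only the loop, and extend it block-diagonally to a $\mathcal P(T)$-module using the fact that $\varepsilon_k$ appears in no relation except $\varepsilon_k^2$. Your version is somewhat more explicit than the paper's (which simply invokes a non-trivial $H_k$-extension of $M(k)$ and leaves non-splitness of the resulting $\mathcal P(T)$-extension implicit): you concretely realize the perturbation as the rank-one map $\phi=f(-)v$ and certify non-splitness with a trace computation, whereas the paper's non-splitness follows more abstractly by noting that a $\mathcal P(T)$-splitting would restrict to an $H_k$-splitting of the chosen non-trivial $H_k$-extension.
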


\begin{proof}
    In fact, for the algebra $\mathcal P(T)$ arising from a triangulation of an orbifold, we show a stronger statement that any rigid module is locally free. It suffices to show that any non-locally free $M$ is not rigid, that is, has a non-trivial self-extension. Let $k\in T$ such that $M(k)$ is not free over $H_k$. Then there is a non-trivial extension $0\rightarrow M(k) \rightarrow N \rightarrow M(k) \rightarrow 0$. Choose a splitting $N = M(k) \oplus M(k)$ as vector spaces. Define $M'$ a $\mathcal P(T)$-module by the tuple of $H_i$-modules $(M'(i))_i$ where $M'(i) = M(i) \oplus M(i)$ for $i\neq k$ and $M'(k) = N$ and for $(i, j)\in \Omega(T)$, the structure ($\Bbbk$-linear) maps 
    \[
        M'(a_{(i, j)}) = \begin{bmatrix}
        M(a_{(i, j)}) & 0 \\
        0 & M(a_{(i, j)})
        \end{bmatrix} \colon M'(j) \longrightarrow M'(i).
    \]
    The $\mathcal P(T)$-module $M'$ is a non-trivial self-extension of $M$.
\end{proof}

\begin{lemma}\label{lemma: tau rigid free kernel}
    For any $\tau$-rigid $\mathcal P(T)$-module $M$ (hence locally free by \Cref{lemma: tau rigid implies loc free}) and any $k\in T$, the image of the $H_k$-morphism $\alpha_k \colon M_{\mathrm{in}}(k) \rightarrow M(k)$ is free over $H_k$.
\end{lemma}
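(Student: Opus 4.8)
The plan is to reduce to the case where $k$ is pending, recast local freeness of $\image \alpha_k$ as a statement about socles, and then—if it fails—contradict $\tau$-rigidity by producing a non-split extension of $M$ by the simple module at $k$. If $k$ is ordinary, $H_k = \Bbbk$ and every $H_k$-module is free, so there is nothing to prove; assume $k$ pending, $H_k = \Bbbk[\varepsilon_k]/\varepsilon_k^2$. By \Cref{lemma: tau rigid implies loc free} the module $M$ is locally free, so $M(k) \cong H_k^{\oplus r}$ and $\operatorname{soc} M(k) = \varepsilon_k M(k)$. Since $k$ lies in a unique (singular) triangle, with other sides $i, j$ and local quiver the $3$-cycle with arrows $a\colon j \to k$, $b\colon k \to i$, $c\colon i \to j$ together with the loop $\varepsilon_k$, subject to $M(b)M(a) = M(c)M(b) = M(a)M(c) = 0$ and $M(\varepsilon_k)^2 = 0$, a direct computation gives $M_{\mathrm{in}}(k) \cong A(T)_{k,j} \otimes_{H_j} M(j)$ (free over $H_k$) and $\image \alpha_k = P + \varepsilon_k P$, where $P \coloneqq M(a)(M(j))$. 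I would then record the elementary fact that an $H_k$-submodule $U$ of a free $H_k$-module $F$ is free if and only if $\varepsilon_k U = U \cap \varepsilon_k F$ (a finitely generated $H_k$-module is a direct sum of copies of $H_k$ and of $\Bbbk$, and a $\Bbbk$-summand is detected exactly by $\operatorname{soc} U \supsetneq \varepsilon_k U$). Applied with $F = M(k)$, $U = \image \alpha_k$, and using the identity $(P + \varepsilon_k P) \cap \varepsilon_k M(k) = (P \cap \varepsilon_k M(k)) + \varepsilon_k P$, this shows: $\image \alpha_k$ is free $\iff P \cap \varepsilon_k M(k) \subseteq \varepsilon_k P$.

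Now suppose $\image \alpha_k$ is not free, so there is $0 \ne u \in P \cap \varepsilon_k M(k)$ with $u \notin \varepsilon_k P$; write $u = M(a)(x)$ with $x \in M(j)$. The goal is a non-split short exact sequence $0 \to S_k \to E \to M \to 0$, where $S_k$ is the simple $\mathcal P(T)$-module supported at $k$: this contradicts $\tau$-rigidity, because $S_k$ is in fact a \emph{quotient} of $M$—if $\operatorname{rad}_k M = P + \varepsilon_k M(k)$ were all of $M(k)$, then $\varepsilon_k P = \varepsilon_k M(k) \supseteq P \cap \varepsilon_k M(k)$, forcing $\image \alpha_k$ free—so by \Cref{prop: characterize tau rigid}(2) (with $N = M$) one would have $\operatorname{Ext}^1_{\mathcal P(T)}(M, S_k) = 0$. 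I would build $E$ as $E(v) = M(v)$ for $v \ne k$, $E(k) = M(k) \oplus \Bbbk s$, with all structure maps those of $M$ except $E(a)(y) = M(a)(y) + \psi(y) s$, where $\psi \colon M(j) \to \Bbbk$ is a linear functional with $\psi(x) = 1$ vanishing on $\ker M(a) + \image M(c)$. Such $\psi$ exists: $M(a)\bigl(\ker M(a) + \image M(c)\bigr) = 0$ (because $M(a)M(c) = 0$) while $M(a)(x) = u \ne 0$, so $x \notin \ker M(a) + \image M(c)$. One checks that $E$ is a genuine $\mathcal P(T)$-module—the only relations possibly affected are $ba$ and $ac$ at $k$, and $E(b)E(a) = M(b)M(a) = 0$, $E(a)E(c)(z) = M(a)M(c)(z) + \psi(M(c)z) s = 0$ since $\psi$ kills $\image M(c)$—and that $\Bbbk s \cong S_k$ is a submodule with $E/\Bbbk s \cong M$.

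Finally one shows the sequence does not split. Unwinding, a retraction $E \to S_k$ restricting to $\mathrm{id}_{S_k}$ is the same datum as a linear functional $\pi \colon M(k) \to \Bbbk$ with $\pi \circ M(a) = -\psi$ and $\pi|_{\varepsilon_k M(k)} = 0$; but such a $\pi$ would satisfy $\pi(u) = \pi(M(a)(x)) = -\psi(x) = -1$ and $\pi(u) = 0$ (since $u \in \varepsilon_k M(k)$) simultaneously—impossible. Hence $\operatorname{Ext}^1_{\mathcal P(T)}(M, S_k) \ne 0$, contradicting $\tau$-rigidity, and so $\image \alpha_k$ is free. The degenerate cases where $i$ or $j$ is a boundary segment (so $b$, or $a$ and $c$, are absent from the quiver) are either vacuous ($M_{\mathrm{in}}(k) = 0$) or handled by the same argument after deleting the missing relations. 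The step I expect to require the most care is precisely the construction and analysis of $E$: the naive cocycle $\xi_a(y) = \psi(y) u$ on $M$ itself turns out to be a coboundary, so one must instead enlarge $M(k)$ as above and isolate the clean criterion ``$\pi \circ M(a) = -\psi$ solvable with $\pi$ vanishing on the socle'' for splitting; the remaining ingredients (the reduction to $k$ pending, the socle criterion for local freeness, and verifying the module axioms for $E$) are routine.
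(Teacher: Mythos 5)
Your proof is correct and uses the same strategy as the paper: when $\image\alpha_k$ is not free, show $S_k$ is a quotient of $M$ and produce a non‑split extension $0\to S_k\to E\to M\to 0$ (so $\operatorname{Ext}^1_{\mathcal P(T)}(M,S_k)\neq 0$), contradicting $\tau$-rigidity via \Cref{prop: characterize tau rigid}(2) with $N=M$. Your explicit witness $u=M(a)(x)\in (P\cap\varepsilon_k M(k))\setminus\varepsilon_k P$ and functional $\psi$ play exactly the role of the paper's $H_k$-splitting $\image\alpha_k=F\oplus T$ and surjection $\pi$ vanishing on $F$, nonzero on $T$ (and you usefully spell out the reduction to $k$ pending and the Nakayama-type argument that $S_k$ is a quotient); the extra requirement that $\psi$ vanish on $\ker M(a)$ is harmless but not needed, since only vanishing on $\image M(c)$ is used to verify the relations for $E$.
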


\begin{proof}
    If $\image \alpha_k$ is not free, then the simple module $S_k$ is a quotient of $M$. In view of \Cref{prop: characterize tau rigid}, we construct below a non-trivial extension from $M$ to $S_k$ to derive a contradiction.

    Let $M'(i) = M(i)$ for $i\neq k$ and $M'(k) = M(k) \oplus S \in \modu H_k$ where $S = H_k/(\varepsilon_k)$. Choose a splitting (in $\modu H_k$) $\image \alpha_k = F \oplus T$ such that $F$ is a maximal free submodule. Let $\pi \colon \image \alpha_k \rightarrow S$ be a surjective $H_k$-morphism such that the restriction on $T$ is non-zero. We define
    \[
        \alpha_k' \coloneqq (\alpha_k, \pi\alpha_k)^\intercal \colon M_{\mathrm{in}}(k) \rightarrow M'(k) \quad \text{and} \quad
        \beta_k' \coloneqq (\beta_k, 0) \colon M'(k) \rightarrow M_{\mathrm{out}}(k).
    \]
    These two maps together with all other $M'(i, j) \coloneqq M(i, j)$ for $i, j\neq k$ define a $\mathcal P(T)$-module structure on $M' = (M'(i))_i$.

    The submodule $S\subset M'(k)$ defines an inclusion $S_k\subset M'$ with $M$ the quotient. Now we have the exact sequence $\begin{tikzcd}[column sep = scriptsize]
        0 \ar[r] & S_k \ar[r, "\iota"] & M' \ar[r] & M \ar[r] & 0    
    \end{tikzcd}$. We show that this is a non-trivial extension from $M$ to $S_k$.

    Suppose this is a retraction $\rho \colon M' \rightarrow S_k$ such that $\rho\iota = \mathrm{id}$. Its restriction on $M'(k)$ is given by $(f, \mathrm{id}) \colon M(k) \oplus S \rightarrow S$. The kernel of $(f, \mathrm{id})$ must contain $\image \alpha_k(M')$, that is 
    \[
    	f(\alpha_k(m)) + \pi \alpha_k (m) = 0 \quad \text{for any $m\in M_{\mathrm{in}}(k)$}.
    \]
    Let $\alpha_k(m)\in T$ such that $\pi\alpha_k(m) \neq 0$. However $f(\alpha_k(m))$ must be zero. In fact, since $T$ is contained in $\varepsilon_kM(k)$, the element $\alpha_k(m)$ must be annihilated by the $H_k$-morphism $f\colon M(k) \rightarrow S_k$. This contradiction implies the extension from $M$ to $S_k$ we have constructed is non-trivial.
\end{proof}

The following lemma is dual to \Cref{lemma: tau rigid free kernel}. We give a direct proof for completeness.
\begin{lemma}\label{lemma: tau inverse rigid free cokernel}
    For any $\tau^{-1}$-rigid $\mathcal P(T)$-module $M$ (hence locally free by \Cref{lemma: tau rigid implies loc free}) and any $k\in T$, the kernel of the $H_k$-morphism $\beta_k \colon M(k) \rightarrow M_{\mathrm{out}}(k)$ is free over $H_k$.
\end{lemma}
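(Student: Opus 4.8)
The plan is to mirror the proof of \Cref{lemma: tau rigid free kernel}, now using the dual half of \Cref{prop: characterize tau rigid}: since $M$ is $\tau^{-1}$-rigid, part (1) with $N=M$ gives $\operatorname{Ext}^1_{\mathcal P(T)}(N',M)=0$ for every submodule $N'\subseteq M$. It therefore suffices to show that if $\ker\beta_k$ is not free over $H_k$, there is a submodule $N'\subseteq M$ admitting a non-split short exact sequence $0\to M\to M'\to N'\to 0$. We may assume $k$ is pending, the ordinary case being vacuous ($H_k=\Bbbk$). Recalling from the proof of \Cref{lemma: image gamma} that $k$ lies in a regular triangle with other sides $i,j$, arrows $j\xrightarrow{a}k\xrightarrow{b}i\xrightarrow{c}j$ and loop $\varepsilon_k$, we have $M_{\mathrm{in}}(k)=A(T)_{k,j}\otimes M(j)$ and $M_{\mathrm{out}}(k)=\Delta(T)_{i,k}\otimes M(i)$, each a single summand. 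Take $N'=M^0_k$, the submodule of $M$ concentrated at the vertex $k$ with $M^0_k(k)=\ker\beta_k$; this really is a submodule because $\beta_k(m)=0$ forces $M(b)(m)=0$ (the coefficient of $\varepsilon_k\bar b$ in ${}^{\circ}M(i,k)(m)$). Assume henceforth $\ker\beta_k$ is not free, so in particular $\ker\beta_k\neq 0$.

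I would build $M'$ as follows: $M'(\ell)=M(\ell)$ for $\ell\neq k$, $M'(k)=M(k)\oplus\ker\beta_k$ as $H_k$-modules, with incoming map $\alpha'_k=(\alpha_k,0)^{\intercal}\colon M_{\mathrm{in}}(k)\to M'(k)$ and outgoing map $\beta'_k\colon M'(k)\to M_{\mathrm{out}}(k)$ restricting to $\beta_k$ on $M(k)$ and to an $H_k$-homomorphism $\psi\colon\ker\beta_k\to M_{\mathrm{out}}(k)$ whose image lies in $\image\beta_k\subseteq\ker\gamma_k$ (the inclusion by \Cref{lemma: relations of alpha beta gamma}). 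One checks that $M'$ is then a $\mathcal P(T)$-module — the only relation needing attention is $c\cdot b=0$, which holds precisely because $\image\psi\subseteq\ker\gamma_k$ (equivalently $\gamma_k\beta'_k=0$), the others being automatic since $\alpha'_k$ has trivial second component and the data off $k$ is that of $M$ — and that $M\hookrightarrow M'$ with cokernel $M^0_k$. Unwinding the dictionary between the arrow map $M'(b)$ and $\beta'_k={}^{\circ}M'(i,k)$, one finds this short exact sequence splits if and only if $\psi=-\beta_k\circ g$ for some $H_k$-homomorphism $g\colon\ker\beta_k\to M(k)$.

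The crux is to choose $\psi$ for which no such $g$ exists, and this is where non-freeness of $\ker\beta_k$ enters. Applying $\operatorname{Hom}_{H_k}(\ker\beta_k,-)$ to $0\to\ker\beta_k\to M(k)\xrightarrow{\beta_k}\image\beta_k\to 0$ and using that $M(k)$ is free, hence injective over the self-injective algebra $H_k=\Bbbk[\varepsilon_k]/\varepsilon_k^2$ (so $\operatorname{Ext}^1_{H_k}(\ker\beta_k,M(k))=0$), one obtains
\[
    \coker\!\bigl(\operatorname{Hom}_{H_k}(\ker\beta_k,M(k))\xrightarrow{\,\beta_k\circ-\,}\operatorname{Hom}_{H_k}(\ker\beta_k,\image\beta_k)\bigr)\ \cong\ \operatorname{Ext}^1_{H_k}(\ker\beta_k,\ker\beta_k).
\]
As $H_k$ is local and $\ker\beta_k$ is not projective, $\ker\beta_k$ has a direct summand isomorphic to the simple module $S=H_k/(\varepsilon_k)$, and $\operatorname{Ext}^1_{H_k}(S,S)\neq 0$ (witnessed by $0\to S\to H_k\to S\to 0$), so the cokernel above is nonzero; take $\psi$ representing a nonzero class, so that $\psi\neq-\beta_k\circ g$ for every $g$. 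Then $0\to M\to M'\to M^0_k\to 0$ is non-split, hence $\operatorname{Ext}^1_{\mathcal P(T)}(M^0_k,M)\neq 0$, contradicting \Cref{prop: characterize tau rigid}(1). Therefore $\ker\beta_k$ is free.

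I expect the main friction to be purely the bookkeeping of the last two assertions in the construction: verifying the gentle relations at $k$ for $(M'(\ell),\alpha'_k,\beta'_k)$, and pinning down the splitting criterion, both of which require careful passage between arrow maps and the ${}^{\circ}(-)$ formalism of \Cref{subsection: construction of mutation}. As a cross-check (and an alternative one might record as a remark) one can argue by duality instead: $\mathcal P(T)^{\mathrm{op}}$ is again of the form $\mathcal P(\bar T)$ for an orbifold triangulation, $D=\Hom_\Bbbk(-,\Bbbk)$ exchanges $\tau^{-1}$-rigid $\mathcal P(T)$-modules with $\tau$-rigid $\mathcal P(\bar T)$-modules, and since $H_k$ is Frobenius, freeness of $\ker\beta_k$ for $M$ is equivalent to freeness of $\image\alpha_k$ for $DM$; so \Cref{lemma: tau rigid free kernel} applied to $DM$ yields the statement.
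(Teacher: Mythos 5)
Your proof is correct, and it reaches the same contradiction (a nonzero $\operatorname{Ext}^1_{\mathcal P(T)}(N',M)$ for a submodule $N'\subseteq M$, against \Cref{prop: characterize tau rigid}(1)) as the paper does, but by a genuinely different construction. The paper works with the smaller submodule $N'=S_k$ (a single simple summand of $\ker\beta_k$), sets $M'(k)=M(k)\oplus S$, and proves non-splitting by an elementary socle count: choose $f\colon S\to M_{\mathrm{out}}(k)$ with $f(S)\not\subset\beta_k(\operatorname{soc}M(k))$ and observe that any retraction would send the generator of $S$ into $\operatorname{soc}M(k)$. You instead take the whole kernel $N'=M^0_k$ with $M'(k)=M(k)\oplus\ker\beta_k$, characterize the splittings as $\psi=-\beta_k\circ g$, and then use injectivity of $M(k)$ over the self-injective local ring $H_k$ to identify the obstruction group with $\operatorname{Ext}^1_{H_k}(\ker\beta_k,\ker\beta_k)$, which is nonzero precisely because $\ker\beta_k$ has a simple summand. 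The homological route is a bit heavier but has a real advantage: by insisting $\image\psi\subseteq\image\beta_k\subseteq\ker\gamma_k$ from the outset, you make the verification that $M'$ is a $\mathcal P(T)$-module explicit. The paper's constraint $f(S)\not\subset\beta_k(\operatorname{soc}M(k))$ alone does not visibly guarantee $\gamma_k\circ f=0$; one has to argue separately that $\operatorname{soc}(\ker\gamma_k)$ strictly contains $\beta_k(\operatorname{soc}M(k))$ when $\ker\beta_k$ is non-free (which does hold, since $\ker\beta_k$ non-free forces $\image\beta_k$ non-free, hence $\operatorname{rk}$ of the free part of $\image\beta_k$ is strictly less than $\operatorname{rk}\ker\gamma_k$). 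Your closing duality alternative — applying \Cref{lemma: tau rigid free kernel} to $DM$ over $\mathcal P(T)^{\mathrm{op}}\cong\mathcal P(T^{\mathrm{op}})$ — is exactly the shortcut the paper alludes to when it calls the two lemmas dual; the paper chooses to give a direct proof ``for completeness,'' and so do you.
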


\begin{proof}
    Suppose that $\ker \beta_k$ is not free, then $M$ has $S_k$ as a submodule. We show that there is a non-trivial extension from $S_k$ to $M$, which contradicts with $M$ being $\tau^{-1}$-rigid by \Cref{prop: characterize tau rigid}.

    Since $\beta_k$ is not surjective, the restriction of $\beta_k$ on $\operatorname{Socle} M(k)$ to $\operatorname{Socle} M_{\mathrm{out}}(k)$ is also not surjective. Let $S = H_k/(\varepsilon_k)\in \modu H_k$ and choose $f\colon S\rightarrow M_{\mathrm{out}}(k)$ such that $f(S)\not\subset \beta_k(\operatorname{Socle} M(k))$.
    
    Let $M'(i) = M(i)$ for $i\neq k$ and $M'(k) = M(k) \oplus S \in \modu H_k$. Define
    \[
    	\beta'_k\coloneqq (\beta_k, f) \colon M'(k) \rightarrow M_{\mathrm{out}}(k) \quad \text{and} \quad \alpha_k'\coloneqq (\alpha_k, 0)^\intercal \colon M_{\mathrm{in}}(k) \rightarrow M'(k).
    \]
    These maps together with $M'(i, j)\coloneqq M(i, j)$ define a $\mathcal P(T)$-module structure on the tuple $M' \coloneqq (M(i))_i$. There is a short exact sequence
    \begin{equation}
    	\begin{tikzcd}
    		0 \ar[r] & M \ar[r] & M' \ar[r, "\pi"] & S_k \ar[r] & 0
    	\end{tikzcd}
    \end{equation}
    where $\pi$ is given by the projection from $M'(k)$ onto $S$.
    
    We show the above exact sequence does not split, hence a non-trivial extension from $S_k$ to $M$. Suppose there is a section of $\pi$ given by an $H_k$-morphism $(g, \mathrm{id})^\intercal\colon S \rightarrow M'(k)$. Let $a$ be a generator of $S$. Then $(g(a), a)$ must be in $\ker \beta_k'$, that is $\beta_k(g(a)) + f(a) = 0$. Notice that $g(a)\in \operatorname{Socle} M(k)$. However we have chosen $f$ such that $f(a)\notin \beta_k(\operatorname {Socle} M(k))$, a contradiction.
\end{proof}

\subsection{\texorpdfstring{$\tau$-tilting theory}{tau-tilting theory}}\label{subsection: tau tilting theory}

A remarkable feature of $\tau$-rigid modules is that they can be organized in a pattern analogous to clusters. There is also a mutation operation in this setup. Let $A$ be a basic finite-dimensional $\Bbbk$-algebra of rank $n$.

\begin{definition}
	A pair $(M, P)$ (resp. $(M, I)$) of $M\in \modu A$ and $P\in \operatorname{proj} A$ (resp. $I\in \operatorname{inj} A$) is called \emph{$\tau$-rigid} (resp. \emph{$\tau^{-1}$-rigid}) if $M$ is $\tau$-rigid and $\Hom_A(P, M) =0$ (resp. $\Hom_A(M, I) = 0$).
\end{definition}

\begin{definition}
    A $\tau$-rigid pair $(M, P)$ is called \emph{support $\tau$-tilting} (resp. \emph{almost complete support $\tau$-tilting}) if $|M| + |P| = n$ (resp. $|M| + |P| = n-1$) where $|\cdot |$ counts the number of non-isomorphic indecomposable direct summands.
\end{definition}

\begin{theorem}[{\cite[Theorem 0.4]{adachi2014tau}}]\label{thm: air mutation}
    Any basic almost complete support $\tau$-tilting pair can be completed to precisely two basic support $\tau$-tilting pairs.
\end{theorem}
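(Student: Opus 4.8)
The approach I would take rests on the correspondence between basic support $\tau$-tilting pairs of $A$ and functorially finite torsion classes of $\modu A$, sending $(T,P)$ to $\operatorname{Fac} T$; this assignment is injective, order preserving, and $P$ is recovered from $\operatorname{Fac} T$. Under it, a completion of the almost complete pair $(U,Q)$ corresponds to a functorially finite torsion class $\mathcal T$ in which $U$ is $\operatorname{Ext}$-projective, and any such $\mathcal T$ lies in the interval $\operatorname{Fac} U \subseteq \mathcal T \subseteq {}^{\perp}(\tau U)$, where ${}^{\perp}(\tau U) := \{X \in \modu A : \Hom_A(X,\tau U) = 0\}$; the upper bound follows from $\operatorname{Ext}^1_A(T,\mathcal T) = 0$ together with \Cref{prop: characterize tau rigid}. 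So the theorem reduces to showing that this interval contains exactly two functorially finite torsion classes of the required type: a smallest one and a largest one.

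\emph{The two endpoints.} For the smallest, $\operatorname{Fac} U$ is itself a functorially finite torsion class because $U$ is $\tau$-rigid, and $U$ is $\operatorname{Ext}$-projective in it, so its associated support $\tau$-tilting pair is a completion of $(U,Q)$. For the largest, I would check that ${}^{\perp}(\tau U)$ is a torsion class (closed under quotients and extensions, via the long exact sequence for $\Hom_A(-,\tau U)$ and \Cref{prop: characterize tau rigid}), that it contains $\operatorname{Fac} U$, and that it is functorially finite by a Bongartz-type argument constructing explicit right ${}^{\perp}(\tau U)$-approximations from sequences built out of $\tau U$; then $U$ is $\operatorname{Ext}$-projective in ${}^{\perp}(\tau U)$ by $\tau$-rigidity, and the Ext-projective module of ${}^{\perp}(\tau U)$ gives a second completion. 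As $U$ is not $\tau$-tilting one has $\operatorname{Fac} U \subsetneq {}^{\perp}(\tau U)$, so these two completions are distinct.

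\emph{No third completion.} This is the heart of the matter. For any completion, the extra indecomposable direct summand $X$ adjoined to $U$ (or the extra indecomposable projective summand, in the degenerate case where the module part stays $U$) is pinned down to at most two possibilities by an exchange argument analogous to classical tilting theory, or equivalently by the exchange of $2$-term silting complexes in $K^{b}(\operatorname{proj} A)$: the two complements of the almost complete datum occupy the two ends of a short exact sequence $0 \to X' \to E \to X'' \to 0$ with $E \in \operatorname{add} U$ whose outer maps are a minimal left and a minimal right $\operatorname{add} U$-approximation, and since $U$ is $\tau$-rigid but not $\tau$-tilting---with \Cref{prop: characterize tau rigid} governing which $X$ may occur---exactly the two complements arise, one yielding the completion with torsion class $\operatorname{Fac} U$ and the other the one with torsion class ${}^{\perp}(\tau U)$, with no functorially finite torsion class strictly in between admitting $U$ as an $\operatorname{Ext}$-projective.

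The main obstacle is this last step: getting the exact count \emph{two}, which requires running the exchange-sequence bookkeeping carefully. This is where finite-dimensionality of $A$ enters in an essential way, since it is what guarantees that minimal projective presentations and minimal $\operatorname{add} U$-approximations exist and are unique up to isomorphism, making the exchange sequence canonical. The functorial finiteness of ${}^{\perp}(\tau U)$ in the second step is a secondary technical point, handled by a standard Bongartz-style approximation argument; everything else is formal manipulation with torsion classes and \Cref{prop: characterize tau rigid}.
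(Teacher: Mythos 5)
This statement is cited in the paper as \cite[Theorem~0.4]{adachi2014tau}; the paper gives no proof, so you are reconstructing the Adachi--Iyama--Reiten argument from scratch. Your overall strategy---the order-preserving bijection $(T,P)\mapsto\operatorname{Fac}T$ onto functorially finite torsion classes, the interval bounded below by $\operatorname{Fac}U$ and above by a Bongartz-type class, and an exchange-sequence argument for exclusivity---is indeed the route taken in that paper, so the plan is sound.

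Two points deserve attention. First, your upper bound $^\perp(\tau U)$ ignores the projective part $Q$ of the almost complete pair: the condition $\Hom_A(Q,M)=0$ translates to $M\in Q^\perp$, so the correct ambient class is $^\perp(\tau U)\cap Q^\perp$, and it is this class (not $^\perp(\tau U)$ alone) that is shown to be functorially finite and to furnish the Bongartz completion. Without the $Q^\perp$ constraint, the upper endpoint need not even correspond to a support $\tau$-tilting pair extending $(U,Q)$, and the interval can be strictly larger than what you want. Second, you correctly identify the ``exactly two'' step as the crux and name the right ingredients (minimal left/right $\operatorname{add}U$-approximations, the exchange short exact sequence), but the sketch does not explain why the interval $[\operatorname{Fac}U,\;{}^\perp(\tau U)\cap Q^\perp]$ contains no functorially finite torsion class strictly between the two endpoints admitting $U$ as $\operatorname{Ext}$-projective. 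In Adachi--Iyama--Reiten this is not a consequence of a single exchange sequence but of a careful comparison of the possible complements $X$ with the two endpoints (the dichotomy $X\in\operatorname{Fac}U$ versus $X\notin\operatorname{Fac}U$), combined with the fact that the exchange is involutive. As written, your argument establishes at least two completions and gives a plausible mechanism for exchange, but it does not close off a hypothetical third; that is exactly where the real work in the original proof lies, so your own flag of this as ``the heart of the matter'' is well placed.
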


Let $\operatorname{s\tau-tilt} A$ denote the set of all basic support $\tau$-tilting pairs of $A$ up to isomorphism. \Cref{thm: air mutation} defines the Adachi--Iyama--Reiten (AIR) \emph{mutation} operation on $\operatorname{s\tau-tilt} A$ that given an indecomposable summand $X$ of $\mathcal M = (M, P)\in \operatorname{s\tau-tilt} A$, there is a unique indecomposable $\tau$-rigid pair $X'$ not isomorphic to $X$ such that $\mu_X(\mathcal M) \coloneqq X' \oplus \mathcal M/X\in \stau{A}$. Therefore the set $\operatorname{s\tau-tilt} A$ can be endowed with a graph structure where the vertex set is $\operatorname{s\tau-tilt} A$ and two pairs $\mathcal M$ and $\mathcal M'$ are joined by an edge if they are AIR mutations of each other.

\begin{remark}\label{rmk: turn decorated rep to a pair}
    Let $A = \mathcal P(T)$. A $\tau$-rigid (resp. $\tau^{-1}$-rigid) pair $(M, P)$ can be turned into a decorated representation $(M, V)$ where $V = \bigoplus H_i^{a_i}$ if $P = \bigoplus_i P_i^{a_i}$ (resp. $I = \bigoplus_i I_i^{a_i}$). It follows that $V$ is disjoint from the support of the module $M$. Conversely, a decorated representation $(M, V)$ can be turned into a pair $(M, P(V))$ or $(M, I(V))$.
\end{remark}

\begin{definition}\label{def: tau rigid dec rep}
    A decorated representation $(M, V)$ is said to be $\tau$-rigid (resp. $\tau^{-1}$-rigid) if the pair $(M, P(V))$ (resp. $(M, I(V))$) is $\tau$-rigid (resp. $\tau^{-1}$-rigid).
\end{definition}

\section{\texorpdfstring{The $E$-invariant}{The E-invariant}} \label{section: e invariant}

In this section, we extend the $E$-invariant introduced in \cite{derksen2010quivers} to locally free decorated representations of $\mathcal P(T)$. In this case, the relation to $\tau$-tilting theory is unveiled in \Cref{prop: E invariant AR translation} and \Cref{prop: e rigid is tau rigid}.

\subsection{}
Let $\mathcal M = (M, V)$ and $\mathcal N = (N, W)$ be (locally free) decorated representations of $\mathcal P = \mathcal P(T)$. Denote $\langle M, N \rangle \coloneqq \dim_\Bbbk \Hom_\mathcal P(M, N)$ and $d_i(\mathcal M) = d_i(M) \coloneqq \dim_\Bbbk M(i)$. Recall the $\mathbf g$-vector $\mathbf g_\mathcal M = (g_i(\mathcal M))_i$ defined in (\ref{eq: def g vector}).

\begin{definition}\label{def: e invariant}
The integer-valued function \emph{$E$-invariant} of a pair $(\mathcal M, \mathcal N)$ is defined to be
\[
    E^{\mathrm{inj}}(\mathcal M, \mathcal N) \coloneqq \langle M, N \rangle + \sum_{i=1}^n d_i(\mathcal M) g_i(\mathcal N)\in \mathbb Z.
\]
The value $E(\mathcal M) \coloneqq E^\mathrm{inj}(\mathcal M, \mathcal M)$ is called the \emph{$E$-invariant} of $\mathcal M$. We say that $\mathcal M$ is $E$-rigid if $E(\mathcal M) = 0$.
\end{definition}

The superscript inj in $E^\mathrm{inj}$ indicates the relation with injective resolutions, as we will explain in \Cref{subsection: homological g and E}.

Following \cite[Section 6]{derksen2010quivers}, we say a homomorphism $\varphi\in \Hom_\mathcal P(M, N)$ is \emph{confined to $k$} if $\varphi(m) = 0$ for any $m\in M(\hat k) \coloneqq \bigoplus_{i\neq k}M(i)$. Such homomorphisms form a subspace $\Hom_\mathcal P^{[k]}(M, N) \subset \Hom_\mathcal P(M, N)$. Restricting $\varphi \in \Hom_\mathcal P^{[k]}(M, N)$ to $M(k)$ induces a canonical vector-space isomorphism
\begin{equation}\label{eq: confined hom space}
    \Hom_\mathcal P^{[k]}(M, N) = \Hom_{H_k}(\coker \alpha_{k; M}, \ker \beta_{k; N}),
\end{equation}
since a $\mathcal P$-homomorphism confined to $k$ kills $\image \alpha_{k;M}$ and lands in $\ker \beta_{k; N}$.

Write $\overline {\mathcal P} \coloneqq \mathcal P(\mu_k(T))$.

\begin{lemma}\label{lemma: confined space preserved under mutation}
    Let $(\overline M, \overline V) = \mu_k(\mathcal M)$ and $(\overline N, \overline W) = \mu_k(\mathcal N)$. There is a $\Bbbk$-linear isomorphism
    \begin{equation}\label{eq: invariance of hom space under mutation}
        \Hom_\mathcal P(M, N)/\Hom^{[k]}_\mathcal P(M, N) = \Hom_{\overline{\mathcal P}}(\overline M, \overline N)/\Hom^{[k]}_{\overline {\mathcal P}}(\overline M, \overline N).
    \end{equation}
\end{lemma}

\begin{proof}
    A $\mathcal P$-module $M$ is also a module over the subalgebra
        \[
            \mathcal P_{\hat k, \hat k} \coloneqq \bigoplus_{i, j\neq k} \mathcal P_{i, j} \subset \mathcal P\quad \text{where}\quad \mathcal P_{i,j} \coloneqq e_i\mathcal P e_j.
        \]
    The subspace $M(\hat k) \coloneqq \bigoplus_{i\neq k} M(i)$ is also a $\mathcal P_{\hat k, \hat k}$-module. Let $\rho\colon \Hom_\mathcal P(M, N) \rightarrow \Hom_{\mathcal P_{\hat k, \hat k}}(M(\hat k), N(\hat k))$ be the map restricting a homomorphism to $M(\hat k)$ and $N(\hat k)$.
    
    Consider the subspace
    \begin{multline*}
        \Hom_{\mathcal P(\hat k)}(M, N)\coloneqq \{\varphi\in \Hom_{\mathcal P_{\hat k, \hat k}}(M(\hat k), N(\hat k)) \mid \varphi(\ker \alpha_{k;M})\subset \ker \alpha_{k; N},\\
        \ \varphi(\image \beta_{k; M}) \subset \image \beta_{k; N} \}\subset  \Hom_{\mathcal P_{\hat k, \hat k}}(M(\hat k), N(\hat k)).
    \end{multline*}
    Then $\rho$ induces an isomorphism
    \[
        \Hom_\mathcal P(M, N)/\Hom_\mathcal P^{[k]}(M, N) = \Hom_{\mathcal P(\hat k)}(M, N).
    \]
    Note that we have an isomorphism of algebras $\mathcal P_{\hat k, \hat k} = \overline {\mathcal P}_{\hat k, \hat k}$ (which is straightforward to check per our constructions). In this context, the two modules $M(\hat k)\in \modu {\mathcal P}_{\hat k, \hat k}$ and $\overline M(\hat k)\in \modu \overline {\mathcal P}_{\hat k, \hat k}$ are isomorphic. This leads to the identification of the two spaces
    \[
        \Hom_{\mathcal P_{\hat k, \hat k}}(M(\hat k), N(\hat k)) = \Hom_{\overline{\mathcal P}_{\hat k, \hat k}}(\overline M(\hat k), \overline N(\hat k)).
    \]
    Furthermore, by \Cref{lemma: ker bar alpha = im beta & ker alpha = im bar beta} we have $\ker \alpha_{k; M} = \image \beta_{k; \overline M}$ and $\image \beta_{k; M} = \ker \alpha_{k; \overline M}$ (same for $N$ and $\overline N$). (This is true in the undecorated setting $\overline M = \mu_k(M)$ but is readily seen to still hold for $(\overline M, \overline V) = \mu_k((M, V))$.)

    Then the subspace $\Hom_{\mathcal P(\hat k)}(M, N)$ identifies with $\Hom_{\overline {\mathcal P}(\hat k)}(\overline M, \overline N)$, implying the desired isomorphism. We note that this property requires neither the original representations nor the mutations to be locally free.
\end{proof}

\begin{corollary}\label{lemma: an identity}
    Assume that $M, N, \overline M, \overline N$ are all locally free, then we have
    \begin{equation}\label{eq: hom space dim calc}
        \langle M, N \rangle + \dim ( \coker \alpha_{k; M} ) \cdot h_k(N) = \langle \overline M, \overline N \rangle + \dim (\coker \alpha_{k; \overline M}) \cdot h_k(\overline N).
    \end{equation}
\end{corollary}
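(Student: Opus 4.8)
The plan is to reduce the statement to \Cref{confined space preserved under mutation} together with the identification (\ref{eq: confined hom space}) of the confined Hom-space, so that only a dimension count over $H_k$ remains. Write $\overline{\mathcal P} \coloneqq \mathcal P(\mu_k(T))$. Applying additivity of $\dim_\Bbbk$ to the short exact sequence
\[
    0 \to \Hom_\mathcal P^{[k]}(M,N) \to \Hom_\mathcal P(M,N) \to \Hom_\mathcal P(M,N)/\Hom_\mathcal P^{[k]}(M,N) \to 0
\]
and to its analogue for $(\overline M,\overline N)$ over $\overline{\mathcal P}$, the isomorphism (\ref{eq: invariance of hom space under mutation}) of \Cref{confined space preserved under mutation} gives
\[
    \langle M,N\rangle - \dim_\Bbbk\Hom_\mathcal P^{[k]}(M,N) = \langle\overline M,\overline N\rangle - \dim_\Bbbk\Hom_{\overline{\mathcal P}}^{[k]}(\overline M,\overline N).
\]
Hence it will suffice to prove the single equality $\dim_\Bbbk\Hom_\mathcal P^{[k]}(M,N) = -\,h_k(N)\cdot\dim_\Bbbk\coker\alpha_{k;M}$ together with its barred counterpart; substituting these into the previous display and rearranging then yields (\ref{eq: hom space dim calc}).

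For that equality I would use the identification $\Hom_\mathcal P^{[k]}(M,N) \cong \Hom_{H_k}(\coker\alpha_{k;M},\ker\beta_{k;N})$ from (\ref{eq: confined hom space}). Since $h_k(N)$ occurs in the statement it is defined, i.e.\ $\ker\beta_{k;N}$ is a free $H_k$-module, of rank $-h_k(N)$ by (\ref{eq: define h vector}). Now $H_k = \Bbbk[\varepsilon_k]/\varepsilon_k^{d_k}$ is local and self-injective (Frobenius), so $\Hom_{H_k}(X,H_k) \cong D X$ as $\Bbbk$-vector spaces for every finite-dimensional $H_k$-module $X$ (equivalently, one computes this directly on the indecomposables $\Bbbk[\varepsilon_k]/\varepsilon_k^{j}$); taking direct sums, $\dim_\Bbbk\Hom_{H_k}(X,F) = (\rk_{H_k}F)\cdot\dim_\Bbbk X$ for every free $F$. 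Applying this with $X = \coker\alpha_{k;M}$ and $F = \ker\beta_{k;N}$ gives the desired equality, and the identical computation with $M,N$ replaced by $\overline M,\overline N$ (using that $h_k(\overline N)$ is likewise defined, so $\ker\beta_{k;\overline N}$ is free of rank $-h_k(\overline N)$) gives the barred one.

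There is no genuinely difficult step: the real content sits in \Cref{confined space preserved under mutation} and in (\ref{eq: confined hom space}), both already established, and the rest is bookkeeping. The only points needing care are keeping the sign convention $h_k = -\operatorname{rank}\ker\beta_k$ straight, and using that mapping into a \emph{free} $H_k$-module scales $\Bbbk$-dimension by the rank — which is exactly where the freeness of $\ker\beta_{k;N}$ (guaranteed by $h_k(N)$ being defined) enters and would fail for a non-free target. Local freeness of $M,N,\overline M,\overline N$ plays no essential role beyond keeping the surrounding objects well behaved; indeed \Cref{confined space preserved under mutation} was already noted to hold without it.
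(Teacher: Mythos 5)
Your argument is correct and takes exactly the route the paper does: take dimensions of both sides of the isomorphism in \Cref{confined space preserved under mutation}, translate $\Hom^{[k]}$ via (\ref{eq: confined hom space}), and count. You spell out the one nontrivial count, namely $\dim_\Bbbk\Hom_{H_k}(X,F)=(\rk_{H_k}F)\cdot\dim_\Bbbk X$ for $F$ free (via $H_k$ Frobenius), and you correctly observe that only freeness of $\ker\beta_{k;N}$ and $\ker\beta_{k;\overline N}$ is actually needed, whereas the paper also invokes freeness of $\coker\alpha_{k;M}$ and $\coker\alpha_{k;\overline M}$ (which does hold here, since over the self-injective local ring $H_k$ a free submodule of a free module is a direct summand, but is not required for the count).
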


\begin{proof}
    Notice that under the assumption, $\coker \alpha_{k; M}$, $\ker \beta_{k; N}$, $\coker \alpha_{k; \overline M}$ and $\ker \beta_{k; \overline N}$ are all free. Then the equation (\ref{eq: hom space dim calc}) follows directly from taking the dimensions of the two sides of (\ref{eq: invariance of hom space under mutation}) and using (\ref{eq: confined hom space}).
\end{proof}

Denote the symmetrizer by $\mathrm{diag}(c_i)_i$, where $c_i = 2$ when $i$ is pending and $c_i = 1$ when $i$ is ordinary.

\begin{theorem}\label{thm: e invariant under mutation}
Let $\overline {\mathcal M} = (\overline M, \overline V) = \mu_k(\mathcal M)$ and $\overline {\mathcal N} = (\overline N, \overline W) = \mu_k (\mathcal N)$. Assume that $\overline {\mathcal M}, \overline{\mathcal N}, \mathcal M, \mathcal N$ are all locally free. Then we have
\[
    E^\mathrm{inj}(\overline {\mathcal M}, \overline {\mathcal N}) - E^\mathrm{inj}(\mathcal M, \mathcal N) = c_kh_k(\overline M)h_k(N) - c_kh_k(M)h_k(\overline N).
\]
\end{theorem}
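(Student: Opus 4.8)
The plan is to expand the difference $\Delta E \coloneqq E^{\mathrm{inj}}(\overline{\mathcal M},\overline{\mathcal N}) - E^{\mathrm{inj}}(\mathcal M,\mathcal N)$ along the two summands of the $E$-invariant, control the $\operatorname{Hom}$-part with \Cref{lemma: an identity} and the $\mathbf g$-vector part with the recursion in \Cref{lemma: recursion of g vector}, and then reconcile the various $H_k$-dimensions that appear. Concretely, write
\[
    \Delta E = \bigl(\langle\overline M,\overline N\rangle - \langle M,N\rangle\bigr) + \sum_{i}\bigl(d_i(\overline M)g_i(\overline{\mathcal N}) - d_i(M)g_i(\mathcal N)\bigr);
\]
since $\overline M(i) = M(i)$ for $i\ne k$, the second sum splits into an $i\ne k$ part in which only the $\mathbf g$-vectors change and an $i=k$ part. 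By \Cref{lemma: an identity}, $\langle\overline M,\overline N\rangle - \langle M,N\rangle = \dim_\Bbbk(\coker\alpha_{k;M})\,h_k(N) - \dim_\Bbbk(\coker\alpha_{k;\overline M})\,h_k(\overline N)$; and by \Cref{lemma: recursion of g vector}, applied both to $\mathcal N$ and to $\mathcal M$, $g_i(\overline{\mathcal N}) - g_i(\mathcal N) = [b_{i,k}]_+ g_k(\mathcal N) - b_{i,k}h_k(N)$ for $i\ne k$, $g_k(\overline{\mathcal N}) = -g_k(\mathcal N)$, and $g_k = h_k - h_k'$ on each side. Substituting these, the $\mathbf g$-vector part becomes $\bigl(\sum_{i\ne k}d_i(M)[b_{i,k}]_+\bigr)g_k(\mathcal N) - \bigl(\sum_{i\ne k}d_i(M)b_{i,k}\bigr)h_k(N) - \bigl(d_k(M)+d_k(\overline M)\bigr)g_k(\mathcal N)$.

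The crux is then the arithmetic of $H_k$-modules. Using that $Q(T)$ has at most one arrow between any two vertices, together with skew-symmetrizability (which forces $b_{i,k} = d_k$ for $i\in T(-,k)$ and $b_{i,k} = -d_k$ for $i\in T(k,-)$), one identifies $\sum_{i\ne k}d_i(M)[b_{i,k}]_+ = \dim_\Bbbk M_{\mathrm{out}}(k)$ and $\sum_{i\ne k}d_i(M)[-b_{i,k}]_+ = \dim_\Bbbk M_{\mathrm{in}}(k)$. Next, from the explicit description $\overline M(k) = \tfrac{\ker\gamma_k}{\image\beta_k}\oplus\image\gamma_k\oplus\tfrac{\ker\alpha_k}{\image\gamma_k}\oplus V(k)$ together with $\ker\alpha_k = \image\bar\beta_k$ and $\ker\bar\alpha_k = \image\beta_k$ from \Cref{lemma: ker alpha = image bar beta}, plus rank--nullity for free $H_k$-modules, one derives
\[
    \dim_\Bbbk\coker\alpha_{k;M} = \dim_\Bbbk M(k) + \dim_\Bbbk\overline M(k) - \dim_\Bbbk M_{\mathrm{in}}(k) + d_k\,h_k(\overline M),
\]
and, running the same computation for $\mu_k(\overline{\mathcal M}) = \mathcal M$ (legitimate by the involutivity \Cref{cor: twice mutation decoration} and $\overline M_{\mathrm{in}}(k) = M_{\mathrm{out}}(k)$),
\[
    \dim_\Bbbk\coker\alpha_{k;\overline M} = \dim_\Bbbk\overline M(k) + \dim_\Bbbk M(k) - \dim_\Bbbk M_{\mathrm{out}}(k) + d_k\,h_k(M).
\]

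Feeding everything back into $\Delta E$, the terms carrying $\dim_\Bbbk\coker\alpha_{k;M}$, $\dim_\Bbbk M(k)$, $\dim_\Bbbk\overline M(k)$ and $\dim_\Bbbk M_{\mathrm{in}/\mathrm{out}}(k)$ cancel in pairs --- after using $g_k(\mathcal M) = h_k(M) - h_k(\overline M)$ to eliminate the leftover $\dim_\Bbbk M_{\mathrm{out}}(k) - \dim_\Bbbk M_{\mathrm{in}}(k)$ --- leaving $\Delta E = d_k\bigl(h_k(M)g_k(\mathcal N) - g_k(\mathcal M)h_k(N)\bigr)$. Rewriting $g_k(\mathcal M) = h_k(M) - h_k(\overline M)$ and $g_k(\mathcal N) = h_k(N) - h_k(\overline N)$ converts this into $c_k h_k(\overline M)h_k(N) - c_k h_k(M)h_k(\overline N)$ with $c_k = d_k$, which is the claim. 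Throughout, the skeleton of the argument parallels the skew-symmetric case of \cite{derksen2010quivers}.

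The step I expect to be the main obstacle is the dimension bookkeeping above: every relevant subquotient of an $H_k$-module appearing there ($\ker\gamma_k$, $\ker\beta_k$, $\image\alpha_k$, $\coker\alpha_k$ and $\overline M(k)$) must be \emph{free} over $H_k$ in order for the substitution $\dim_\Bbbk = d_k\cdot\rk_{H_k}$ to be legitimate. This is exactly where the four local-freeness hypotheses enter: recall from the proof of \Cref{lemma: recursion of g vector} that local freeness of $\overline{\mathcal M}$ is equivalent to $\ker\beta_{k;M}$ and $\image\alpha_{k;M}$ being free (and symmetrically for $\mathcal N$), which in turn forces freeness of the remaining subquotients and, incidentally, produces the factor $c_k = d_k$ that distinguishes the skew-symmetrizable identity from its skew-symmetric prototype.
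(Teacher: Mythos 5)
Your argument is correct and matches the paper's own proof strategy: both hinge on \Cref{lemma: an identity}, compute $\dim_\Bbbk \coker\alpha_{k;M}$ and $\dim_\Bbbk\coker\alpha_{k;\overline M}$ via rank--nullity together with $\ker\alpha_k=\image\bar\beta_k$ (identifying $\sum_{i\neq k}[-b_{ik}]_+d_i(M)=\dim_\Bbbk M_{\mathrm{in}}(k)$ and its $\mathrm{out}$-counterpart along the way), and then use the $\mathbf g$-vector recursion from \Cref{lemma: recursion of g vector} to cancel the residual dimension terms. The paper merely compresses the final substitution $h_k(\overline N)=h_k(N)-g_k(\mathcal N)$ and ensuing cancellations into an ``easily seen'' remark, whereas you carry out the bookkeeping explicitly and correctly identify $c_k=d_k$.
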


\begin{proof}
    The proof is similar to the proof of \cite[Theorem 7.1]{derksen2010quivers}. We deduce the claimed equation using \Cref{lemma: an identity}.
    
    Using the equality $\ker \alpha_{k; M} = \image \beta_{k; \overline M}$ (as noted in the proof of \Cref{lemma: confined space preserved under mutation}), we obtain
    \begin{align*}
        \dim (\coker \alpha_{k; M}) & = d_k(M) - \dim M_{\operatorname{in}}(k) + \dim (\ker \alpha_{k; M}) \\
        & = d_k(M) - \sum_{i}[-b_{ik}]_+ d_i(M) + d_k(\overline M) - \dim (\ker \beta_{k, \overline M}) \\
        & \overset{\eqref{eq: define h vector}}{=} c_k h_k(\overline M) + d_k(M) + d_k(\overline M) - \sum_{i} [-b_{ik}]_+d_i(M).
    \end{align*}
    By \Cref{cor: decorated mutation involution} (the involutivity of decorated mutations), we have $\mu_k(\overline M, \overline V) = (M, V)$, hence
    \[
        \dim(\coker \alpha_{k;\overline M}) = c_kh_k(M) + d_k(\overline M) + d_k(M) - \sum_{i}[b_{ik}]_+d_i(\overline M).
    \]
    Now we can rewrite (\ref{eq: hom space dim calc}) as
    \begin{align*}
        \langle \overline M, \overline N \rangle - \langle M, N\rangle & =
        \dim(\coker \alpha_{k;M})\cdot h_k(N) - \dim(\coker \alpha_{k;\overline M})\cdot h_k(\overline N) \\
        &= c_kh_k(\overline M)h_k(N) - c_k h_k(\overline N)h_k(M) \\
        & + (d_k(M) + d_k(\overline M))\cdot (h_k(N) - h_k(\overline N)) \\
        & + \sum_{i\neq k} ([b_{ik}]_+ h_k(\overline N) - [-b_{ik}]_+h_k(N))d_i(M).
    \end{align*}
    Applying $h_k(\overline N) = h_k(N) - g_k(\mathcal{N})$ and the transformation between $\mathbf g(\mathcal N)$ and $\mathbf g(\overline{\mathcal N})$ (\Cref{lemma: recursion of g vector}), the desired equation is easily seen equivalent to the above one.
\end{proof}

\begin{corollary}\label{cor: e invariant under mutation}
    If both $\mathcal{M}$ and $\overline{\mathcal{M}} = \mu_k(\mathcal{M})$ are locally free, then their $E$-invariants (computed with respect to the corresponding underlying associative algebras) are equal, i.e., $E(\mathcal M) = E(\overline{\mathcal{M}})$.    
\end{corollary}

\subsection{}
Denote by $T^\mathrm{op}$ the triangulation given by $T$ on the same orbifold but with the opposite orientation. The quiver $Q(T^\mathrm{op})$ is simply $Q(T)$ with all arrows reversed. Tracing the definition of the algebra $\mathcal P(T)$, we see that $\mathcal P(T^\mathrm{op}) = \mathcal P(T)^\mathrm{op}$. 

For a $\mathcal P(T)$-module $M$, taking the dual $\Hom_\Bbbk(M, \Bbbk)$ gives rise to a $\mathcal P(T^\mathrm{op})$-module $M^\star$. The following proposition is analogous to \cite[Proposition 7.3]{derksen2010quivers}. The proof therein can be adapted word by word to our situation.

\begin{proposition}\label{prop: e invariant opposite rep}
    Let $\mathcal M = (M, V)$ be a locally free decorated $\mathcal P(T)$-representation. We have $E(\mathcal M^\star) = E(\mathcal M)$ where $\mathcal M^\star = (M^\star, V)$.
\end{proposition}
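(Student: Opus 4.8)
The plan is to reduce the identity $E(\mathcal M^\star)=E(\mathcal M)$ to a statement about $\mathbf g$-vectors, which then becomes a short symmetric computation. Expanding \Cref{def: e invariant} on both sides gives
\[
 E(\mathcal M^\star)-E(\mathcal M)=\bigl(\langle M^\star,M^\star\rangle-\langle M,M\rangle\bigr)+\sum_{i=1}^n\bigl(d_i(M^\star)\,g_i(\mathcal M^\star)-d_i(M)\,g_i(\mathcal M)\bigr),
\]
where on the left every quantity is computed for $\mathcal M^\star$ over $\mathcal P(T^{\mathrm{op}})$. Since $\mathcal P(T^{\mathrm{op}})=\mathcal P(T)^{\mathrm{op}}$ and $D\coloneqq\Hom_\Bbbk(-,\Bbbk)$ is a contravariant equivalence $\modu\mathcal P(T)\to\modu\mathcal P(T^{\mathrm{op}})$, we have $\langle M^\star,N^\star\rangle=\langle N,M\rangle$ for all $M,N$, so the first bracket vanishes. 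Moreover each $H_i=\Bbbk[\varepsilon_i]/\varepsilon_i^{d_i}$ is commutative and self-injective, so the $\Bbbk$-dual of a free $H_i$-module is free of the same rank; thus $M^\star$ is again locally free, $M^\star(i)\cong M(i)$ as $H_i$-modules, $d_i(M^\star)=d_i(M)=\dim_\Bbbk M(i)$, and the decoration $V$ is unchanged. It therefore remains to prove $\sum_i d_i(M)\bigl(g_i(\mathcal M^\star)-g_i(\mathcal M)\bigr)=0$.

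The key point is to express $g_i(\mathcal M^\star)-g_i(\mathcal M)$ through the data of $\mathcal M$ over $\mathcal P(T)$. The quiver $Q(T^{\mathrm{op}})$ is $Q(T)$ with all arrows reversed, $T^{\mathrm{op}}(i,-)=T(-,i)$ and $T^{\mathrm{op}}(-,i)=T(i,-)$, and $D$ carries the diagram (\ref{eq: mutation diagram}) attached to $(M,T)$ at $i$ to the one attached to $(M^\star,T^{\mathrm{op}})$ at $i$: using the identifications $\Delta(T)_{i,j}\cong A(T^{\mathrm{op}})_{j,i}$ together with the bimodule dualities $\rho,\lambda$ of (\ref{eq: map rho left dual})--(\ref{eq: map lambda right dual}), one checks that $\alpha_i$ and $\beta_i$ are interchanged up to $H_i$-duality and that $\gamma_i$ becomes $\Hom_{H_i}(\gamma_i,H_i)$, so that $M^\star_{\mathrm{in}}(i)$ and $M^\star_{\mathrm{out}}(i)$ are the $H_i$-duals of $M_{\mathrm{out}}(i)$ and $M_{\mathrm{in}}(i)$. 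Because $\ker\gamma_i$ and $\image\gamma_i$ are free over $H_i$ by \Cref{lemma: image gamma} (applied also to $M^\star$) and $H_i$ is self-injective, dualizing yields $\rk_{H_i}\ker(\gamma^{\mathrm{op}}_i)=\rk_{H_i}\coker\gamma_i=\rk_{H_i}M_{\mathrm{in}}(i)-\rk_{H_i}\image\gamma_i$, while $\rk_{H_i}\ker\gamma_i=\rk_{H_i}M_{\mathrm{out}}(i)-\rk_{H_i}\image\gamma_i$. Substituting into (\ref{eq: def g vector}) and using $\rk_{H_i}M^\star(i)=\rk_{H_i}M(i)$ gives
\[
 g_i(\mathcal M^\star)-g_i(\mathcal M)=\rk_{H_i}M_{\mathrm{in}}(i)-\rk_{H_i}M_{\mathrm{out}}(i).
\]

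Finally, by (\ref{eq: def m in and m out}) and the fact that $\rk_{H_i}\bigl(A(T)_{i,j}\otimes_{H_j}M(j)\bigr)=\dim_\Bbbk M(j)=\rk_{H_i}\bigl(\Delta(T)_{j,i}\otimes_{H_j}M(j)\bigr)$, we get $\rk_{H_i}M_{\mathrm{in}}(i)=\sum_{j:\,b_{ij}>0}\dim_\Bbbk M(j)$ and $\rk_{H_i}M_{\mathrm{out}}(i)=\sum_{j:\,b_{ji}>0}\dim_\Bbbk M(j)$. Hence
\[
 \sum_i d_i(M)\bigl(g_i(\mathcal M^\star)-g_i(\mathcal M)\bigr)=\sum_{(i,j):\,b_{ij}>0}\dim_\Bbbk M(i)\,\dim_\Bbbk M(j)-\sum_{(i,j):\,b_{ji}>0}\dim_\Bbbk M(i)\,\dim_\Bbbk M(j),
\]
and the two sums agree after the substitution $i\leftrightarrow j$, since the summand is symmetric in $i$ and $j$; thus the difference is $0$, which completes the reduction. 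The one step that needs genuine care — and is the analogue of the computation in \cite[Proposition 7.3]{derksen2010quivers} — is the identification $\gamma^{\mathrm{op}}_i=\Hom_{H_i}(\gamma_i,H_i)$ in the second paragraph, i.e. matching the $\gamma$-maps and the modulating bimodules under the duality $\mathcal M\rightsquigarrow\mathcal M^\star$ and the orientation reversal $T\rightsquigarrow T^{\mathrm{op}}$; once this is in place, the remaining manipulations are purely formal.
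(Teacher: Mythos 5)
Your proof is correct and follows essentially the same route as the paper, which simply cites \cite[Proposition 7.3]{derksen2010quivers} and asserts the argument adapts verbatim: expand $E(\mathcal M^\star)-E(\mathcal M)$, use that duality preserves $\langle-,-\rangle$ and $\Bbbk$-dimensions, reduce to $\sum_i d_i(M)\bigl(g_i(\mathcal M^\star)-g_i(\mathcal M)\bigr)=0$ via the matching $\gamma_i^{\mathrm{op}}\leftrightarrow$ dual of $\gamma_i$ (this is precisely the "word-by-word" adaptation the paper alludes to), and close with the $i\leftrightarrow j$ symmetry of $\sum_{b_{ij}>0} d_i(M)d_j(M)$. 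You have in fact supplied more explicit detail than the paper does, and you correctly flag the duality of the $\gamma$-maps as the one step requiring genuine verification; the rest is bookkeeping and checks out, including the rank computations $\rk_{H_i}M_{\mathrm{in}}(i)=\sum_{j:b_{ij}>0}\dim_\Bbbk M(j)$ and the use of freeness of $\ker\gamma_i$ and $\ker\gamma_i^{\mathrm{op}}$ from \Cref{lemma: image gamma} to make the rank of the non-necessarily-free $\coker\gamma_i$ meaningful.
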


\subsection{}\label{subsection: homological g and E}
In the rest of the section, we express the $E$-invariant in terms of $\tau$ the AR translation. First we give the following homological interpretation of $\mathbf g$-vectors.

\begin{proposition} \label{prop: g vector by inj}
Let $M$ be a locally free $\mathcal P(T)$-module and
\[
    0 \rightarrow M \rightarrow \bigoplus_{k=1}^n I_k^{p_k} \rightarrow \bigoplus_{k=1}^n I_k^{q_k}   
\]
be a minimal injective copresentation where $I_k$ denotes the injective hull of $S_k$ in $\modu \mathcal P(T)$. Then the $\mathbf g$-vector $\mathbf g_M = (g_k)_{k}$ (as defined in (\ref{eq: def g vector})) satisfies $g_k = - p_k + q_k$.
\end{proposition}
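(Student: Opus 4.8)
The plan is to translate the combinatorial formula \eqref{eq: def g vector} into homological data. Since $M$ is an honest module, $\mathbf g(M)=\mathbf g((M,0))$ has $V=0$, so $g_k=\rk_{H_k}\ker\gamma_k-\rk_{H_k}M(k)$, and we must show this equals $q_k-p_k$. First I would dispose of the right-hand side by a standard socle/$\operatorname{Ext}$ argument. As $\mathcal P(T)$ is basic and $\operatorname{soc}I_j=S_j$, one has $\Hom_{\mathcal P(T)}(S_k,I_j)=\delta_{jk}\Bbbk$, hence $\dim_\Bbbk\Hom_{\mathcal P(T)}(S_k,I^0)=p_k$ and $\dim_\Bbbk\Hom_{\mathcal P(T)}(S_k,I^1)=q_k$. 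Minimality of the copresentation makes $M\hookrightarrow I^0$ an injective envelope (so $\operatorname{soc}M=\operatorname{soc}I^0$) and $C\coloneqq\coker(M\to I^0)\hookrightarrow I^1$ one as well; feeding this into the long exact sequence of $0\to M\to I^0\to C\to 0$ shows that $\Hom(S_k,M)\to\Hom(S_k,I^0)$ is an isomorphism and hence $\operatorname{Ext}^1_{\mathcal P(T)}(S_k,M)\cong\Hom_{\mathcal P(T)}(S_k,C)$, while $\Hom(S_k,C)\to\Hom(S_k,I^1)$ is an isomorphism as well. Therefore $p_k=\dim_\Bbbk\Hom_{\mathcal P(T)}(S_k,M)$ and $q_k=\dim_\Bbbk\operatorname{Ext}^1_{\mathcal P(T)}(S_k,M)$, and the proposition reduces to the local identity
\[
\dim_\Bbbk\operatorname{Ext}^1_{\mathcal P(T)}(S_k,M)-\dim_\Bbbk\Hom_{\mathcal P(T)}(S_k,M)=\rk_{H_k}\ker\gamma_k-\rk_{H_k}M(k).
\]

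To prove this I would use the short exact sequence $0\to\operatorname{rad}P_k\to P_k\to S_k\to 0$: it identifies $\Hom_{\mathcal P(T)}(S_k,M)$ with the set of $m\in M(k)$ killed by $\operatorname{rad}\mathcal P(T)$ and, since $\operatorname{Ext}^1(P_k,-)=0$, identifies $\operatorname{Ext}^1_{\mathcal P(T)}(S_k,M)$ with the cokernel of the restriction map $M(k)=\Hom(P_k,M)\to\Hom(\operatorname{rad}P_k,M)$. Gentleness of $\mathcal P(T)$ makes the target explicit: $\operatorname{rad}P_k$ is a direct sum of string modules, with one summand $\langle b\rangle\subseteq P_k$ for each arrow $b\colon k\to i$ running through a triangle (one such arrow when $k$ is pending, up to two when $k$ is ordinary), plus, when $k$ is pending, one further summand $\langle\varepsilon_k\rangle\cong P_k/\langle\varepsilon_k\rangle$ coming from the loop. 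Using the triangle relation $M(c)\circ M(b)=0$, where $c\colon i\to j$ is the third side, one identifies $\Hom(\langle b\rangle,M)=\ker\big(M(c)\colon M(i)\to M(j)\big)$; by the description of $\gamma_k$ in \Cref{subsection: construction of mutation} and the freeness from \Cref{lemma: image gamma}, this kernel is exactly $\ker\gamma_k$ computed on that triangle, so $\rk_{H_k}\ker\gamma_k=\dim_\Bbbk\ker M(c)$ when $k$ is pending (and the sum of the two such kernels when $k$ is ordinary). Crucially, the loop summand $\langle\varepsilon_k\rangle$ contributes nothing to the cokernel: its component of the restriction map is $M(k)\to\ker M(\varepsilon_k)$, $m\mapsto M(\varepsilon_k)(m)$, which is surjective precisely because $M(k)$ is free over $H_k=\Bbbk[\varepsilon_k]/\varepsilon_k^2$, whence $\image M(\varepsilon_k)=\ker M(\varepsilon_k)$.

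Putting this together in the pending case (writing $r=\rk_{H_k}M(k)$): $\Hom_{\mathcal P(T)}(S_k,M)=\ker M(b)\cap\varepsilon_k M(k)$ and $\operatorname{Ext}^1_{\mathcal P(T)}(S_k,M)=\coker\big(\varepsilon_k M(k)\xrightarrow{M(b)}\ker M(c)\big)$, so rank--nullity for $M(b)$ restricted to $\varepsilon_k M(k)$ gives $\dim_\Bbbk\operatorname{Ext}^1-\dim_\Bbbk\Hom=\dim_\Bbbk\ker M(c)-r=\rk_{H_k}\ker\gamma_k-\rk_{H_k}M(k)$, as wanted; the ordinary case is the same count with $H_k=\Bbbk$, two incident triangles and no loop. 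The main obstacle is the bookkeeping when a neighbour of $k$ is itself pending: then $Q(T)$ carries a second loop and the long paths $b\varepsilon_k$, $\varepsilon_i b$, $\varepsilon_i b\varepsilon_k,\dots$ enter the string description of $P_k$ and $\operatorname{rad}P_k$, and one must verify that the identification $\Hom(\langle b\rangle,M)=\ker M(c)$ and the freeness assertions persist and that the conversion between $\Bbbk$-dimensions and $H_k$-ranks produces exactly the factor $|b_{j,k}|=d_k$. As an alternative that may streamline this, one can dualise: a minimal injective copresentation of $M$ dualises to a minimal projective presentation of $M^\star$ over $\mathcal P(T)^{\mathrm{op}}=\mathcal P(T^{\mathrm{op}})$, so $p_k,q_k$ become the multiplicities in that presentation and one proves instead that the projective-presentation $\mathbf g$-vector of $M^\star$ is $\big(\rk_{H_k}M(k)-\rk_{H_k}\ker\gamma_k\big)_k$, a statement dovetailing with \Cref{prop: e invariant opposite rep} and whose local computation only involves the incoming maps $\alpha_k$ and $\varepsilon_k$.
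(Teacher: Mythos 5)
Your proposal is correct, but it takes a genuinely different route than the paper's argument, and it is worth contrasting them. The paper simply cites Cerulli Irelli--Labardini--Sch\"oer for the identifications $p_k = \dim_\Bbbk\Hom_{\mathcal P}(S_k,M)$ and $q_k = \dim_\Bbbk\operatorname{Ext}^1_{\mathcal P}(S_k,M)$, and then computes $\operatorname{Ext}^1_{\mathcal P}(S_k,M)$ by directly classifying extension classes $0\to M\to M'\to S_k\to 0$: because $M(k)$ is $H_k$-free, such an extension is encoded by an $H_k$-morphism $\varphi\colon S_k\to M_{\operatorname{out}}(k)$ landing in $\ker\gamma_k$, giving $\operatorname{Ext}^1_{\mathcal P}(S_k,M)\cong \Hom_{H_k}(S_k,\ker\gamma_k)/\beta_k(\operatorname{soc}M(k))$ and $\Hom_{\mathcal P}(S_k,M)\cong\operatorname{soc}(\ker\beta_k)$, from which the identity drops out by counting socles. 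You instead reprove the $p_k,q_k$ identification from scratch (a standard socle/injective-envelope argument that is entirely fine) and then compute the same Euler form via the projective presentation $0\to\operatorname{rad}P_k\to P_k\to S_k\to 0$ and the gentle string decomposition $\operatorname{rad}P_k=\langle\varepsilon_k\rangle\oplus\langle b\rangle$ (or $\langle b_i\rangle\oplus\langle b_p\rangle$ in the ordinary case). Your observation that $\operatorname{ann}_{\mathcal P}(b)=\mathcal P c$ for the unique arrow $c$ with $cb$ a relation --- hence $\Hom(\langle b\rangle,M)=\ker M(c)$ --- is a clean consequence of gentleness and answers your own worry about extra bookkeeping when the neighbour $i$ is pending: longer paths like $c\varepsilon_i b$ are nonzero precisely because gentle relations have length two, so they impose no new constraints on $\Hom(\langle b\rangle,M)$. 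Your reduction of the cokernel of $M(k)\to\varepsilon_k M(k)\oplus\ker M(c)$ to $\coker(\varepsilon_k M(k)\xrightarrow{M(b)}\ker M(c))$ is also sound (it is the standard fact that if the first component is surjective the cokernel is computed by the second component restricted to the kernel of the first), though it deserves to be spelt out as such rather than asserted. Net comparison: the paper's extension-class computation is shorter and entirely coordinate-free in terms of the modulated-graph structure morphisms $\beta_k$ and $\gamma_k$; your route makes the gentle string structure of $P_k$ do the work and gives a more explicit description of $\Hom$ and $\operatorname{Ext}^1$, at the cost of a case analysis on whether $k$ (and its neighbours) are pending. Your closing remark about dualising to a projective presentation of $M^\star$ over $\mathcal P(T^{\operatorname{op}})$ is another viable variant, consistent with \Cref{prop: e invariant opposite rep}.
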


\begin{proof}
It follows from \cite[Lemma 3.3]{cerulli2015caldero} that
\[
    p_k = \dim \Hom_\mathcal P(S_k, M) \quad \text{and} \quad q_k = \dim \operatorname{Ext}^1_\mathcal P(S_k, M).
\]
Recall that $g_k \coloneqq \rk \ker \gamma_k - \rk M(k)$. Then it amounts to prove the equation
\begin{equation}\label{eq: g vector homological identity}
    \rk \ker \gamma_k - \rk M(k) = -\dim \Hom_\mathcal P (S_k, M) + \dim \operatorname{Ext}^1_\mathcal P (S_k, M).
\end{equation}

First notice that $\Hom_{\mathcal P}(S_k, M) = \Hom_{H_k} (S_k, \ker \beta_k)$, hence $\dim \Hom_\mathcal P(S_k, M) = \dim \operatorname{soc}(\ker \beta_k)$.

Let $0\rightarrow M \rightarrow M' \rightarrow S_k\rightarrow 0$ be a short exact sequence in $\operatorname{Ext}^1_\mathcal P(S_k, M)$. Then $M'(k) = M(k) \oplus S_k$ as $M(k)$ is free. Now this extension is determined by an $H_k$-morphism
\[
    \varphi \colon S_k \rightarrow M_{\mathrm{out}}(k) \quad \text{such that} \quad \varphi(S_k) \subset \ker \gamma_k 
\]
to satisfy the relations in $\mathcal P$. Let $a$ be a generator of $S_k$. Two extensions $\varphi$ and $\varphi'$ are equivalent in $\operatorname{Ext}^1_\mathcal P(S_k, M)$ if and only if $\varphi(a)$ and $\varphi'(a)$ differ by an element in the image of $\operatorname{soc}M(k)$ under $\beta_k$. Therefore we have
\[
    \operatorname{Ext}_\mathcal P^1(S_k, M) = \dfrac{\Hom_{H_k}(S_k, \ker \gamma_k)}{\beta_k(\operatorname{soc}M(k))}.
\]
Now we can rewrite the right-hand side of (\ref{eq: g vector homological identity}) as
\[
    -\dim \operatorname{soc}(\ker \beta_k) - \dim \beta_k(\operatorname{soc}(M(k)) + \dim \Hom_{H_k}(S_k, \ker \gamma_k).
\]
Since $\ker \gamma_k$ and $\ker \beta_k$ are both free, we have $\dim \Hom_{H_k}(S_k, \ker \gamma_k) = \rk \ker \gamma_k$ and $\dim \operatorname{soc}(\ker \beta_k) + \dim \beta_k(\operatorname{soc}(M(k)) = \dim \operatorname{soc}(M(k)) = \rk M(k)$. Then (\ref{eq: g vector homological identity}) follows.
\end{proof}

\begin{remark}
Proposition \ref{prop: g vector by inj} states that the $\mathbf{g}$-vector we have defined in \eqref{eq: def g vector} equals the vector considered in \cite[(10.15), (10.16), Proposition 10.4]{derksen2010quivers} and later in \cite[\S5]{adachi2014tau} for the vector space dual of $M$, which is a module over the opposite algebra $\mathcal{P}(T)^{\mathrm{op}}$. Similarly, the $E$-invariant we have defined in \Cref{def: e invariant} will turn out to be equal to the one considered in \cite[Theorem 10.5, Corollary 10.9]{derksen2010quivers} and later in \cite[\S5]{adachi2014tau}, as we are about to show.
\end{remark}

The following theorem due to Auslander and Reiten will be useful.

\begin{theorem}[{\cite[Theorem 1.4(b)]{auslander1985modules}}]\label{thm: dim hom space AR}
 Let $M$ and $N$ be representations of a finite-dimensional basic algebra $\Lambda$. Let
 \[
    0 \rightarrow N \to I_0 \to I_1
 \]
 be a minimal injective copresentation of $N$. Then we have
 \[
    \dim \Hom_{\Lambda}(\tau^{-1}N, M) = \dim \Hom_{\Lambda}(M, N) - \dim \Hom_{\Lambda}(M, I_0) + \dim \Hom_{\Lambda} (M, I_1).
 \]
\end{theorem}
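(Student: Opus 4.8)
The statement is a standard consequence of the definition of $\tau^{-1}$ together with left exactness of $\Hom$, and the plan is to reduce the left-hand side to the kernel of a single linear map and then count dimensions. First I would recall the defining right exact sequence of $\tau^{-1}N$: applying the functor $\nu^{-1} = (-)^{*}D$ to the minimal injective copresentation $0 \to N \to I_0 \xrightarrow{f} I_1$ produces an exact sequence of $\Lambda$-modules
\[
\nu^{-1}I_0 \xrightarrow{\ \nu^{-1}f\ } \nu^{-1}I_1 \longrightarrow \tau^{-1}N \longrightarrow 0,
\]
with $\nu^{-1}I_0$ and $\nu^{-1}I_1$ projective. Applying the contravariant left exact functor $\Hom_{\Lambda}(-, M)$ yields the exact sequence
\[
0 \longrightarrow \Hom_{\Lambda}(\tau^{-1}N, M) \longrightarrow \Hom_{\Lambda}(\nu^{-1}I_1, M) \xrightarrow{\ (\nu^{-1}f)^{*}\ } \Hom_{\Lambda}(\nu^{-1}I_0, M),
\]
so that $\Hom_{\Lambda}(\tau^{-1}N, M)$ is identified with $\ker\bigl((\nu^{-1}f)^{*}\bigr)$.

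The one genuinely structural ingredient is the isomorphism $\Hom_{\Lambda}(\nu^{-1}J, M) \cong D\,\Hom_{\Lambda}(M, J)$, natural in the injective module $J$. I would derive it by chaining three standard identifications: $\nu^{-1}J = (DJ)^{*}$, with $DJ$ a finitely generated projective right $\Lambda$-module; the evaluation isomorphism $\Hom_{\Lambda}(P, M) \cong \Hom_{\Lambda}(P, \Lambda)\otimes_{\Lambda} M$ for finitely generated projective $P$, which here gives $\Hom_{\Lambda}((DJ)^{*}, M) \cong (DJ)^{**}\otimes_{\Lambda} M \cong DJ\otimes_{\Lambda} M$; and the duality $X\otimes_{\Lambda} M \cong D\,\Hom_{\Lambda}(M, DX)$ applied to $X = DJ$, which gives $DJ\otimes_{\Lambda} M \cong D\,\Hom_{\Lambda}(M, J)$. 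Since this isomorphism is functorial in $J$, it carries $(\nu^{-1}f)^{*}$ to the $\Bbbk$-dual $D(f_{*})$ of the map $f_{*}\colon \Hom_{\Lambda}(M, I_0) \to \Hom_{\Lambda}(M, I_1)$ induced by $f$.

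Consequently $\Hom_{\Lambda}(\tau^{-1}N, M) \cong \ker D(f_{*})$, and since taking $\Bbbk$-duals preserves the rank of a linear map,
\[
\dim \Hom_{\Lambda}(\tau^{-1}N, M) = \dim \Hom_{\Lambda}(M, I_1) - \dim \image f_{*}.
\]
On the other hand, applying the left exact functor $\Hom_{\Lambda}(M, -)$ to $0 \to N \to I_0 \xrightarrow{f} I_1$ identifies $\ker f_{*}$ with $\Hom_{\Lambda}(M, N)$, whence $\dim \image f_{*} = \dim \Hom_{\Lambda}(M, I_0) - \dim \Hom_{\Lambda}(M, N)$. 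Substituting this into the previous display gives the asserted identity. I expect the only non-formal point to be checking that the isomorphism $\Hom_{\Lambda}(\nu^{-1}J, M) \cong D\,\Hom_{\Lambda}(M, J)$ is natural in $J$, so that it indeed intertwines $(\nu^{-1}f)^{*}$ with $D(f_{*})$; once that is in place the remainder is pure left exactness of $\Hom$ and elementary linear algebra.
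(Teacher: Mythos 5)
The paper does not supply a proof of this theorem: it is quoted verbatim as Theorem~1.4(b) of Auslander--Reiten \cite{auslander1985modules}, so there is no internal proof to compare your attempt against.

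That said, your argument is correct and complete, and it is the standard derivation of this dimension formula. Applying $\Hom_\Lambda(-,M)$ to the right-exact presentation $\nu^{-1}I_0 \to \nu^{-1}I_1 \to \tau^{-1}N \to 0$ of $\tau^{-1}N$ identifies $\Hom_\Lambda(\tau^{-1}N,M)$ with $\ker\bigl((\nu^{-1}f)^{*}\bigr)$; the natural isomorphism $\Hom_\Lambda(\nu^{-1}J,M)\cong D\Hom_\Lambda(M,J)$ for $J$ injective (which you correctly build from the evaluation isomorphism for finitely generated projectives and the tensor--hom/$D$-duality, and which is precisely the $D$-dual of the defining adjunction $\Hom_\Lambda(M,\nu P)\cong D\Hom_\Lambda(P,M)$ of the Nakayama functor) converts that kernel into $D(\coker f_{*})$, where $f_{*}\colon\Hom_\Lambda(M,I_0)\to\Hom_\Lambda(M,I_1)$. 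Since $\dim D(\coker f_{*}) = \dim\Hom_\Lambda(M,I_1) - \operatorname{rk} f_{*}$ and left exactness of $\Hom_\Lambda(M,-)$ gives $\ker f_{*}\cong\Hom_\Lambda(M,N)$, hence $\operatorname{rk} f_{*} = \dim\Hom_\Lambda(M,I_0) - \dim\Hom_\Lambda(M,N)$, the stated identity follows. The only place requiring care is the naturality in $J$ of the isomorphism $\Hom_\Lambda(\nu^{-1}J,M)\cong D\Hom_\Lambda(M,J)$ so that it intertwines $(\nu^{-1}f)^{*}$ with $D(f_{*})$; you flag this yourself and it does hold, as each of the three isomorphisms you chain is natural.
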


We return to the situation when $\Lambda = \mathcal P(T)$ arising from a triangulation $T$.
\begin{proposition}\label{prop: E invariant AR translation}
For locally free decorated representations $\mathcal M = (M, V)$ and $\mathcal N = (N, W)$, we have
\[
    E^\mathrm{inj}(\mathcal M, \mathcal N) = \dim \Hom_\mathcal P(\tau^{-1}N, M) + \sum_{i=1}^n d_i(M) \cdot \rk W_i.
\]
\end{proposition}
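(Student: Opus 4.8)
The plan is to reduce the statement to the Auslander--Reiten identity of \Cref{thm: dim hom space AR} together with the homological reading of the $\mathbf g$-vector in \Cref{prop: g vector by inj}. Since $N$ is locally free by hypothesis, I would start by fixing a minimal injective copresentation
\[
    0 \rightarrow N \rightarrow \bigoplus_{k=1}^n I_k^{p_k} \rightarrow \bigoplus_{k=1}^n I_k^{q_k},
\]
where $I_k$ is the injective hull of $S_k$. By \Cref{prop: g vector by inj} the (undecorated) $\mathbf g$-vector of $N$ has components $g_k(N) = q_k - p_k$, and from the definition (\ref{eq: def g vector}) of the $\mathbf g$-vector of a decorated representation one has $g_k(\mathcal N) = g_k(N) + \rk_{H_k} W(k)$.

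The only auxiliary computation needed is the standard identity $\dim_\Bbbk \Hom_\mathcal P(M, I_k) = d_k(M)$ for each $k$. Writing $I_k = D(e_k \mathcal P)$ with $D = \Hom_\Bbbk(-, \Bbbk)$, the tensor--hom adjunction gives $\Hom_\mathcal P(M, D(e_k\mathcal P)) \cong D(e_k\mathcal P \otimes_\mathcal P M) = D(e_k M) = D(M(k))$, whence $\dim_\Bbbk \Hom_\mathcal P(M, I_k) = \dim_\Bbbk M(k) = d_k(M)$; note that this requires no local-freeness assumption. Summing over multiplicities, $\dim \Hom_\mathcal P(M, \bigoplus_k I_k^{p_k}) = \sum_k p_k d_k(M)$ and $\dim\Hom_\mathcal P(M, \bigoplus_k I_k^{q_k}) = \sum_k q_k d_k(M)$.

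Then applying \Cref{thm: dim hom space AR} to the copresentation above (with $\Lambda = \mathcal P$) gives
\[
    \dim \Hom_\mathcal P(\tau^{-1}N, M) = \langle M, N \rangle - \sum_{k=1}^n p_k d_k(M) + \sum_{k=1}^n q_k d_k(M) = \langle M, N \rangle + \sum_{k=1}^n g_k(N)\, d_k(M).
\]
Substituting $g_k(N) = g_k(\mathcal N) - \rk_{H_k} W(k)$ and using $d_k(\mathcal M) = d_k(M)$, the right-hand side equals
\[
    \langle M, N \rangle + \sum_{k=1}^n d_k(\mathcal M)\, g_k(\mathcal N) - \sum_{k=1}^n d_k(M)\, \rk_{H_k} W(k) = E^\mathrm{inj}(\mathcal M, \mathcal N) - \sum_{i=1}^n d_i(M)\, \rk_{H_i} W(i),
\]
by \Cref{def: e invariant}. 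Rearranging yields the asserted formula.

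I do not expect a genuine obstacle here: the argument is essentially a bookkeeping assembly of results already established. The two points deserving a little care are that \Cref{prop: g vector by inj} is applied to the locally free module $N$ (legitimate by hypothesis) and that one keeps track of the difference between the $\mathbf g$-vector of the plain module $N$ and that of the decorated pair $\mathcal N = (N, W)$, which is exactly the rank vector of the decoration $W$.
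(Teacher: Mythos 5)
Your proof is correct and follows essentially the same route as the paper's: fix a minimal injective copresentation of $N$, read off $g_k(N) = q_k - p_k$ from \Cref{prop: g vector by inj}, feed this into \Cref{thm: dim hom space AR}, and account for the decoration term. The only difference is that you make explicit the bookkeeping identity $\dim_\Bbbk\Hom_\mathcal P(M, I_k) = d_k(M)$ via the tensor--hom adjunction, which the paper uses silently.
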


\begin{proof}
    Let $0 \rightarrow N \rightarrow \bigoplus_{i=1}^n I_i^{p_i} \rightarrow \bigoplus_{i=1}^n I_i^{q_i}$ be a minimal injective copresentation of $N$. By \Cref{prop: g vector by inj}, we have $g_i(N) = -p_i + q_i$ for $i = 1,\dots, n$. Using \Cref{thm: dim hom space AR}, the right-hand side is expressed as
    \begin{align*}
        &\langle M, N\rangle + \sum_{i = 1}^n d_i(M) (-p_i + q_i) + \sum_{i=1}^n d_i(M) \rk W_i \\
        = & \langle M, N\rangle + \sum_{i = 1}^n d_i(M) g_i(N) + \sum_{i=1}^n d_i(M) \rk W_i \\
        = & \langle M, N\rangle + \sum_{i = 1}^n d_i(M) g_i(\mathcal N),
    \end{align*}
    which by definition is $E^\mathrm{inj}(\mathcal M, \mathcal N)$.
\end{proof}

\begin{corollary}\label{cor: formula proj e invariant}
    Define $E^\mathrm{proj}(\mathcal M, \mathcal N) \coloneqq E^\mathrm{inj}(\mathcal N^\star, \mathcal M^\star)$. Then we have
    \[
        E^\mathrm{proj}(\mathcal M, \mathcal N) = \dim \Hom_\mathcal P(N, \tau M) + \sum_{i = 1}^n d_i(N)\cdot \rk V_i.
    \]
\end{corollary}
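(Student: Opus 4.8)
The plan is to derive this as a formal consequence of \Cref{prop: E invariant AR translation} applied over the opposite algebra, together with elementary properties of the duality $(-)^\star = \Hom_\Bbbk(-,\Bbbk)$.

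Recall from the discussion preceding \Cref{prop: e invariant opposite rep} that $\mathcal P(T)^{\mathrm{op}} = \mathcal P(T^{\mathrm{op}})$ is again the gentle algebra associated with a triangulation (of the orbifold carrying the opposite orientation), so every result of the previous subsections holds for it. I would first record that $(-)^\star$ sends locally free decorated $\mathcal P(T)$-representations to locally free decorated $\mathcal P(T^{\mathrm{op}})$-representations: each $H_i = \Bbbk[\varepsilon_i]/\varepsilon_i^{d_i}$ is a commutative local Frobenius $\Bbbk$-algebra, hence $\Hom_\Bbbk(H_i,\Bbbk)\cong H_i$ as an $H_i$-module and $\star$ preserves freeness over $H_i$; moreover $d_i(M^\star)=d_i(M)$, and the decoration tuple is unchanged in passing from $\mathcal M=(M,V)$ to $\mathcal M^\star=(M^\star,V)$. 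Thus $\mathcal N^\star=(N^\star,W)$ and $\mathcal M^\star=(M^\star,V)$ are legitimate locally free decorated $\mathcal P(T^{\mathrm{op}})$-representations.

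Applying \Cref{prop: E invariant AR translation} over $\mathcal P(T^{\mathrm{op}})$ to the ordered pair $(\mathcal N^\star,\mathcal M^\star)$ gives
\[
    E^\mathrm{proj}(\mathcal M,\mathcal N)=E^\mathrm{inj}(\mathcal N^\star,\mathcal M^\star)=\dim\Hom_{\mathcal P(T^{\mathrm{op}})}\bigl(\tau^{-1}(M^\star),N^\star\bigr)+\sum_{i=1}^n d_i(N^\star)\cdot\rk V(i),
\]
where $\tau^{-1}$ is the inverse AR translation of $\mathcal P(T^{\mathrm{op}})$. Since $d_i(N^\star)=d_i(N)$, it remains to identify the Hom-term. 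Here I would use the two standard facts about $D\coloneqq(-)^\star$: it is an exact contravariant equivalence $\modu\mathcal P(T)\to\modu\mathcal P(T^{\mathrm{op}})$ carrying projectives to injectives, so it takes a minimal projective presentation of $M$ to a minimal injective copresentation of $M^\star$, whence $\tau^{-1}(M^\star)\cong(\tau M)^\star$; and $\Hom_{\mathcal P(T^{\mathrm{op}})}(X^\star,Y^\star)\cong\Hom_{\mathcal P(T)}(Y,X)$. Combining these, $\Hom_{\mathcal P(T^{\mathrm{op}})}(\tau^{-1}(M^\star),N^\star)\cong\Hom_{\mathcal P(T^{\mathrm{op}})}((\tau M)^\star,N^\star)\cong\Hom_{\mathcal P(T)}(N,\tau M)$, and substituting yields the claimed formula. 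There is no real obstacle; the only points requiring a sentence of care are that the minimal presentation is genuinely preserved by $\star$ (so the $\tau$'s match on the nose, not merely in the stable category) and the invariance of $\rk$ and $d_i$ under $\star$ — both immediate from $\star$ being a dimension-preserving exact contravariant equivalence that swaps $\operatorname{proj}$ and $\operatorname{inj}$.
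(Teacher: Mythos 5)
Your proof is correct and follows essentially the same route as the paper: apply \Cref{prop: E invariant AR translation} over $\mathcal P(T)^{\mathrm{op}}$ to the pair $(\mathcal N^\star,\mathcal M^\star)$, use $\tau^{-1}(M^\star)\cong(\tau M)^\star$ and $\Hom_{\mathcal P^{\mathrm{op}}}(X^\star,Y^\star)\cong\Hom_{\mathcal P}(Y,X)$, and observe $d_i(N^\star)=d_i(N)$. The extra paragraph verifying that $\star$ preserves local freeness and minimality of presentations is a fine elaboration of facts the paper silently assumes, but the argument is the same.
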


\begin{proof} This is a direct corollary of \Cref{prop: E invariant AR translation}. We have
    \begin{align*}
        E^\mathrm{proj}(\mathcal M, \mathcal N) & \coloneqq E^\mathrm{inj}(\mathcal N^\star, \mathcal M^\star) \\
        & = \dim \Hom_{\mathcal P^\mathrm{op}}(\tau^{-1}(M^\star), N^\star) + \sum d_i(N)\cdot \rk V_i \quad \text{(by \Cref{prop: E invariant AR translation})} \\
        & = \dim \Hom_{\mathcal P^\mathrm{op}}((\tau M)^\star, N^\star) + \sum d_i(N)\cdot \rk V_i \\
        & = \dim \Hom_{\mathcal P}(N, \tau M) + \sum d_i(N)\cdot \rk V_i.
    \end{align*}
\end{proof}

The following proposition is a direct corollary of \Cref{prop: E invariant AR translation}, \Cref{cor: formula proj e invariant} and \Cref{prop: e invariant opposite rep}.
\begin{proposition}\label{prop: e rigid is tau rigid}
    Let $\mathcal M = (M, V)$ be a locally free decorated $\mathcal P(T)$-representation. Then the following are equivalent.
    \begin{enumerate}
        \item $\mathcal M$ is $E$-rigid, i.e. $E(\mathcal M) = 0$.
        \item The pair $(M, P(V))$ is $\tau$-rigid.
        \item The pair $(M, I(V))$ is $\tau^{-1}$-rigid.
    \end{enumerate}
\end{proposition}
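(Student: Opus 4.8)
The plan is to derive the proposition directly from the two explicit formulas for the $E$-invariant already in hand, together with the invariance of $E$ under dualization, by an elementary non-negativity argument. (All the cited results apply since $\mathcal M=(M,V)$ is locally free by hypothesis.)

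First I would specialize \Cref{prop: E invariant AR translation} to $\mathcal N = \mathcal M$, which gives
\[
    E(\mathcal M) = \dim \Hom_{\mathcal P}(\tau^{-1}M, M) + \sum_{i=1}^n d_i(M)\cdot \rk V(i).
\]
Then, applying \Cref{cor: formula proj e invariant} with $\mathcal N = \mathcal M$ and noting that $E^{\mathrm{proj}}(\mathcal M, \mathcal M) = E^{\mathrm{inj}}(\mathcal M^\star, \mathcal M^\star) = E(\mathcal M^\star) = E(\mathcal M)$, where the last equality is \Cref{prop: e invariant opposite rep}, I obtain the companion identity
\[
    E(\mathcal M) = \dim \Hom_{\mathcal P}(M, \tau M) + \sum_{i=1}^n d_i(M)\cdot \rk V(i).
\]
Since each $\dim \Hom$ term and each summand $d_i(M)\cdot\rk V(i)$ is a non-negative integer, both displays show at once that $E(\mathcal M) = 0$ if and only if $\sum_{i} d_i(M)\cdot\rk V(i) = 0$ and $\Hom_{\mathcal P}(\tau^{-1}M, M) = 0$, equivalently (once the sum vanishes, the first $\Hom$-space vanishes precisely when the second does) $\sum_{i} d_i(M)\cdot\rk V(i) = 0$ and $\Hom_{\mathcal P}(M, \tau M) = 0$.

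It then remains to rephrase these vanishings in the language of pairs. Writing $V = \bigoplus_i H_i^{a_i}$, so that $P(V) = \bigoplus_i P_i^{a_i}$ and $I(V) = \bigoplus_i I_i^{a_i}$, the standard identifications $\dim \Hom_{\mathcal P}(P_i, M) = \dim M(i) = \dim \Hom_{\mathcal P}(M, I_i)$ yield
\[
    \Hom_{\mathcal P}(P(V), M) = 0 \iff \sum_{i=1}^n d_i(M)\cdot\rk V(i) = 0 \iff \Hom_{\mathcal P}(M, I(V)) = 0.
\]
Feeding this back into the previous paragraph: condition (1) is equivalent to ``$M$ is $\tau$-rigid and $\Hom_{\mathcal P}(P(V), M) = 0$'', which is condition (2); and it is equally equivalent to ``$M$ is $\tau^{-1}$-rigid and $\Hom_{\mathcal P}(M, I(V)) = 0$'', which is condition (3). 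This closes the loop. I do not anticipate any genuine obstacle here: all the substance is already packaged into \Cref{prop: E invariant AR translation}, \Cref{cor: formula proj e invariant}, and \Cref{prop: e invariant opposite rep}; the only care needed is the non-negativity bookkeeping and the elementary computation of $\Hom$-spaces out of projectives and into injectives.
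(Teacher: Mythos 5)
Your proposal is correct and follows precisely the route the paper indicates (the paper states the proposition is a direct corollary of \Cref{prop: E invariant AR translation}, \Cref{cor: formula proj e invariant}, and \Cref{prop: e invariant opposite rep}, without writing out the details). You have supplied exactly the intended bookkeeping: the two expressions for $E(\mathcal M)$ as a sum of non-negative integers, the identification $\dim\Hom_{\mathcal P}(P(V),M)=\sum_i d_i(M)\,\rk V(i)=\dim\Hom_{\mathcal P}(M,I(V))$, and the observation that both $\Hom$-terms have equal dimension so they vanish simultaneously.
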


\section{Applications to cluster algebras associated to orbifolds} \label{section: application cluster algebras}

Let $\mathbb T_n$ be the $n$-regular tree rooted at a vertex $t_0$. For any $t\in \mathbb T_n$, let
\[
    t_0 \frac{k_1}{\quad\quad} t_1 \frac{k_2}{\quad\quad} \dots\dots \frac{k_{p-1}}{\quad\quad} t_{p-1} \frac{k_p}{\quad\quad} t_p = t
\]
be the (unique) shortest path joining $t_0$ and $t$ in $\mathbb T_n$. Let a triangulation $T_0$ be associated with $t_0$, which we call the initial triangulation. This uniquely determines an association of a triangulation to any $t\in \mathbb T_n$ such that $T \coloneqq \mu_{k_p}\dots\mu_{k_1}(T_0)$ is associated to $t$.

A triangulation $T$ supplies an exchange matrix $B(T)$ as well as the algebra $\mathcal P(T)$. Let the arcs in $T$ be labeled by $\{1, \dots, n\}$. For any $\ell\in T$, write $\mathcal E_\ell ^-(T) \coloneqq (0, H_\ell)$, the (locally free) decorated representation of $\mathcal P(T)$ with rank 1 decoration at $\ell$. It is both $\tau$-rigid and $\tau^{-1}$-rigid (see \Cref{def: tau rigid dec rep}).

Applying the sequence of decorated mutations $(\mu_{k_1}, \dots, \mu_{k_p})$, we obtain the decorated $\mathcal P(T_0)$-representation
\begin{equation}\label{eq:def-of-dec-rep-M-ell-t-T0,t0}
    \mathcal M_{\ell;t}^{T_0; t_0} \coloneqq \mu_{k_1}\dots\mu_{k_p}(\mathcal E_\ell^-(T)).
\end{equation}
We will also consider the direct sum
\[
    \mathcal M^{T_0; t_0}_{t} \coloneqq \bigoplus_{\ell = 1}^n \mathcal M_{\ell;t}^{T_0; t_0} = \mu_{k_1}\dots\mu_{k_p}\left(\bigoplus_{\ell = 1}^n \mathcal E_\ell^-(T)\right).
\]

\begin{proposition}\label{prop: locally free mutation preserve}
    Every decorated representation $\mathcal M_{\ell; t}^{T_0; t_0}$ is either of the form $\mathcal E^-_i(T_0)$ for some $i$ or $(M, 0)$ where $M$ is an indecomposable (locally free) $\tau$-rigid and $\tau^{-1}$-rigid $\mathcal P(T_0)$-module. Furthermore $\mathcal M_t^{T_0; t_0}$ is a support $\tau$-tilting (also support $\tau^{-1}$-tilting) pair.
\end{proposition}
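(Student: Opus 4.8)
The plan is to prove the statement by induction on the length $p$ of the shortest path from $t$ to $t_0$, i.e. by applying one decorated mutation at a time, the crucial input being the following closure property that I would establish first: \emph{if $\mathcal{N}$ is a locally free $E$-rigid decorated $\mathcal{P}(T)$-module, then $\mu_k(\mathcal{N})$ is again a locally free $E$-rigid decorated $\mathcal{P}(\mu_k(T))$-module, for every vertex $k$.} Granting this, $\mathcal{E}_\ell^-(T) = (0,H_\ell)$ is indecomposable (its decoration is the indecomposable free module $H_\ell$ at one vertex), locally free, and $E$-rigid, being $\tau$-rigid and $\tau^{-1}$-rigid hence $E$-rigid by \Cref{prop: e rigid is tau rigid}. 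Since each $\mu_{k_m}$ is involutive on locally free modules with $\ker\beta_{k_m}$ and $\image\alpha_{k_m}$ free (\Cref{cor: twice mutation decoration}), and an $E$-rigid module has $\tau$-rigid and $\tau^{-1}$-rigid underlying module and so satisfies these freeness hypotheses by \Cref{lemma: tau rigid free kernel} and \Cref{lemma: tau inverse rigid free cokernel}, the composite $\mu_{k_1}\cdots\mu_{k_p}$ restricts to a bijection up to isomorphism, with inverse $\mu_{k_p}\cdots\mu_{k_1}$, between the locally free $E$-rigid decorated $\mathcal{P}(T)$-modules and those of $\mathcal{P}(T_0)$. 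In particular $\mathcal{M}_{\ell;t}^{T_0;t_0}$ is indecomposable, locally free and $E$-rigid; an indecomposable locally free decorated module is either $(M,0)$ with $M$ indecomposable, or $(0,V)$ with $V$ an indecomposable free $H_i$-module at a single vertex, i.e. $\mathcal{E}_i^-(T_0)$, and in the first case \Cref{prop: e rigid is tau rigid} (together with \Cref{lemma: tau rigid implies loc free}) identifies $E$-rigidity of $(M,0)$ with $M$ being $\tau$-rigid and $\tau^{-1}$-rigid. This yields the first assertion.

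For the closure property I would argue as follows. Write $\mathcal{N}=(N,W)$; by \Cref{prop: e rigid is tau rigid} the module $N$ is $\tau$-rigid and $\tau^{-1}$-rigid, so $\image\alpha_k$ and $\ker\beta_k$ are free over $H_k$ by \Cref{lemma: tau rigid free kernel} and \Cref{lemma: tau inverse rigid free cokernel}. The key observation is that $H_k$ is $\Bbbk$ or $\Bbbk[\varepsilon]/\varepsilon^2$, hence a self-injective Frobenius algebra over which free, projective and injective modules all coincide; consequently, for a submodule $U\subseteq F$ with $F$ free, if either $U$ or $F/U$ is free then both are free (a free submodule is injective and splits off; a free quotient is projective and its kernel splits off; a direct summand of a free $H_k$-module is free by Krull--Schmidt). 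Applying this successively: $\image\beta_k\cong N(k)/\ker\beta_k$ is free; $\ker\alpha_k$ is free (it is the kernel of $N_{\mathrm{in}}(k)\twoheadrightarrow\image\alpha_k$ with $N_{\mathrm{in}}(k)$ free); $\ker\gamma_k$ and $\image\gamma_k$ are free by \Cref{lemma: image gamma}, and $\image\beta_k\subseteq\ker\gamma_k$, $\image\gamma_k\subseteq\ker\alpha_k$ by \Cref{lemma: relations of alpha beta gamma}, so $\ker\gamma_k/\image\beta_k$ and $\ker\alpha_k/\image\gamma_k$ are free; therefore each summand of $\overline N(k)=\ker\gamma_k/\image\beta_k\oplus\image\gamma_k\oplus\ker\alpha_k/\image\gamma_k$ is free, and adding the free summand $W_k$ at vertex $k$ (\Cref{def: decorated mutation}) while leaving all other vertices unchanged shows the module part of $\mu_k(\mathcal{N})$ is locally free. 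Likewise $\ker\beta_k\cap\image\alpha_k$ is free by \Cref{prop: twice mutation}, and inside the free module $\ker\beta_k$ it splits off, so $\overline W_k=\ker\beta_k/(\ker\beta_k\cap\image\alpha_k)$ is free and $\mu_k(\mathcal{N})$ is a locally free decorated module. Finally \Cref{cor: e invariant under mutation} gives $E(\mu_k(\mathcal{N}))=E(\mathcal{N})=0$, completing the closure property.

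For the support $\tau$-tilting claim I would first record that the decorated mutation is additive: choosing the splitting data compatibly with a direct sum decomposition of $N$ gives $\mu_k(\mathcal{N}'\oplus\mathcal{N}'')\cong\mu_k(\mathcal{N}')\oplus\mu_k(\mathcal{N}'')$, which is well-defined by \Cref{lemma: independence of split}. Hence the bijection $\mu_{k_1}\cdots\mu_{k_p}$ above respects direct sums and so carries indecomposables to indecomposables; applied to the $n$ pairwise non-isomorphic modules $\mathcal{E}_1^-(T),\dots,\mathcal{E}_n^-(T)$ it produces the $n$ pairwise non-isomorphic indecomposables $\mathcal{M}_{1;t}^{T_0;t_0},\dots,\mathcal{M}_{n;t}^{T_0;t_0}$, whose direct sum is $\mathcal{M}_t^{T_0;t_0}$. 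Thus $\mathcal{M}_t^{T_0;t_0}=(M_t,V_t)$ is locally free, $E$-rigid (iterating \Cref{cor: e invariant under mutation}, all intermediate direct sums being locally free), and has exactly $n$ pairwise non-isomorphic indecomposable summands, so $|M_t|+|V_t|=n$. By \Cref{prop: e rigid is tau rigid} the pairs $(M_t,P(V_t))$ and $(M_t,I(V_t))$ are $\tau$-rigid and $\tau^{-1}$-rigid respectively, and since $|P(V_t)|=|V_t|=|I(V_t)|$ the count $|M_t|+|P(V_t)|=n=|M_t|+|I(V_t)|$ makes them a support $\tau$-tilting pair and a support $\tau^{-1}$-tilting pair. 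The step I expect to be the main obstacle is the propagation of freeness in the second paragraph: one must check that \emph{every} subquotient occurring in the description of $\overline N(k)$ and $\overline W_k$ is free over $H_k$, and this is exactly where the hypothesis that $N$ is both $\tau$-rigid and $\tau^{-1}$-rigid, rather than merely rigid, is essential.
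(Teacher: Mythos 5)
Your proposal is correct and takes essentially the same route as the paper's own proof: induction along the mutation path, using that $E$-rigidity is equivalent to simultaneous $\tau$- and $\tau^{-1}$-rigidity (\Cref{prop: e rigid is tau rigid}), that this forces $\image\alpha_k$ and $\ker\beta_k$ to be free (\Cref{lemma: tau rigid free kernel}, \Cref{lemma: tau inverse rigid free cokernel}) so the mutation stays locally free, and that the $E$-invariant is preserved (\Cref{cor: e invariant under mutation}). The only difference is that you spell out in full the Frobenius-algebra freeness propagation through the subquotients of $\overline N(k)$ and the additivity/involutivity argument for indecomposability, which the paper treats tersely by pointing at the same chain of lemmas.
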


\begin{proof}
    We show by induction on the distance from $t$ to $t_0$ that 
    \begin{equation*}
        \mathcal M_{\ell; t}^{T_0; t_0} \ \text{is locally free indecomposable and} \ E(\mathcal M_{\ell; t}^{T_0; t_0}) = 0.
    \end{equation*}
    In the induction, we fix the vertex $t$ and allow the root $t_0$ to move towards $t$ to decrease the distance, as opposed to fixing $t_0$ and moving $t$ towards $t_0$. Thus for the inductive basis, it suffices to notice that $\mathcal{E}_\ell^-(T)$ is always locally free indecomposable and has $E(\mathcal E_\ell^-(T)) = 0$.

    For the inductive step, denote $T_1 = \mu_{k_1}(T_0)$ and assume that the decorated representation $\mathcal M_{\ell; t}^{T_1; t_1}$ is locally free indecomposable and satisfies $E(\mathcal M_{\ell; t}^{T_1; t_1}) = 0$. By \Cref{prop: e rigid is tau rigid}, $\mathcal M_{\ell; t}^{T_1;t_1}$ is both $\tau$-rigid and $\tau^{-1}$-rigid; see \Cref{def: tau rigid dec rep}. By \Cref{lemma: tau rigid free kernel,lemma: tau inverse rigid free cokernel,lemma: when is mutation locally free}, $\mathcal M_{\ell; t}^{T_0; t_0}=\mu_{k_1}(\mathcal M_{\ell; t}^{T_1; t_1})$ is again locally free. In this situation, by \Cref{cor: decorated mutation involution} and the fact that $\mu_k$ opens direct sums of decorated modules, $\mathcal M_{\ell; t}^{T_0; t_0}$ is indecomposable. Now Corollary \ref{cor: e invariant under mutation} asserts $E(\mathcal M_{\ell; t}^{T_0; t_0})= E(\mathcal{M}_{\ell;t}^{T_1;t_1})=0$. This finishes the induction.

    By \Cref{prop: e rigid is tau rigid}, each $\mathcal M_{\ell; t}^{T_0; t_0}$ is both $\tau$-rigid and $\tau^{-1}$-rigid.

    Almost the same inductive argument as above shows that $\mathcal M_t^{T; t_0} = (M, V)$ is $E$-rigid with $n$ indecomposable summands. Thus by \Cref{prop: e rigid is tau rigid}, the module $M$ is both $\tau$-rigid and $\tau^{-1}$-rigid and its support is disjoint from the decoration $V$. In addition $|M| + |V| = n$ by definition. This implies that $\mathcal M_t^{T; t_0}$ is simultaneously support $\tau$-tilting and $\tau^{-1}$-tilting; see the convention in \Cref{def: tau rigid dec rep}. 
\end{proof}

\begin{theorem}\label{thm: AIR mutations and DWZ mutations are compatible}
    Fix two triangulations $T_0$ and $T_1$ of $\surf$ related by the flip of an arc $k\in T_0$. If $\mathcal M$ is a support $\tau$-tilting pair over the algebra $\mathcal{P}(T_0)$, then $\mu_k(\mathcal M)$ is a support $\tau$-tilting pair over $\mathcal{P}(T_1)$. Moreover, whenever two support $\tau$-tilting pairs $\mathcal{M}_1$ and $\mathcal{M}_2$ over $\mathcal{P}(T_0)$ are related by an AIR mutation, then $\mu_{k}(\mathcal{M}_1)$ and $\mu_{k}(\mathcal{M}_2)$ are related by an AIR mutation as support $\tau$-tilting pairs over $\mathcal{P}(T_1)$.
\end{theorem}

\begin{proof}
    This is an immediate consequence of \Cref{prop: locally free mutation preserve} and the connectedness of the graph $\stau{\mathcal{P}(T)}$ of support $\tau$-tilting pairs proved in \cite{fu2023support} for all finite-dimensional gentle algebras.
\end{proof}

According to \Cref{prop: locally free mutation preserve} and \Cref{thm: AIR mutations and DWZ mutations are compatible}, if $t$ and $t'$ are joined by a $k$-labeled edge in $\mathbb T_n$, then (the support $\tau$-tilting pairs) $\mathcal M_t^{T_0;t_0}$ and $\mathcal M_{t'}^{T_0;t_0}$ are related by the AIR mutation at the $k$-th component. From this view, we have defined the graph map
\begin{equation}\label{eq: from tn to stau-tilt}
    \pi \colon \mathbb T_n \rightarrow \operatorname{s\tau-tilt} \mathcal P(T_0), \quad \pi(t) = \mathcal M_t^{T_0; t_0},
\end{equation}
which is a local isomorphism and onto $\operatorname{rs\tau-tilt} \mathcal P(T_0)$, the connected component of $\operatorname{s\tau-tilt} \mathcal P(T_0)$ containing $(0, \mathcal P)$.\footnote{As pointed out in the proof of \Cref{thm: AIR mutations and DWZ mutations are compatible}, $\stau{\mathcal P(T_0)}$ is connected, so $\pi$ is surjective and thus is a covering map of graphs.}

\begin{theorem}\label{thm: main theorem}
For $B = B(T_0)$, any $t\in \mathbb T_n$ and any $\ell\in \{1, \dots, n\}$, we have
\begin{equation}\label{eq: g vector f polynomial equality}
    \mathbf g_{\ell;t}^{B;t_0} = \mathbf g(\mathcal M) \quad \text{and} \quad F_{\ell;t}^{B;t_0} = F(\mathcal M),
\end{equation}
where $\mathcal M = \mathcal M_{\ell;t}^{B;t_0}$.
\end{theorem}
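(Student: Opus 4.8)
The plan is to prove (A) and (B) together by induction on the graph distance $d = d_{\mathbb{T}_n}(t_0,t)$, carried out uniformly over all choices of initial triangulation $T_0$ and root $t_0\in\mathbb{T}_n$ at once. For the base case $d=0$ one has $\mathcal{M}_{\ell;t_0}^{T_0;t_0} = \mathcal{E}_\ell^-(T_0) = (0,H_\ell)$ by \eqref{eq:def-of-dec-rep-M-ell-t-T0,t0}; here the module part and the spaces $M_{\mathrm{in}}(k)$, $M_{\mathrm{out}}(k)$ all vanish, so \eqref{eq: def g vector} gives $\mathbf{g}(\mathcal{M}_{\ell;t_0}^{T_0;t_0}) = \mathbf{e}_\ell$, while the locally free $F$-polynomial is $1$; this matches $\mathbf{g}_{\ell;t_0}^{B;t_0} = \deg(x_\ell) = \mathbf{e}_\ell$ and $F_{\ell;t_0}^{B;t_0} = X_{\ell;t_0}(1,\dots,1;y_1,\dots,y_n) = 1$.

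For the inductive step, let $t_0 \,\frac{k}{\;}\, t_1 \,\frac{}{\;}\, \cdots \,\frac{}{\;}\, t$ be the shortest path, so $d_{\mathbb{T}_n}(t_1,t) = d-1$, and set $T_1 \coloneqq \mu_k(T_0)$, $B_1 \coloneqq B(T_1)$; by \Cref{lemma: flip and mutation matrix} one has $B_1 = \mu_k(B) = B_{t_1}^{B;t_0}$, and of course $\mu_k(B_1) = B$. From \eqref{eq:def-of-dec-rep-M-ell-t-T0,t0} it is immediate that $\mathcal{M}_{\ell;t}^{T_0;t_0} = \mu_k\bigl(\mathcal{M}_{\ell;t}^{T_1;t_1}\bigr)$, and by \Cref{prop: locally free mutation preserve} both $\mathcal{M}_{\ell;t}^{T_1;t_1}$ and its mutation $\mathcal{M}_{\ell;t}^{T_0;t_0}$ are locally free and simultaneously $\tau$- and $\tau^{-1}$-rigid. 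Hence for each of them $\alpha_k$ has free image (\Cref{lemma: tau rigid free kernel}) and $\beta_k$ has free kernel (\Cref{lemma: tau inverse rigid free cokernel}); in particular the vectors $\mathbf{h}$ are defined, $\mu_k$ is involutive on them (\Cref{cor: twice mutation decoration}), and the recurrences of \Cref{lemma: recursion of g vector} and \Cref{prop: recursion of f polynomial} apply to the mutation at $k$.

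By the inductive hypothesis applied at the root $t_1$ with initial triangulation $T_1$ we have $\mathbf{g}(\mathcal{M}_{\ell;t}^{T_1;t_1}) = \mathbf{g}_{\ell;t}^{B_1;t_1}$ and $F(\mathcal{M}_{\ell;t}^{T_1;t_1}) = F_{\ell;t}^{B_1;t_1}$. Since the representation-theoretic $\mathbf{h}$-vector is read off from the locally free $F$-polynomial by the same tropical evaluation (\Cref{prop: rep h vector from f poly}, with the exchange matrix $B_1$) that defines the cluster-theoretic $\mathbf{h}$-vector, these equalities also yield $\mathbf{h}(\mathcal{M}_{\ell;t}^{T_1;t_1}) = \mathbf{h}_{\ell;t}^{B_1;t_1}$. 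Now I would invoke \Cref{prop: recurrence of f poly g vector} at the root $t_1$ for the edge joining $t_1$ and $t_0$: it expresses $\mathbf{g}_{\ell;t}^{B;t_0}$ and $F_{\ell;t}^{B;t_0}$ out of $\mathbf{g}_{\ell;t}^{B_1;t_1}$, $F_{\ell;t}^{B_1;t_1}$, $\mathbf{h}_{\ell;t}^{B_1;t_1}$ by exactly the formulas (in the index $k$, with the matrix $B_1$) by which \Cref{lemma: recursion of g vector} and \Cref{prop: recursion of f polynomial} express $\mathbf{g}(\mathcal{M}_{\ell;t}^{T_0;t_0})$ and $F(\mathcal{M}_{\ell;t}^{T_0;t_0})$ out of $\mathbf{g}(\mathcal{M}_{\ell;t}^{T_1;t_1})$, $F(\mathcal{M}_{\ell;t}^{T_1;t_1})$, $\mathbf{h}(\mathcal{M}_{\ell;t}^{T_1;t_1})$. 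Feeding in the three inductive equalities then gives $\mathbf{g}(\mathcal{M}_{\ell;t}^{T_0;t_0}) = \mathbf{g}_{\ell;t}^{B;t_0}$ and $F(\mathcal{M}_{\ell;t}^{T_0;t_0}) = F_{\ell;t}^{B;t_0}$, closing the induction.

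The main obstacle is not the induction itself, which is bookkeeping once the recurrences are in hand, but rather guaranteeing that those recurrences are applicable at every node along the path — i.e. that the representations $\mathcal{M}_{\ell;t}^{T';t'}$ and all their one-step mutations stay locally free with $\ker\beta_k$ and $\image\alpha_k$ free, so that $\mathbf{h}$ is defined, $\mu_k$ is involutive, and \Cref{lemma: recursion of g vector}, \Cref{prop: recursion of f polynomial} hold. This is exactly where the $\tau$-tilting and $E$-invariant input (\Cref{prop: locally free mutation preserve}, \Cref{lemma: tau rigid free kernel}, \Cref{lemma: tau inverse rigid free cokernel}) is indispensable. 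The one further point to spell out carefully is the term-by-term matching of the coefficient substitutions in \Cref{prop: recursion of f polynomial} with those in \Cref{prop: recurrence of f poly g vector}, which is precisely what \Cref{prop: recursion of f polynomial} was arranged to provide.
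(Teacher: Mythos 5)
Your proposal matches the paper's proof essentially step for step: both proceed by induction on $d_{\mathbb{T}_n}(t_0,t)$ uniformly over all choices of initial triangulation, both carry the three quantities $\mathbf{g}$, $F$, $\mathbf{h}$ through the induction, both use \Cref{prop: locally free mutation preserve} together with \Cref{lemma: tau rigid free kernel} and \Cref{lemma: tau inverse rigid free cokernel} to justify that the recurrences of \Cref{lemma: recursion of g vector} and \Cref{prop: recursion of f polynomial} apply at every step, and both close the loop by matching these with \Cref{prop: recurrence of f poly g vector} and recovering the $\mathbf{h}$-vector equality via \Cref{prop: rep h vector from f poly}. The argument is correct and is the same as the paper's.
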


\begin{proof}
By \Cref{prop: locally free mutation preserve}, we have $\mathcal M_{\ell;t}^{B;t_0}$ is locally free, so its locally free $F$-polynomial is well-defined. We prove (\ref{eq: g vector f polynomial equality}) together with $\mathbf h_{\ell; t}^{B; t_0} = \mathbf h(\mathcal M)$ by induction on the distance from $t$ to $t_0$ in $\mathbb T_n$. 

Suppose that (\ref{eq: g vector f polynomial equality}) holds for $t_1$ and $t$, that is,
\[
    \mathbf g_{\ell; t}^{B(T_1); t_1} = \mathbf g(\mathcal M_{\ell; t}^{B(T_1); t_1}) \quad \text{and} \quad F_{\ell; t}^{B(T_1); t_1} = F(\mathcal M_{\ell; t}^{B(T_1); t_1})
\]
and that $\mathbf h_{\ell; t}^{B(T_1); t_1} = \mathbf h(\mathcal M_{\ell; t}^{B(T_1); t_1})$.

Since the recurrence of representation-theoretic $\mathbf g$-vectors given in \Cref{lemma: recursion of g vector} coincides with that of cluster-theoretic $\mathbf g$-vectors given by \eqref{eq: recursion g vector} of \Cref{prop: recurrence of f poly g vector}, we have $\mathbf g_{\ell;t}^{B;t_0} = \mathbf g(\mathcal M)$. In view of the formula $g_k = h_k - h_k'$ in both \Cref{lemma: recursion of g vector} and \Cref{prop: recurrence of f poly g vector}, the $k$-th components of $\mathbf h_{\ell; t}^{B; t_0}$ and of $\mathbf h(\mathcal M)$ are the same. Then by \Cref{prop: recursion of f polynomial} and \eqref{eq: recursion f poly}, we obtain $F_{\ell; t}^{B; t_0} = F(\mathcal M)$, and in particular, the polynomial $F(\mathcal M)(y_1, \dots, y_n)$ is in $\mathbb Q_\mathrm{sf}(y_1, \dots, y_n)$ by induction. Finally according to the definition of $\mathbf h_{\ell; t}^{B; t_0}$ \eqref{eq: def cluster h vector} and by \Cref{prop: rep h vector from f poly}, we have $\mathbf h_{\ell; t}^{B; t_0} = \mathbf h(\mathcal M)$. This finishes the induction.
\end{proof}

A direct corollary of the above theorem is that the graph covering map $\mathbb T_n \rightarrow \mathbf E(\mathcal A_{\bullet}(B))$ (where $t\in \mathbb T_n$ is sent to the associated unlabeled seed) factors through $\pi$ \eqref{eq: from tn to stau-tilt}:
\begin{center}
    \begin{tikzcd}
        \mathbb T_n \ar[r, "\pi"] \ar[rd] & \stau{\mathcal P(T_0)} \ar[d] \\
        & \mathbf E(\mathcal A_\bullet(B))
    \end{tikzcd}
\end{center}
This is because the cluster variables are determined by modules, and the coefficients and the exchange matrix of a seed can be read off from cluster exchange relations. Taking locally free Caldero--Chapoton functions induces a map from $\Delta(\mathcal P(T_0))$, the $\tau$-tilting complex (see \Cref{subsection: intro tau tilting}), to the cluster complex $\Delta(\mathcal A)$ (whose restriction on vertices is just the vertical map in the above diagram). Since non-isomorphic $\tau$-rigid pairs have distinct $\mathbf g$-vectors (which is generally true for any finite-dimensional algebra by \cite{demonet2019tau}), this map is seen to be an isomorphism between simplicial complexes. We summarize relevant statements below.

\begin{corollary}\label{cor: app in cluster algebra}
    For the cluster algebra $\mathcal A = \mathcal A_\bullet(B)$ where $B = B(T_0)$, we have
    \begin{enumerate}
        \item a seed is determined by its cluster;
        \item the (thus well-defined) cluster complex $\Delta(\mathcal A)$ is isomorphic to the $\tau$-tilting complex $\Delta(\mathcal P(T_0))$ such that $X_{\ell;t}^{B;t_0}$ corresponds to $\mathcal M_{\ell;t}^{B;t_0}$;
        \item the cluster variable $X_{\ell; t}^{B; t_0}$ is given by the locally free Caldero--Chapoton function
        \[
            \mathrm{CC}^\mathrm{l.f.}(\mathcal M) \coloneqq x^{\mathbf g(\mathcal M)} \cdot F(\mathcal M)(\hat y_1, \dots \hat y_n)
        \]
        where $\mathcal M = \mathcal M_{\ell;t}^{B;t_0}$.
    \end{enumerate}
\end{corollary}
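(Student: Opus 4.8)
Part~(3) is immediate: by \Cref{thm: main theorem} we have $\mathbf g_{\ell;t}^{B;t_0}=\mathbf g(\mathcal M)$ and $F_{\ell;t}^{B;t_0}=F(\mathcal M)$ for $\mathcal M=\mathcal M_{\ell;t}^{B;t_0}$, so substituting these into the expression (\ref{eq: express cluster variable g vector f poly}) of $X_{\ell;t}^{B;t_0}$ in terms of its $\mathbf g$-vector and $F$-polynomial yields exactly $\mathrm{CC}^{\mathrm{l.f.}}(\mathcal M)=\mathbf x^{\mathbf g(\mathcal M)}\cdot F(\mathcal M)(\hat y_1,\dots,\hat y_n)$. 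So I would dispose of (3) first and then use it freely in what follows.

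For (1) and (2) the plan is to analyse the graph map $\mathbb T_n\to\mathbf E(\mathcal A_\bullet(B))$ sending $t$ to the (unlabeled) seed $(\mathbf x_t,\mathbf y_t,B_t)$ and to factor it through $\pi$. Recall from (\ref{eq: from tn to stau-tilt}) and \Cref{prop: locally free mutation preserve} that $\pi\colon\mathbb T_n\to\operatorname{rs\tau-tilt}\mathcal P(T_0)$ is a local isomorphism of $n$-regular graphs onto the component of $(0,\mathcal P(T_0))$, with $\pi(t)$ the support $\tau$-tilting pair whose indecomposable summands are $\mathcal M_{1;t}^{B;t_0},\dots,\mathcal M_{n;t}^{B;t_0}$. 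First I would check that $\pi(t)=\pi(t')$ forces the seeds at $t$ and $t'$ to coincide up to relabeling: equality of the summand sets gives, via part~(3), equality of the unordered clusters $\mathbf x_t$ and $\mathbf x_{t'}$; and since mutation in $\operatorname{s\tau-tilt}$ depends only on the support $\tau$-tilting pair, the neighbouring support $\tau$-tilting pairs—hence by part~(3) again the neighbouring clusters—also agree. The exchange matrix $B_t$ and the coefficients are then recovered from $\mathbf x_t$ by reading off, for each $k$, the cluster exchange relation $x_{k;t}x_{k;t'}=(\text{monomial})+(\text{monomial})$: the unique decomposition of this element into a sum of two Laurent monomials in $x_1,\dots,x_n,y_1,\dots,y_n$ recovers the $k$-th column of $B_t$ from the $x$-exponents and the $k$-th coefficient from the $y$-exponents. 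Consequently the seed map factors as $\mathbb T_n\xrightarrow{\pi}\operatorname{rs\tau-tilt}\mathcal P(T_0)\to\mathbf E(\mathcal A_\bullet(B))$.

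Next I would show that the induced graph morphism $\operatorname{rs\tau-tilt}\mathcal P(T_0)\to\mathbf E(\mathcal A_\bullet(B))$ is an isomorphism. Surjectivity is clear since $\pi$ is onto its target and the seed map is onto $\mathbf E(\mathcal A_\bullet(B))$; it is a local isomorphism because both $\pi$ and the seed map are. Injectivity uses that non-isomorphic $\tau$-rigid pairs of $\mathcal P(T_0)$ have distinct $\mathbf g$-vectors \cite{demonet2019tau}: combined with part~(3) (so that each reachable indecomposable $\tau$-rigid pair gets a distinct cluster variable, recoverable as its multidegree), two distinct support $\tau$-tilting pairs have distinct summand sets, hence distinct clusters, hence distinct seeds. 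From this isomorphism, statement~(1) follows because a support $\tau$-tilting pair is determined by its set of indecomposable summands—the simplicial-complex structure of $\operatorname{s\tau-tilt}$ from \cite{demonet2019tau}—so the same is true of clusters and seeds; and statement~(2) follows by dualising, identifying $\Delta(\mathcal A)$ as the simplicial complex whose dual graph is $\mathbf E(\mathcal A_\bullet(B))\cong\operatorname{rs\tau-tilt}\mathcal P(T_0)$, the dual graph of $\Delta^+(\mathcal P(T_0))$, with the vertex bijection $X_{\ell;t}^{B;t_0}\mapsto\mathcal M_{\ell;t}^{B;t_0}$ from part~(3) providing the simplicial isomorphism.

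The main obstacle I anticipate is not a deep calculation but the bookkeeping around \emph{reachability} and labelings: one must make sure $\pi$ genuinely surjects onto the entire component $\operatorname{rs\tau-tilt}\mathcal P(T_0)$ (so that the two sides of the claimed isomorphism match), and that the passage between the labeled data carried along $\mathbb T_n$ and the unlabeled seeds and support $\tau$-tilting pairs is consistent at every step. The step of recovering $B_t$ and the coefficients from the exchange relations is conceptually routine but relies on the precise form of the exchange relation in $\mathcal A_\bullet(B)$ and the uniqueness of the two-term monomial decomposition, and this is the point where a little care is required.
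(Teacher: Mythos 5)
Your proposal follows essentially the same route as the paper: dispose of (3) by combining \Cref{thm: main theorem} with (\ref{eq: express cluster variable g vector f poly}), factor the seed map $\mathbb T_n\to\mathbf E(\mathcal A_\bullet(B))$ through $\pi$ from (\ref{eq: from tn to stau-tilt}) by noting that modules determine the cluster variables (hence, via the exchange relations, the coefficients and exchange matrix), and then conclude the induced map $\operatorname{rs\tau-tilt}\mathcal P(T_0)\to\mathbf E(\mathcal A_\bullet(B))$ is an isomorphism of simplicial complexes using the injectivity of representation-theoretic $\mathbf g$-vectors from \cite{demonet2019tau}. You merely spell out the bookkeeping the paper leaves implicit; the structure of the argument is the same.
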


\section{Arc and band representations}\label{section: arc representation}

\begin{definition}\label{def:generalized-arc}
A \emph{generalized arc} on $\surf$ is a continuous curve $\mycurve:[0,1]\rightarrow \Sigma $ that connects either a pair of points in $\mathbb{M}$, or a point in $\mathbb{M}$ and a point in $\mathbb{O}$, or two distinct points in $\mathbb{O}$, and satisfies the following conditions:
\begin{itemize}
\item except for its endpoints, $\mycurve$ is disjoint from $\partial\Sigma\cup\mathbb{O}$;
\item $\mycurve$ is not homotopically trivial in $\Sigma\setminus(\mathbb{M}\cup\mathbb{O})$ relative to its endpoints;
\item $\mycurve$ is not homotopic in $\Sigma\setminus(\mathbb{M}\cup\mathbb{O})$ relative to its endpoints to a boundary segment of $\Sigma$.
\end{itemize} 
\end{definition}

\begin{definition}\label{def:closed-curve}
A \emph{closed curve} on $\surf$ is a continuous curve $\mycurve:\mathbb{S}^1\rightarrow \Sigma $ that satisfies the following conditions:
\begin{itemize}
\item $\mycurve$ is disjoint from $\partial\Sigma\cup\mathbb{O}$;
\item $\mycurve$ is not homotopically trivial in $\Sigma\setminus(\mathbb{M}\cup\mathbb{O})$;
\item $\mycurve$ is not a loop enclosing a single orbifold point;
\item $\mycurve$ is not homotopic in $\Sigma\setminus(\mathbb{M}\cup\mathbb{O})$ to a curve $\mycurve':\mathbb{S}^1\rightarrow \Sigma$ that can be factored as
$$
\xymatrix{
\mathbb{S}^1 \ar[dr]_{\mycurve'}  \ar[rr]^{z\mapsto z^n} & & \mathbb{S}^1 \ar[dl]^{\mycurve''} \\
& \Sigma & 
}
$$
with $\mycurve''$ continuous and $n>1$.
\end{itemize} 
\end{definition}

Generalized arcs will be considered up to homotopy in $\Sigma\setminus(\mathbb{M}\cup\mathbb{O})$ relative to their endpoints, and up to orientation. Closed curves will be considered up to free homotopy in $\Sigma\setminus(\mathbb{M}\cup\mathbb{O})$, and up to orientation.

Let $\mycurve$ be either a generalized arc or a closed curve on $\surf$. We assume that $\mycurve$ is chosen within its homotopy class so as to minimize its number of self-crossings. Furthermore, given a triangulation $T$ of $\surf$ not containing $\mycurve$, we assume that for each arc $k\in T$, both $\mycurve$ and $k$ are chosen within their respective homotopy classes so as to minimize the number of crossings between $\mycurve$ and $k$.

\begin{definition}\label{def:forbidden-kinks}
    Let $\mycurve$ be either a generalized arc or a closed curve on $\surf$, and let $T$ be a triangulation of $\surf$ not containing $\mycurve$. A kink $\kink$ of $\mycurve$ around an orbifold point $q\in\orbset$ is \emph{forbidden with respect to $T$} if either:
    \begin{itemize}
    \item $\kink$ has multiplicity greater than $1$ (see \cite{geiss2023resolution} for a way to define formally the multiplicity of a kink); or
    \item $\kink$ occurs inside the digon containing $q$, entering and leaving this digon on different sides; or
    \item $\kink$ occurs inside the digon containing $q$, entering or leaving this digon through a marked point (which is then an endpoint of $\mycurve$). 
    \end{itemize}
\end{definition}

\begin{example} Figure \ref{Fig:forbiddenKinks} depicts some forbidden kinks. The relevant digon containing the orbifold point $q$ has been drawn, but, for simplicity, the pending arc surrounding $q$ has been omitted. For kinks that are not forbidden, we refer the reader to Figures \ref{Fig:localConfigsOfSegments} and \ref{Fig:casesForFlipVsMuts}.
\begin{figure}[h]
    \centering
    \includegraphics[scale=.075]{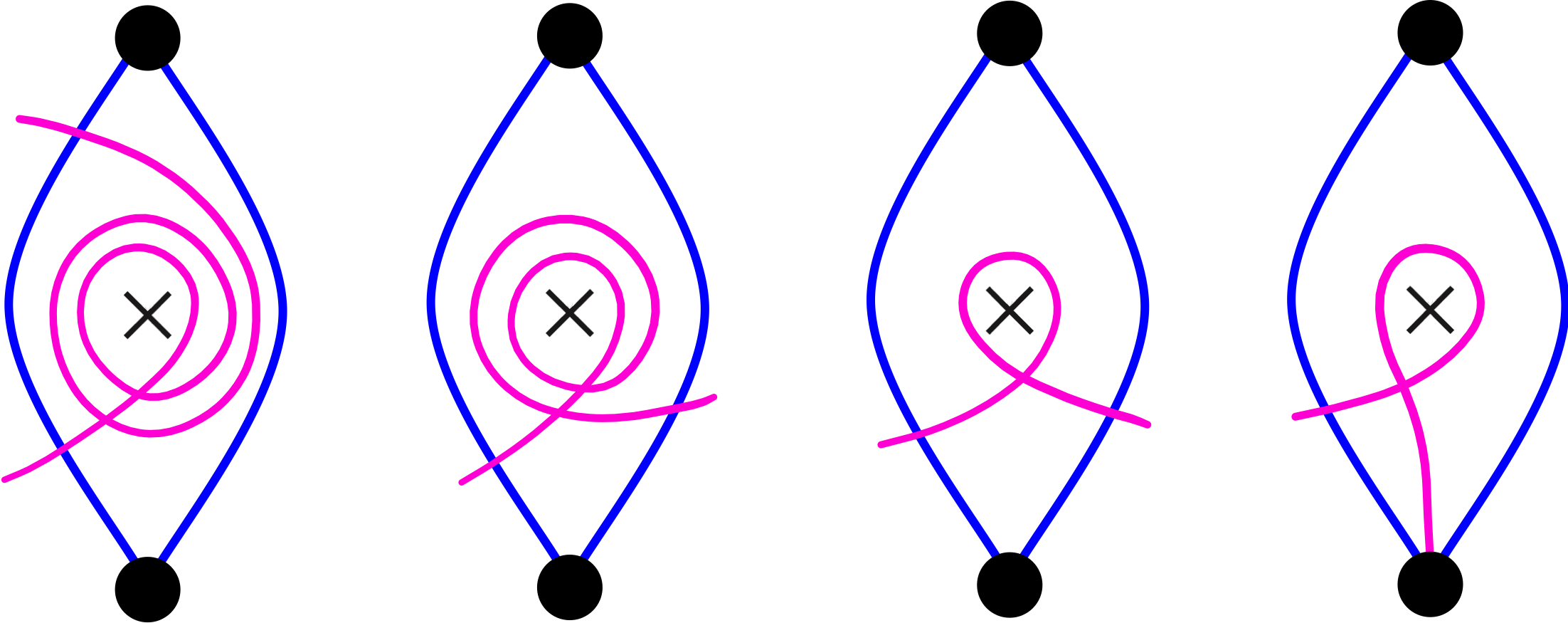}
    \caption{Some forbidden kinks. The two left-most depicted kinks have multiplicity greater than $1$.}\label{Fig:forbiddenKinks}
\end{figure}
\end{example}

Let $\mycurve$ be either a generalized arc or a closed curve on $\surf$, and let $T$ be a triangulation of $\surf$ not containing $\mycurve$. We assume that $\mycurve$ does not have forbidden kinks with respect to $T$. Furthermore, let
$$
U_{\mycurve}:=\begin{cases}
\{1\} & \text{if $\mycurve$ is a generalized arc};\\
\Bbbk\setminus\{0\} & \text{if $\mycurve$ is a closed curve}.
\end{cases}
$$
We fix an embedding of the quiver $Q(T)$ into the surface $\mathcal{O}$ so that each $3$-cycle of $Q$ arising from a regular triangle $\Delta$ is fully contained in $\Delta$ (thus appearing, \emph{a fortiori}, as a counter-clockwisely oriented $3$-cycle), and more importantly, so that for every pending arc $k\in T$, the arrow $\varepsilon_k$ is drawn as a loop oriented in the counter-clockwise sense. This is consistent with the way we have drawn the quivers of the form $Q(T)$ throughout the paper, as the reader can easily notice.

With the chosen embedding of $Q(T)$ into $\mathcal{O}$, there is an induced string (if $\mycurve$ is a generalized arc) or band (if $\mycurve$ is a closed curve), from which one can in turn define a string module $M(T,\mycurve,1)$ (if $\mycurve$ is a generalized arc) or a 1-parameter family $\{M(T,\mycurve,\lambda) \suchthat \lambda\in U_{\mycurve}\}$ of quasi-simple band modules (if $\mycurve$ is a closed curve). See \cite{butler1987Auslander-Reiten} for the definition of string and band modules. The corresponding decorated representation with zero decoration will be denoted $\mathcal{M}(T,\mycurve,\lambda):=(M(T,\mycurve,\lambda),0)$ for $\lambda\in U_{\mycurve}$.

On the other hand, if $T$ contains $\mycurve$, then $\mycurve$ is an arc, and we set $\mathcal{M}(T,\mycurve,1)$ to be the negative pseudo-simple representation whose decoration has rank one at the vertex $\mycurve$.

\begin{theorem}\label{thm:behavior-of-arc-and-band-reps-under-muts}
Let $\mycurve$ be either a generalized arc or a closed curve on $\surf$, and let $T$ and $T'$ be triangulations related by the flip of an arc $k\in T$. We assume that $\mycurve$ does not have forbidden kinks with respect to $T$ or $T'$.
\begin{enumerate}
\item If $k$ is not a pending arc, then there exists an automorphism $g=g_{T,\mycurve}$ of $U_{\mycurve}$ as an algebraic variety such that the decorated representations $\mu_k(\mathcal{M}(T,\mycurve,\lambda))$ and $\mathcal{M}(T',\mycurve,g(\lambda))$ are isomorphic through an isomorphism that acts as the identity on each of the vector spaces attached to the vertices of $Q(T)$ that are different from $k$.
\item If $k$ is a pending arc, surrounding the orbifold point $q\in\orbset$,
then there exists an automorphism $g=g_{T,\mycurve}$ of $U_{\mycurve}$ as an algebraic variety such that the decorated representations $\mu_k(\mathcal{M}(T,\mycurve,\lambda))$ and $\mathcal{M}(T',\mycurve,g(\lambda))$ are isomorphic through an isomorphism that acts as the identity on each of the vector spaces attached to the vertices of $Q(T)$ that are different from $k$.
\item If $\mycurve$ is not a generalized arc with at least one endpoint in $\orbset$, then for any triangulation $T$ with respect to which $\mycurve$ does not have forbidden kinks, the decorated representation $\mathcal{M}(T,\mycurve,\lambda)$ is locally free.
\end{enumerate}
\end{theorem}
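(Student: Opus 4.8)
The plan is to reduce all three parts to a local analysis in the (generalized) quadrilateral $S(k)$ cut out by $k$. Two facts make this possible. By the construction of \Cref{subsection: construction of mutation}, the mutation $\mu_k$ alters the data of a representation only inside $S(k)$: it leaves the $H_i$-module $\overline M(i)=M(i)$ unchanged for every $i\neq k$ and only modifies the structure maps among the vertices of $S(k)$. Dually, the string (if $\mycurve$ is a generalized arc) or band (if $\mycurve$ is a closed curve) that $\mycurve$ induces on the embedded quiver $Q(T)$ is read off segment by segment, so the portion lying outside $S(k)$ is unaffected by the flip and, since $T$ and $T'$ agree there, already coincides with the corresponding portion relative to $T'$. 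Hence, for (1) and (2), it suffices to run through the finitely many local shapes in which the segments of $\mycurve$ can meet $S(k)$ — the configurations underlying Figures \ref{Fig:localConfigsOfSegments} and \ref{Fig:casesForFlipVsMuts} — and, in each, to compute the $H_k$-submodules $\image\beta_k$, $\ker\gamma_k$, $\image\gamma_k$, $\ker\alpha_k$ of \Cref{subsection: construction of mutation} from the explicit formulas for $\alpha_k,\beta_k,\gamma_k$, to assemble $\overline M(k)=\ker\gamma_k/\image\beta_k\oplus\image\gamma_k\oplus\ker\alpha_k/\image\gamma_k$ together with the maps $\overline M(k,i)$ and $^\circ \overline M(j,k)$ coming from $\bar\alpha_k$, $\bar\beta_k$, and to recognize the result as the local piece of the string (resp. band) of $\mycurve$ relative to $T'$. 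The automorphism $g=g_{T,\mycurve}$ of $U_{\mycurve}$ records the rescaling of the band parameter produced by the bend of $\mycurve$ inside $S(k)$; when $\mycurve$ is a generalized arc we have $U_{\mycurve}=\{1\}$ and $g=\mathrm{id}$. The boundary cases in which $\mycurve$ is $k$ itself, or the arc obtained from $k$ by the flip, follow directly from the identities $\mu_k(0,H_k)=(E_k,0)$ and $\mu_k(E_k,0)=(0,H_k)$ recorded after \Cref{def: decorated mutation}.

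Part (3) is vacuous at an ordinary vertex, where $H_i=\Bbbk$, so the content is at a pending vertex $k$ surrounding an orbifold point $q$, where one must show that $e_kM$ is free over $H_k=\Bbbk[\varepsilon_k]/\varepsilon_k^2$; equivalently, that the nilpotent operator $M(\varepsilon_k)$ satisfies $\ker M(\varepsilon_k)=\image M(\varepsilon_k)$. A basis of $e_kM$ is indexed by the intersection points of $\mycurve$ with $k$, and $M(\varepsilon_k)$ induces a pairing on this set dictated by how $\mycurve$ behaves inside the digon that $k$ cuts out around $q$. The two hypotheses of (3) — that $\mycurve$ has no kink around $q$ that is forbidden with respect to $T$, and that $\mycurve$ is not a generalized arc with an endpoint in $\orbset$ — are precisely what rules out the configurations (the three clauses of \Cref{def:forbidden-kinks}, together with a curve terminating inside the digon) in which this pairing fails to be perfect. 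Thus $e_kM$ is free and $\mathcal{M}(T,\mycurve,\lambda)$ is locally free; this is also what legitimizes, in (1) and (2), the appeal to \Cref{lemma: recursion of g vector}, \Cref{prop: recursion of f polynomial}, \Cref{prop: twice mutation} and the involutivity of \Cref{cor: twice mutation decoration}.

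The main obstacle is the pending case (2). Because $\varepsilon_k^2=0$, the local shapes in $S(k)$ are strictly richer than in the ordinary case, and the loop $\varepsilon_k$ must be tracked through $\alpha_k$, $\beta_k$ and the bimodule dualities $\rho$, $\lambda$ of (\ref{eq: map rho left dual}) and (\ref{eq: map lambda right dual}); the case-by-case check that $\mu_k(\mathcal{M}(T,\mycurve,\lambda))\cong\mathcal{M}(T',\mycurve,g(\lambda))$ through an isomorphism fixing the spaces attached to the vertices $\neq k$ concentrates the bulk of the computation here. A second, more structural difficulty is that an isotopy of a minimal-position representative of $\mycurve$ across the flipped pending arc need not land in minimal position relative to $T'$: it may acquire a kink around $q$ that can be resolved. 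One then resolves it using the procedure of \cite{geiss2023resolution} and checks that, on the representation side, this resolution is exactly the splitting-off of the summand supported at $k$ described in \Cref{prop: twice mutation}; the hypothesis that $\mycurve$ has no forbidden kink with respect to either $T$ or $T'$ ensures that after the resolution one recovers $\mathcal{M}(T',\mycurve,g(\lambda))$ on the nose. With these two points in hand, the remaining comparisons reduce, case by case, to matching explicit matrices, just as in the ordinary case (1).
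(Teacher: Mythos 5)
Your proposal correctly identifies the paper's strategy: exploit the locality of $\mu_k$ and of the string/band construction, reduce to a finite case analysis of how the segments of $\mycurve$ meet the (generalized) quadrilateral cut out by $k$, and verify each configuration by direct matrix computation of the mutation diagram $\alpha_k,\beta_k,\gamma_k\leadsto\bar\alpha_k,\bar\beta_k$. The paper does exactly this; all three parts, including the local-freeness assertion (3), are read off from the case tables once the admissible local configurations have been enumerated, and your translation of (3) into the statement $\ker M(\varepsilon_k)=\operatorname{im}M(\varepsilon_k)$ at a pending vertex is also correct.

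Two small corrections, though neither affects the soundness of the overall plan. First, local freeness is not a prerequisite for (1) and (2). You write that (3) ``legitimizes, in (1) and (2), the appeal to'' \Cref{lemma: recursion of g vector}, \Cref{prop: recursion of f polynomial}, \Cref{prop: twice mutation} and \Cref{cor: twice mutation decoration}; none of these is used in the proof of this theorem, which is a one-way computation $M\mapsto\mu_k(M)$ checked against the $T'$-module. Indeed the case tables contain segments terminating at the orbifold point, which produce modules that are \emph{not} locally free at $k$, and these are nonetheless covered by (1) and (2). Second, the paper does not isotope a single representative of $\mycurve$ across the flipped arc and then resolve kinks via \cite{geiss2023resolution} (that reference is cited only for the definition of kink multiplicity); it takes separate minimal-position representatives with respect to $T$ and $T'$ and shows directly that the non-forbidden kink configurations appear as entries in the tables and match under $\mu_k$. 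Your resolution-based detour would import \Cref{prop: twice mutation}, with its local-freeness hypotheses, into a spot where it is neither needed nor always available.
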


Before proving Theorem \ref{thm:behavior-of-arc-and-band-reps-under-muts}, let us emphasize the following particular case.

\begin{corollary}
    Let $\mycurve$ be either a generalized arc with both endpoints in $\marked$, or a closed curve on $\surf$, and let $T$ and $T'$ be triangulations related by the flip of an arc $k\in T$. If $\mycurve$ does not have kinks whatsoever (e.g., if $\mycurve$ does not have self-crossings), then:
    \begin{enumerate}
    \item the decorated representations $\mu_k(\mathcal{M}(T,\mycurve))$ and $\mathcal{M}(T',\mycurve)$ (resp. $\mu_k(\mathcal{M}(T,\mycurve,\lambda))$ and $\mathcal{M}(T',\mycurve,\lambda)$) are isomorphic;
    \item 
    the decorated representations $\mathcal{M}(T,\mycurve)$ and $\mathcal{M}(T',\mycurve)$ (resp. $\mathcal{M}(T,\mycurve,\lambda)$ and $\mathcal{M}(T',\mycurve,\lambda)$) are locally~free.
    \end{enumerate}
\end{corollary}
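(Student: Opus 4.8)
The plan is to deduce the corollary directly from \Cref{thm:behavior-of-arc-and-band-reps-under-muts}; essentially the only thing to observe is that a curve with no kinks has no forbidden kinks with respect to any triangulation, after which all three parts of that theorem apply verbatim.

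First I would record two consequences of the hypotheses. Since $\mycurve$ has no kinks whatsoever, it has no kink around any orbifold point, hence \emph{a fortiori} no forbidden kink in the sense of \Cref{def:forbidden-kinks} with respect to $T$, with respect to $T'$, or indeed with respect to any triangulation of $\surf$. Second, by hypothesis $\mycurve$ is either a closed curve or a generalized arc with both endpoints in $\marked$, so in particular $\mycurve$ is not a generalized arc having an endpoint in $\orbset$. These are exactly the assumptions needed to feed $\mycurve$ into all three parts of \Cref{thm:behavior-of-arc-and-band-reps-under-muts}.

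Part (2) then follows by applying \Cref{thm:behavior-of-arc-and-band-reps-under-muts}(3) once to $T$ and once to $T'$, both of which have no forbidden kinks for $\mycurve$; thus $\mathcal{M}(T,\mycurve,\lambda)$ and $\mathcal{M}(T',\mycurve,\lambda)$ are both locally free (and likewise $\mathcal{M}(T,\mycurve)=\mathcal{M}(T,\mycurve,1)$, $\mathcal{M}(T',\mycurve)$ when $\mycurve$ is a generalized arc). For part (1), I would distinguish whether $k$ is a pending arc. If it is not, \Cref{thm:behavior-of-arc-and-band-reps-under-muts}(1) applies directly; if it surrounds an orbifold point $q\in\orbset$, then \Cref{thm:behavior-of-arc-and-band-reps-under-muts}(2) applies, its hypothesis on the absence of forbidden kinks around $q$ with respect to $T$ and $T'$ being covered by the first observation. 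Either way one gets an algebraic automorphism $g=g_{T,\mycurve}$ of $U_{\mycurve}$ and an isomorphism $\mu_k(\mathcal{M}(T,\mycurve,\lambda))\cong\mathcal{M}(T',\mycurve,g(\lambda))$ acting as the identity on every vector space attached to a vertex of $Q(T)$ different from $k$.

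The only point that genuinely requires checking is that $g=\mathrm{id}$. For generalized arcs this is vacuous since $U_{\mycurve}=\{1\}$. For closed curves one must return to the proof of \Cref{thm:behavior-of-arc-and-band-reps-under-muts} and observe that the scalar $\lambda$ labelling a quasi-simple band module is modified only when the band runs through a loop $\varepsilon_k$ at a pending arc, which happens precisely when $\mycurve$ has a kink around the enclosed orbifold point; since $\mycurve$ has no kinks, the cyclic word describing the band of $\mycurve$ relative to $T'$ is obtained from the one relative to $T$ by the local substitutions arising from the flip at $k$ (as in \Cref{subsection: flip bimodules}) without altering $\lambda$, so $g=\mathrm{id}$ and $\mu_k(\mathcal{M}(T,\mycurve,\lambda))\cong\mathcal{M}(T',\mycurve,\lambda)$. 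I expect this verification about $g$ to be the main --- indeed essentially the only --- obstacle, everything else being a formal reduction to \Cref{thm:behavior-of-arc-and-band-reps-under-muts}.
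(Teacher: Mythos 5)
Your overall plan is correct and is essentially the one the paper intends: the corollary is stated as a "particular case" of \Cref{thm:behavior-of-arc-and-band-reps-under-muts} and no independent argument is given, so the right move is to observe that "no kinks" implies "no forbidden kinks with respect to any triangulation," feed $\mycurve$ into all three parts of that theorem, and sort out the parameter $\lambda$. Your treatment of part (2), of part (1) for generalized arcs, and your observation that the hypothesis rules out endpoints in $\orbset$, are all correct.

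The problem is in your argument that $g=\mathrm{id}$ for closed curves. You claim that "the scalar $\lambda$ labelling a quasi-simple band module is modified only when the band runs through a loop $\varepsilon_k$ at a pending arc, which happens precisely when $\mycurve$ has a kink." This is contradicted by the paper's own Tables \ref{table:proof-of-behavior-of-arc-and-band-reps-under-muts-simply-laced}--\ref{table:proof-of-behavior-of-arc-and-band-reps-under-muts-non-simply-laced-2}: in Case 1.a and Case 1.b --- an ordinary quadrilateral with no loop and no kink, i.e. exactly the local picture of a closed curve crossing a non-pending diagonal $k$ transversally --- the parameter $\lambda$ becomes $-\lambda$ after mutation; the same happens in several of the digon cases (4.a, 4.b, 6.a, 6.b) that again involve no kink. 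So sign changes do occur away from loops, and your stated reason collapses. What one would actually need is a bookkeeping argument along the entire band: each crossing of $k$ contributes a local sign via the table, and one must show that the product of those signs over all crossings is $+1$ (or can be absorbed into the base-change freedom at the vertices) so that the monodromy, hence the isomorphism class of the band module, is unchanged. You have not done this, and it is genuinely nontrivial because the number of crossings of a closed curve with $k$ is not a priori even. That verification --- or an argument showing the local sign changes are always absorbable --- is the missing content of your proof, and you should not dismiss the non-pending quadrilateral cases as sign-neutral when the tables say otherwise.
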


\begin{proof}[Proof of \Cref{thm:behavior-of-arc-and-band-reps-under-muts}]
Consider the topological closure in $\Sigma$ of the union of the two triangles of $T$ that contain $k$. Let $\diamondsuit$ be its topological interior in $\Sigma\setminus \partial\Sigma$. Thus, if $k$ is not pending, then $\diamondsuit$ is an open quadrilateral; whereas if $k$ is pending, then $\diamondsuit$ is an open digon. 

By continuity of $\mycurve$ as a function from $[0,1]$ or $\mathbb{S}^1$ to $\Sigma$, the inverse image $\mycurve^{-1}(\diamondsuit)$ can certainly be written as a disjoint union of open intervals. Let $(a,b)$ be one such open interval. 
Since $\mycurve$ has been chosen within its homotopy class so as to minimize its number of self-crossings, and since $\mycurve$ does not have forbidden kinks (see \Cref{def:forbidden-kinks}),
\begin{itemize}
\item if $\diamondsuit$ is a quadrilateral, then $\mycurve$ maps $(a,b)$ homeomorphically onto $\mycurve(a,b)\subseteq\mycurve\cap\diamondsuit$;
\item if $\diamondsuit$ is a digon, then either $\mycurve$ maps $(a,b)$ homeomorphically onto $\mycurve(a,b)\subseteq\mycurve\cap\diamondsuit$, or $\mycurve(a,b)$ is a kink contained in $\diamondsuit$ that enters and leaves $\diamondsuit$ on the same side. 
\end{itemize}
This, together with the fact that for each arc $j\in T$, both $\mycurve$ and $j$ have been chosen within their respective homotopy classes so as to minimize the number of crossings between $\mycurve$ and $j$, implies that $\mycurve(a,b)$ coincides with one of the segments represented in Figure \ref{Fig:localConfigsOfSegments}.
\begin{figure}[h]
    \centering
    \includegraphics[scale=.1]{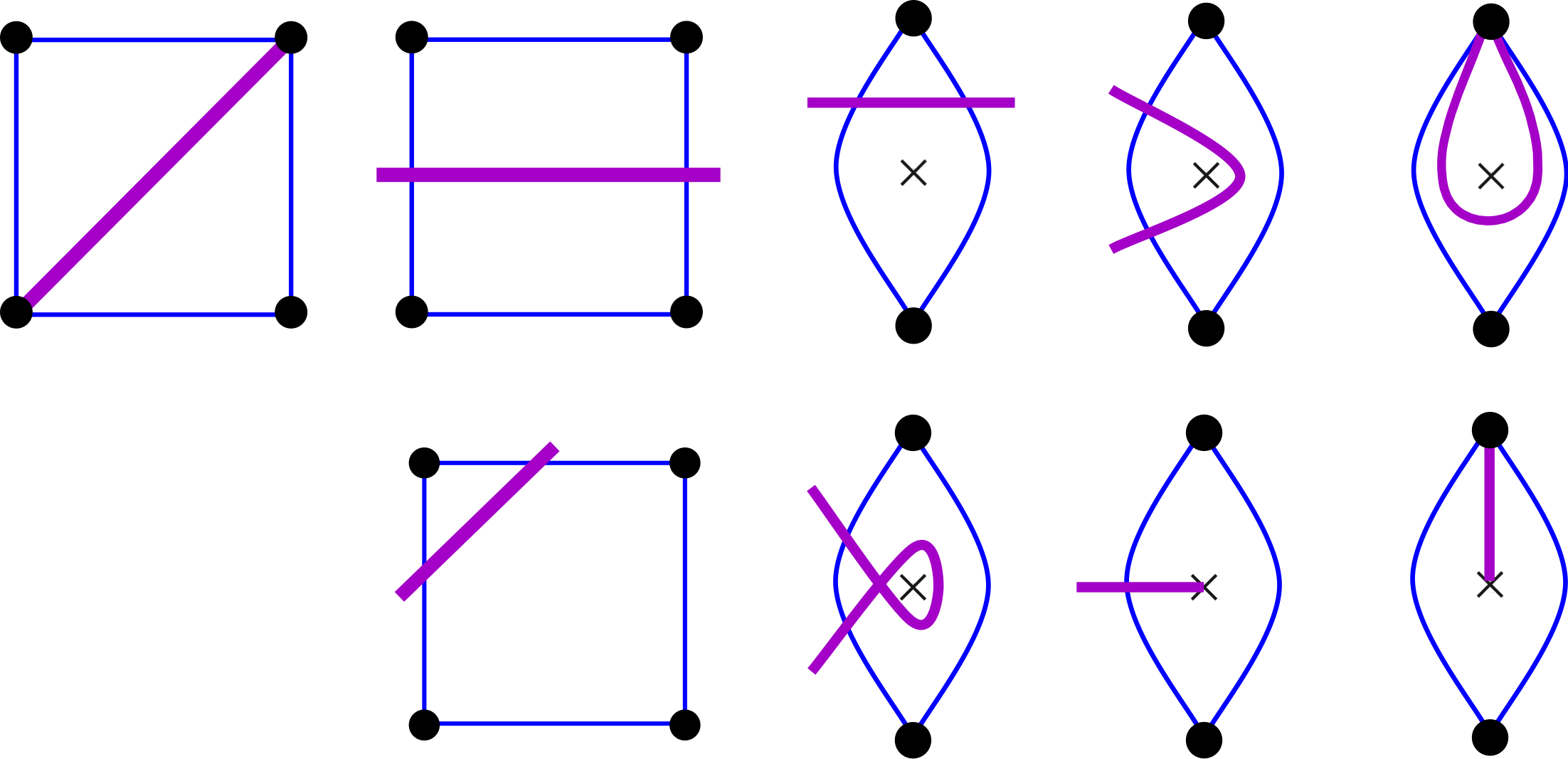}
    \caption{All possibilities for the segment $\mycurve(a,b)$}\label{Fig:localConfigsOfSegments}
\end{figure}

We see that if we draw both $k$ and $\mycurve(a,b)$ inside $\diamondsuit$, we necessarily obtain one of the configurations depicted in Figure \ref{Fig:casesForFlipVsMuts}.
\begin{figure}[h]
    \centering
    \includegraphics[scale=.19]{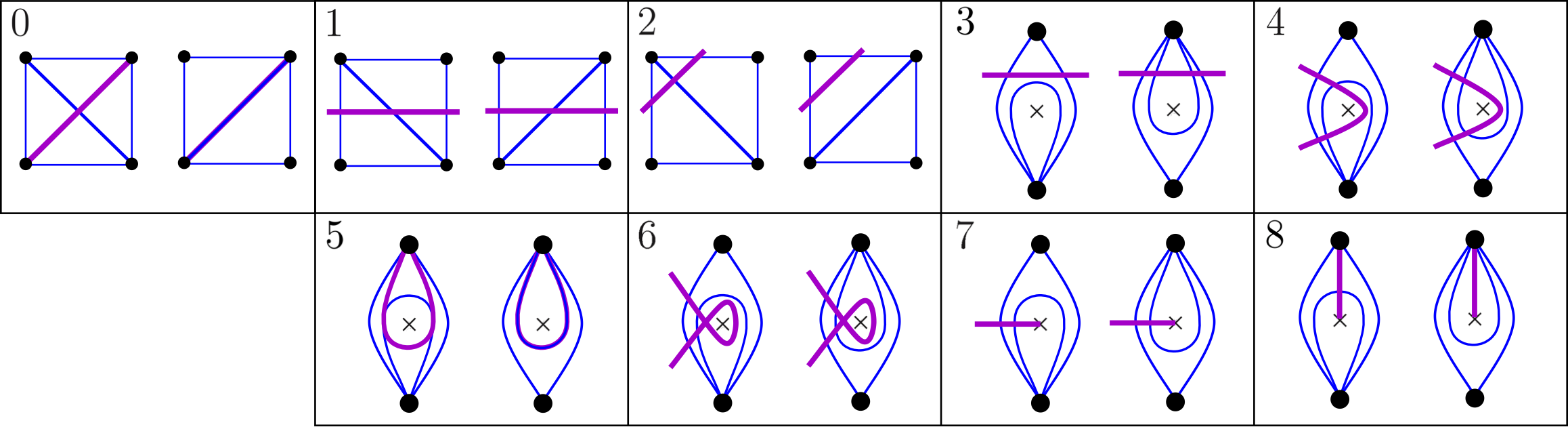}
    \caption{All possibilities for the pair $(\mycurve(a,b),k)$}\label{Fig:casesForFlipVsMuts}
\end{figure}

At this point, the third statement of Theorem \ref{thm:behavior-of-arc-and-band-reps-under-muts} becomes clear.
Furthermore, given the local additivity of mutations of representations,
in order to prove Theorem \ref{thm:behavior-of-arc-and-band-reps-under-muts}, it suffices proving that its statements hold for each of the configurations shown in Figure~\ref{Fig:casesForFlipVsMuts}. This can be verified directly in Tables~\ref{table:proof-of-behavior-of-arc-and-band-reps-under-muts-simply-laced},~\ref{table:proof-of-behavior-of-arc-and-band-reps-under-muts-non-simply-laced-1}~and~\ref{table:proof-of-behavior-of-arc-and-band-reps-under-muts-non-simply-laced-2}.
\end{proof}

\afterpage{%
    \clearpage
    \thispagestyle{empty}
    \begin{landscape}
        \hspace{-1cm}
\renewcommand{\arraystretch}{1.25}
{\tiny
\begin{tabular}{|c|c|c|c|c|c|c|c|c|}
\hline
 & Case 0.a & Case 0.b & Case 1.a & Case 1.b & Case 2.a & Case 2.b \\
\hline
$\begin{array}{cc}
\mathcal{M}(T,\mycurve(a,b),\lambda)\\
\lambda\neq 0
\end{array}$
 &
$
\xymatrix{
 & 0 \ar[d]&  \\
 0 \ar[dr] & \Bbbk  \ar[r] \ar[l] & 0 \ar[ul]\\
 & 0 \ar[u] & 
}
$  
 &
Negative simple
 &
$
\xymatrix{
 & 0 \ar[d]&  \\
 \Bbbk \ar[dr] & \Bbbk  \ar[r]^{\lambda} \ar[l]_{1} & \Bbbk \ar[ul]\\
 & 0 \ar[u] & 
}
$ 
&
$
\xymatrix{
 & 0 \ar[dl] & \\
 \Bbbk \ar[r]^{\lambda} & \Bbbk \ar[u] \ar[d] & \Bbbk \ar[l]_{1} \\
 & 0 \ar[ur] & 
}
$
&
$
\xymatrix{
 & \Bbbk \ar[d]_{\lambda}&  \\
 \Bbbk \ar[dr] & \Bbbk \ar[r] \ar[l]_{1} & 0 \ar[ul]\\
 & 0 \ar[u] & 
}
$
&
$
\xymatrix{
 & \Bbbk \ar[dl]_{\lambda} & \\
 \Bbbk \ar[r] & 0 \ar[u] \ar[d] & 0 \ar[l] \\
 & 0 \ar[ur] & 
}
$
\\
\hline
$
\xymatrix{
& M(k) \ar[dr]^{\beta}& \\
M_{\operatorname{in}}(k)  \ar[ur]^{\alpha} & & M_{\operatorname{out}}(k) \ar[ll]^{\gamma}
}
$
&
$
\xymatrix{
 & \Bbbk \ar[dr] & \\
0 \ar[ur] & & 0 \ar[ll]
}
$
&
$
\xymatrix{
 & 0 \ar[dr] & \\
0 \ar[ur] & & 0 \ar[ll]
}
$
&
$
\xymatrix{
 & \Bbbk \ar[dr]^{\left[\begin{array}{c}\lambda \\ 1\end{array}\right]} & \\
0 \ar[ur] & & \Bbbk^2 \ar[ll]
}
$
&
$
\xymatrix{
 & \Bbbk \ar[dr] & \\
\Bbbk^2 \ar[ur]^{\left[\begin{array}{cc}1 & \lambda\end{array}\right]} & & 0 \ar[ll]
}
$
&
$
\xymatrix{
 & \Bbbk \ar[dr]^{1} & \\
\Bbbk \ar[ur]^{\lambda} & & \Bbbk \ar[ll]^{0} 
}
$
&
$
\xymatrix{
 & 0 \ar[dr] & \\
\Bbbk \ar[ur] & & \Bbbk \ar[ll]^{\lambda}
}
$
\\
\hline
$
\xymatrix{
& \overline{M}(k) \ar[dl]^{\overline{\alpha}}& \\
M_{\operatorname{in}}(k)   & & M_{\operatorname{out}}(k) \ar[ul]^{\overline{\beta}}
}
$
&
$
\xymatrix{
& 0 \ar[dl] & \\
0 & & 0 \ar[ul]
}
$
&
$
\xymatrix{
& \Bbbk \ar[dl] & \\
0 & & 0 \ar[ul]
}
$
& 
$
\xymatrix{
& \Bbbk \ar[dl] & \\
0 & & \Bbbk^2 \ar[ul]_{\left[\begin{array}{cc}1 & -\lambda\end{array}\right]}
}
$
& 
$
\xymatrix{
 & \Bbbk \ar[dl]_{\left[\begin{array}{c}-\lambda \\ 1\end{array}\right]} & \\
 \Bbbk^2 & & 0 \ar[ul]
}
$
& 
$
\xymatrix{
 & 0 \ar[dl] & \\
 \Bbbk & & \Bbbk \ar[ul]
}
$
&
$
\xymatrix{
 & \Bbbk \ar[dl]_{1} & \\
 \Bbbk & & \Bbbk \ar[ul]_{\lambda}
}
$
\\
\hline
$\mu_k(\mathcal{M}(T,\mycurve(a,b),\lambda))$
&
Negative simple
&
$
\xymatrix{
 & 0 \ar[d]&  \\
 0 \ar[dr] & \Bbbk  \ar[r] \ar[l] & 0 \ar[ul]\\
 & 0 \ar[u] & 
}
$   
&
$
\xymatrix{
 & 0 \ar[dl] & \\
 \Bbbk \ar[r]^{-\lambda} & \Bbbk \ar[u] \ar[d] & \Bbbk \ar[l]_{1} \\
 & 0 \ar[ur] & 
}
$
&
$
\xymatrix{
 & 0 \ar[d]&  \\
 \Bbbk \ar[dr] & \Bbbk  \ar[r]^{-\lambda} \ar[l]_{1} & \Bbbk \ar[ul]\\
 & 0 \ar[u] & 
}
$ 
& 
$
\xymatrix{
 & \Bbbk \ar[dl]_{\lambda} & \\
 \Bbbk \ar[r] & 0 \ar[u] \ar[d] & 0 \ar[l] \\
 & 0 \ar[ur] & 
}
$
&
$
\xymatrix{
 & \Bbbk \ar[d]_{\lambda}&  \\
 \Bbbk \ar[dr] & \Bbbk \ar[r] \ar[l]_{1} & 0 \ar[ul]\\
 & 0 \ar[u] & 
}
$
\\
\hline
Automorphism $g_{\mycurve}:U_{\mycurve}\to U_{\mycurve}$ & $\myid$ & $\myid$ & $\lambda\mapsto-\lambda$ & $\lambda\mapsto-\lambda$ & $\myid$ & $\myid$
\\
\hline
\end{tabular}}
       \captionof{table}{Proof of Theorem \ref{thm:behavior-of-arc-and-band-reps-under-muts} for the segments $\mycurve(a,b)$}
       \label{table:proof-of-behavior-of-arc-and-band-reps-under-muts-simply-laced}
    \end{landscape}
    \clearpage
}

\afterpage{%
    \clearpage
    \thispagestyle{empty}
    \begin{landscape}
        \hspace{-3.575cm}
\renewcommand{\arraystretch}{1.05}
{\tiny
\begin{tabular}{|c|c|c|c|c|c|c|c|c|}
\hline
 & Case 3.a & Case 3.b & Case 4.a & Case 4.b & Case 5.a & Case 5.b \\
\hline
$\begin{array}{cc}
\mathcal{M}(T,\mycurve(a,b),\lambda)\\
\lambda\neq 0
\end{array}$
 &
 $
\xymatrix{
\Bbbk \ar[dr] & & \Bbbk \ar[ll]_{\lambda} \\
& 0 \ar[ur] &
}$ 
&
$
\xymatrix{
 & \Bbbk^2 \ar@(ur,ul)_{\left[\begin{array}{cc}0 &0\\ 1 & 0\end{array}\right]} \ar[dl]_{\left[\begin{array}{cc}0 & 1\end{array}\right]} & \\
 \Bbbk \ar[rr]_{0} & & \Bbbk \ar[ul]_{\left[\begin{array}{c}\lambda \\ 0\end{array}\right]}
}$ 
 &
 $
\xymatrix{
\Bbbk^2  \ar[dr]_{\left[\begin{array}{cc}1 &0\\ 0 & \lambda\end{array}\right]} & & 0 \ar[ll]\\
 & \Bbbk^2 \ar@(dl,dr)_{\left[\begin{array}{cc}0 &0\\ 1 & 0\end{array}\right]} \ar[ur] & 
}
$ 
&
$
\xymatrix{
 & \Bbbk^2  \ar@(ur,ul)_{\left[\begin{array}{cc}0 &0\\ 1 & 0\end{array}\right]} \ar[dl]_{\left[\begin{array}{cc}\lambda &0\\ 0 & 1\end{array}\right]} & \\
 \Bbbk^2 \ar[rr] & & 0 \ar[ul]
}
$
 &
 $
\xymatrix{
0 \ar[rd] & & 0 \ar[ll]\\
 & \Bbbk^2 \ar@(dl,dr)_{\left[\begin{array}{cc}0 &0\\ 1 & 0\end{array}\right]} \ar[ur] & 
}
$ 
&
\begin{tabular}{c}
Negative\\ pseudosimple
\end{tabular}
\\
\hline
$
\xymatrix{
& M(k) \ar[dr]^{\beta}& \\
M_{\operatorname{in}}(k)  \ar[ur]^{\alpha} & & M_{\operatorname{out}}(k) \ar[ll]^{\gamma}
}
$
&
$
\xymatrix{
 & 0 \ar[dr] & \\
 \frac{\Bbbk[\varepsilon]}{\varepsilon^2}\smallotimesk \Bbbk \ar[ur] & & \frac{\Bbbk[\varepsilon]}{\varepsilon^2}\smallotimesk \Bbbk \ar[ll]^{\lambda\myid}
}
$
&
$
\hspace{-0.2cm}\xymatrix{
 & \frac{\Bbbk[\varepsilon]}{\varepsilon^2} \ar[dr]^{\myid} \\
 \frac{\Bbbk[\varepsilon]}{\varepsilon^2}\smallotimesk \Bbbk \ar[ur]^{\lambda\myid} & & \frac{\Bbbk[\varepsilon]}{\varepsilon^2}\smallotimesk \Bbbk \ar[ll]^{0}
}\hspace{-0.2cm}
$
&
$
\hspace{-0.2cm}
\xymatrix{
 & \frac{\Bbbk[\varepsilon]}{\varepsilon^2} \ar[dr] & \\
 \frac{\Bbbk[\varepsilon]}{\varepsilon^2}\smallotimesk \Bbbk^2 \ar[ur]^{\left[\hspace{-0.15cm}\begin{array}{cccc}1 & 0 & 0 & 0 \\ 0 & 1 & \lambda & 0\end{array}\hspace{-0.15cm}\right]} & & 0 \ar[ll]
}
\hspace{-0.2cm}
$
&
$
\xymatrix{
 & \frac{\Bbbk[\varepsilon]}{\varepsilon^2} \ar[dr]^{\left[\begin{array}{cc}0 & 0\\ \lambda & 0\\ 1 & 0\\ 0 & 1\end{array}\right]} \\
 0 \ar[ur] & & \frac{\Bbbk[\varepsilon]}{\varepsilon^2}\smallotimesk \Bbbk^2 \ar[ll]
}
$
&
$
\hspace{-0.2cm}
\xymatrix{
& \frac{\Bbbk[\varepsilon]}{\varepsilon^2} \ar[dr] & \\
0  \ar[ur] & & 0 \ar[ll]
}\hspace{-0.2cm}
$
&
$
\xymatrix{
& 0 \ar[dr] & \\
0  \ar[ur] & & 0 \ar[ll]
}
$
\\
\hline
$
\xymatrix{
& \overline{M}(k) \ar[dl]_{\overline{\beta}}& \\
M_{\operatorname{in}}(k)   & & M_{\operatorname{out}}(k) \ar[ul]_{\overline{\alpha}}
}
$
&
$
\hspace{-0.2cm}\xymatrix{
 & \frac{\Bbbk[\varepsilon]}{\varepsilon^2}\smallotimesk \Bbbk \ar[dl]_{\myid} & \\
 \frac{\Bbbk[\varepsilon]}{\varepsilon^2}\smallotimesk \Bbbk & & \frac{\Bbbk[\varepsilon]}{\varepsilon^2}\smallotimesk \Bbbk \ar[ul]_{\lambda\myid} 
}\hspace{-0.2cm}
$
&
$
\xymatrix{
 & 0 \ar[dl] & \\
 \frac{\Bbbk[\varepsilon]}{\varepsilon^2}\smallotimesk \Bbbk & & \frac{\Bbbk[\varepsilon]}{\varepsilon^2}\smallotimesk \Bbbk \ar[ul] 
}
$
& 
$
\xymatrix{
& \frac{\Bbbk[\varepsilon]}{\varepsilon^2} \ar[dl]_{\left[\begin{array}{cc}0 & 0\\ -\lambda & 0 \\ 1 & 0\\ 0 & 1\end{array}\right]} & \\
\frac{\Bbbk[\varepsilon]}{\varepsilon^2}\smallotimesk \Bbbk^2 & & 0 \ar[ul]
}
$
& 
$
\hspace{-0.2cm}
\xymatrix{
 & \frac{\Bbbk[\varepsilon]}{\varepsilon^2} \ar[dl] & \\
 0 & & \frac{\Bbbk[\varepsilon]}{\varepsilon^2}\smallotimesk \Bbbk^2 \ar[ul]_{\left[\hspace{-0.15cm}\begin{array}{cccc}1 & 0 & 0 & 0\\ 0 & 1 & -\lambda & 0\end{array}\hspace{-0.15cm}\right]}
}
\hspace{-0.2cm}
$
& 
$
\xymatrix{
& 0 \ar[dl] & \\
0   & & 0 \ar[ul]
}
$
&
$
\hspace{-0.2cm}
\xymatrix{
& \frac{\Bbbk[\varepsilon]}{\varepsilon^2} \ar[dl] & \\
0   & & 0 \ar[ul]
}
\hspace{-0.2cm}
$
\\
\hline
$\mu_k(\mathcal{M}(T,\mycurve(a,b),\lambda))$
&
$
\xymatrix{
 & \Bbbk^2 \ar@(ur,ul)_{\left[\begin{array}{cc}0 & 0 \\ 1 & 0\end{array}\right]} \ar[dl]_{\left[\begin{array}{cc}0 & 1\end{array}\right]} & \\
 \Bbbk \ar[rr]_{0} & & \Bbbk \ar[ul]_{\left[\begin{array}{c}\lambda \\ 0\end{array}\right]}
}
$
&
$
\xymatrix{
 & 0 \ar@(ur,ul) \ar[dl] & \\
 \Bbbk \ar[rr]_{\lambda} & & \Bbbk \ar[ul]
}
$ 
&
$
\xymatrix{
 & \Bbbk^2 \ar@(ur,ul)_{\left[\begin{array}{cc}0 & 0 \\ 1 & 0\end{array}\right]} \ar[dl]_{\left[\begin{array}{cc}-\lambda & 0 \\ 0 & 1\end{array}\right]}& \\
 \Bbbk^2 \ar[rr] & & 0 \ar[ul]
}
$
&
$
\xymatrix{
\Bbbk^2  \ar[dr]_{\left[\begin{array}{cc}1 &0\\ 0 & -\lambda\end{array}\right]} & & 0 \ar[ll]\\
 & \Bbbk^2 \ar@(dl,dr)_{\left[\begin{array}{cc}0 &0\\ 1 & 0\end{array}\right]} \ar[ur] & 
}
$ 
& 
\begin{tabular}{c}
Negative\\ pseudosimple
\end{tabular}
&
$
\xymatrix{
0 \ar[rd] & & 0 \ar[ll]\\
 & \Bbbk^2 \ar@(dl,dr)_{\left[\begin{array}{cc}0 &0\\ 1 & 0\end{array}\right]} \ar[ur] & 
}
$ 
\\
\hline
Automorphism $g_{\mycurve}:U_{\mycurve}\to U_{\mycurve}$ & $\myid$ & $\myid$ & $\lambda\mapsto-\lambda$ & $\lambda\mapsto-\lambda$ & $\myid$ & $\myid$
\\
\hline
\end{tabular}}
       \captionof{table}{Proof of Theorem \ref{thm:behavior-of-arc-and-band-reps-under-muts} for the segments $\mycurve(a,b)$}
       \label{table:proof-of-behavior-of-arc-and-band-reps-under-muts-non-simply-laced-1}
    \end{landscape}
    \clearpage
}

\afterpage{%
    \clearpage
    \thispagestyle{empty}
    \begin{landscape}
        \hspace{-3.6cm}
\renewcommand{\arraystretch}{1.1}
{\tiny
\begin{tabular}{|c|c|c|c|c|c|c|c|c|}
\hline
 & Case 6.a & Case 6.b & Case 7.a & Case 7.b & Case 8.a & Case 8.b \\
\hline
$\begin{array}{cc}
\mathcal{M}(T,\mycurve(a,b),\lambda)\\
\lambda\neq 0
\end{array}$
 &
$
\xymatrix{
\Bbbk^2  \ar[dr]_{\left[\begin{array}{cc}1 &0\\ 0 & \lambda\end{array}\right]} & & 0 \ar[ll]\\
 & \Bbbk^2 \ar@(dl,dr)_{\left[\begin{array}{cc}0 &1\\ 0 & 0\end{array}\right]} \ar[ur] & 
}
$ 
&
$
\xymatrix{
 & \Bbbk^2  \ar@(ur,ul)_{\left[\begin{array}{cc}0 &1\\ 0 & 0\end{array}\right]} \ar[dl]_{\left[\begin{array}{cc}\lambda &0\\ 0 & 1\end{array}\right]} & \\
 \Bbbk^2 \ar[rr] & & 0 \ar[ul]
}
$
&
$
\xymatrix{
\Bbbk  \ar[dr]_{1} & & 0 \ar[ll]\\
 & \Bbbk \ar@(dl,dr)_{0} \ar[ur] & 
}
$ 
&
$
\xymatrix{
 & \Bbbk  \ar@(ur,ul)_{0} \ar[dl]_{1} & \\
 \Bbbk \ar[rr] & & 0 \ar[ul]
}
$
 &
$
\xymatrix{
0 \ar[rd] & & 0 \ar[ll]\\
 & \Bbbk \ar@(dl,dr)_{0} \ar[ur] & 
}
$ 
&
$
\text{Negative simple}
$ 
\\
\hline
$
\xymatrix{
& M(k) \ar[dr]^{\beta}& \\
M_{\operatorname{in}}(k)  \ar[ur]^{\alpha} & & M_{\operatorname{out}}(k) \ar[ll]^{\gamma}
}
$
&
$
\xymatrix{
& \frac{\Bbbk[\varepsilon]}{\varepsilon^2}  \ar[dr] & \\
\frac{\Bbbk[\varepsilon]}{\varepsilon^2}\smallotimesk \Bbbk^2 \ar[ur]^{\left[\begin{array}{cccc}0 & 1 & \lambda & 0\\ 0 & 0 & 0 & \lambda\end{array}\right]} & & 0 \ar[ll]
}
$
&
$
\xymatrix{
 & \frac{\Bbbk[\varepsilon]}{\varepsilon^2} \ar[dr]^{\left[\begin{array}{cc}\lambda & 0\\ 0 & \lambda\\ 0 & 1\\ 0 & 0\end{array}\right]} & \\
 0 \ar[ur] & & \frac{\Bbbk[\varepsilon]}{\varepsilon^2}\smallotimesk \Bbbk^2 \ar[ll]
}
$
&
$
\xymatrix{
 & \Bbbk \ar[dr] & \\
 \frac{\Bbbk[\varepsilon]}{\varepsilon^2}\smallotimesk \Bbbk \ar[ur]^{\left[\begin{array}{cc}1 & 0\end{array}\right]} & & 0 \ar[ll]
}
$
&
$
\xymatrix{
& \Bbbk \ar[dr]^{\left[\begin{array}{c}0 \\ 1\end{array}\right]} & \\
0 \ar[ur] & & \frac{\Bbbk[\varepsilon]}{\varepsilon^2}\smallotimesk \Bbbk \ar[ll]
}
$
&
$
\xymatrix{
 & \Bbbk \ar[dr] & \\
 0 \ar[ur] & & 0 \ar[ll]
}
$
&
$
\xymatrix{
& 0 \ar[dr] & \\
0 \ar[ur] & & 0 \ar[ll]
}
$
\\
\hline
$
\xymatrix{
& \overline{M}(k) \ar[dl]_{\overline{\beta}}& \\
M_{\operatorname{in}}(k)   & & M_{\operatorname{out}}(k) \ar[ul]_{\overline{\alpha}}
}
$
&
$
\xymatrix{
 & \frac{\Bbbk[\varepsilon]}{\varepsilon^2} \ar[dl]_{\left[\begin{array}{cc}-\lambda & 0\\ 0 & -\lambda\\ 0 & 1\\ 0 & 0\end{array}\right]} & \\
 \frac{\Bbbk[\varepsilon]}{\varepsilon^2}\smallotimesk \Bbbk^2 & & 0 \ar[ul]
}
$
&
$
\xymatrix{
 & \frac{\Bbbk[\varepsilon]}{\varepsilon^2} \ar[dl] & \\
 0  & & \frac{\Bbbk[\varepsilon]}{\varepsilon^2}\smallotimesk \Bbbk^2 \ar[ul]_{\left[\begin{array}{cccc}0 & 1 & -\lambda & 0\\ 0 & 0 & 0 & -\lambda\end{array}\right]}
}
$
& 
$
\xymatrix{
& \Bbbk \ar[dl]_{\left[\begin{array}{c}0 \\ 1\end{array}\right]} & \\
\frac{\Bbbk[\varepsilon]}{\varepsilon^2}\smallotimesk \Bbbk & & 0 \ar[ul]
}
$
& 
$
\xymatrix{
& \Bbbk \ar[dl] & \\
0  & & \frac{\Bbbk[\varepsilon]}{\varepsilon^2}\smallotimesk \Bbbk \ar[ul]_{\left[\begin{array}{cc}1 & 0\end{array}\right]}
}
$
& 
$
\xymatrix{
& 0 \ar[dl] & \\
0 & & 0 \ar[ul]
}
$
&
$
\xymatrix{
& \Bbbk \ar[dl] & \\
0 & & 0 \ar[ul]
}
$
\\
\hline
$\mu_k(\mathcal{M}(T,\mycurve(a,b),\lambda))$
&
$
\xymatrix{
 & \Bbbk^2  \ar@(ur,ul)_{\left[\begin{array}{cc}0 &1\\ 0 & 0\end{array}\right]} \ar[dl]_{\left[\begin{array}{cc}-\lambda &0\\ 0 & 1\end{array}\right]} & \\
 \Bbbk^2 \ar[rr] & & 0 \ar[ul]
}
$
&
$
\xymatrix{
\Bbbk^2  \ar[dr]_{\left[\begin{array}{cc}1 &0\\ 0 & -\lambda\end{array}\right]} & & 0 \ar[ll]\\
 & \Bbbk^2 \ar@(dl,dr)_{\left[\begin{array}{cc}0 &1\\ 0 & 0\end{array}\right]} \ar[ur] & 
}
$  
&
$
\xymatrix{
 & \Bbbk  \ar@(ur,ul)_{0} \ar[dl]_{1} & \\
 \Bbbk \ar[rr] & & 0 \ar[ul]
}
$
&
$
\xymatrix{
\Bbbk  \ar[dr]_{1} & & 0 \ar[ll]\\
 & \Bbbk \ar@(dl,dr)_{0} \ar[ur] & 
}
$ 
& 
Negative simple
&
Simple
\\
\hline
Automorphism $g_{\mycurve}:U_{\mycurve}\to U_{\mycurve}$ & $\lambda\mapsto-\lambda$ & $\lambda\mapsto-\lambda$ & $\myid$ & $\myid$ & $\myid$ & $\myid$
\\
\hline
\end{tabular}}
       \captionof{table}{Proof of Theorem \ref{thm:behavior-of-arc-and-band-reps-under-muts} for the segments $\mycurve(a,b)$}
       \label{table:proof-of-behavior-of-arc-and-band-reps-under-muts-non-simply-laced-2}
    \end{landscape}
    \clearpage
}

As an application of \Cref{thm:behavior-of-arc-and-band-reps-under-muts}, let us show that whenever $\mycurve$ is an arc on $\mathcal{O}$, the representation $\mathcal{M}(T,\mycurve,1)$
provides an explicit, direct computation of the $\tau$-rigid indecomposable pair that corresponds to $\mycurve$ and whose locally free Caldero--Chapoton function is a cluster
variable in the coefficient-free skew-symmetrizable Fomin-Zelevinsky cluster algebra associated to the orbifold $\mathcal{O}$. As in \Cref{section: application cluster algebras}, let $T_0$
be a triangulation of $\mathcal{O}=\surf$. Write $n:=|T_0|$ and fix a numbering $k_1,\ldots,k_n$ of the arcs in $T_0$. Using this same indexing for the columns and rows of the matrix
$B(T_0)$, consider the coefficient-free Fomin--Zelevinsky cluster pattern
\begin{equation}\label{eq:cluster-pattern-for-B(T_0)}
    (\mathbf{x}_{t}^{B(T_0);t_0},B_t^{B(T_0);t_0})_{t\in \mathbb{T}_n}
\end{equation}
with initial seed
$(\mathbf{x}_{t_0},B(T_0))$. Thus,
\begin{equation}\label{eq:cluster-variable-in-cluster-pattern}
\mathbf{x}_{t}^{B(T_0);t_0} = (x_{1;t}^{B(T_0);t_0},\ldots, x_{n;t}^{B(T_0);t_0}).
\end{equation}
For each $t\in \mathbb{T}_n$ there is exactly one path $t_0 \frac{k_1}{\quad} t_1  \frac{k_2}{\quad} \cdots \frac{k_p}{\quad} t_p=t$ from $t_0$ to $t$. By \Cref{lemma: flip and mutation matrix} we have $B_t^{B(T_0);t_0}=B(T_t)$, where $T_t=f_{k_p}\cdots f_{k_1}(T_0)=(\mycurve_{1;t},\ldots,\mycurve_{n;t})$ is the ordered triangulation obtained applying the corresponding sequence of flips. For each $\ell=1,\ldots,n$, consider the decorated $\mathcal{P}(T_0)$-representation \eqref{eq:def-of-dec-rep-M-ell-t-T0,t0}, that is,
$$
\mathcal M_{\ell;t}^{T_0; t_0} \coloneqq \mu_{k_1}\dots\mu_{k_p}(\mathcal E_\ell^-(T_t)).
$$

\begin{corollary}\label{coro:string-reps-give-cluster-variables}
With the notation from the preceding paragraph,
\begin{enumerate}
\item for each $t\in \mathbb{T}_n$ and $\ell=1,\ldots,n$, $\mathcal M_{\ell;t}^{T_0; t_0} \cong \mathcal{M}(T_0,\mycurve_{\ell;t},1)$ as decorated representations of~$\mathcal{P}(T_0)$;
\item $x_{\ell;t}^{B(T_0);t_0}=\mathrm{CC}^{\mathrm{l.f.}}(\mathcal{M}(T_0,\mycurve_{\ell;t},1))$ in the coefficient-free skew-symmetrizable Fomin--Zelevinsky cluster algebra $\mathcal{A}(B(T_0))$.
\end{enumerate}
\end{corollary}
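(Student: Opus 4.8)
The plan is to establish part (1) by induction along the flip sequence from $t_0$ to $t$, and then to obtain part (2) as a formal consequence of \Cref{thm: main theorem} by specializing the principal coefficients to $1$.

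For part (1), I would first note that since $\mycurve_{\ell;t}$ is, by construction, the $\ell$-th arc of the triangulation $T_t$, the convention fixing $\mathcal{M}(T,\mycurve,1)$ in the case $\mycurve\in T$ gives the identification $\mathcal E_\ell^-(T_t)=(0,H_\ell)=\mathcal{M}(T_t,\mycurve_{\ell;t},1)$. The heart of the argument is then to prove, by downward induction on $j$ running from $p$ to $0$, that
\[
\mu_{k_{j+1}}\cdots\mu_{k_p}\bigl(\mathcal E_\ell^-(T_t)\bigr)\;\cong\;\mathcal{M}(T_{t_j},\mycurve_{\ell;t},1)
\]
as decorated $\mathcal{P}(T_{t_j})$-representations, where $T_{t_j}$ is the triangulation attached to $t_j$ and the right-hand side is the negative pseudo-simple representation with rank-one decoration at $\mycurve_{\ell;t}$ when $\mycurve_{\ell;t}\in T_{t_j}$ and the associated string (or pending) module otherwise. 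The inductive step applies $\mu_{k_j}$ and invokes \Cref{thm:behavior-of-arc-and-band-reps-under-muts} for the flip of the arc $k_j\in T_{t_j}$ that produces $T_{t_{j-1}}$. Because $\mycurve_{\ell;t}$ is an arc on $\mathcal{O}$ it has no self-crossings, hence no kinks at all, so in particular no forbidden kinks with respect to $T_{t_j}$ or $T_{t_{j-1}}$; thus both the non-pending case (1) and the pending case (2) of that theorem are available. Moreover $U_{\mycurve_{\ell;t}}=\{1\}$, so the algebraic-variety automorphism $g=g_{T_{t_j},\mycurve_{\ell;t}}$ occurring there is necessarily the identity and $g(1)=1$. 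Taking $j=0$ then gives $\mathcal M_{\ell;t}^{T_0;t_0}\cong\mathcal{M}(T_0,\mycurve_{\ell;t},1)$. As a byproduct, since $\mycurve_{\ell;t}$ has both endpoints in $\marked$, part (3) of \Cref{thm:behavior-of-arc-and-band-reps-under-muts} re-confirms that $\mathcal{M}(T_0,\mycurve_{\ell;t},1)$ is locally free, so its $\mathbf g$-vector and locally free $F$-polynomial — hence $\mathrm{CC}^{\mathrm{l.f.}}$ — are defined.

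For part (2), I would use \Cref{lemma: flip and mutation matrix}, which yields $B_t^{B(T_0);t_0}=B(T_t)$, so that the exchange matrices of the coefficient-free cluster pattern coincide with those read off from the flip sequence and the indexing of rows and columns by $T_t=(\mycurve_{1;t},\dots,\mycurve_{n;t})$ is consistent. By \Cref{thm: main theorem} (equivalently \Cref{cor: app in cluster algebra}(3)), the principal-coefficient cluster variable $X_{\ell;t}^{B(T_0);t_0}$ equals $\mathbf x^{\mathbf g(\mathcal M)}\cdot F(\mathcal M)(\hat y_1,\dots,\hat y_n)$ with $\mathcal M=\mathcal M_{\ell;t}^{T_0;t_0}$. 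Applying the specialization homomorphism $\pi\colon y_i\mapsto 1$ of \Cref{subsection: coef free cluster} sends $X_{\ell;t}^{B(T_0);t_0}$ to $x_{\ell;t}^{B(T_0);t_0}$ and turns the right-hand side into the coefficient-free expression $\mathbf x^{\mathbf g(\mathcal M)}\cdot F(\mathcal M)(\hat y_1,\dots,\hat y_n)$ with $\hat y_i=\prod_j x_j^{b_{ji}}$, which is exactly $\mathrm{CC}^{\mathrm{l.f.}}(\mathcal M)$. Since $\mathbf g$-vectors and locally free $F$-polynomials are isomorphism invariants, part (1) gives $\mathrm{CC}^{\mathrm{l.f.}}(\mathcal M)=\mathrm{CC}^{\mathrm{l.f.}}(\mathcal{M}(T_0,\mycurve_{\ell;t},1))$, which completes part (2).

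I expect the only genuinely delicate point to be the topological bookkeeping inside the induction for part (1): one must keep each intermediate curve $\mycurve_{\ell;t}$ together with the ambient triangulation $T_{t_j}$ in minimal position, so that no kinks are created along the way and the hypotheses of \Cref{thm:behavior-of-arc-and-band-reps-under-muts} remain valid at every flip. All the substantive representation-theoretic content is already packaged in \Cref{thm:behavior-of-arc-and-band-reps-under-muts} and \Cref{thm: main theorem}, so no new computation is needed.
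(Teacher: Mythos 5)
Your proof is correct and follows the approach the paper intends (the paper leaves the proof implicit, but it is manifestly a consequence of the corollary following \Cref{thm:behavior-of-arc-and-band-reps-under-muts} applied inductively for part (1), combined with \Cref{cor: app in cluster algebra}(3) and the specialization $\pi$ of \Cref{subsection: coef free cluster} for part (2)). The worry in your final paragraph is in fact a non-issue: since each $\mycurve_{\ell;t}$ is an arc and hence has no self-crossings, it has no kinks for any ambient triangulation, so no additional topological bookkeeping is required beyond what you already state.
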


With a slight change of perspective, now let \eqref{eq:cluster-pattern-for-B(T_0)} and \eqref{eq:cluster-variable-in-cluster-pattern} denote instead the pattern associated to $(\mathbf{x}_{t_0},B(T_0))$ in Chekhov--Shapiro's generalized cluster algebra considered in \cite[Section 2]{LM1}. A combination of \Cref{coro:string-reps-give-cluster-variables}-(1) with \cite[Theorem 10.4]{LM1} yields:

\begin{corollary}\label{coro:string-reps-give-generalized-cluster-variables}
    In Chekhov--Shapiro's coefficient-free generalized cluster algebra associated to $\surf$, with all orbifold points being of order $3$, we have
    $$x_{\ell;t}^{B(T_0);t_0}=\mathrm{CC}(\mathcal{M}(T_0,\mycurve_{\ell;t},1))$$
    where $\mathrm{CC}(\mathcal{M})$ is the Caldero--Chapoton function whose coefficients are Euler characteristics of full quiver Grassmannians, instead of Euler characteristics of locally free quiver Grassmannians.
\end{corollary}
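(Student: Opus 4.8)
The plan is to obtain this as a direct transfer of the categorification of the Chekhov--Shapiro generalized cluster structure carried out in the prequel \cite{LM1}, with \Cref{coro:string-reps-give-cluster-variables}-(1) serving as the bridge between the two settings. The key point, already emphasized in the introduction, is that the gentle algebra $\mathcal{P}(T_0)$, the local algebras $H_i$, the bimodules $A(T_0)_{i,j}$, the arc complex of $\surf$, the flip combinatorics, and therefore both the mutation of representations of \Cref{section: mutation} and the arc representations $\mathcal{M}(T_0,\mycurve,1)$ of \Cref{section: arc representation}, are built from exactly the same combinatorial input whether the orbifold points of $\mathcal{O}$ are declared to be of order $2$ (as throughout the body of this paper) or of order $3$ (as in \cite{LM1}); only the cluster exchange relations differ. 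In particular, the decorated representation $\mathcal{M}_{\ell;t}^{T_0;t_0}=\mu_{k_1}\cdots\mu_{k_p}(\mathcal{E}_\ell^-(T_t))$ and the isomorphism $\mathcal{M}_{\ell;t}^{T_0;t_0}\cong\mathcal{M}(T_0,\mycurve_{\ell;t},1)$ of \Cref{coro:string-reps-give-cluster-variables}-(1) are assertions internal to $\modu\mathcal{P}(T_0)$, and hence remain valid verbatim in the order-$3$ situation.

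I would then invoke \cite[Theorem 10.4]{LM1}, the analogue in \cite{LM1} of \Cref{cor: app in cluster algebra}-(3): for the coefficient-free generalized cluster algebra of Chekhov--Shapiro attached to $\surf$ with all orbifold points of order $3$, it expresses the cluster variable $x_{\ell;t}^{B(T_0);t_0}$ as the Caldero--Chapoton function $\mathrm{CC}(\mathcal{M}_{\ell;t}^{T_0;t_0})$, where $\mathrm{CC}$ now denotes the variant whose coefficients are Euler characteristics of full quiver Grassmannians (this being the version adapted to the degree-three exchange polynomials at pending arcs). Feeding the isomorphism of \Cref{coro:string-reps-give-cluster-variables}-(1) into this formula — and noting that an isomorphism of $\mathcal{P}(T_0)$-representations induces isomorphisms of all of its quiver Grassmannians, full ones included, hence leaves $\mathrm{CC}$ unchanged — produces $x_{\ell;t}^{B(T_0);t_0}=\mathrm{CC}(\mathcal{M}(T_0,\mycurve_{\ell;t},1))$, which is the assertion. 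Should one prefer to read \cite[Theorem 10.4]{LM1} as phrased through $\tau$-rigid pairs indexed by $\mathbf{g}$-vectors rather than through the mutation sequence, the same conclusion follows by additionally using \Cref{prop: locally free mutation preserve} and \Cref{thm: main theorem}(A) to identify $\mathcal{M}(T_0,\mycurve_{\ell;t},1)$ as the $\tau$-rigid representation carrying $\mathbf{g}$-vector $\mathbf{g}_{\ell;t}^{B(T_0);t_0}$.

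The only point demanding genuine care, and the one I expect to be the main — indeed essentially the sole — obstacle, is the reconciliation of the conventions of the two papers: one must verify that the orderings of the flip sequence $f_{k_p}\cdots f_{k_1}$ producing $\mycurve_{\ell;t}$ and of the mutation sequence $\mu_{k_1}\cdots\mu_{k_p}$ producing $\mathcal{M}_{\ell;t}^{T_0;t_0}$, the numbering of the arcs of $T_0$ indexing the rows and columns of $B(T_0)$, and the normalization of the generalized cluster pattern with initial seed $(\mathbf{x}_{t_0},B(T_0))$ used in \cite[Section 2]{LM1}, all agree with those underlying \cite[Theorem 10.4]{LM1}, so that the representation appearing there is literally $\mathcal{M}_{\ell;t}^{T_0;t_0}$. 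As \Cref{coro:string-reps-give-cluster-variables} has already been set up to be compatible with exactly these conventions, this is a routine bookkeeping check and brings in no new representation-theoretic content.
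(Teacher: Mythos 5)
Your proposal is correct and follows exactly the route the paper takes: the paper's entire ``proof'' is the one-sentence observation that combining \Cref{coro:string-reps-give-cluster-variables}-(1) (which, as you rightly note, is a statement internal to $\modu\mathcal{P}(T_0)$ and hence independent of whether the orbifold points are declared to have order $2$ or $3$) with \cite[Theorem~10.4]{LM1} yields the claim. You have simply spelled out the intended argument, and your remark about the convention check being the only delicate point is consistent with the paper's setup of \Cref{coro:string-reps-give-cluster-variables} being deliberately aligned with \cite{LM1}.
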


\begin{remark} In work \cite{Banaian2023InPreparation} independent from this paper, Esther Banaian and Yadira Valdivieso will prove \Cref{coro:string-reps-give-generalized-cluster-variables} by exploiting Banaian--Kelley's perfect-matching expansions from snake graphs \cite{banaian2020snake}, following essentially the same strategy used by Geiss--Labardini--Schröer in \cite{geiss2022schemes} to show the corresponding result in the absence of orbifold points (and without punctures).
\end{remark}

\section{\texorpdfstring{Example: cluster algebras of type $\widetilde C_n$}{Cluster algebras of type tilde Cn}} \label{section: example}

In this section, we consider a class of special cases to our constructions and results corresponding to cluster algebras of affine type $\widetilde C_n$.

Let $B = (b_{i,j})$ be an $(n+1)\times (n+1)$ skew-symmetrizable matrix such that $|b_{i, i+1}| = |b_{i+1, i}| = 1$ for $i = 2, \dots, n-1$, $|b_{1, 2}| = |b_{n+1, n}| = 1$, $|b_{2,1}| = |b_{n, n+1}| = 2$, and $b_{i, j} = 0$ otherwise. In the language of \cite{geiss2017quivers}, the matrix $B = B(C, \Omega)$ is associated to an orientation $\Omega$ of the affine type $\widetilde C_n$ Cartan matrix 
\[
    C = \begin{bmatrix}
        2 & -1 &  &  &  \\
        -2 & 2 & \ddots &  &  \\
        & -1 & \ddots & -1 &  \\
        &  & \ddots & 2 & -2 \\
        &  &  & -1 & 2
        \end{bmatrix}.
\]
Accordingly we say that any cluster algebra $\mathcal A$ with an exchange matrix $B$ is of type $\widetilde C_n$.

There is an orbifold model for the cluster structure of $\mathcal A(B)$. Let $\mathcal O$ be a disk with $n$ boundary marked points and two orbifold points in the interior; see \Cref{fig: orbifold example} for the case $n = 6$. It is not hard to see that any matrix $B$ as above arises as $B(T)$ from some triangulation $T$ of $\mathcal O$. According to \cite{felikson2012cluster}, the cluster complex of $\mathcal A(B)$ is exactly the arc complex $\mathbf {Arc}(\mathcal O)$.

Our representation theoretic approach starts with the construction of the gentle algebra $\mathcal P(T)$ from a triangulation $T$ of $\mathcal O$. When $T$ induces acyclic $B(T)$ (i.e. of the form $B(C, \Omega)$), the algebra $\mathcal P(T)$ then identifies with the finite-dimensional algebra $H(C, D, \Omega)$ defined by Geiss--Leclerc--Schr\"oer \cite{geiss2017quivers} with the minimal symmetrizer $D = \operatorname{diag}(2, 1, \dots, 1, 2)$ for some orientation $\Omega$. The quiver of $\mathcal P(T)$ is depicted below with one loop at the two ending vertices respectively (corresponding to two pending arcs) and any orientation of the internal edges can be realized.
\[
    \begin{tikzcd}[]
        1 \arrow[loop left, leftarrow, distance=3em, start anchor={[yshift=-1ex]west}, end anchor={[yshift=1ex]west}] \arrow[dash]{r} & 2 \arrow[dash]{r} & \cdots \arrow[dash]{r} & n \arrow[dash]{r} & n + 1 \arrow[loop right, leftarrow, distance=3em, start anchor={[yshift=1ex]east}, end anchor={[yshift=-1ex]east}]
    \end{tikzcd}.
\]

More generally for $C$ a Cartan matrix, $\Omega$ an acyclic orientation and $D$ a symmetrizer, Geiss--Leclerc--Schr\"oer have proposed a Caldero--Chapoton type formula (featuring locally free representations of $H(C, D, \Omega)$) for cluster variables \cite{geiss2018quivers}. They further proved the formula when $C$ is of Dynkin (i.e. finite) type. As discussed above, the case when $C$ is of type $\widetilde C_n$ coincides with the orbifold $\mathcal O$ with a triangulation $T$ inducing acyclic $B(T)$. Therefore as a special case of \Cref{thm: main theorem} (and in particular (3) of \Cref{cor: app in cluster algebra}), we have proved the locally free Caldero--Chapoton formula in type $\widetilde C_n$. We remark that our result not only provides such a formula for an initial seed with an acyclic exchange matrix, but also for any seed since the construction of $\mathcal P(T)$ would not require $B(T)$ to be acyclic.

In \cite[\S11]{LM1}, we gave a type-$\widetilde{C}_2$ example to illustrate how the (ordinary) Caldero--Chapoton functions of two support $\tau$-tilting pairs related by a flip satisfy the exchange relations appearing in the (coefficient-free) generalized cluster algebra associated by Chekhov--Shapiro to an unpunctured digon with two orbifold points of order 3. Here we revisit that example. Given that our current perspective is dual to that of \cite{LM1}, the quivers with potential appearing here will be opposite to those appearing in \cite{LM1}, the representations here will be dual to those in \cite{LM1}, and, as above, we shall be working with injective $\mathbf g$-vectors. The skew-symmetrizable matrices associated to triangulations will be exactly those in \cite{LM1} though, i.e., not their negatives or transposes.

Consider the triangulations $T = T_0,T_1,T_2,T_3$ and $T_4$ from Figure \ref{Fig_ExTriangsC2tilde_GoodExample}. Arcs in these triangulations are labeled by $\{1, 2, 3\}$. The sequence of flips from $T$ to $T_4$ is $(k_1, k_2, k_3, k_4) = (1, 3, 2, 3)$. One can associated $T_i$ to $t_i$ in the subgraph $t_0 \frac{1}{\quad\quad} t_1  \frac{3}{\quad\quad} t_2 \frac{2}{\quad\quad} t_3 \frac{3}{\quad\quad} t_4$ of $\mathbb T_3$. The skew-symmetrizable matrix $B(T)$ is of type $\widetilde{C}_2$. 
\begin{figure}[h]
                \centering
\begin{tikzpicture}[scale = 1.2]
\filldraw[fill=gray!20, thick](0,0) circle (1);
\draw[] (0,-1) -- (0,1);
\draw[] (0,1) .. controls (-1.5, -0.7) and (0, -1) .. (0,1);
\draw[] (0,1) .. controls (1.5, -0.7) and (0, -1) .. (0, 1);
\node[] at (-0.4, 0) {$\star$};
\node[] at (0.4, 0) {$\star$};
\node[] at (0.6, -0.5) {\tiny $1$};
\node[] at (0.1, -0.7) {\tiny $2$};
\node[] at (-0.6, -0.5) {\tiny $3$};
\filldraw(0, -1) circle (1.5pt);
\filldraw(0, 1) circle (1.5pt);
\node[] at (-1, 1) {$T_0$};

\filldraw[fill=gray!20, thick](2.5,0) circle (1);
\filldraw(2.5, -1) circle (1.5pt);
\filldraw(2.5, 1) circle (1.5pt);
\node[] at (2.1, 0) {$\star$};
\node[] at (2.9, 0) {$\star$};
\draw[] (2.5,-1) -- (2.5,1);
\draw[] (2.5,1) .. controls (1, -0.7) and (2.5, -1) .. (2.5,1);
\draw[] (2.5,-1) .. controls (4, 0.7) and (2.5, 1) .. (2.5, -1);
\node[] at (1.9, -0.5) {\tiny $3$};
\node[] at (2.4, -0.3) {\tiny $2$};
\node[] at (3.1, 0.5) {\tiny $1$};
\node[] at (1.5, 1) {$T_1$};

\filldraw[fill=gray!20, thick](5,0) circle (1);
\filldraw(5, -1) circle (1.5pt);
\filldraw(5, 1) circle (1.5pt);
\node[] at (4.6, 0) {$\star$};
\node[] at (5.4, 0) {$\star$};
\draw[] (5,-1) -- (5,1);
\draw[] (5,-1) .. controls (3.5, 0.7) and (5, 1) .. (5, -1);
\draw[] (5,-1) .. controls (6.5, 0.7) and (5, 1) .. (5, -1);
\node[] at (5.6, 0.5) {\tiny $1$};
\node[] at (4.9, 0.6) {\tiny $2$};
\node[] at (4.4, 0.5) {\tiny $3$};
\node[] at (4, 1) {$T_2$};

\filldraw[fill=gray!20, thick](7.5,0) circle (1);
\filldraw(7.5, -1) circle (1.5pt);
\filldraw(7.5, 1) circle (1.5pt);
\node[] at (7.1, 0) {$\star$};
\node[] at (7.9, 0) {$\star$};
\draw[] (7.5,-1) .. controls (6, 0.7) and (7.5, 1) .. (7.5, -1);
\draw[] (7.5,-1) .. controls (9, 0.7) and (7.5, 1) .. (7.5, -1);
\draw[] (7.5, -0.05) ellipse (0.9 and 0.95);
\node[] at (8.1, 0.5) {\tiny $1$};
\node[] at (7.5, 0.8) {\tiny $2$};
\node[] at (6.9, 0.5) {\tiny $3$};
\node[] at (6.5, 1) {$T_3$};

\filldraw[fill=gray!20, thick](10,0) circle (1);
\filldraw(10, -1) circle (1.5pt);
\filldraw(10, 1) circle (1.5pt);
\node[] at (9.6, 0) {$\star$};
\node[] at (10.4, 0) {$\star$};
\draw[] (10,-1) .. controls (11.5, 0.7) and (10, 1) .. (10, -1);
\draw[] (10, -0.05) ellipse (0.9 and 0.95);
\draw[] plot [smooth] coordinates {(10, -1) (10.6, -0.6) (10.78, -0.25) (10.73, 0.4) (10, 0.65) 
(9.5, 0.15) (9.4, -0.15) (9.6, -0.25) (10, 0.5) (10.67, 0.38) (10.7, -0.2) (10.5, -0.58) (10, -1)};
\node[] at (9.9, -0.6) {\tiny $1$};
\node[] at (10, 0.8) {\tiny $2$};
\node[] at (9.4, -0.35) {\tiny $3$};
\node[] at (9, 1) {$T_4$};

\end{tikzpicture}
\caption{Five triangulations of the digon with two orbifold points, related by a sequence of flips.}
\label{Fig_ExTriangsC2tilde_GoodExample}
\end{figure}
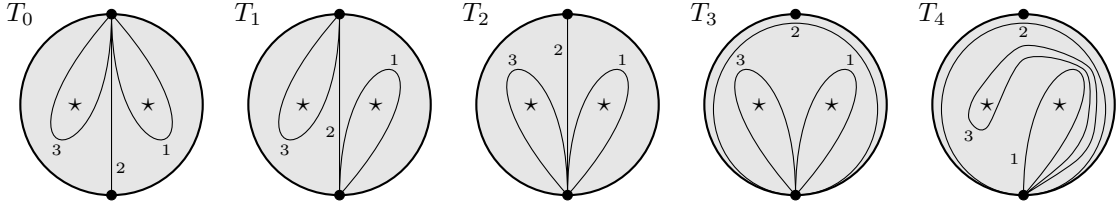

\begin{table}[ht]
\centering
    \begin{tabular}{|c|c|c|}
    \hline
         & $Q(T_j)$ & $B(T_j)$  \\
    \hline
            && \\
    $T_4$ & { $\xymatrix{&  3 \ar[dr] \ar@(lu,ld)   & & 1 \ar[ll]  \ar@(rd,ru)  & \\ & & 2 \ar[ur] & & }$} & { $\left[\begin{array}{ccc} 0 & 1  & -2 \\ -2 & 0 & 2 \\ 2 & -1 & 0 \end{array}\right]$ } \\
        && \\
    $T_3$ & { $\xymatrix{& 3 \ar@(lu,ld) \ar[rr] & & 1 \ar@(rd,ru) \ar[dl]  & \\ & &  2 \ar[ul] & & }$} & { $\left[\begin{array}{ccc} 0 & -1 & 2 \\ 2 & 0 & -2 \\ -2 & 1 & 0 \end{array}\right]$ }  \\
        && \\
    $T_2$    & { $\xymatrix{ & 3 \ar@(lu,ld) \ar[dr] & & 1 \ar@(rd,ru)  & \\ & & 2 \ar[ur] & & }$}  & {$\left[\begin{array}{ccc} 0 & 1 & 0\\ -2 & 0 & 2 \\ 0 & -1 & 0 \end{array}\right]$ }  \\
        && \\
    $T_1$     & {  $\xymatrix{ & 3  \ar@(lu,ld)  & & 1  \ar@(rd,ru)  & \\ & & 2 \ar[ul] \ar[ur]   & & }$}   & { $\left[\begin{array}{ccc} 0 & 1 & 0 \\ -2 & 0 & -2 \\ 0 & 1 & 0 \end{array}\right]$}   \\
        && \\
    $T_0$ & { $\xymatrix{ & 3  \ar@(lu,ld)  & & 1 \ar@(rd,ru) \ar[dl] &\\ & & 2 \ar[ul] & & }$} & { $\left[\begin{array}{ccc} 0 & -1 & 0\\ 2 & 0 & -2 \\ 0 & 1 & 0\end{array}\right]$}  \\
    \hline     
    \end{tabular}
    \caption{Quivers and matrices associated to $T_i$}
    \label{table:quivers-and-matrices-four-triangulations-type-tildeC}
\end{table}

In \Cref{table:quivers-and-matrices-four-triangulations-type-tildeC} we can see the quivers and skew-symmetrizable matrices of the triangulations.

Denote the decorated representations $\mathcal E_i^-(T_4) \coloneqq (0, H_i)$ for $i = 1, 2, 3$. Consider the decorated representations 
\begin{align*}
    \mathcal{M}_{1;4}\oplus\mathcal{M}_{2;4}\oplus\mathcal{M}_{3;4} &:=\mathcal E_1^-(T_4)\oplus\mathcal E_2^-(T_4)\oplus \mathcal E_3^-(T_4)  &  \text{in} \quad \decrep{\mathcal{P}(T_4)} \\
    \mathcal{M}_{1;3}\oplus\mathcal{M}_{2;3}\oplus\mathcal{M}_{3;3} &:=\mu_{3}(\mathcal E_1^-(T_4)\oplus\mathcal E_2^-(T_4)\oplus \mathcal E_3^-(T_4)) &  \text{in} \quad \decrep{\mathcal{P}(T_3)} \\
    \mathcal{M}_{1;2}\oplus\mathcal{M}_{2;2}\oplus\mathcal{M}_{3;2} &:=\mu_{2}\mu_{3}(\mathcal E_1^-(T_4)\oplus\mathcal E_2^-(T_4)\oplus \mathcal E_3^-(T_4)) &  \text{in} \quad \decrep{\mathcal{P}(T_2)}\\
    \mathcal{M}_{1;1}\oplus\mathcal{M}_{2;1}\oplus\mathcal{M}_{3;1} &:=\mu_{3}\mu_{2}\mu_{3}(\mathcal E_1^-(T_4)\oplus\mathcal E_2^-(T_4)\oplus \mathcal E_3^-(T_4)) &  \text{in} \quad \decrep{\mathcal{P}(T_1)}\\
    \mathcal{M}_{1;0}\oplus\mathcal{M}_{2;0}\oplus\mathcal{M}_{3;0} &:=\mu_1\mu_{3}\mu_{2}\mu_{3}(\mathcal E_1^-(T_4)\oplus\mathcal E_2^-(T_4)\oplus \mathcal E_3^-(T_4)) &  \text{in} \quad \decrep{\mathcal{P}(T_0)}.
\end{align*}
These are explicitly computed in \Cref{table: tau rigid pairs} below.

\begin{table}[ht]
    \centering
    \begin{tabular}{|c|c|c|c|}
    \hline
         &  $\mathcal{M}_{1;j}$ & $\mathcal{M}_{2;j}$ & $\mathcal{M}_{3;j}$ \\
    \hline 
         $j = 4$ & $\mathcal E_1^-(T_4)$ & $\mathcal E_2^-(T_4)$ & $\mathcal E_3^-(T_4)$ \\
    
         $j = 3$ & $\mathcal E_1^-(T_3)$ & $\mathcal E_2^-(T_3)$ & \begin{tikzcd}[row sep = scriptsize, column sep = scriptsize, ampersand replacement=\&]
         \mathbb C^2 \ar[loop above, "{\begin{bsmallmatrix} 0 & 1\\ 0 & 0 \end{bsmallmatrix}}"]\ar[rr] \& \& 0\ar[dl]\ar[loop above]\\
         \& 0 \ar[ul] \&
         \end{tikzcd} \\
    
         $j = 2$ & $\mathcal E_1^-(T_2)$ & \begin{tikzcd}[row sep = scriptsize, column sep = scriptsize, ampersand replacement=\&]
         0 \ar[loop above] \ar[dr]\& \& 0  \ar[loop above]\\
         \& \mathbb C \ar[ur] \&
         \end{tikzcd} & \begin{tikzcd}[row sep = scriptsize, column sep = scriptsize, ampersand replacement=\&]
         \mathbb C^2 \ar[loop above, "{\begin{bsmallmatrix} 0 & 1 \\ 0 & 0 \end{bsmallmatrix}}"] \ar[dr, "{\begin{bsmallmatrix} 1 & 0 \\ 0 & 1 \end{bsmallmatrix}}", swap]\& \& 0  \ar[loop above]\\
         \& \mathbb C^2 \ar[ur] \&
         \end{tikzcd} \\
    
         $j = 1$ & $\mathcal E_1^-(T_1)$ & \begin{tikzcd}[row sep = scriptsize, column sep = scriptsize, ampersand replacement=\&]
         \mathbb C^2 \ar[loop above, "{\begin{bsmallmatrix} 0 & 1\\ 0 & 0 \end{bsmallmatrix}}"] \& \& 0  \ar[loop above]\\
         \& \mathbb C \ar[ul, "{\begin{bsmallmatrix} 0 \\ 1 \end{bsmallmatrix}}"] \ar[ur] \&
         \end{tikzcd} & \begin{tikzcd}[row sep = scriptsize, column sep = scriptsize, ampersand replacement=\&]
         \mathbb C^2 \ar[loop above, "{\begin{bsmallmatrix} 0 & 1 \\ 0 & 0 \end{bsmallmatrix}}"]   \& \& 0  \ar[loop above]\\
         \& \mathbb C^2 \ar[ul, "{\begin{bsmallmatrix} 1 & 0 \\ 0 & 1 \end{bsmallmatrix}}"] \ar[ur] \&
         \end{tikzcd}\\
    
         $j = 0$ & \begin{tikzcd}[row sep = scriptsize, column sep = scriptsize, ampersand replacement=\&]
         0 \ar[loop above] \& \& \mathbb C^2 \ar[loop above, "{\begin{bsmallmatrix} 0 & 1\\ 0 & 0 \end{bsmallmatrix}}"] \ar[dl]\\
         \& 0  \ar[ul] \&
         \end{tikzcd} & \begin{tikzcd}[row sep = scriptsize, column sep = scriptsize, ampersand replacement=\&]
         \mathbb C^2 \ar[loop above, "{\begin{bsmallmatrix} 0 & 1\\ 0 & 0 \end{bsmallmatrix}}"] \& \& \mathbb C^2 \ar[loop above, "{\begin{bsmallmatrix} 0 & 1\\ 0 & 0 \end{bsmallmatrix}}"] \ar[dl, "{\begin{bsmallmatrix} 1 & 0 \end{bsmallmatrix}}"]\\
         \& \mathbb C \ar[ul, "{\begin{bsmallmatrix} 0 \\ 1 \end{bsmallmatrix}}"] \&
         \end{tikzcd} & \begin{tikzcd}[row sep = scriptsize, column sep = scriptsize, ampersand replacement=\&]
         \mathbb C^2 \ar[loop above, "{\begin{bsmallmatrix} 0 & 1\\ 0 & 0 \end{bsmallmatrix}}"] \& \& \mathbb C^4 \ar[dl, "{\begin{bsmallmatrix} 0&1&0&0\\ 0&0&1&0 \end{bsmallmatrix}}"]\ar[loop above, "{\begin{bsmallmatrix}0&0&0&0\\ 1&0&0&0\\ 0&0&0&1\\ 0&0&0&0 \end{bsmallmatrix}}"]\\
         \& \mathbb C^2 \ar[ul, "{\begin{bsmallmatrix} 1 & 0 \\ 0 & 1 \end{bsmallmatrix}}"] \&
         \end{tikzcd}\\
         & & & \\
    \hline           
    \end{tabular}
    
    \caption{Indecomposable $E$-rigid decorated representations.}
    \label{table: tau rigid pairs}
\end{table}

Consider the decorated representation $\mathcal N := (\begin{tikzcd}[column sep = small, ampersand replacement=\&]
\mathbb C^2 \ar[loop left, "{\begin{bsmallmatrix} 0 & 1 \\ 0 & 0\end{bsmallmatrix}}"]  \& 0 \ar[l] \& 0 \ar[l]\ar[loop right]
\end{tikzcd},0)$. In view of \cite[\S11]{LM1}, the decorated representations $\mathcal M_{1;0}\oplus\mathcal M_{2;0} \oplus \mathcal M_{3;0} $ and $\mathcal M_{1;0}\oplus \mathcal M_{2;0} \oplus \mathcal N$ of $\mathcal{P}(T_0)$ correspond to the triangulations $T_4$ and $T_3$ respectively. In the notation system of \Cref{section: application cluster algebras}, they are $\mathcal M_{t_4}^{B(T_0);t_0}$ and $\mathcal M_{t_3}^{B(T_0); t_0}$ respectively.
Since
$$
0\rightarrow \mathcal{M}_{3;0}\rightarrow I_3(T_0) \rightarrow 0
 \quad \text{and} \quad
 0\rightarrow \mathcal{N}\rightarrow I_3(T_0)\rightarrow I_2(T_0)^2 
$$
are the minimal injective resolutions of $\mathcal{M}_{3;0}$ and $\mathcal{N}$,
their (injective) $\mathbf g$-vectors are
\[
    \mathbf g^{\mathcal P(T_0)}(\mathcal M_{3; 0}) = (0, 0, -1) \quad \text{and} \quad
    \mathbf g^{\mathcal P(T_0)}(\mathcal N) = (0, 2, -1).
\]
Furthermore, the locally free $F$-polynomials of $\mathcal{M}_{3;0}$ and $\mathcal{N}$ are
$$
    F({\mathcal{M}_{3;0}})= 1 +y_3+2y_2y_3+2y_1y_2y_3+y_2^2y_3+2y_1y_2^2y_3+y_1^2y_2^2y_3 \quad \text{and} \quad
    F({\mathcal{N}})= 1+y_3.
$$
Hence, their locally free Caldero--Chapoton functions are
\begin{align*}
    \mathrm{CC}^\mathrm{l.f.}(\mathcal{M}_{3;0}) &=
x_3^{-1}+x_2^{-2}x_3^{-1}+2x_1^{-1}x_2^{-2}+2x_1^{-1}+x_1^{-2}x_2^{-2}x_3+2x_1^{-2}x_3+x_1^{-2}x_2^{2}x_3,
\\
\mathrm{CC}^\mathrm{l.f.}(\mathcal{N})&=x_2^2x_3^{-1}+x_3^{-1}.
\end{align*}

As for the decorated representations of $\mathcal{P}(T_0)$ shared by both $T_3$ and $T_4$, we have
\[
    \mathbf{g}^{\mathcal{P}(T_0)}(\mathcal{M}_{1;0}) = (-1, 0, 0) \quad \text{and} \quad \mathbf g^{\mathcal P(T_0)}(\mathcal M_{2;0}) = (0, 1, -1);
\]
\[
    F(\mathcal M_{1;0}) = 1 + y_1 \quad \text{and} \quad F(\mathcal M_{2;0}) = 1 + y_3 + y_2y_3 + y_1y_2y_3.
\]
Hence, their locally free Caldero--Chapoton functions are
\begin{align*}
    \mathrm{CC}^\mathrm{l.f.}(\mathcal{M}_{1;0}) &= x_1^{-1}+x_1^{-1}x_2^2,
\\
    \mathrm{CC}^\mathrm{l.f.}(\mathcal{M}_{2;0})&=
x_2x_3^{-1}+x_2^{-1}x_3^{-1}+x_1^{-1}x_2^{-1}+x_1^{-1}x_2.
\end{align*}

Direct computation shows that 
$$
\mathrm{CC}^\mathrm{l.f.}(\mathcal{M}_{3;0})\mathrm{CC}^\mathrm{l.f.}(\mathcal{N})=
\mathrm{CC}^\mathrm{l.f.}(\mathcal{M}_{1;0})^2+\mathrm{CC}^\mathrm{l.f.}(\mathcal{M}_{2;0})^2,
$$
which is precisely the equality predicted by the cluster exchange relation when flipping a pending arc (\Cref{fig: exchange relation pending}). The reader can directly verify that the representations $\mathcal{M}_{1;0}$, $\mathcal{M}_{2;0}$, $\mathcal{M}_{3;0}$ and $\mathcal{N}$ are precisely dual to those in \cite[\S11]{LM1}.

\bibliographystyle{amsalpha}

\newcommand{\etalchar}[1]{$^{#1}$}
\providecommand{\bysame}{\leavevmode\hbox to3em{\hrulefill}\thinspace}
\providecommand{\MR}{\relax\ifhmode\unskip\space\fi MR }
\providecommand{\MRhref}[2]{%
  \href{http://www.ams.org/mathscinet-getitem?mr=#1}{#2}
}
\providecommand{\href}[2]{#2}

\end{document}